\newtheorem{teor}{Theorem}
\numberwithin{teor}{section}
\newtheorem{lemma}[teor]{Lemma}
\newtheorem{prop}[teor]{Proposition}
\newtheorem{coro}[teor]{Corollary}
\newtheorem{remark}[teor]{Remark}
\theoremstyle{definition}\newtheorem{defi}[teor]{Definition}
\theoremstyle{definition}\newtheorem{example}{Example}
\numberwithin{example}{subsection}
\definecolor{customPink}{rgb}{1, 0.2, 0.4}
\definecolor{customGreen}{rgb}{0, 0.6, 0}
\definecolor{customBlue}{rgb}{0, 0.4, 0.6}
\def\P{\mathbb{P}}
\def\A{\mathbb{A}}
\def\C{\mathbb{C}}
\def\Z{\mathbb{Z}}
\def\Q{\mathbb{Q}}
\def\Div{\text{Div}}
\def\NS{\text{NS}}
\def\Q{\Bbb{Q}}
\newcommand{\EEight}{
\begin{tikzpicture}[thick,scale=1.4, every node/.style={scale=1}]

\draw (0,0) node{$\bullet$};


\draw (0,0) node[above]{\small{$2\Theta_7$}};	

\draw (0.6,0) node{$\bullet$};


\draw (0.6,0) node[above]{\small{$4\Theta_6$}};	

\draw (1.2,0) node{$\bullet$};


\draw (1.2,0) node[above]{\small{$6\Theta_5$}};	

\draw (1.2,-0.5) node{$\bullet$};


\draw (1.2,-0.5) node[right]{\small{$3\Theta_8$}};	

\draw (1.8,0) node{$\bullet$};


\draw (1.8,0) node[above]{\small{$5\Theta_4$}};	

\draw (2.4,0) node{$\bullet$};


\draw (2.4,0) node[above]{\small{$4\Theta_3$}};	

\draw (3,0) node{$\bullet$};


\draw (3,0) node[above]{\small{$3\Theta_2$}};	

\draw (3.6,0) node{$\bullet$};


\draw (3.6,0) node[above]{\small{$2\Theta_1$}};	

\draw (4.2,0) node{$\bullet$};


\draw (4.2,0) node[above]{\small{$\Theta_0$}};	

\draw (0,0) -- (0.6,0);

\draw (0.6,0) -- (1.2,0);

\draw (1.2,0) -- (1.8,0);

\draw (1.2,0) -- (1.2,-0.5);

\draw (1.8,0) -- (2.4,0);

\draw (2.4,0) -- (3,0);

\draw (3,0) -- (3.6,0);

\draw (3.6,0) -- (4.2,0);

\end{tikzpicture}
}
\newcommand{\ESeven}{
\begin{tikzpicture}[thick,scale=1.4, every node/.style={scale=1}]
\draw (0,0) node{$\bullet$};


\draw (0,0) node[above]{\small{$\Theta_0$}};	

\draw (0.6,0) node{$\bullet$};


\draw (0.6,0) node[above]{\small{$2\Theta_1$}};	

\draw (1.2,0) node{$\bullet$};


\draw (1.2,0) node[above]{\small{$3\Theta_2$}};	

\draw (1.8,0) node{$\bullet$};


\draw (1.8,0) node[above]{\small{$4\Theta_3$}};	

\draw (1.8,-0.5) node{$\bullet$};


\draw (1.8,-0.5) node[right]{\small{$2\Theta_7$}};	

\draw (2.4,0) node{$\bullet$};


\draw (2.4,0) node[above]{\small{$3\Theta_4$}};	

\draw (3,0) node{$\bullet$};


\draw (3,0) node[above]{\small{$2\Theta_5$}};	

\draw (3.6,0) node{$\bullet$};


\draw (3.6,0) node[above]{\small{$\Theta_6$}};	

\draw (0,0) -- (0.6,0);

\draw (0.6,0) -- (1.2,0);

\draw (1.2,0) -- (1.8,0);

\draw (1.8,0) -- (2.4,0);

\draw (1.8,0) -- (1.8,-0.5);

\draw (2.4,0) -- (3,0);

\draw (3,0) -- (3.6,0);
\end{tikzpicture}
}
\newcommand{\ESix}{
\begin{tikzpicture}[thick,scale=1.4, every node/.style={scale=1}]
\draw (0,0) node{$\bullet$};


\draw (0,0) node[above]{\small{$\Theta_0$}};	

\draw (0.6,0) node{$\bullet$};


\draw (0.6,0) node[above]{\small{$2\Theta_1$}};	

\draw (1.2,0) node{$\bullet$};


\draw (1.2,0) node[above]{\small{$3\Theta_2$}};	

\draw (1.2,-0.5) node{$\bullet$};


\draw (1.2,-0.5) node[right]{\small{$2\Theta_5$}};	

\draw (1.2,-1) node{$\bullet$};


\draw (1.2,-1) node[right]{\small{$\Theta_6$}};

\draw (1.8,0) node{$\bullet$};


\draw (1.8,0) node[above]{\small{$2\Theta_3$}};	

\draw (2.4,0) node{$\bullet$};


\draw (2.4,0) node[above]{\small{$\Theta_4$}};	

\draw (0,0) -- (0.6,0);

\draw (0.6,0) -- (1.2,0);

\draw (1.2,0) -- (1.2,-0.5);

\draw (1.2,-0.5) -- (1.2,-1);

\draw (1.2,0) -- (1.8,0);

\draw (1.8,0) -- (2.4,0);
\end{tikzpicture}
}
\newcommand{\Dn}{
\begin{tikzpicture}[thick,scale=1.4, every node/.style={scale=1}]
\draw (0.1,0.5) node{$\bullet$};


\draw (0.1,0.5) node[left]{\small{$\Theta_0$}};	

\draw (0.1,-0.5) node{$\bullet$};


\draw (0.1,-0.5) node[left]{\small{$\Theta_1$}};	

\draw (0.5,0) node{$\bullet$};


\draw (0.5,0) node[left]{\small{$2\Theta_4$}};	

\draw (1,0) node{$\bullet$};


\draw (1,0) node[below]{\small{$2\Theta_5$}};	

\draw (2,0) node{$\bullet$};


\draw (2,0) node[below]{\small{$2\Theta_{n+3}$}};	

\draw (2.5,0) node{$\bullet$};


\draw (2.5,0) node[right]{\small{$2\Theta_{n+4}$}};	

\draw (2.9,0.5) node{$\bullet$};


\draw (2.9,0.5) node[right]{\small{$\Theta_2$}};	

\draw (2.9,-0.5) node{$\bullet$};


\draw (2.9,-0.5) node[right]{\small{$\Theta_3$}};	

\draw (0.1,0.5) -- (0.5,0);

\draw (0.1,-0.5) -- (0.5,0);

\draw (0.5,0) -- (1,0);

\draw[dashed] (1,0) -- (2,0);

\draw (2,0) -- (2.5,0);

\draw (2.5,0) -- (2.9,0.5);

\draw (2.5,0) -- (2.9,-0.5);
\end{tikzpicture}
}
\newcommand{\An}{
\begin{tikzpicture}[thick,scale=1.4, every node/.style={scale=1}]

\draw (0,0) node{$\bullet$}; 

\draw (0,0) node[left]{\small{$\Theta_0$}};	

\draw (0.5,0.5) node{$\bullet$};	

\draw (0.5,0.5) node[above]{\small{$\Theta_1$}};	

\draw (1, 0.25) node{$\bullet$};

\draw (1, 0.25) node[right]{\small{$\Theta_2$}};	

\draw (1, -0.25) node{$\bullet$};

\draw (1, -0.25) node[right]{\small{$\Theta_3$}};	

\draw (0.5, -0.5) node{$\bullet$};

\draw (0.5, -0.5) node[below]{\small{$\Theta_{n-1}$}};	

\draw (0,0)--(0.5, 0.5);

\draw (0,0)--(0.5, -0.5);

\draw (0.5, 0.5)--(1, 0.25);

\draw (1, 0.25)--(1, -0.25);

\draw[dashed] (1, -0.25)--(0.5,-0.5);

\draw (0.5, -0.5)--(0,0);
\end{tikzpicture}
}
\newcommand{\TypeA}{
\begin{tikzpicture}[thick,scale=1.4, every node/.style={scale=1}]

\draw[fill=white] (0,0) circle (1.5	pt);

\draw (0,0) node[above]{\small{$1$}};	

\draw (0.5,0) node{$\bullet$}; 

\draw (0.5,0) node[above]{\small{$1$}};	

\draw (1,0) node{$\bullet$};

\draw (1,0) node[above]{\small{$1$}};	

\draw (1.5,0) node{$\bullet$};

\draw (1.5,0) node[above]{\small{$1$}};

\draw (2,0) node{$\bullet$};

\draw (2,0) node[above]{\small{$1$}};

\draw[fill=white] (2.5,0) circle (1.5pt);

\draw (2.5,0) node[above]{\small{$1$}};

\draw (0.05,0) -- (0.5,0);

\draw (0.5,0) -- (1,0);

\draw[dashed] (1,0) -- (1.5,0);

\draw (1.5,0) -- (2,0);

\draw (2,0) -- (2.45,0);
\end{tikzpicture}
}
\newcommand{\TypeB}{
\begin{tikzpicture}[thick,scale=1.4, every node/.style={scale=1}]

\draw[fill=white] (0,0) circle (1.5	pt);

\draw (0,0) node[above]{\small{$2$}};	

\draw (0.5,0) node{$\bullet$}; 

\draw (0.5,0) node[above]{\small{$2$}};	

\draw (1,0) node{$\bullet$};

\draw (1,0) node[above]{\small{$2$}};	

\draw (1.5,0) node{$\bullet$};

\draw (1.5,0) node[above]{\small{$2$}};

\draw (2,0) node{$\bullet$};

\draw (2,0) node[above]{\small{$2$}};

\draw (2.5,0.3) node{$\bullet$};

\draw (2.5,0.3) node[above]{\small{$1$}};

\draw (2.5,-0.3) node{$\bullet$};

\draw (2.5,-0.3) node[below]{\small{$1$}};

\draw (0.05,0) -- (0.5,0);

\draw (0.5,0) -- (1,0);

\draw[dashed] (1,0) -- (1.5,0);

\draw (1.5,0) -- (2,0);

\draw (2,0) -- (2.5,0.3);

\draw (2,0) -- (2.5,-0.3);
\end{tikzpicture}
}
\newcommand{\TypeC}{
\begin{tikzpicture}[thick,scale=1.4, every node/.style={scale=1}]

\draw (0,0) node{$\bullet$};

\draw (0,0) node[above]{\small{$1$}};

\draw[fill=white] (0.5,0) circle (1.5pt);

\draw (0.5,0) node[above]{\small{$2$}};	

\draw (1,0) node{$\bullet$};

\draw (1,0) node[above]{\small{$1$}};	

\draw (0,0) -- (0.45,0);

\draw (0.55,0) -- (1,0);
\end{tikzpicture}
}
\newcommand{\TypeD}{
\begin{tikzpicture}[thick,scale=1.4, every node/.style={scale=1}]

\draw[fill=white] (0,0) circle (1.5pt);

\draw (0,0) node[above]{\small{$1$}};	

\draw[fill=white] (0.5,0) circle (1.5pt);

\draw (0.5,0) node[above]{\small{$1$}};	

\draw (0.05,0) -- (0.45,0);
\end{tikzpicture}
}
\newcommand{\TypeE}{
\begin{tikzpicture}[thick,scale=1.4, every node/.style={scale=1}]

\draw (0,0) node{$\star$};

\draw (0,0) node[above]{\small{$1$}};
\end{tikzpicture}
}
\newcommand{\DiagramOne}{
\begin{tikzpicture}[thick,scale=1.4, every node/.style={scale=1}]

\draw[fill=white] (0,0) circle (1.5	pt);

\draw (0,0) node[below]{\small{$O$}};

\draw (0,0) node[above]{\small{$2$}};	

\draw (0.5,0) node{$\bullet$}; 

\draw (0.5,0) node[below]{\small{$\Theta_0$}};

\draw (0.5,0) node[above]{\small{$2$}};	

\draw (1,0) node{$\bullet$};

\draw (1,0) node[above]{\small{$2$}};	

\draw (1,0) node[below]{\small{$\Theta_1$}};

\draw (1.5,0) node{$\bullet$};

\draw (1.5,0) node[above]{\small{$2$}};

\draw (1.5,0) node[below]{\small{$\Theta_2$}};

\draw (2,0) node{$\bullet$};

\draw (2,0) node[above]{\small{$2$}};

\draw (2,0) node[below]{\small{$\Theta_3$}};

\draw (2.5,0) node{$\bullet$};

\draw (2.5,0) node[above]{\small{$2$}};

\draw (2.5,0) node[below]{\small{$\Theta_4$}};

\draw (3,0) node{$\bullet$};

\draw (3,0) node[above]{\small $2$};

\draw (3,0) node[below]{\small{$\Theta_5$}};

\draw (3.5,0.25) node{$\bullet$};

\draw (3.5,0.25) node[right]{\small{$\Theta_8$}};


\draw (3.5,-0.25) node{$\bullet$};

\draw (3.5,-0.25) node[right]{\small{$\Theta_6$}};


\draw (0.05,0) -- (0.5,0);

\draw (0.5,0) -- (1,0);

\draw (1,0) -- (1.5,0);

\draw (1.5,0) -- (2,0);

\draw (2,0) -- (2.5,0);

\draw (2.5,0) -- (3,0);

\draw (3,0) -- (3.5,0.25);

\draw (3,0) -- (3.5,-0.25);
\end{tikzpicture}
}
\newcommand{\DiagramTwo}{
\begin{tikzpicture}[thick,scale=1.4, every node/.style={scale=1}]

\draw[fill=white] (0,0) circle (1.5	pt);

\draw (0,0) node[below]{\small{$O$}};

\draw (0,0) node[above]{\small{$2$}};	

\draw (0.5,0) node{$\bullet$}; 

\draw (0.5,0) node[below]{\small{$\Theta_0$}};

\draw (0.5,0) node[above]{\small{$2$}};	

\draw (1,0) node{$\bullet$};

\draw (1,0) node[above]{\small{$2$}};	

\draw (1,0) node[below]{\small{$\Theta_1$}};

\draw (1.5,0) node{$\bullet$};

\draw (1.5,0) node[above]{\small{$2$}};

\draw (1.5,0) node[below]{\small{$\Theta_2$}};

\draw (2,0) node{$\bullet$};

\draw (1.85,0) node[above]{\small{$2$}};

\draw (2,0) node[below]{\small{$\Theta_3$}};

\draw (2.5,0.25) node{$\bullet$};

\draw (2.5,0.25) node[right]{\small{$\Theta_7$}};


\draw (2.5,-0.25) node{$\bullet$};

\draw (2.5,-0.25) node[right]{\small{$\Theta_4$}};


\draw (0.05,0) -- (0.5,0);

\draw (0.5,0) -- (1,0);

\draw (1,0) -- (1.5,0);

\draw (1.5,0) -- (2,0);

\draw (2,0) -- (2.5,0.25);

\draw (2,0) -- (2.5,-0.25);

\end{tikzpicture}
}
\newcommand{\DiagramThree}{
\begin{tikzpicture}[thick,scale=1.4, every node/.style={scale=1}]

\draw[fill=white] (0,0) circle (1.5	pt);

\draw (0,0) node[below]{\small{$P_1$}};


\draw (0.5,0) node{$\bullet$}; 

\draw (0.5,0) node[below]{\small{$\Theta_0$}};


\draw (1,0) node{$\bullet$};


\draw (1,0) node[below]{\small{$\Theta_1$}};

\draw (1.5,0) node{$\bullet$};


\draw (1.5,0) node[below]{\small{$\Theta_2$}};

\draw (2,0) node{$\bullet$};

\draw (2,0) node[below]{\small{$\Theta_3$}};


\draw (2.5,0) node{$\bullet$};

\draw (2.5,0) node[below]{\small{$\Theta_4$}};


\draw [fill=white] (3,0) circle (1.5pt);

\draw (3,0) node[below]{\small{$P_2$}};


\draw (0.05,0) -- (0.5,0);

\draw (0.5,0) -- (1,0);

\draw (1,0) -- (1.5,0);

\draw (1.5,0) -- (2,0);

\draw (2,0) -- (2.5,0);

\draw (2.5,0) -- (2.95,0);

\end{tikzpicture}
}
\newcommand{\DiagramFour}{
\begin{tikzpicture}[thick,scale=1.4, every node/.style={scale=1}]

\draw[fill=white] (0,0) circle (1.5	pt);

\draw (0,0) node[below]{\small{$O$}};

\draw (0,0) node[above]{\small{$2$}};	

\draw (0.5,0) node{$\bullet$}; 

\draw (0.5,0) node[below]{\small{$\Theta_0$}};

\draw (0.5,0) node[above]{\small{$2$}};	

\draw (1,0) node{$\bullet$};

\draw (1,0) node[above]{\small{$2$}};	

\draw (1,0) node[below]{\small{$\Theta_1$}};

\draw (1.5,0.25) node{$\bullet$};


\draw (1.5,0.25) node[right]{\small{$\Theta_2$}};

\draw (1.5,-0.25) node{$\bullet$};

\draw (1.5,-0.25) node[right]{\small{$\Theta_3$}};


\draw (0.05,0) -- (0.5,0);

\draw (0.5,0) -- (1,0);

\draw (1,0) -- (1.5,0.25);

\draw (1,0) -- (1.5,-0.25);

\end{tikzpicture}
}
\newcommand{\DiagramFive}{
\begin{tikzpicture}[thick,scale=1.4, every node/.style={scale=1}]

\draw[fill=white] (0,0) circle (1.5	pt);

\draw (0,0) node[below]{\small{$O$}};

\draw (0,0) node[above]{\small{$2$}};	

\draw (0.5,0) node{$\bullet$}; 

\draw (0.5,0) node[below]{\small{$\Theta_0$}};

\draw (0.5,0) node[above]{\small{$2$}};	

\draw (1,0) node{$\bullet$};

\draw (1,0) node[above]{\small{$2$}};	

\draw (1,0) node[below]{\small{$\Theta_1$}};

\draw (1.5,0.25) node{$\bullet$};


\draw (1.5,0.25) node[right]{\small{$\Theta_3$}};

\draw (1.5,-0.25) node{$\bullet$};

\draw (1.5,-0.25) node[right]{\small{$\Theta_2$}};


\draw (0.05,0) -- (0.5,0);

\draw (0.5,0) -- (1,0);

\draw (1,0) -- (1.5,0.25);

\draw (1,0) -- (1.5,-0.25);

\end{tikzpicture}
}
\newcommand{\DiagramSix}{
\begin{tikzpicture}[thick,scale=1.4, every node/.style={scale=1}]

\draw[fill=white] (0,0) circle (1.5	pt);

\draw (0,0) node[above]{\small{$2$}};

\draw (0.5,0) node{$\bullet$}; 

\draw (0.5,0) node[above]{\small{$2$}};

\draw (1,0) node{$\bullet$};

\draw (1,0) node[above]{\small{$2$}};

\draw (1.5,0.25) node{$\bullet$};

\draw (1.5,-0.25) node{$\bullet$};

\draw (0.05,0) -- (0.5,0);

\draw (0.5,0) -- (1,0);

\draw (1,0) -- (1.5,0.25);

\draw (1,0) -- (1.5,-0.25);

\end{tikzpicture}
}
\newcommand{\DiagramSeven}{
\begin{tikzpicture}[thick,scale=1.4, every node/.style={scale=1}]

\draw[fill=white] (0,0) circle (1.5	pt);


\draw (0.5,0) node{$\bullet$}; 


\draw (1,0) node{$\bullet$};


\draw (1.5,0) node{$\bullet$};


\draw[fill=white] (2,0) circle (1.5	pt);


\draw (0.05,0) -- (0.5,0);

\draw (0.5,0) -- (1,0);

\draw (1,0) -- (1.5,0);

\draw (1.5,0) -- (1.95,0);

\end{tikzpicture}
}
\newcommand{\DiagramEight}{
\begin{tikzpicture}[thick,scale=1.4, every node/.style={scale=1}]

\draw (0,0) node{$\bullet$};

\draw (0,0) node[below]{\small{$\Theta_0^1$}};

\draw[fill=white] (0.5,0) circle (1.5pt); 

\draw (0.5,0) node[above]{\small{$2$}};

\draw (0.5,0) node[below]{\small{$O$}};

\draw (1,0) node{$\bullet$};

\draw (1,0) node[below]{\small{$\Theta_0^2$}};

\draw (0.05,0) -- (0.45,0);

\draw (0.55,0) -- (1,0);

\end{tikzpicture}
}
\newcommand{\DiagramNine}{
\begin{tikzpicture}[thick,scale=1.4, every node/.style={scale=1}]

\draw[fill=white] (0,0) circle (1.5pt);

\draw (0,0) node[below]{\small{$P_1$}};

\draw (0.5,0) node{$\bullet$};

\draw (1,0) node{$\bullet$};

\draw (1.5,0) node{$\bullet$};

\draw[fill=white] (2,0) circle (1.5pt);

\draw (2,0) node[below]{$P_2$};

\draw (0.05,0) -- (0.5,0);

\draw (0.5,0) -- (1,0);

\draw (1,0) -- (1.5,0);

\draw (1.5,0) -- (1.95,0);

\end{tikzpicture}
}
\newcommand{\DiagramTen}{
\begin{tikzpicture}[thick,scale=1.4, every node/.style={scale=1}]

\draw (0,0) node{$\bullet$};

\draw (0,0) node[below]{\small{$\Theta_0^1$}};

\draw[fill=white] (0.5,0) circle (1.5pt); 

\draw (0.5,0) node[above]{\small{$2$}};

\draw (0.5,0) node[below]{\small{$O$}};

\draw (1,0) node{$\bullet$};

\draw (1,0) node[below]{\small{$\Theta_0^2$}};

\draw (0,0) -- (0.45,0);

\draw (0.55,0) -- (1,0);

\end{tikzpicture}
}
\newcommand{\DiagramEleven}{
\begin{tikzpicture}[thick,scale=1.4, every node/.style={scale=1}]

\draw[fill=white] (0,0) circle (1.5pt);

\draw (0,0) node[below]{\small{$P_1$}};

\draw (0.5,0) node{$\bullet$};

\draw (1,0) node{$\bullet$};

\draw (1.5,0) node{$\bullet$};

\draw (2,0) node{$\bullet$};

\draw (2.5,0) node{$\bullet$};

\draw[fill=white] (3,0) circle (1.5pt);

\draw (3,0) node[below]{$P_2$};

\draw (0.05,0) -- (0.5,0);

\draw (0.5,0) -- (1,0);

\draw (1,0) -- (1.5,0);

\draw (1.5,0) -- (2,0);

\draw (2,0) -- (2.5,0);

\draw (2.5,0) -- (2.95,0);

\end{tikzpicture}
}
\newcommand{\DiagramX}{
\begin{tikzpicture}[thick,scale=1.4, every node/.style={scale=1}]

\draw[fill=white] (0,0) circle (1.5	pt);

\draw (0,0) node[above]{\small{$2$}};	

\draw (0.5,0) node{$\bullet$}; 

\draw (0.5,0) node[above]{\small{$n_0$}};	

\draw (1,0) node{$\bullet$};

\draw (1,0) node[above]{\small{$n_1$}};	

\draw (1.5,0) node{$\bullet$};

\draw (1.5,0) node[above]{\small{$n_m$}};	

\draw (0.05,0) -- (0.5,0);

\draw (0.5,0) -- (1,0);

\draw[dashed] (1,0) -- (1.5,0);
\end{tikzpicture}
}
\newcommand{\DiagramY}{
\begin{tikzpicture}[thick,scale=1.4, every node/.style={scale=1}]

\draw[fill=white] (0,0) circle (1.5	pt);

\draw (0,0) node[above]{\small{$2$}};	

\draw (0.5,0) node{$\bullet$}; 

\draw (0.5,0) node[above]{\small{$n_0$}};	

\draw (1,0) node{$\bullet$};

\draw (1,0) node[above]{\small{$n_1$}};	

\draw (1.5,0) node{$\bullet$};

\draw (1.5,0) node[above]{\small{$n_m$}};

\draw (2,0.3) node[right]{branch 1};

\draw (2,-0.3) node[right]{branch 2};

\draw (0.05,0) -- (0.5,0);

\draw (0.5,0) -- (1,0);

\draw[dashed] (1,0) -- (1.5,0);

\draw (1.5,0) -- (2.06,0.3);

\draw (1.5,0) -- (2.06,-0.3);
\end{tikzpicture}
}
\newcommand{\DiagramZ}{
\begin{tikzpicture}[thick,scale=1.4, every node/.style={scale=1}]

\draw[fill=white] (0,0) circle (1.5	pt);

\draw (0,0) node[above]{\small{$2$}};	

\draw (0.5,0) node{$\bullet$}; 

\draw (0.5,0) node[above]{\small{$n_0$}};	

\draw (1,0.4) node{$\bullet$};	

\draw (1,0.4) node[above]{\small{$n_1$}};	

\draw (1.5,0.2) node{$\bullet$};

\draw (1.5,0.2) node[above]{\small{$n_2$}};	

\draw (1.5,-0.2) node{$\bullet$};

\draw (1.5,-0.2) node[below]{\small{$n_3$}};	

\draw (1,-0.4) node{$\bullet$};

\draw (1,-0.4) node[below]{\small{$n_m$}};	

\draw (0.05,0)--(0.5,0);

\draw (0.5,0)--(1,0.4);

\draw (1,0.4)--(1.5,0.2);

\draw (1.5,0.2)--(1.5,-0.2);

\draw[dashed] (1.5,-0.2)--(1,-0.4);

\draw (1,-0.4)--(0.5,0);
\end{tikzpicture}
}
\begin{document}

\begin{titlepage}
	\centering
	\includegraphics[width=0.15\textwidth]{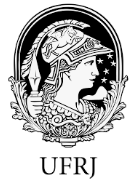}\par\vspace{1cm}
	{\Large UNIVERSIDADE FEDERAL DO RIO DE JANEIRO\par}
	{\Large CENTRO DE CIÊNCIAS MATEMÁTICAS E DA NATUREZA\par}
	{\Large INSTITUTO DE MATEMÁTICA\par}	
	\vspace{2.5cm}
	{\huge\bfseries Geometric and arithmetic aspects\\of rational elliptic surfaces\par}
	\vspace{2cm}
	{\Large Renato Dias Costa\par}
	{\Large Advisor: Cecília Salgado\par}
	\vfill
	{A thesis submitted in partial fulfilment of the requirements\\ for the degree of Doctor of Philosophy (PhD) in Mathematics.}
	\vfill
	{\large Rio de Janeiro\par}
	{\large 2022\par}
\end{titlepage}

\includepdf[pages={1}]{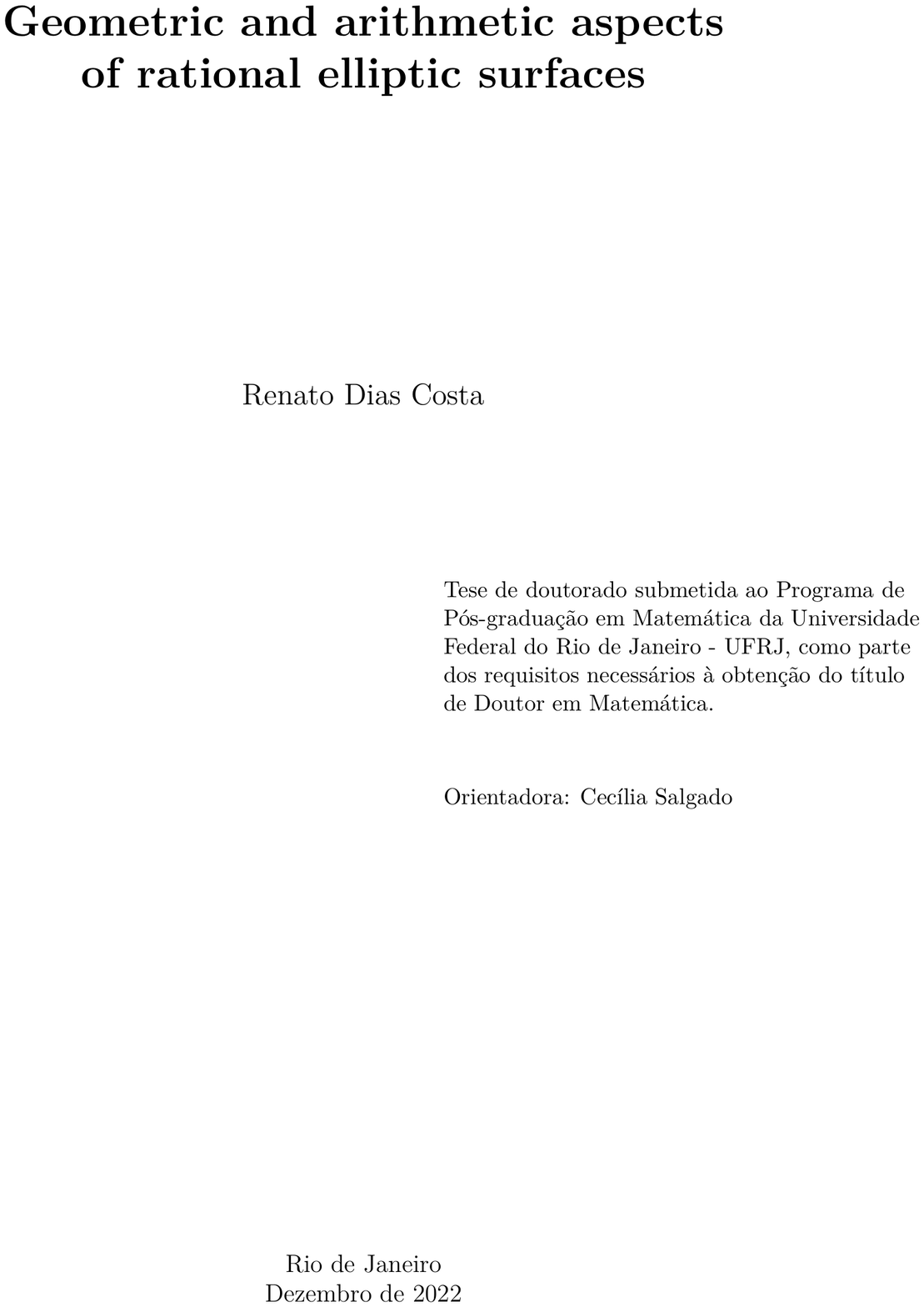}

\includepdf[pages={1}]{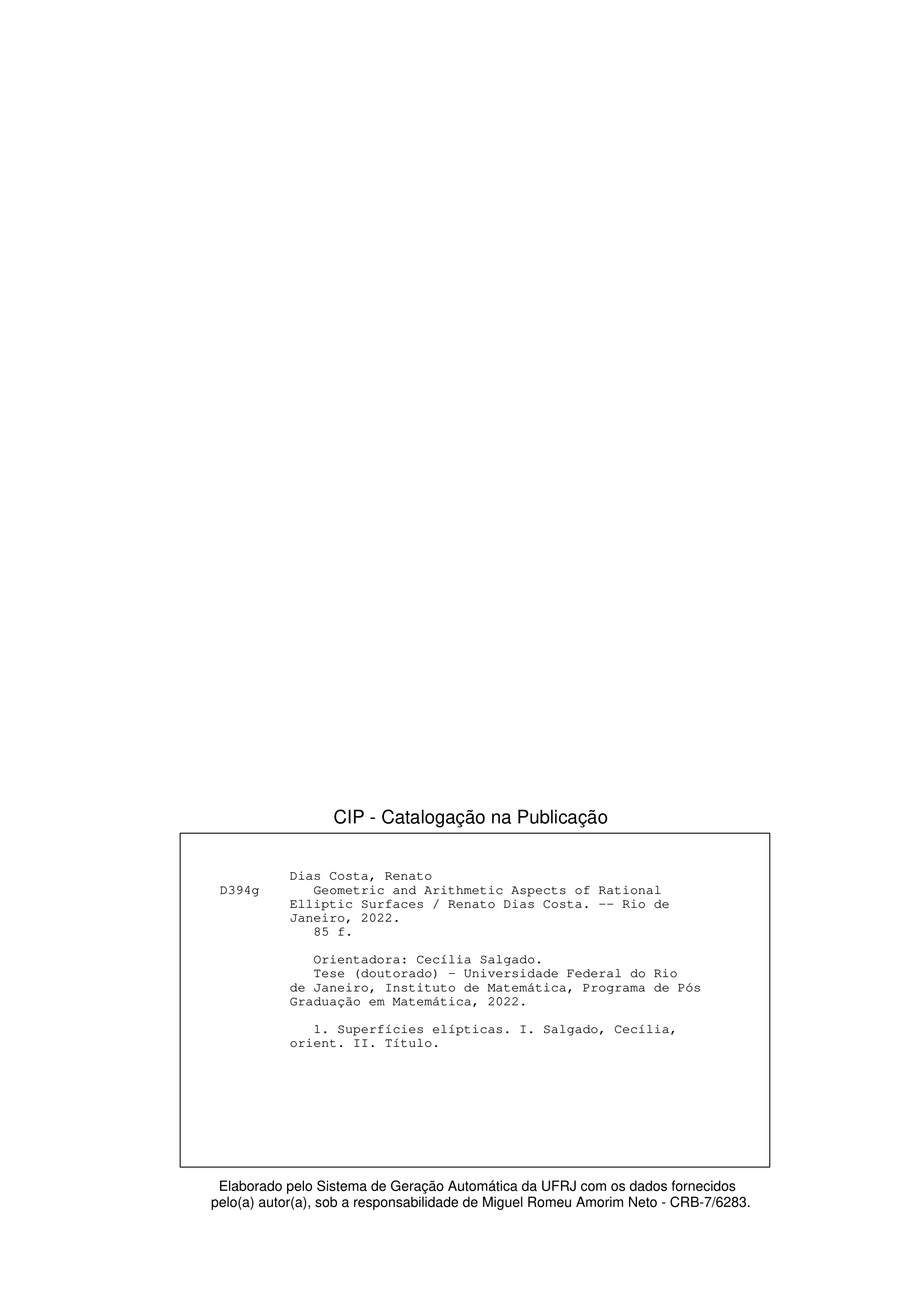}

\includepdf[pages={1}]{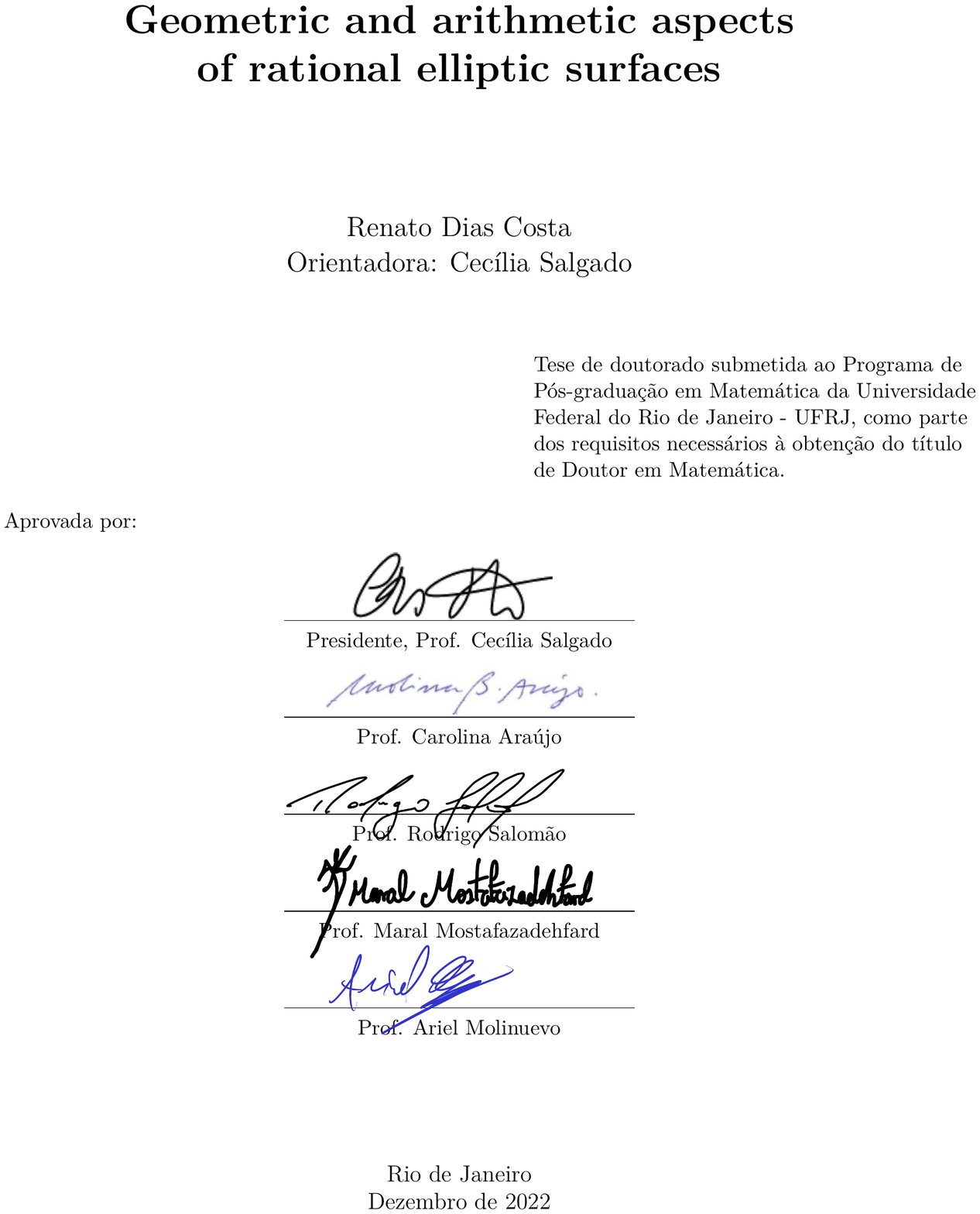}

\chapter*{Agradecimentos}\
\noindent É um dever e uma alegria agradecer àqueles que estiveram ao meu lado nos últimos quatro anos.

\thispagestyle{empty}

Tive a sorte de trabalhar com Cecília Salgado, que me deu um exemplo de como pensa uma profissional de verdade e de como é possível transitar entre a academia e a vida em família. Agradeço pela paciência, principalmente no início, de repetir as mesmas explicações mil vezes até me fazer entender que alho não é bugalho. Cecília acabou se revelando uma boa professora de redação, o que foi uma grata surpresa. Agradeço por ter acreditado em mim, às vezes mais do que eu acreditei em mim mesmo: este trabalho teria sido muito mais difícil sem esse apoio.

Sou grato a Alice Garbagnati pela generosidade de ler meu primeiro artigo com tanto cuidado e sugerir investigações posteriores. Também ao meu colega Felipe Zingali, pelas chances de compartilhar nossas dificuldades em descobrir como alguém descobre alguma coisa. 

Não posso perder a ocasião de mencionar um professor dos mais notáveis, Rodrigo José Gondim Neves. Foi a primeira pessoa que conheci com disposição e competência para entender a matemática não apenas em nome de suas aplicações mas, como escreveu Jacobi, \textit{pour l'honneur de l'esprit humain}. Suas aulas sobre polinômios ciclotômicos estão entre minhas lembranças mais queridas: devo a ele mais do que ele suspeita.

Ao meu pai e minha mãe, devo minha vida e tantas outras coisas que nunca vou poder retribuir. À minha família, agradeço pelo apoio e pela torcida, principalmente no início do doutorado, quando minha filha acabava de nascer. A presença diária da minha esposa Edlaura e da minha filha Cecília sempre manteve meus pés no chão e tem deixado minha vida cada vez mais cheia de sentido e afeto. Espero que, entre as nossas conquistas juntos, este trabalho represente uma delas.

Agradeço aos membros da banca Rodrigo Salomão, Carolina Araújo, Maral Mostafazadehfard, Ariel Molinuevo, Luciane Quoos e Miriam Abdon por seus comentários e sugestões, dos quais tirei muito proveito. 

Agradeço ainda às agências de fomento CAPES, CNPq e FAPERJ pelo auxílio financeiro, sem o qual este trabalho não teria sido possível.

\newpage

\chapter*{Abstract}\
\indent This thesis collects the material developed in the course of three lines of investigation concerning arithmetic and algebraic aspects of rational elliptic surfaces. We organize the text as follows. 

\thispagestyle{empty}

In Chapter~\ref{ch:introduction} we present the three lines of investigation and explain their connection with one another. In short, they are:
\begin{enumerate}
\item Over an algebraically closed field, classify the fibers of conic bundles on rational elliptic surfaces and describe the interplay between the fibers of the elliptic fibration and the fibers of the conic bundle.
\item Given a rational elliptic surface over an algebraically closed field, investigate the numbers that cannot occur as the intersection number of a pair of sections, which we call \textit{gap numbers}. More precisely, try to answer when gap numbers exist, how they are distributed and how to identify them.
\item Given a rational elliptic surface over a number field, study the set of fibers whose Mordell-Weil rank is higher than the generic rank. More specifically, present conditions to guarantee that the collection of fibers where the rank jumps of at least 3 is not thin.
\end{enumerate}

Chapter~\ref{ch:preliminaries} is dedicated to establishing notations, definitions and well-known results on which this work is based. The results related to each of the three topics mentioned receive an individual chapter, namely Chapters~\ref{ch:conic_bundles_on_RES}, \ref{ch:gaps} and \ref{ch:rank_jumps}. The appendix in Chapter~\ref{ch:appendix} stores data relevant to Chapter~\ref{ch:gaps}.

The results in Chapters~\ref{ch:conic_bundles_on_RES}, \ref{ch:gaps} and \ref{ch:rank_jumps} also appear separately in the following preprints:

\begin{itemize}
\item \cite{ConicBundles} R.D. Costa.\textit{Classification of conic bundles on a rational elliptic surface in any characteristic}. arXiv:2206.03549.
\item \cite{Gaps} R.D. Costa. \textit{Gaps on the intersection numbers of sections on a rational elliptic surface}. arXiv:2301.03137.
\item \cite{DiasCostaSalgado} R.D. Costa, C. Salgado. \textit{Large rank jumps on elliptic surfaces and the Hilbert property}. arXiv:2205.07801.
\end{itemize}

\noindent\textbf{Keywords:} Elliptic surfaces, elliptic fibrations, conic bundles, Mordell-Weil lattices.

\newpage

\tableofcontents

\chapter{Introduction}\label{ch:introduction}\
\indent The central object of this thesis are rational elliptic surfaces (see Definitions~\ref{def:elliptic_surfaces} and \ref{def:geometrically_rational}). Our investigation stems from three motivating problems regarding both arithmetic and geometric aspects of these surfaces. We introduce each problem in its particular context, explain our strategy for approaching it and state our results.

From a historical perspective, elliptic surfaces emerge from number theoretic problems and receive a progressively more geometric treatment over time. From the viewpoint of elliptic curves over a function field, they have first been studied over a finite field of constants by Artin \cite{Artin} in the search for an analogue of the Riemann Hypothesis, later proved in generality by Weil \cite{Weil}. In the framework of algebraic surfaces over the complex numbers, elliptic fibrations were already known to Enriques \cite{Enriques}, but only as a class of examples among others. 

A much more dedicated treatment was given by Kodaira \cite{KodII, KodIII} using the language of complex geometry, which laid the foundations for a profound study of elliptic surfaces. The algebraic theory was soon advanced by Néron \cite{Neron64} and Shafarevich \cite{Shafarevich65} and received an important contribution from Tate \cite{Tate75} with a simplified algorithm for identifying singular fibers.

A key aspect of elliptic surfaces is the bijective correspondence between sections of the elliptic fibration and points on the generic fiber (Subsection~\ref{subsection:correspondence_between_sections_and_points}), first properly emphasized by Shioda in \cite{Shioda72}. This correspondence is a fundamental ingredient in the construction of the Mordell-Weil lattice (Section~\ref{section:MW_lattice}) by Shioda \cite{Shioda89} and Elkies \cite{Elkies90} independently. This proved to be a powerful tool of both geometric and arithmetic interest, with applications in various topics such as Galois representations, moduli spaces of K3 surfaces and crystallography.

Our focus on rational elliptic surfaces is due to their convenient geometric properties (Theorem~\ref{thm:RES_distinguished_properties}), the simplicity with which examples can be produced (Section~\ref{section:examples_conic_bundles}) and the fact that their possible fiber configurations and possible Mordell-Weil lattices have been completely classified in \cite{Persson} and \cite{OguisoShioda} respectively.

In what follows $X$ is a geometrically rational elliptic surface with elliptic fibration $\pi:X\to\P^1$ over a field $k$ which is either a number field or an algebraically closed field. We let $K:=k(\P^1)$ be the function field of the base curve $\P^1$ and define the \textit{Mordell-Weil group} of $\pi$ as the group of $K$-points on the generic fiber $E$, denoted by $E(K)$. We note that every geometrically rational elliptic surface admits precisely one elliptic fibration (Proposition~\ref{thm:RES_distinguished_properties}), hence we may refer to $E(K)$ as the Mordell-Weil group \textit{of the surface} $X$. We use $r$ to denote the rank of $E(K)$, called the \textit{Mordell-Weil rank} or \textit{generic rank}. 

We present our three problems of interest.

\section{Classification of conic bundle fibers}\
\indent In a broad sense, a \textit{conic bundle} can be understood as a genus zero fibration on a variety (see Definition~\ref{def:conic_bundles} for surfaces). Conic bundles arise in different forms in many important contexts, from the classification of $k$-minimal rational surfaces to the Minimal Model Program (Section~\ref{section:conic_bundles} and Chapter~\ref{ch:conic_bundles_on_RES} for a more precise account). In this thesis we are concerned more specifically with conic bundles on a rational elliptic surface $\pi:X\to\P^1$. This has been motivated by results from \cite{Salgado12}, \cite{LoughSalgado}, where the presence of conic bundles is used as a condition to guarantee that rank jumps occur (Section~\ref{section:large_rank_jumps} and Chapter~\ref{ch:rank_jumps}); from \cite{GarbagnatiSalgado17, GarbagnatiSalgado20}, where conic bundles are used to classify elliptic fibrations on certain K3 surfaces; and from \cite{ArtebaniGarbagnatiLaface}, where conic bundles appear in the study of generators of the Cox ring of $X$.

These applications seem to justify a further study of conic bundles on rational elliptic surfaces. More specifically, we propose two questions: first, in which ways can we construct a divisor $D$ on $X$ such that the linear system $|D|$ induces a conic bundle on $X$? Second, to what extent is this construction obstructed by the fiber configuration of the elliptic fibration?

Our starting point is an observation, already implicit in \cite{ArtebaniGarbagnatiLaface}, \cite{GarbagnatiSalgado17, GarbagnatiSalgado20}, that there is a bijective correspondence between conic bundles on $X$ and certain Néron-Severi classes $[D]\in\NS(X)$, which we call \textit{conic classes} (Definition~\ref{def:conic_class}), which indicates that all information we need can be derived from the numerical behavior of a divisor $D$ representing a conic class.

We note that this study is essentially geometric, therefore it makes sense to work over an algebraic closure of the base field. Moreover, except for minor difficulties, there is no reason to restrict ourselves to characteristic zero, which is the case in \cite{Salgado12, LoughSalgado} over number fields, in \cite{ArtebaniGarbagnatiLaface, GarbagnatiSalgado17} over $\C$ or in \cite{GarbagnatiSalgado20} over a field of characteristic zero. 

Our first result is a complete classification of the fibers of a conic bundle on $X$ and answers our first question about the possibilities of $D$ such that $|D|$ induces a conic bundle $\varphi_{|D|}:X\to \P^1$.
\\ \\
\noindent\textbf{Theorem~\ref{thm:classification_conic_bundles}.} \textit{Let $\pi:X\to\P^1$ be a rational elliptic surface over an algebraically closed field and let $\varphi:X\to\P^1$ be a conic bundle. If $D$ is a fiber of $\varphi$, then the intersection graph of $D$ (multiplicities considered) fits one of the types below. Conversely, if the intersection graph of a divisor $D$ fits any of these types, then $|D|$ induces a conic bundle $\varphi_{|D|}:X\to\P^1$.}
\newpage
\begin{table}[h]
\begin{center}
\centering
\begin{tabular}{|c|c|} 
\hline
\multirow{2}{*}{\hfil Type} & \multirow{2}{*}{$\begin{matrix}\text{Intersection graph}\\ \text{(with multiplicities)}\end{matrix}$}\\
&\\
\hline
\multirow{3}{*}{\hfil $0$} & \multirow{3}{*}{\hfil \TypeE}\\ 
&\\
&\\
\hline
\multirow{3}{*}{\hfil $A_2$} & \multirow{3}{*}{\hfil \TypeD}\\
& \\
& \\
\hline
\multirow{3}{*}{\hfil $A_n$ ($n\geq 3$)} & \multirow{3}{*}{\hfil \TypeA}\\ 
& \\ 
& \\
\hline
\multirow{3}{*}{\hfil $D_3$} & \multirow{3}{*}{\hfil \TypeC}\\ 
& \\
& \\
\hline
\multirow{4}{*}{\hfil $D_m$ ($m\geq 4$)} & \multirow{4}{*}{\hfil \TypeB}\\ 
& \\ 
& \\
& \\
\hline
\end{tabular}
\end{center}
\begin{align*}
\star&\,\,\text{smooth, irreducible curve of genus }0\\
\circ&\,(-1)\text{-curve (section of }\pi)\\
\bullet&\,(-2)\text{-curve (component of a reducible fiber of }\pi)
\end{align*}
\end{table}

Our second result answers our second question and consists in a precise description of the interplay between fibers of conic bundles on $X$ and the fiber configuration of the elliptic fibration. We use Kodaira's notation for the singular fibers of $\pi$ (see Theorem~\ref{thm:Kodaira_classification}).
\\ \\
\noindent\textbf{Theorem~\ref{thm:interplay_conic_bundle_elliptic_fibration}.} \textit{Let $X$ be a rational elliptic surface with elliptic fibration $\pi:X\to\P^1$. Then the following statements hold:}
\begin{enumerate}[a)]
\item \textit{$X$ admits a conic bundle with an $A_2$ fiber $\Leftrightarrow$ $\pi$ has positive generic rank and no {\normalfont $\text{III}^*$} fiber.}
\item \textit{$X$ admits a conic bundle with an $A_{n\geq 3}$ fiber $\Leftrightarrow$ $\pi$ has a reducible fiber distinct from {\normalfont $\text{II}^*$}.}
\item \textit{$X$ admits a conic bundle with a $D_3$ fiber $\Leftrightarrow$ $\pi$ has at least two reducible fibers.}
\item \textit{$X$ admits a conic bundle with a $D_{m\geq 4}$ fiber $\Leftrightarrow$ $\pi$ has a nonreduced fiber or a fiber {\normalfont $\text{I}_{n\geq 4}$}.}
\end{enumerate}

Theorems~\ref{thm:classification_conic_bundles} and \ref{thm:interplay_conic_bundle_elliptic_fibration} are proven in Chapter~\ref{ch:conic_bundles_on_RES} and, for the most part, are also the main results in \cite{ConicBundles}.
\newpage
\section{Intersection gaps}\
\indent We note that a complete proof of item a) in Theorem~\ref{thm:interplay_conic_bundle_elliptic_fibration} is not possible without a detailed study on how sections of $\pi:X\to\P^1$ intersect one another. Indeed, in order to answer precisely when $X$ admits a conic bundle with an $A_2$ fiber one must know precisely which rational elliptic surfaces admit sections $P_1,P_2\in E(K)$ such that the intersection number $P_1\cdot P_2$ is equal to $1$. This answer was not available at the time of \cite{ConicBundles}, hence only a partial version of item a) could be proven in that context and the complete one remained as a conjecture. We present our next investigation, which provides the tools to prove item a) and several other results.

We ask the following question: as $P_1,P_2$ run through $E(K)$, what values can $P_1\cdot P_2$ attain? On surfaces in general, the computation of intersection numbers of curves can be a very difficult problem. In our case, however, we are only concerned with sections of an elliptic surface, which has the additional advantage of being rational. The tool that allows us to attack this problem is the \textit{Mordell-Weil lattice}, a notion first introduced by Elkies \cite{Elkies90} and Shioda \cite{Shioda89, Shioda90}. It involves the definition of a $\Q$-valued pairing on $E(K)$, called the \textit{height pairing}, which induces a positive-definite lattice on the quotient $E(K)/E(K)_\text{tor}$, which is the Mordell-Weil lattice (Section~\ref{section:MW_lattice}). A key aspect of its construction is the connection with the Néron-Severi lattice, so that the height pairing and the intersection pairing of sections are strongly intertwined. Fortunately, the possibilities for the Mordell-Weil lattice on a rational elliptic surface have already been classified in \cite{OguisoShioda}, which gives us a convenient starting point.

Another aspect of this investigation is the connection with a classic theme in number theory, namely the representation of integers by positive-definite quadratic forms. Indeed, since $E(K)$ has rank $r$, its free part is generated by $r$ terms, so the height $h(P):=\langle P,P\rangle$ induces a positive-definite quadratic form on $r$ variables with coefficients in $\Q$. If $O\in E(K)$ is the neutral section and $R$ is the set of reducible fibers of $\pi$, then by the height formula (\ref{equation:height_formula_P})
$$h(P)=2+2(P\cdot O)-\sum_{v\in R}\text{contr}_v(P),$$
where the sum over $v$ is a rational number which can be easily estimated. By clearing denominators, we see that the possible values of $P\cdot O$ depend on a certain range of integers represented by a positive-definite quadratic form with coefficients in $\Z$. This point of view is explored in some parts
of Chapter~\ref{ch:gaps}, where we apply results such as the classical Lagrange four-square theorem \cite[\S 20.5]{HardyWright}, the counting of integers represented by a binary quadratic form \cite[p. 91]{Bernays} and the more recent Bhargava-Hanke's 290-theorem on universal quadratic forms \cite[Thm. 1]{BhargavaHanke}.

We say that $k\in\Z_{\geq 0}$ is a \textit{gap number} of $X$, or that or that $X$ has a $k$-\textit{gap} if there are no sections $P_1\cdot P_2\in E(K)$ such that $P_1\cdot P_2=k$. We try to answer under which conditions gap numbers exist, how they are distributed and try to identify them in some cases. Our first result states that gap numbers do not exist for a big enough Mordell-Weil rank.
\\ \\
\noindent\textbf{Theorem~\ref{thm:gap_free_r>=5}.} \textit{If $r\geq 5$, then $X$ has no gap numbers.}
\\ \\
\indent On the other hand, if the rank is low enough, then gap numbers occur with probability $1$.
\newpage
\noindent\textbf{Theorem~\ref{thm:gaps_probability_1_r=1,2}.} \textit{If $r\leq 2$, then the set of gap numbers $G:=\{k\in\Bbb{N}\mid k\text{ is a gap number of }X\}$ has density $1$ in $\Bbb{N}$, i.e.
$$\lim_{n\to\infty}\frac{\#G\cap\{1,...,n\}}{n}=1.$$}

\indent As to the explicit identification of gap numbers, we point out some cases where a complete identification is possible.
\\ \\
\noindent\textbf{Theorem~\ref{thm:identification_of_gaps_r=1}.} \textit{If $E(K)$ is torsion-free with rank $r=1$, then all the gap numbers of $X$ are described below, according to the lattice $T$ associated with the reducible fibers of $\pi$ (see Definition~\ref{def:lattice_T}).}
\begin{table}[h]
\begin{center}
\centering
\begin{tabular}{cc} 
\hline
\multirow{2}{*}{$T$} & \multirow{2}{*}{$\begin{matrix}k\text{ is a gap number}\Leftrightarrow \text{none of}\\ \text{the following are perfect squares}\end{matrix}$}\\ 
& \\
\hline
\multirow{2}{*}{$E_7$} & \multirow{2}{*}{$k+1$, $4k+1$}\\ 
& \\
\hline
\multirow{2}{*}{$A_7$} & \multirow{2}{*}{$\frac{k+1}{4}$, $16k,...,16k+9$}\\ 
& \\
\hline
\multirow{2}{*}{$D_7$} & \multirow{2}{*}{$\frac{k+1}{2}$, $8k+1,...,8k+4$}\\
& \\
\hline
\multirow{2}{*}{$A_6\oplus A_1$} & \multirow{2}{*}{$\frac{k+1}{7}$, $28k-3,...,28k+21$}\\
& \\
\hline
\multirow{2}{*}{$E_6\oplus A_1$} & \multirow{2}{*}{$\frac{k+1}{3}$, $12k+1,...,12k+9$}\\
& \\
\hline
\multirow{2}{*}{$D_5\oplus A_2$} & \multirow{2}{*}{$\frac{k+1}{6}$, $24k+1,...,24k+16$}\\
& \\
\hline
\multirow{2}{*}{$A_4\oplus A_3$} & \multirow{2}{*}{$\frac{k+1}{10}$, $40k-4,...,40k+25$}\\
& \\
\hline
\multirow{2}{*}{$A_4\oplus A_2\oplus A_1$} & \multirow{2}{*}{$\frac{k+1}{15}$, $60k-11,...,60k+45$}\\
& \\
\hline
\end{tabular}\caption{Description of gap numbers when $E(K)$ is torsion-free with $r=1$.}\label{table:description_gaps_r=1}
\end{center}
\end{table}

\indent We conclude by fixing $k=1$ and identifying all rational elliptic surfaces with a $1$-gap. This is tantamount to describing when $X$ admits a conic bundle with an $A_2$ fiber, hence we solve our motivating problem of proving item a) in Theorem~\ref{thm:interplay_conic_bundle_elliptic_fibration}.
\\ \\
\noindent\textbf{Theorem~\ref{thm:surfaces_with_a_1-gap}.} \textit{$X$ has a $1$-gap if and only if $r=0$ or $r=1$ and $\pi$ has a} III$^*$ \textit{fiber.}
\\ \\
\indent The four theorems above are proven in Chapter~\ref{ch:gaps} and are also the main results in \cite{Gaps}.
\newpage

\section{Large rank jumps and the Hilbert property}\label{section:large_rank_jumps}\
\indent Let $k$ be a number field. We address a prominent theme in Diophantine Geometry, namely the variation of the Mordell-Weil rank of a fiber $\pi^{-1}(t)$ as $t$ varies in $\P^1_k$. In the search for elliptic curves with large rank, the use of this phenomenon has been a major source of examples, as done by Mestre \cite{Mestre}, Fermigier \cite{Fermigier92, Fermigier97}, Nagao \cite{Nagao92, Nagao93, Nagao94}, Martin-McMillen \cite{MartinMcMillen98, MartinMcMillen00} and ultimately Elkies \cite{Elkies06}, whose record of rank $\geq 28$ over $\Q$ still holds.

We begin by considering two important results; first by Néron \cite[Thm. 6]{Neron52}, which states that the Mordell-Weil rank $r_t$ (over $k$) of the fiber $\pi^{-1}(t)$ is greater or equal to the generic rank $r$ for all $t\in\P^1_k$ outside a thin subset of $\P^1_k$ (see Definition~\ref{def:thin_sets}); and another by Silverman \cite[Thm. C]{Silverman}, built on the first, stating that $r_t\geq r$ outside a set of points of bounded height. In this thesis we study the possibility of having $r_t>r$, in which case we say that the \textit{rank jumps}, and study the nature of subsets of $\P^1_k$ where rank jumps occur.

This problem has received attention not only in the case of rational elliptic surfaces \cite{Billard, Salgado12, LoughSalgado} but also of K3 surfaces \cite{Salgado15} and Abelian varieties \cite{HindrySalgado}. We note that in \cite{LoughSalgado} the authors show that under some conditions the set of fibers where the rank jumps of at least $2$, i.e. $\{t\in\P^1_k\mid r_t\geq r+2\}$ is not a thin set (Definition~\ref{def:thin_sets}), which calls attention to the nature of the rank jump set and brings back the notion of thin sets in Néron's specialization theorem.

Our strategy is to use Néron's geometric methods already applied in \cite{Shioda91, Salgado12, Salgado15, HindrySalgado, ColliotThelene20} to produce rank jumps and use ideas from \cite{LoughSalgado} to study the nature of the rank jump set. More precisely, we look for conditions to guarantee that rank jumps of at least 3 occur on a non-thin set. We show in fact that it suffices to require the existence of a certain genus zero fibration on $X$, which we call an \textit{RNRF-conic bundle} (see Definitions~\ref{def:conic_bundles} and \ref{def:RNRF}). The following theorem is, to our knowledge, the largest rank jump observed in this level of generality.
\\ \\
\textbf{Theorem~\ref{thm:rank_jump_3_times}.} \textit{If $X$ admits a \textit{RNRF}-conic bundle, then $\{t\in \P^1_k\mid r_t\geq r+3\}$ is not thin.}
\\ \\
\indent Theorem~\ref{thm:rank_jump_3_times} is proven in Chapter~\ref{ch:rank_jumps} and is also the main result in \cite{DiasCostaSalgado}.

\newpage
\chapter{Preliminaries}\label{ch:preliminaries}\
\indent Throughout the text, all surfaces are projective and smooth over a field $k$, which is either a number field or an algebraically closed field of arbitrary characteristic. The ground field $k$ is specified only when necessary. We reserve the letter $X$ to denote a geometrically rational elliptic surface with elliptic fibration $\pi:X\to \P^1$ (Definitions~\ref{def:elliptic_surfaces}, \ref{def:RES}). The letter $S$ denotes a surface in general, which may or may not be rational elliptic. 

For the general theory of elliptic surfaces we refer the reader to the classical books by Miranda \cite{Miranda}, Cossec and Dolgachev \cite[Ch. V]{Dolga}, Silverman \cite[Ch. III]{SilvermanAdv}, a survey paper by Schuett and Shioda \cite{Schuett-Shioda} and the more recent book by the same authors \cite[Ch. 5]{MWL}.

\section{Elliptic surfaces}\label{section:elliptic_surfaces}
\begin{defi}\label{def:elliptic_surfaces}
We call $S$ an \textit{elliptic surface} if there is smooth projective curve $C$ and a surjective morphism $f:S\to C$, called an \textit{elliptic fibration}, such that
\begin{enumerate}[i)]
\item The fiber $f^{-1}(t)$ is a smooth genus-1 curve for all but finitely many $t\in C$.
\item (existence of a section) There is a morphism $\sigma:C\to S$ such that $f\circ\sigma=\text{id}_C$, called a \textit{section}.
\item (relative minimality) No fiber of $f$ contains an exceptional curve in its support (i.e., a smooth rational curve with self-intersection $-1$).\label{item:relative_minimality}
\end{enumerate}
\end{defi}

\begin{remark}\label{remark:relative_minimality}
\normalfont Condition~\ref{item:relative_minimality}) can be understood as an extra hypothesis on $f$. For our purposes it is a natural one, as it assures that the fibers agree with Kodaira's classification (Theorem~\ref{thm:Kodaira_classification}) and that, for rational elliptic surfaces, the fibration is uniquely determined by the anticanonical system (Theorem~\ref{thm:RES_distinguished_properties}).  

\end{remark}

\begin{defi}
Let $f:S\to C$ be an elliptic fibration. If $\eta\in C$ is the generic point of $C$, we call $E:=f^{-1}(\eta)$ the \textit{generic fiber} of $\pi$. In particular, $E$ is an elliptic curve over the function field $K:=k(C)$ and the set $E(K)$ of $K$-points has a group structure, which we call the \textit{Mordell-Weil group} of $S$.
\end{defi}

As the generic fiber is an elliptic curve over $K:=k(C)$, the elliptic surface $S$ may be locally represented in the Weierstrass form, namely
\begin{equation}\label{eq:Weierstrass}
y^2+a_1xy+a_3=x^3+a_2x^2+a_4x+a_6,\text{ where }a_i\in K\text{ for all }i,
\end{equation}
which is an affine model of the generic fiber in $\A_K^2$.
\newpage

We also introduce a notion related to that of section.

\begin{defi}\label{def:multisection}
A curve $D\subset S$ is called a \textit{multisection of degree} $d$ when $f|_D:D\to C$ is a flat, finite morphism of degree $d$. In particular, a section of $f:S\to C$ is a multisection of degree $1$.
\end{defi}
\begin{remark}\label{remark:bisection}
\normalfont In Chapter~\ref{ch:rank_jumps} we deal with multisections of degree $2$, which we call \textit{bisections}.
\end{remark}

\subsection{Correspondence between sections, curves and points on the generic fiber}\label{subsection:correspondence_between_sections_and_points}\
\indent Let $f:S\to C$ be an elliptic fibration and $E$ the generic fiber of $f$, which is an elliptic curve over the function field $K:=k(C)$. We describe the natural correspondence between sections of $f$, certain curves on $S$ and $K$-points in $E$.

Given a section $\sigma:C\to S$, the curve $\sigma(C)$ on $S$ is isomorphic to $C$ via $\sigma$. Moreover $\sigma(C)$ meets the generic fiber $E$ at exactly one $K$-point, say $P\in E(K)$. 
\begin{figure}[h]
\begin{center}
\includegraphics[scale=0.65]{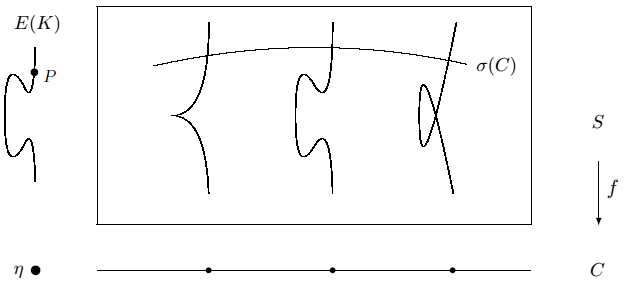}
\end{center}
\caption{Correspondence between sections $\sigma:C\to S$ and points $P\in E(K)$.}
\end{figure}

\indent Conversely, a point $P\in E(K)$ gives rise to a section in the following manner. Since $E$ is the generic fiber, $P\in E(K)$ corresponds to a point $P_v\in f^{-1}(v)$ for almost all $v\in C$. We take the scheme-theoretic closure of $\{P_v\}_v$ in $S$ and obtain a curve $\Gamma\subset S$. Because $C$ is smooth, the restriction $f|_\Gamma:\Gamma\to C$ is an isomorphism, which induces a section $\sigma:C\to S$ such that $\text{Im}(\sigma)=\Gamma$. This correspondence between sections and $K$-points in the generic fiber is, in fact, bijective.

\begin{prop}\label{prop:correspondence_sections_points}\cite[Prop. 5.4]{MWL}
The sections of $f:S\to C$ are in a natural bijection correspondence with the points in $E(K)$ via $\sigma\mapsto \sigma(C)\cap E(K)$, where $\sigma:C\to S$ is a section.
\end{prop}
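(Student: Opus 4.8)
The plan is to exhibit two mutually inverse maps between the set of sections of $f:S\to C$ and the set $E(K)$, exploiting the identification of the generic fiber as the base change $E=S\times_C\text{Spec}(K)$ together with the fact that a rational map from a smooth curve to a projective variety is automatically a morphism (equivalently, the valuative criterion of properness applied fiberwise). Relative minimality (Definition~\ref{def:elliptic_surfaces}, item~iii) plays no role here; only properness of $f$ and smoothness of $C$ are needed.

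First I would construct the forward map $\sigma\mapsto P_\sigma$. Given a section $\sigma:C\to S$ with $f\circ\sigma=\text{id}_C$, I restrict to the generic point $\eta\in C$. Since $\text{Spec}(K)\to C$ factors through $\eta$, the universal property of the fiber product $E=S\times_C\text{Spec}(K)$ turns $\sigma$ into a morphism $\text{Spec}(K)\to E$ splitting the structure map $E\to\text{Spec}(K)$; this is exactly a point $P_\sigma\in E(K)$, geometrically the unique intersection of $\sigma(C)$ with the generic fiber. Conversely, I would send $P\in E(K)$ to a section as follows. The point $P$ is a morphism $\text{Spec}(K)\to E$, and composing with $E\hookrightarrow S$ gives a rational map $\psi:C\dashrightarrow S$ defined over $\eta$. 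The essential step is that $\psi$ extends to a genuine morphism $\sigma_P:C\to S$: because $C$ is a smooth curve and $S$ is projective, the rational map extends uniquely. One can phrase this concretely as in the excerpt by taking the scheme-theoretic closure $\Gamma:=\overline{\psi(\text{Spec}\,K)}\subset S$ and observing that $f|_\Gamma:\Gamma\to C$ is a proper, birational morphism onto the smooth (hence normal) curve $C$, therefore an isomorphism; its inverse composed with $\Gamma\hookrightarrow S$ is the desired section $\sigma_P$, and $f\circ\sigma_P=\text{id}_C$ holds since it holds over the dense open $\eta$ and both sides are morphisms into the separated scheme $C$.

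Finally I would verify that the two assignments are mutually inverse. Starting from $P$, restricting $\sigma_P$ back to $\eta$ recovers $P$ by construction; starting from $\sigma$, the extension of $P_\sigma$ must equal $\sigma$ because a section is determined by its restriction to the generic point (separatedness of $f$ together with density of $\eta$ in $C$). The main obstacle is precisely the extension step, i.e.\ showing $f|_\Gamma:\Gamma\to C$ is an isomorphism rather than merely birational; this is where smoothness of the base $C$ (guaranteeing normality, so Zariski's main theorem applies) and properness of $f$ are indispensable. Everything else is formal manipulation with the fiber product and the generic point.
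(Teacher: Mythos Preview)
Your proof is correct and follows essentially the same approach as the paper: the paper does not give a formal proof (it cites \cite[Prop.~5.4]{MWL}) but the preceding discussion sketches exactly your two maps---restriction of $\sigma$ to the generic fiber in one direction, and scheme-theoretic closure $\Gamma$ with $f|_\Gamma:\Gamma\to C$ an isomorphism (using smoothness of $C$) in the other. Your write-up simply fills in the standard details (fiber product, extension of rational maps from smooth curves, separatedness for uniqueness) that the paper leaves implicit.
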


\begin{remark}\label{remark:sections_as_curves}
\normalfont We also identify a section $\sigma:C\to S$ with the image $\sigma(C)$, which is a curve on $S$, hence calling it a \textit{section} as well. We note that when $S$ is rational, the sections correspond to the exceptional curves of $S$ (Theorem~\ref{thm:RES_distinguished_properties}).
\end{remark}

\subsection{Singular Fibers}\label{subsection:singular_fibers}\
\indent The singular fibers of an elliptic fibration play an important role in the study of elliptic surfaces. The first classification of singular fibers was made by Kodaira \cite{KodII} over the base field $\C$, followed by Tate \cite{Tate75}, who introduced a simplified classifying algorithm also valid over perfect fields. Although in Tate's algorithm one finds the same fiber types as Kodaira's, new fiber types may appear over non-perfect fields \cite{Szydlo}. As our base field $k$ is either a number field or algebraically closed, hence perfect, we may rely on the usual classification (Theorem~\ref{thm:Kodaira_classification}).
\\ \\
\noindent\textbf{Notation.} We follow Kodaira's notation \cite{KodII} for fiber types. A smooth fiber is labeled as $\text{I}_0$, while singular fibers receive one of the following labels: $\text{I}_n,\text{I}^*_n$ for some $n\geq 1$, $\text{II}, \text{II}^*,\text{III},\text{III}^*, \text{IV}, \text{IV}^*$, where $*$ indicates a nonreduced fiber. By Theorem~\ref{thm:Kodaira_classification}, every reducible fiber (i.e. all types except $\text{I}_1$, $\text{II}$) is associated with an ADE lattice (Figure~\ref{figure:ADE_lattices}). More precisely, the intersection graph of a reducible fiber forms an \textit{extended Dynkin diagram} \cite[I.4.7]{Humphreys} of type $\widetilde{A}_n$ ($n\geq 2$), $\widetilde{D}_k$ ($k\geq 4$) or $\widetilde{E}_\ell$ ($\ell=6,7,8$), where $\sim$ indicates that the Dynkin diagram for $A_n, D_k$ or $E_\ell$ is extended with one extra node. In Table~\ref{table:indices_reducible_fibers} the extra node corresponds to the neutral component $\Theta_0$.

\begin{teor}{\normalfont \cite[\S 6]{Tate75}}\label{thm:Kodaira_classification}
Let $f:S\to C$ be an elliptic fibration. If $F$ a singular fiber of $f$, then all possibilities for $F$ are listed below. 

\begin{center}
\includegraphics[scale=0.7]{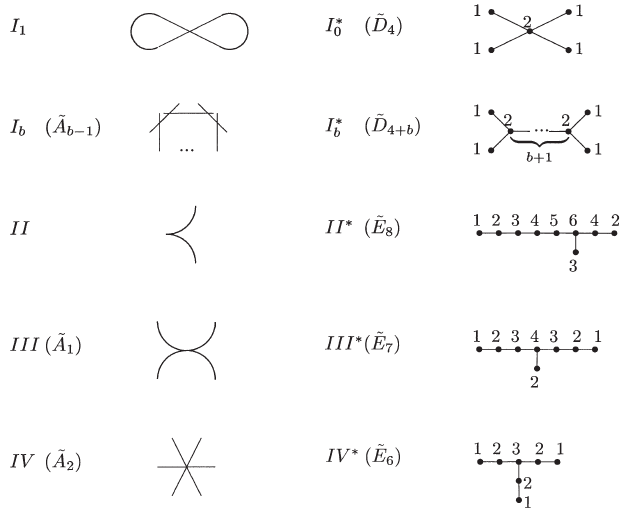}
\end{center}
Moreover, when $F$ is irreducible (namely of type $\text{I}_1$ or $\text{II}$), it is a singular, integral curve of arithmetic genus $1$. When $F$ is reducible, all members in its support are smooth, rational curves with selfintersection $-2$.
\end{teor}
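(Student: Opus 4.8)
The final statement is the classification theorem (Theorem~\ref{thm:Kodaira_classification}), asserting that the singular fibers of an elliptic fibration $f:S\to C$ over a perfect field are exhausted by Kodaira's list, together with two geometric consequences: that the irreducible singular fibers (types $\text{I}_1$, $\text{II}$) are integral curves of arithmetic genus $1$, and that the components of a reducible fiber are smooth rational $(-2)$-curves. The plan is to obtain the enumeration of fiber types by invoking Tate's algorithm \cite[\S 6]{Tate75} directly, and then to derive the two geometric claims from relative minimality together with the adjunction formula.

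First I would set up the Weierstrass model. Since $E=f^{-1}(\eta)$ is an elliptic curve over $K=k(C)$, the fibration is locally represented in the form (\ref{eq:Weierstrass}) with coefficients $a_i\in K$; after a suitable integral model one reduces to studying the special fiber over a discrete valuation of $K$ corresponding to a point $v\in C$. Tate's algorithm takes the valuations of the discriminant $\Delta$ and the relevant coefficients, and through a finite branching procedure (testing divisibility conditions on $c_4$, $\Delta$, and the associated Newton-polygon data) outputs exactly one of the symbols $\text{I}_n$, $\text{I}_n^*$, $\text{II}$, $\text{III}$, $\text{IV}$, $\text{II}^*$, $\text{III}^*$, $\text{IV}^*$. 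Because $k$ is perfect (being either a number field or algebraically closed), no exotic fibers of the kind found over imperfect fields \cite{Szydlo} arise, so the output of the algorithm coincides with Kodaira's geometric list, and this yields the enumeration together with each fiber's intersection graph.

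It then remains to justify the two structural assertions. For the reducible fibers, each component $\Theta_i$ is a curve in a fiber, hence has $\Theta_i^2\le 0$; relative minimality (condition~\ref{item:relative_minimality}) in Definition~\ref{def:elliptic_surfaces}) forbids $(-1)$-curves in a fiber, so no component can be exceptional. Writing the fiber as $F=\sum n_i\Theta_i$ with $F^2=0$ and $F\cdot\Theta_i=0$ for all $i$, a standard argument using the negative-semidefiniteness of the intersection form on fiber components forces each $\Theta_i^2=-2$; the adjunction formula $2p_a(\Theta_i)-2=\Theta_i^2+\Theta_i\cdot K_S$, combined with $\Theta_i\cdot K_S=0$ (which follows from $K_S$ being a sum of fiber classes by the canonical bundle formula, or directly from $F\cdot K_S=0$), then gives $p_a(\Theta_i)=0$, so each component is a smooth rational $(-2)$-curve. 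For the irreducible fibers $\text{I}_1$ and $\text{II}$, irreducibility and $F^2=0$ with $F\cdot K_S=0$ give $2p_a(F)-2=0$, hence $p_a(F)=1$; since these fibers are singular (a node for $\text{I}_1$, a cusp for $\text{II}$), they are integral curves of arithmetic genus $1$.

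I expect the genuine content to lie entirely in citing Tate's algorithm for the enumeration, which is the substantive input, while the geometric verifications are routine applications of adjunction and the lattice structure of fiber components. The one point requiring a little care is the claim $\Theta_i\cdot K_S=0$ and the passage from negative-semidefiniteness to the precise value $\Theta_i^2=-2$; this rests on the canonical bundle formula for elliptic surfaces, so the main conceptual obstacle is ensuring that this formula is available and correctly applied in arbitrary characteristic, which it is for relatively minimal elliptic surfaces over perfect fields.
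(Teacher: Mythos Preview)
The paper does not prove this theorem; it is stated with attribution to \cite[\S 6]{Tate75} and immediately followed by Remark~\ref{remark:char_2_and_3}, with no argument given. Your proposal is a reasonable sketch of how one would justify the statement from Tate's algorithm plus adjunction, but there is no proof in the paper to compare it against---the result is treated as a black-box citation.
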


\begin{remark}\label{remark:char_2_and_3}
\normalfont Tate's algorithm consists in identifying singular fibers by analyzing local Weierstrass forms. This local analysis can be pathological in $\text{char}(k)=2,3$, but even in these cases the possibilities for singular fibers are still the ones listed in Theorem~\ref{thm:Kodaira_classification} \cite[Section 6]{Tate75}. In our classification of conic bundles in Chapter~\ref{ch:conic_bundles_on_RES} we do not deal with local Weierstrass forms, but only with the numerical behavior of fibers as divisors, hence we are free to use Theorem~\ref{thm:Kodaira_classification} in any characteristic.
\end{remark}

\newpage

One way to detect singular fibers is by considering a local Weierstrass form (\ref{eq:Weierstrass}) and identifying the zeros of the discriminant, namely \cite{Tate75}
$$\Delta=-b_2^2b_8-8b_4^3-27b_6^2+9b_2b_4b_6,$$
where:
\begin{align*}
b_2&:=a_1^2+4a_2,\\
b_4&:=a_1a_3+2a_4,\\
b_6&:=a_3^2+4a_6,\\
b_8&:=a_1^2a_6-a_1a_3a_4+4a_2a_6+a_2a_3^2-a_4^2.
\end{align*}

\begin{remark}
\normalfont In $\text{char}(k)\neq 2,3$ a suitable change of coordinates yields a simpler Weierstrass form, namely $y^2=x^3+A(t)x+B(t)$ for some $A(t),B(t)\in k(C)$ whose discriminant is given by $\Delta(t)=-16\cdot(4A(t)^3+27B(t)^2)$ \cite[III.1]{SilvermanAdv}.
\end{remark}

Tate's algorithm for identifying fiber types was later refined by the Dokchitser brothers \cite{Dokchitser}, which reduced the identification to a simple inspection of the coefficients $a_i$ of the Weierstrass form (\ref{eq:Weierstrass}). More precisely, given a point $v\in C$ and its corresponding discrete valuation $v:k(C)^\times\to\Z$, the fiber type of $f^{-1}(v)$ is given by Table~\ref{table:Dokchitser} \cite[Thm. 1]{Dokchitser}.

\begin{table}[h]
\centering
\begin{tabular}{|c|c|c|c|c|c|c|c|c|} 
\hline
\multirow{1}{*}{} & \multirow{1}{*}{$\text{II}$} & \multirow{1}{*}{$\text{III}$} & \multirow{1}{*}{$\text{IV}$} & \multirow{1}{*}{$\text{I}^*_0$} & \multirow{1}{*}{$\text{I}^*_{n>0}$} & \multirow{1}{*}{$\text{IV}^*$} & \multirow{1}{*}{$\text{III}^*$} & \multirow{1}{*}{$\text{II}^*$}\\ 
\hline
\multirow{2}{*}{$\min_i\frac{v(a_i)}{i}$} & \multirow{2}{*}{$\frac{1}{6}$} & \multirow{2}{*}{$\frac{1}{4}$} & \multirow{2}{*}{$\frac{1}{3}$} & \multirow{2}{*}{$\frac{1}{2}$} & \multirow{2}{*}{$\frac{1}{2}$} & \multirow{2}{*}{$\frac{2}{3}$} & \multirow{2}{*}{$\frac{3}{4}$} & \multirow{2}{*}{$\frac{5}{6}$}\\ 
& & & & & & & &\\
\multirow{2}{*}{\tiny extra condition} & \multirow{2}{*}{} & \multirow{2}{*}{} & \multirow{2}{*}{\tiny $v(b_6)=2$} & \multirow{2}{*}{\tiny $v(d)=6$} & \multirow{2}{*}{\tiny $\begin{matrix}v(d)>6\\v(a_2^2-3a_4)=2\end{matrix}$ } & \multirow{2}{*}{\tiny $v(b_6)=4$} & \multirow{2}{*}{} & \multirow{2}{*}{}\\ 
& & & & & & & &\\
\hline
\end{tabular}\caption{Dokchitsers' refinement of Tate's algorithm.}\label{table:Dokchitser}
$$(\text{where }b_6:=a_3^2+4a_6=\text{Disc}(y^2+a_3y-a_6)\text{ and } d:=\text{Disc}(x^3+a_2x^2+a_4x+a_6))$$
\end{table}

We introduce some notation for the components of the reducible fibers. If $F_v:=f^{-1}(v)$ is a reducible fiber of the elliptic fibration $f:S\to C$, we write it as:
$$F_v=\sum_{i=0}^{m_v-1}\mu_{v,i}\Theta_{v,i}\,\text{ for each }v\in R,$$
\noindent where
\begin{align*}
m_v&:\text{ number of irreducible components of }F_v.\\
\Theta_{v,i}&:\text{ irreducible components with }0\leq i\leq m_v-1.\\
\mu_{v,i}&:\text{ multiplicity of }\Theta_{v,i}\text{ in }F_v.\\
m^{(1)}_v&:\text{ number of \textit{simple components} of }F_v,\text{ i.e. such that }\mu_{v,i}=1.\\
R&:\text{ set of reducible fibers of }f.
\end{align*}
\newpage
For a fixed $v$, we order the $i$-indices of the components $\Theta_{v,i}$ in the fashion of \cite{KodII} as indicated in Table~\ref{table:indices_reducible_fibers} (we include the multiplicities but omit the index $v$ for simplicity). We warn the reader that the $i$-indices in $\text{I}_n^*$ do not follow the usual order of the respective Dynkin diagram for the lattice $D_{n+4}$ (Figure~\ref{figure:ADE_lattices}).

\begin{table}[h]
\begin{center}
\centering
\begin{tabular}{c c} 
\multirow{4}{*}{\hfil II$^*\,(\widetilde{E}_8)$} & \multirow{4}{*}{\hfil\EEight}\\
& \\
& \\
& \\
\multirow{4}{*}{\hfil III$^*\,(\widetilde{E}_7)$} & \multirow{4}{*}{\hfil \ESeven}\\ 
& \\
& \\
& \\
\multirow{5}{*}{\hfil IV$^*\,(\widetilde{E}_6)$} & \multirow{5}{*}{\hfil \ESix}\\ 
& \\
& \\
& \\
& \\
\multirow{5}{*}{\hfil I$_n^*\,(\widetilde{D}_{n+4})$} & \multirow{5}{*}{\hfil \Dn}\\ 
& \\
& \\
& \\
& \\
\multirow{4}{*}{\hfil I$_n\,(\widetilde{A}_{n-1})$} & \multirow{4}{*}{\hfil \An}\\ 
& \\
& \\
& \\
\end{tabular}
\end{center}
\caption{Reducible fibers according to its Kodaira type and\\ its respective extended Dynkin diagram.}
\label{table:indices_reducible_fibers}
\end{table}	
\newpage

\subsection{Euler number}\
\indent We introduce the formula for the \textit{Euler number} (or \textit{Euler-Poincaré characteristic}) of $S$, denoted by $e(S)$. When working over the complex numbers, this corresponds to the topological Euler number. Over general fields, however, the Euler number is defined by the alternating sum of Betti numbers, i.e. dimensions of the $\ell$-adic étale cohomology \cite[\S 5.12]{MWL}.

For a fiber $F_v$ of $f:S\to C$ with $m_v$ components, we have
$$e(F_v)=\begin{cases}
0, & \text{ if }F_v\text{ is smooth;}\\
m_v, & \text{ if }F_v\text{ is singular of type I}_n;\\
m_v+1, & \text{ if }F_v\text{ is singular and not of type I}_n.\\
\end{cases}$$

It is worth noting that, when there is no wild ramification, the number $e(F_v)$ coincides with the valuation $v(\Delta(t))$ of the discriminant of the Weierstrass form \cite[\S 5.9]{MWL}. In particular, assuming $\text{char}(k)\neq 2,3$ is enough to avoid wild ramification \cite[\S 5.12]{MWL} and we obtain the following formula.

\begin{teor}{\cite[Prop. 5.16]{Dolga}}
Let $f:S\to C$ over $k$ with $\text{char}(k)\neq 2,3$. Then 
$$e(S)=\sum_{v\in C}e(F_v).$$
\end{teor}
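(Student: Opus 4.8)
The plan is to reduce the computation to two standard properties of the ($\ell$-adic) Euler characteristic: additivity with respect to a decomposition into locally closed pieces, and multiplicativity for smooth proper families. Let $\Sigma\subset C$ be the finite set of points $v$ such that $F_v:=f^{-1}(v)$ is singular, and let $U:=C\setminus\Sigma$ be its complement. Stratifying $S$ as the disjoint union of $f^{-1}(U)$ and the singular fibers $F_v$ ($v\in\Sigma$), additivity of $e(\cdot)$ yields
$$e(S)=e(f^{-1}(U))+\sum_{v\in\Sigma}e(F_v).$$
Since a smooth fiber is a genus-one curve and therefore has Euler number $e(F_v)=2-2g=0$, proving the theorem amounts to showing that $e(f^{-1}(U))=0$; the sum over $\Sigma$ may then be harmlessly extended to a sum over all of $C$.

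To see that $e(f^{-1}(U))=0$, I would analyze the smooth proper morphism $f\colon f^{-1}(U)\to U$ through its Leray spectral sequence, whose $E_2$-page already computes $e(f^{-1}(U))$ as an alternating sum (no degeneration is needed for Euler characteristics). The higher direct images $R^q f_*\overline{\Q}_\ell$ are lisse on $U$, with $R^0=\overline{\Q}_\ell$ and $R^2=\overline{\Q}_\ell(-1)$ of rank $1$ and $R^1 f_*\overline{\Q}_\ell$ of rank $2g=2$. Consequently
$$e(f^{-1}(U))=\sum_q(-1)^q\chi(U,R^q f_*\overline{\Q}_\ell)=2\,e(U)-\chi(U,R^1 f_*\overline{\Q}_\ell),$$
so the claim reduces to the identity $\chi(U,R^1 f_*\overline{\Q}_\ell)=2\,e(U)$, i.e. to the assertion that the rank-$2$ local system $R^1 f_*\overline{\Q}_\ell$ has Euler characteristic equal to its rank times $e(U)$.

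The main obstacle is precisely this last identity, and it is here that the hypothesis $\text{char}(k)\neq 2,3$ is essential. Over $\C$ one can argue topologically: Ehresmann's theorem makes $f^{-1}(U)\to U$ a locally trivial $C^\infty$ fiber bundle with fiber a real torus, whose Euler number vanishes, so multiplicativity gives $e(f^{-1}(U))=e(U)\cdot 0=0$ at once. Over a general field I would instead invoke the Grothendieck--Ogg--Shafarevich formula, which expresses $\chi(U,R^1 f_*\overline{\Q}_\ell)$ as $2\,e(U)$ corrected by the sum of the Swan conductors of $R^1 f_*\overline{\Q}_\ell$ at the points of $\Sigma$. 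The monodromy of $R^1$ around a singular fiber of an elliptic fibration is tame away from characteristics $2$ and $3$, so the hypothesis forces every Swan conductor to vanish and removes the correction term, giving $\chi(U,R^1 f_*\overline{\Q}_\ell)=2\,e(U)$ and hence $e(f^{-1}(U))=0$. I expect the verification that wild ramification cannot occur when $\text{char}(k)\neq 2,3$ to be the technically delicate point, consistent with the remark already made in the text that $e(F_v)$ agrees with the discriminant valuation $v(\Delta)$ precisely in the absence of wild ramification.
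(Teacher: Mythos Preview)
The paper does not supply a proof of this theorem at all; it is quoted verbatim from \cite[Prop.~5.16]{Dolga} and left without argument. Hence there is no in-paper proof to compare against.

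That said, your sketch is the standard route and is essentially correct. Stratifying $S$ by the smooth locus and the singular fibers, using additivity of the ($\ell$-adic) Euler characteristic, and then reducing $e(f^{-1}(U))=0$ to the vanishing of the Swan correction in the Grothendieck--Ogg--Shafarevich formula is exactly how this is proved in the references (e.g.\ Dolgachev, or \cite[\S 5.12]{MWL}). Your identification of the role of the hypothesis $\text{char}(k)\neq 2,3$ is also on point: it is precisely the condition that rules out wild ramification of $R^1 f_*\overline{\Q}_\ell$ at the bad places, so that the local terms reduce to $e(F_v)$ without a Swan contribution --- which is the same observation the paper records just before stating the theorem.
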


\subsection{Base change}\label{subsection:base_change}\
\indent We explain the relationship between base change on elliptic surfaces and changes in the Mordell-Weil rank. We show how this is relevant to the study of rank jumps in Chapter~\ref{ch:rank_jumps}, particularly in the case of base changes of degree $2$.

\subsubsection*{Base change and Mordell-Weil rank}\
\indent In the study of an elliptic curve $E$ over a field $L$ it is a common operation to extend the base field, i.e. to consider a field extension $L'/L$ and analyze the elliptic curve $E$ over $L'$. In this case the set of $L'$-points on $E$ is at least as large as the set of $L$-points, which implies a possibly higher Mordell-Weil rank over $L'$ than over $L$.

In the case of an elliptic fibration $f:S\to C$, whose generic fiber $E$ is an elliptic curve over the function field $K:=k(C)$, this construction has a geometric counterpart. Indeed, a finite field extension $K'/K$ of degree $d$ corresponds to a finite morphism of curves $\varphi:C'\to C$ of degree $d$, and we obtain a commutative diagram from base change
\begin{displaymath}
\begin{tikzcd}
S'\arrow{r}{\varphi'}\arrow[swap]{d}{f'} & S\arrow{d}{f}\\
C'\arrow[swap]{r}{\varphi} & C
\end{tikzcd}
\end{displaymath} 

where $S':=S\times_CC'$ is a new elliptic surface with fibration $f':S'\to C'$. Notice in particular that the Mordell-Weil rank of $f'$ is greater or equal to the Mordell-Weil rank of $f$, since each section $\sigma:C\to S$ of $f$ has a natural corresponding section $(\sigma,\text{id}):C'=C\times_CC'\to S\times_CC'=S'$ of $f'$.

\subsubsection*{Base change and rank jumps}\
\indent If we are able to find a section of $f'$ distinct from those of the form $(\sigma,\text{id}):C'\to S'$, then there is a legitimate reason to ask whether $f'$ has strictly higher Mordell-Weil rank than $f$. It turns out that this new section does exists when $C'$ is a multisection of $f$, i.e. a curve $C'\subset S$ such that $f|_{C'}:C'\to C$ is a finite morphism. Indeed, the inclusion $\iota: C'\hookrightarrow S$ induces a section $(\iota,\text{id}):C'=C\times_CC'\to S'$ distinct from those of the form $(\sigma,\text{id})$.

As we now explain, the observation in the previous paragraph is crucial to our investigation to our investigation of \textit{rank jumps} (Chapter~\ref{ch:rank_jumps}), i.e. of the occurrence of fibers of $f$ whose Mordell-Weil rank is greater than the generic rank. 

Indeed, assume that the new section $(\iota,\text{id})$ is independent from the ones of the form $(\sigma,\text{id})$ in the Mordell-Weil group of $f':S'\to C'$. In particular $r'>r$, where $r,r'$ are the generic ranks of $f,f'$ respectively. Assuming moreover that $C'$ has infinitely many $k$-points, then Silverman's specialization theorem \cite[Thm. C]{Silverman} guarantees that $\text{rank }f'^{-1}(t)\geq r'$ for infinitely many $t\in C'(k)$. By construction, the fiber $f'^{-1}(t)$ corresponds to $f^{-1}(\varphi(t))$, hence $\text{rank }f^{-1}(\varphi(t))\geq r'>r$ for infinitely many $t\in C'(k)$.
In particular, rank jumps occur at infinitely many fibers of $f$.

\textbf{Conclusion:} if $C'$ is a multisection of $f:S\to C$ which contains infinitely many $k$-points and induces a new independent section of $f':S'\to C'$, then there are infinitely many fibers of $f$ for which the rank jumps.

\subsubsection*{Quadratic base change}\
\indent In Chapter~\ref{ch:rank_jumps} we address the problem of rank jumps on a rational elliptic surface $\pi:X\to\P^1$ over a number field $k$. The following result from \cite{SalgadoPhD} tells us of the possibility of having infinitely many curves $D\subset X$ inducing a new independent section of the base-changed fibration $\pi':X'\to D$.

\begin{teor}{\cite{SalgadoPhD}}\label{thm:pencil_general}
Let $\mathscr{L}$ be a pencil of curves on $X$ not all contained in a fiber of $\pi$. Then after base change under $\varphi:=\pi|_D:D\to\P^1$, the elliptic fibration $\pi':X'=X\times_{\P^1}D\to D$ has Mordell-Weil rank strictly greater than that of $\pi$ for all but finitely many $D\in\mathscr{L}$. 
\end{teor}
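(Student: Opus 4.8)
The plan is to produce a \emph{new} section of $\pi'$ directly from $D$ and then to show that, for a general member $D\in\mathscr{L}$, it is independent of the sections pulled back from $\pi$; the rank inequality follows at once. First I would reduce to the case where the general member of $\mathscr{L}$ is an irreducible multisection. Since we work over a number field (characteristic zero) and $\mathscr{L}$ is not contained in the fibers of $\pi$, Bertini's theorem gives that the general member $D$ is irreducible (after replacing $\mathscr{L}$ by its reduced/associated pencil if it were composed with another pencil). Moreover $\mathscr{L}$ genuinely moves, while sections of $\pi$ are rigid $(-1)$-curves, so a general $D$ is not a section; hence $\varphi=\pi|_D\colon D\to\P^1$ is finite of some degree $d\geq 2$. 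As explained in Subsection~\ref{subsection:base_change}, the inclusion $\iota\colon D\hookrightarrow X$ then yields a section $s_D=(\iota,\mathrm{id})\colon D\to X'$ of $\pi'$, i.e. a point $P_D\in E(K')$ with $K':=k(D)$, and one checks easily (comparing with graphs of sections of $\pi$) that $P_D$ does not lie in the image of the pullback $E(K)\hookrightarrow E(K')$. The real content is to upgrade this to \emph{linear independence} in $E(K')\otimes\Q$.

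For this I would exploit the tautological nature of $s_D$ under specialization. Over a point $x_0\in D$ lying above $t_0:=\varphi(x_0)$, the fiber of $\pi'\colon X'\to D$ is exactly $\pi^{-1}(t_0)$, the zero section specializes to the origin $O(t_0)$, each pulled-back section $\bar\sigma$ specializes to $\sigma(t_0)$, while $s_D$ specializes to $x_0$ itself. Suppose, for contradiction, that $P_D$ is dependent on $E(K)$, so that a relation $nP_D=\sum_i m_i\,\bar\sigma_i+T$ holds in $E(K')$ with $n\neq 0$ and $T$ torsion. Applying the specialization homomorphism (defined for all but finitely many $x_0\in D$, cf. \cite[Thm. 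C]{Silverman}) descends this to the identity $n\,x_0=\sum_i m_i\,\sigma_i(t_0)+T(t_0)$ in the group $\pi^{-1}(t_0)$. For a general $t_0$ all $d$ preimages $x_0^{(1)},\dots,x_0^{(d)}\in D$ are admissible, and the right-hand side depends only on $t_0$; comparing two sheets gives $n\,x_0^{(1)}=n\,x_0^{(2)}$, so the difference $x_0^{(1)}\ominus x_0^{(2)}$ is an $n$-torsion point of $\pi^{-1}(t_0)$.

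It remains to rule this out for a general $D$, and this is the step I expect to be the main obstacle. As $t_0$ varies, the differences $x_0^{(1)}\ominus x_0^{(2)}$ assemble into a multisection $\delta_D$ of $\pi$ (living over an auxiliary cover of $\P^1$ built from $D\times_{\P^1}D$), and the conclusion above forces $\delta_D$ to be torsion, i.e. to lie in the fixed finite set of torsion multisections of $\pi$. The heart of the matter is to show that for all but finitely many $D\in\mathscr{L}$ the multisection $\delta_D$ is \emph{non-torsion}. The idea is that $\delta_D$ varies algebraically as $D$ runs through the pencil, whereas the torsion multisections form a rigid finite family; a genuinely moving family can meet this finite set only for finitely many parameters. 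Making this precise amounts to spreading the construction out over the parameter $\P^1$ of $\mathscr{L}$ and verifying that $\delta_D$ is non-constant — equivalently, that the two sheets of $D$ over $\P^1$ are not translates of one another by a fixed torsion section — which is exactly where the hypothesis that $\mathscr{L}$ is a genuine (moving) pencil not contained in the fibers is used. Once non-torsion of $\delta_D$ is established for general $D$, no dependence relation can hold, $P_D$ is independent of $E(K)$, and therefore the Mordell-Weil rank of $\pi'$ strictly exceeds that of $\pi$ for all but finitely many $D\in\mathscr{L}$, as claimed.
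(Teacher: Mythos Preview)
The paper does not prove this theorem: it is quoted from \cite{SalgadoPhD} (and the closely related version is quoted later from \cite{Salgado12} as Theorem~\ref{thm:rank_jump_pencil}), so there is no proof in the paper to compare against. Your overall architecture---produce the tautological section $P_D$ from $\iota:D\hookrightarrow X$, then argue independence via specialization---is the natural one and matches the discussion in Subsection~\ref{subsection:base_change}.

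That said, your sketch has a genuine gap exactly where you flag the ``main obstacle''. Two concrete issues. First, the integer $n$ in a putative relation $nP_D\in E(K)$ may depend on $D$; your ``$\delta_D$ varies algebraically while the torsion multisections form a rigid finite family'' argument implicitly assumes a uniform $n$. The standard fix is to work at the generic point of the pencil: let $\eta$ be the generic point of the parameter $\P^1$ of $\mathscr{L}$, form $D_\eta\subset X_{k(\eta)}$, and ask whether $P_{D_\eta}$ is dependent on $E(K)\otimes 1$ inside $E(k(\eta)(D_\eta))$. If independent, specialization in the pencil parameter gives the claim for all but finitely many $D$; if dependent, you get a single $n$ valid for every $D$, and only then does your torsion argument get off the ground. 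Second, even with $n$ fixed, the conclusion $n x_0^{(1)}=n x_0^{(2)}$ says that $D\subset[n]^{-1}(\sigma)$ for some section $\sigma\in E(K)$; but $\sigma$ is allowed to vary over the infinite group $E(K)$, so ``finitely many torsion multisections'' is not literally what you need. What you need is that an infinite pencil of irreducible curves on $X$ cannot consist entirely of components of the curves $[n]^{-1}(\sigma)$, and for this you should argue via numerical classes: components of $[n]^{-1}(\sigma)$ all lie in a fixed numerical class (determined by $n$ and the class of a section), hence are rigid, while the members of $\mathscr{L}$ move. This is the precise form of the ``moving vs rigid'' dichotomy you are gesturing at.
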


As mentioned earlier, in order to conclude that rank jumps occur at infinitely many fibers, we still need that there be some curve in $\mathscr{L}$ with infinitely many $k$-points. One way to obtain such $\mathscr{L}$ is by finding a pencil of genus $0$ curves over $k$ (a \textit{conic bundle}, as in Section~\ref{section:conic_bundles}). By Lemma~\ref{lemma:bisection}, this is tantamount to finding a bisection over $k$, i.e. a curve $D\subset X$ such that $\varphi:D\to\P^1$ is finite of degree $2$, in which case we perform a \textit{quadratic} base change.

\subsubsection*{Singular fibers after quadratic base change}\
\indent In Chapter~\ref{ch:rank_jumps}, more specifically in Lemma~\ref{lemma:againrational}, we need to describe how singular fibers of $\pi':X'\to D$ relate to those of $\pi:X\to\P^1$ in a quadratic base change. This information depends on how the degree $2$ morphism $\varphi=\pi|_D:D\to \P^1$ ramifies, as explained in what follows.

If $t\in \P^1$ is not a branch point of $\varphi:D\to\P^1$, then $\varphi^{-1}(t)=\{t_1,t_2\}$ and the fibers $\pi'^{-1}(t_1),\pi'^{-1}(t_2)$ are both isomorphic to $\pi^{-1}(t)$. On the other hand, if $t$ is a branch point of $\varphi$, then $\varphi^{-1}(t)=\{t_1\}$ and the fiber type of $\pi'^{-1}(t_1)$ is determined by the fiber type of $\pi^{-1}(t)$ according to Table~\ref{table:quadratic_base_change} below.
\newpage
\begin{table}[h]
\begin{center}
\begin{tabular}{ |c|c| } 
\hline
$\text{Fibre of } \pi$ & $\text{Fibre of } \pi'$\\
\hline
I$_n$, $n\geq 0$ & I$_{2n}$\\
\hline
I$^*_n$, $n\geq 0$ & I$_{2n}$\\ 
\hline
II & IV\\ 
\hline
II$^*$ & IV$^*$\\ 
\hline
III & I$_0^*$\\
\hline
III$^*$ & I$_0^*$\\
\hline
IV & IV$^*$\\
\hline
IV$^*$ & IV\\
\hline
\end{tabular}
\end{center}
\caption{Singular fibers above branch points in a quadratic base change \cite[VI.4.1]{Miranda}.}
\label{table:quadratic_base_change}
\end{table}

\noindent\textbf{Example.} Consider the rational elliptic surface $X: y^2=x^3+t$. By Dokchitsers' algorithm (Table~\ref{table:Dokchitser}), we identify the fiber types $\text{II},\text{II}^*$ at $t=0$ and $t=\infty$ respectively. Let $\varphi:\P^1\to\P^1$ be given by $(s:t)\mapsto (s^2:t^2)$, whose branch points are $t=0$ and $t=\infty$. The Weierstrass form of the base-changed surface is obtained by pull-back under $\varphi$, namely $X':y^2=x^3+t^2$. Again by Dokchitsers' algorithm, we identify types $\text{IV},\text{IV}^*$ at $t=0$ and $t=\infty$ respectively, which agrees with Table~\ref{table:quadratic_base_change}.

\section{Rational elliptic surfaces}\label{section:RES}\
\indent Among the many examples of nontrivial elliptic surfaces, the most easy to construct, describe and manipulate are \textit{rational} elliptic surfaces, which is the central object of this thesis. In the present section we make the distinction between \textit{rational} and \textit{geometrically rational}, explain how rational elliptic surfaces can be constructed from pencils of cubics on $\P^2$ and list some of their properties. 

\begin{defi}\label{def:RES}
We say that $S$ is a \textit{rational elliptic surface} if $S$ is birational to $\P^2_k$ and admits an elliptic fibration $f:S\to C$.
\end{defi}
\begin{remark}
\normalfont Since $S$ is rational, $C$ is isomorphic to $\P^1_k$ by Lüroth's theorem.
\end{remark}

\subsection{Rational vs. geometrically rational surfaces}\
\indent When dealing with surfaces over a non-algebraically closed field (in our case, a number field in Chapter~\ref{ch:rank_jumps}), we must distinguish between a \textit{rational} surface and a \textit{geometrically rational} surface. 

\begin{defi}\label{def:geometrically_rational}
A surface $S$ over $k$ is called \textit{geometrically rational} if $S\times_k\overline{k}$ is birational to $\P^2_{\overline{k}}$.
\end{defi}

In Chapter~\ref{ch:rank_jumps} we deal with elliptic surfaces when $k$ is a number field, in which case the following criterion can be used to determine whether a surface is geometrically rational.
\newpage
\begin{lemma}\label{lemma:euler_number_12}
Let $f:S\to C$ be a relatively minimal elliptic surface over a number field $k$ and let $e(S)$ be its Euler number. Then $S$ is geometrically rational if and only if $e(S)=12$.
\end{lemma}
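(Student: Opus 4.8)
The plan is to prove both implications of the equivalence $S\text{ geometrically rational}\Leftrightarrow e(S)=12$ by passing to the algebraic closure and invoking the structure theory of minimal elliptic surfaces. Since the Euler number is defined via $\ell$-adic Betti numbers and is invariant under the base change $S\mapsto S\times_k\overline{k}$ (both the fibration and its singular fibers are preserved, and the fiber formula for $e(F_v)$ in Subsection~2.1.3 is geometric), I may replace $S$ by $\overline{S}:=S\times_k\overline{k}$ and $k$ by $\overline{k}$ throughout. Thus it suffices to show: a relatively minimal elliptic surface over an algebraically closed field is rational if and only if its Euler number equals $12$.

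For a relatively minimal elliptic surface $f:\overline{S}\to C$ over an algebraically closed field, the key input is Kodaira's canonical bundle formula, which gives
\begin{equation*}
K_{\overline{S}}=f^*\!\bigl(K_C+\mathcal{L}\bigr),\qquad \deg\mathcal{L}=\chi(\mathcal{O}_{\overline{S}})=\tfrac{1}{12}e(\overline{S}),
\end{equation*}
the last equality being Noether's formula combined with $K_{\overline{S}}^2=0$ (which holds because $K_{\overline{S}}$ is a rational multiple of a fiber class, and fibers have self-intersection zero). First I would establish the forward direction: if $\overline{S}$ is rational then $\chi(\mathcal{O}_{\overline{S}})=1$, since $p_g=q=0$ for any rational surface, whence $e(\overline{S})=12\chi(\mathcal{O}_{\overline{S}})=12$. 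This direction is essentially immediate from birational invariance of $p_g$ and $q$ together with Noether's formula.

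The converse is the substantive part. Assuming $e(\overline{S})=12$, Noether's formula gives $\chi(\mathcal{O}_{\overline{S}})=1$, so $K_{\overline{S}}=f^*(K_C+\mathcal{L})$ with $\deg(K_C+\mathcal{L})=2g(C)-2+1=2g(C)-1$. Because $\overline{S}$ carries a section, it has no multiple fibers, so this description of $K_{\overline{S}}$ is exact. The plan is to show $g(C)=0$ and then that $\overline{S}$ is rational. If $g(C)\geq 1$ then $K_{\overline{S}}$ is effective (indeed nef), forcing the Kodaira dimension to be $\geq 0$ and ruling out rationality; conversely one checks via the classification of surfaces that $e=12$ with a section forces $g(C)=0$ and $\deg\mathcal{L}=1$, giving $K_{\overline{S}}=-F$ for a fiber $F$, i.e. $-K_{\overline{S}}$ is effective and the surface is a (relatively minimal) elliptic surface with $\chi=1$, $q=0$. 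One then concludes rationality by Castelnuovo's criterion ($q=P_2=0$): here $q=h^1(\mathcal{O}_{\overline{S}})=g(C)=0$, and $P_2=h^0(2K_{\overline{S}})=h^0(-2F)=0$ since $-2F$ is anti-effective and nontrivial. Hence $\overline{S}$ is rational.

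I expect the main obstacle to be handling the converse cleanly without circular appeal to the rational-elliptic-surface theory one is trying to establish: specifically, ruling out the competing possibility that an elliptic surface with $e=12$ and a section could fail to be rational. The cleanest route is the Castelnuovo criterion argument above, and the delicate point is justifying $\chi(\mathcal{O}_{\overline{S}})=1\Rightarrow q=0$ in the presence of a section, equivalently that $g(C)=0$. This follows because $q=h^1(\mathcal{O}_{\overline{S}})=h^1(\mathcal{O}_C)=g(C)$ for an elliptic surface with section (the higher direct image $R^1f_*\mathcal{O}_{\overline{S}}$ contributes a torsion-free rank-one part dual to $\mathcal{L}$, and the irregularity reduces to $g(C)$ once $\chi=1$ forces the fibration to have no nontrivial relative $H^1$ contribution to $q$); combined with $\chi=1-q+p_g=1$ and $p_g\geq 0$ this yields $q=p_g=0$ simultaneously, after which Castelnuovo applies directly. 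Alternatively, one may simply cite the classification of elliptic surfaces with $\chi(\mathcal{O})=1$ and a section, which are precisely the rational ones, but I would prefer to make the Castelnuovo step explicit for self-containment.
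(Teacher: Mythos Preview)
Your forward direction (geometrically rational $\Rightarrow e=12$) is correct and essentially the paper's argument: both use $K_S^2=0$ from relative minimality together with Noether's formula to get $e(S)=12\chi(\mathcal{O}_S)$, then observe that a rational surface has $\chi=1$. For the converse the paper simply invokes \cite[Lemma III.4.6]{Miranda} to pass from $\chi=1$ to geometric rationality, whereas you try to unpack this via the canonical bundle formula and Castelnuovo's criterion, which is a more self-contained route and a reasonable instinct.

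There is, however, a genuine gap in your converse. You correctly flag that the delicate point is $g(C)=0$, but your resolution fails: from $\chi=1-q+p_g=1$ you only obtain $p_g=q$, and the inequality $p_g\geq 0$ adds nothing. Concretely, for an elliptic surface with section over a curve $C$ of genus $g$ with $\deg\mathcal{L}=\chi(\mathcal{O}_{\overline S})=1$, one has $q=g$ and $p_g=h^0(C,K_C\otimes\mathcal{L})$; when $g\geq 1$ the degree $2g-1$ exceeds $2g-2$, so Riemann--Roch gives $p_g=g$, and $\chi=1-g+g=1$ is consistent with \emph{any} genus of $C$. Such surfaces exist (take a Weierstrass fibration over $C$ with fundamental line bundle of degree $1$) and are not rational, since they dominate a positive-genus curve. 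Thus the implication $e=12\Rightarrow$ geometrically rational genuinely needs $g(C)=0$ as an input, which neither the lemma's stated hypotheses nor your argument supplies; your proposed alternative of citing ``the classification of elliptic surfaces with $\chi(\mathcal{O})=1$ and a section'' as precisely the rational ones is incorrect for the same reason. In the paper's actual applications (notably Lemma~\ref{lemma:againrational}) the base curve of the new fibration is a genus-zero bisection, so the issue is harmless in context. If you want your Castelnuovo argument to stand on its own, add $g(C)=0$ to the hypotheses: then $q=0$, hence $p_g=0$ from $\chi=1$, hence $P_2=h^0(2K_{\overline S})=0$, and Castelnuovo applies cleanly.
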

\begin{proof}
Regardless of whether $S$ is geometrically rational, $K_S^2=0$ by relative minimality \cite[Prop. IX.3]{Beauville}, hence by Noether's formula $e(S)=12\chi(S)$. If $e(S)=12$ holds, then $\chi(S)=1$, which implies that $S$ is geometrically rational \cite[Lemma III.4.6]{Miranda}. Conversely, let $S$ be geometrically rational, i.e. $\overline{S}:=S\times_k\overline{k}$ is birational to $\P_{\overline{k}}^2$. As $\chi$ is a birational invariant, $\chi(\overline{S})=1$. From the inclusions $H^i(S,\mathcal{O}_S)\subset H^i(\overline{S},\mathcal{O}_{\overline{S}})$ for $i=1,2$ we conclude that $\chi(S)=1$, hence $e(S)=12$.
\end{proof}

\subsection{Construction from pencils of cubics}\label{subsection:RES_from_pencils_of_cubics}\
\indent We exhibit a standard method for constructing rational elliptic surfaces over an arbitrary field. We apply this in Chapter~\ref{ch:conic_bundles_on_RES} to produce examples of rational elliptic surfaces.

Let $L$ be a field and $F,G$ cubics on $\Bbb{P}_L^2$, at least one of them smooth. Assume moreover that $F,G$ only meet at $L$-rational points. The intersection $F\cap G$ has precisely $9$ points counted with multiplicity, and the pencil of cubics $\mathcal{P}:=\{sF+tG=0\mid (s:t)\in\P_L^1\}$ has $F\cap G$ as its base locus. Let $\phi:\P_L^2\dasharrow\P^1_L$ be the rational map associated to $\mathcal{P}$. The blowup $p:X\to \Bbb{P}_L^2$ of the $9$ points of the base locus resolves the indeterminacies of $\phi$ and we get an elliptic fibration $\pi:X\to \P^1_L$.

\begin{displaymath}
\begin{tikzcd}
X\arrow{r}{p}\arrow[bend right, swap]{rr}{\pi} & \P^2_L\arrow[dashed]{r}{\phi} &\P^1_L
\end{tikzcd}
\end{displaymath} 

By construction, $X$ is birational to $\P^2_L$. Moreover, the blowup of each base point induces a $(-1)$-curve on $X$, which is a section of $\pi:X\to\P^1$. If we assume moreover that $L$ is algebraically closed, it turns out that every rational elliptic surface can be obtained by this procedure.

\begin{teor}{\normalfont \cite[Theorem 5.6.1]{Dolga}}\label{thm:RES_from_pencil_of_cubics}
Let $X$ be a rational elliptic surface with elliptic fibration $\pi:X\to \P^1$ over an algebraically closed field. Then $X$ is isomorphic to the blowup of $\P^2$ at the base locus of a pencil of cubics.
\end{teor}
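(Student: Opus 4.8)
The statement to establish is the nontrivial direction: the converse (that a pencil of cubics produces a rational elliptic surface) was already explained before the statement, so the plan is to show that \emph{every} rational elliptic surface $\pi:X\to\P^1$ over an algebraically closed field arises as the blowup of $\P^2$ at the base locus of a pencil of cubics. The first step is to pin down the canonical class. Since $X$ is rational we have $\chi(\mathcal{O}_X)=1$, and $X$ carries a section, so $\pi$ has no multiple fibers; the canonical bundle formula for a relatively minimal elliptic surface over $\P^1$ then gives $K_X=\pi^*\mathcal{O}_{\P^1}(-1)$, i.e. $-K_X=F$ is the class of a fiber. In particular $K_X^2=F^2=0$, and the elliptic fibration $\pi$ is precisely the anticanonical pencil $|-K_X|$, which is one--dimensional.

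Next I would produce a birational morphism $p:X\to\P^2$ realizing $X$ as a blowup of nine points. Since $X$ is rational, it admits a birational morphism to a minimal rational surface, which is either $\P^2$ or a Hirzebruch surface $\mathbb{F}_n$. The key input is that on $X$ \emph{every} irreducible curve has self-intersection $\geq-2$: by adjunction and $-K_X=F$, a vertical curve is a $(-2)$-curve or an irreducible fiber, while a horizontal $d$-section $C$ satisfies $2g-2=C^2-d$ and hence $C^2\geq-1$. This rules out $\mathbb{F}_n$ with $n\geq 3$, since the strict transform of the negative section would be an irreducible curve of self-intersection $\leq-3$; the remaining possibilities $\mathbb{F}_0,\mathbb{F}_1,\mathbb{F}_2$ are converted to $\P^2$ by a short elementary-transformation argument (blowing up a point off the negative section and contracting the strict transform of the fibre through it), yielding a morphism $p:X\to\P^2$. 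Because each blowup drops $K^2$ by $1$ and $K_{\P^2}^2=9$, $K_X^2=0$, the morphism $p$ is the blowup at nine points $p_1,\dots,p_9$ (possibly infinitely near).

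It then remains to identify the anticanonical pencil with a pencil of cubics. Writing $H=p^*\mathcal{O}_{\P^2}(1)$ and $E_1,\dots,E_9$ for the exceptional divisors, one has $K_X=-3H+\sum_{i=1}^{9}E_i$, so $-K_X=3H-\sum_{i=1}^{9}E_i$ and $|-K_X|=|3H-\sum_{i=1}^{9}E_i|$ is exactly the linear system of plane cubics passing through $p_1,\dots,p_9$. Since $\dim|-K_X|=1$, these nine points impose only eight independent conditions and therefore form the base locus of a pencil of cubics on $\P^2$; the associated rational map from $\P^2$ to $\P^1$ is resolved precisely by $p$, recovering $\pi$. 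Hence $X$ is isomorphic to the blowup of $\P^2$ at the base locus of this pencil, as claimed.

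The main obstacle is the second step: ruling out that the minimal model is a Hirzebruch surface and guaranteeing that one lands on $\P^2$. The self-intersection bound $C^2\geq-2$ for irreducible curves on a relatively minimal rational elliptic surface is the decisive ingredient, and the low cases $\mathbb{F}_0,\mathbb{F}_1,\mathbb{F}_2$ require the elementary-transformation argument above. By comparison, the canonical-class computation and the counting of conditions imposed by the nine points are routine.
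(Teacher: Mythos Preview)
The paper does not supply its own proof of this theorem: it is quoted from \cite[Theorem 5.6.1]{Dolga} and used as a black box, so there is no ``paper's proof'' to compare against. Your outline is a correct reconstruction of the standard argument and in fact relies on facts the paper states elsewhere (Theorem~\ref{thm:RES_distinguished_properties} for $-K_X\sim F$ and Lemma~\ref{lemma:negative_curves} for the bound $C^2\geq -2$ on irreducible curves).

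One point deserves a little more care. In the elementary-transformation step you write ``blowing up a point off the negative section and contracting the strict transform of the fibre through it,'' but what you actually need is that this blown-up point is \emph{already} among the centres of $p:X\to\mathbb{F}_n$, so that the new contraction still factors through $X$. Concretely: since $K_{\mathbb{F}_n}^2=8$ and $K_X^2=0$, the map $p$ blows up $8$ points (possibly infinitely near). If \emph{every} one of these lay on the negative section $\sigma$ (or its successive strict transforms), the strict transform of $\sigma$ in $X$ would have self-intersection $-n-8\leq -8$, contradicting $C^2\geq -2$. Hence at least one centre lies off $\sigma$, and the elementary transformation at that centre produces a morphism $X\to\mathbb{F}_{n-1}$. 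Iterating reaches $\mathbb{F}_1$ and then $\P^2$. With this clarification your proof is complete.
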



\subsection{Properties of rational elliptic surfaces}\
\indent In addition to the property in Theorem~\ref{thm:RES_from_pencil_of_cubics}, we mention some other distinguished properties of rational elliptic surfaces. As these properties are geometric in nature, we assume $k$ is algebraically closed throughout the rest of this section.

\begin{teor}{\normalfont \cite[Section 8.2]{Schuett-Shioda}}\label{thm:RES_distinguished_properties}
Assume $k$ algebraically closed and let $X$ be a rational elliptic surface with elliptic fibration $\pi:X\to\P^1$. Then
\begin{enumerate}[i)]
\item $\chi(X)=1$, where $\chi(X):=h^0(X,\mathcal{O}_X)-h^1(X,\mathcal{O}_X)+h^2(X,\mathcal{O}_X)$.
\item $-K_X$ is linearly equivalent to any fiber of $\pi$. In particular, $-K_X$ is nef and $X$ admits precisely one elliptic fibration (namely, the one defined by the anticanonical system $|-K_X|$).
\item Every section of $\pi$ is an exceptional curve (smooth, rational curve with selfintersection $-1$).
\end{enumerate}	
\end{teor}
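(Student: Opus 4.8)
The plan is to prove the three items in order, since (ii) rests on (i) through the canonical bundle formula and (iii) rests on (ii) through adjunction; throughout I would exploit the presentation of $X$ as an iterated blow-up $p:X\to\P^2$ of the nine base points of a pencil of cubics (Theorem~\ref{thm:RES_from_pencil_of_cubics}). For (i), I would use that $\chi(\mathcal{O})$ is unchanged under the blow-up of a point on a smooth surface, in any characteristic; since $\chi(\mathcal{O}_{\P^2})=1$ and $X$ arises from $\P^2$ by a sequence of such blow-ups, we obtain $\chi(\mathcal{O}_X)=1$ at once. Alternatively one invokes the birational invariance of $\chi(\mathcal{O})$ already used in Lemma~\ref{lemma:euler_number_12}.

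For (ii) the crux is the linear equivalence $-K_X\sim F$ with $F$ a fiber. I would read it off the blow-up: with $\ell=p^*H$ the pulled-back line class and $E_1,\dots,E_9$ the exceptional classes, we have $K_X=-3\ell+\sum_i E_i$, so $-K_X=3\ell-\sum_i E_i$, which is precisely the class of the strict transform of a cubic of the pencil, i.e. a fiber of $\pi$. This is also the content of Kodaira's canonical bundle formula $K_X=\pi^*(K_{\P^1}+\mathcal{L})$ with $\deg\mathcal{L}=\chi(\mathcal{O}_X)=1$ by (i). Nefness of $-K_X$ then reduces to nefness of $F$: since $F^2=0$, since $F\cdot D=0$ for every component $D$ of a fiber, and since $F\cdot D=\deg(\pi|_D)>0$ for every multisection $D$, we get $F\cdot D\geq 0$ for all irreducible $D$.

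For the uniqueness of the fibration I would compute $\pi_*\mathcal{O}_X(F)=\mathcal{O}_{\P^1}(1)$ by the projection formula (using $\pi_*\mathcal{O}_X=\mathcal{O}_{\P^1}$), so that $h^0(X,-K_X)=2$ and $|-K_X|$ is a base-point-free pencil whose members are exactly the fibers of $\pi$. Any other elliptic fibration $\pi'$ on $X$ is, by definition, relatively minimal with a section, so the same identity gives $-K_X\sim F'$ for its fibers; hence $\pi'$ is defined by the very same pencil $|-K_X|$ and coincides with $\pi$. Finally, for (iii) I would apply adjunction to a section $\Sigma=\sigma(\P^1)$: as $\Sigma\cong\P^1$ is smooth rational, $\Sigma^2+K_X\cdot\Sigma=2g(\Sigma)-2=-2$, and by (ii) $K_X\cdot\Sigma=-F\cdot\Sigma=-1$ because a section meets each fiber once; therefore $\Sigma^2=-1$ and $\Sigma$ is an exceptional curve.

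The step I expect to be most delicate is the uniqueness clause in (ii): while $-K_X\sim F$ is a formal computation, excluding every competing elliptic fibration hinges on the facts that the canonical class, and hence the pencil $|-K_X|$, is intrinsic to $X$ and independent of any chosen fibration. A lesser annoyance is the bookkeeping when the nine base points are infinitely near, which slightly complicates the blow-up computation of $K_X$ but is bypassed entirely by the canonical bundle formula.
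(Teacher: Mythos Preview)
The paper does not supply a proof of this theorem at all: it is stated with a citation to \cite[Section~8.2]{Schuett-Shioda} and used as background. So there is no ``paper's own proof'' to compare against, and your task was effectively to supply one.

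Your argument is correct and well organized. A few minor remarks. For (ii), your blow-up computation of $K_X$ is the most concrete route, and the canonical bundle formula you invoke as an alternative indeed holds in any characteristic for a relatively minimal elliptic fibration with section (the section kills multiple fibers, so no extra terms appear). For uniqueness, your reasoning is sound but note that it uses the paper's convention (Definition~\ref{def:elliptic_surfaces}) that an \emph{elliptic fibration} is by hypothesis relatively minimal and admits a section; without relative minimality the canonical bundle formula need not apply to a competing fibration, and without a section one would have to worry about multiple fibers. You implicitly use that any such $\pi':X\to C'$ has $C'\cong\P^1$ because $X$ is rational (so $q(X)=0$); this is worth stating explicitly. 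For (iii), your adjunction argument is the standard one and matches how the paper itself argues in Lemma~\ref{lemma:negative_curves}.

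The point you flag as delicate---uniqueness---is exactly the right one, and your resolution via intrinsicness of $|-K_X|$ is the clean way through. The infinitely-near base points are indeed only a bookkeeping issue for the blow-up formula and, as you say, are bypassed by the canonical bundle formula.
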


The following property is related to torsion sections of rational elliptic surfaces and plays an important role in the study of intersection numbers in Chapter~\ref{ch:gaps}.

\begin{teor}{\normalfont \cite[Lemma 1.1]{MirandaPersson}}\label{thm:torsion_sections_disjoint}
On a rational elliptic surface, $Q_1\cdot Q_2=0$ for any distinct $Q_1,Q_2\in E(K)_\text{tor}$. In particular, if $O$ is the neutral section, then $Q\cdot O=0$ for all $Q\in E(K)_\text{tor}\setminus\{O\}$.
\end{teor}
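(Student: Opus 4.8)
The plan is to reduce the whole statement to the single special case $P\cdot O=0$ for a nontrivial torsion section $P$, and then to extract that case from the positivity of the Mordell--Weil lattice together with the height formula.

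First I would use the fact that the Mordell--Weil group acts on $X$ by fibre-preserving automorphisms: translation by $P\in E(K)$ on the generic fibre $E$ extends to an automorphism $t_P$ of the minimal surface $X$, and $t_P$ permutes the sections according to the group law. Since each $t_P$ is an automorphism it preserves intersection numbers, so applying $t_{-Q_2}$ to the pair $(Q_1,Q_2)$, which sends $Q_2\mapsto O$ and $Q_1\mapsto Q_1-Q_2$, yields
$$Q_1\cdot Q_2=(Q_1-Q_2)\cdot O.$$
As $Q_1\neq Q_2$, the section $P:=Q_1-Q_2$ is a nontrivial torsion section, so the general claim follows once I establish that $P\cdot O=0$ for every $P\in E(K)_{\mathrm{tor}}\setminus\{O\}$; this simultaneously contains the ``in particular'' clause. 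Because the height pairing is positive-definite on $E(K)/E(K)_{\mathrm{tor}}$, every torsion section has $h(P)=0$. Substituting into the height formula $h(P)=2+2(P\cdot O)-\sum_{v\in R}\mathrm{contr}_v(P)$ gives
$$2(P\cdot O)=\sum_{v\in R}\mathrm{contr}_v(P)-2,$$
so, since $P\cdot O\in\Z_{\geq 0}$ and each $\mathrm{contr}_v(P)\geq 0$, the total local contribution is an even integer that is at least $2$; it remains to force it down to exactly $2$, i.e. to show $P$ is disjoint from $O$.

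The hardest part is controlling the local contributions, and here the cleanest route is geometric. A section meets each fibre in a single point lying in the \emph{smooth} locus of that fibre: being a $(-1)$-curve it meets the anticanonical fibre transversally with multiplicity one, hence avoids the nodes where fibre components cross. These smooth loci $F_t^{\#}$ are commutative algebraic groups on which $O$ cuts out the identity, and restriction $E(K)\to F_t^{\#}$ is a group homomorphism that is injective on torsion of order prime to $\mathrm{char}(k)$. Consequently a nontrivial torsion section $P$ can never specialize to the identity, so $P\cap O=\varnothing$ and $P\cdot O=0$. Alternatively one stays inside the lattice framework: each $\mathrm{contr}_v$ is a positive-definite form on the component group $\Phi_v$, the parity constraint above pins the total contribution at $2$, and one checks the finitely many torsion-plus-fibre configurations against the Oguiso--Shioda classification. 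I expect the injectivity of specialization on torsion---and its mild subtlety for $p$-torsion in positive characteristic---to be the genuine crux, while the reduction via translations and the manipulation of the height formula are routine bookkeeping.
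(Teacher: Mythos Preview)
The paper does not supply its own proof here: the theorem is quoted from \cite{MirandaPersson} (and, for arbitrary characteristic, from \cite[Cor.~8.30]{MWL}), so there is no argument in the text to compare against. What matters is whether your sketch actually proves the statement in the generality claimed, namely over an arbitrary algebraically closed field.

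Your reduction via the translation automorphisms $t_P$ and the height-formula identity
$$2(P\cdot O)=\sum_{v\in R}\mathrm{contr}_v(P)-2$$
are both correct. The gap is in the step you yourself flag as the crux. The specialization argument (``a nontrivial torsion section can never specialize to the identity'') is only valid for torsion of order prime to the characteristic; in characteristic $p$ a $p$-torsion section can genuinely hit $O$ on higher-$\chi$ elliptic surfaces, which is precisely why the remark following the theorem insists on the rationality hypothesis in that setting. So the geometric route, as written, does not close the case. Your alternative (``the parity constraint pins the total contribution at $2$'') is circular: parity only gives $\sum_v\mathrm{contr}_v(P)\in\{2,4,6,\dots\}$, and forcing it down to $2$ is exactly the assertion $P\cdot O=0$. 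Saying ``check Oguiso--Shioda'' is admissible but is not an argument.

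What is missing is a uniform bound that uses $\chi(X)=1$. On a rational elliptic surface $\sum_{v\in R}(m_v-1)\le 8$, and for every reducible fibre type one has $\mathrm{contr}_v(P)\le (m_v-1)/2$, with equality only when $T_v=A_1$. Summing gives $\sum_v\mathrm{contr}_v(P)\le 4$, and equality would require eight $A_1$-fibres, which is excluded by the Euler-number constraint $e(X)=12$. Hence $\sum_v\mathrm{contr}_v(P)<4$, so $P\cdot O<1$ and therefore $P\cdot O=0$. This replaces both the specialization step and the case check, and it is characteristic-free.
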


\begin{remark}
\normalfont Theorem~\ref{thm:torsion_sections_disjoint} holds for elliptic surfaces over $\Bbb{C}$ even without assuming the surface is rational. Over an arbitrary algebraically closed field, however, the rationality hypothesis is needed \normalfont \cite[Cor. 8.30]{MWL}.
\end{remark}

We include two more results. Lemma~\ref{lemma:detect_fibers} provides a simple test to detect fibers of the elliptic fibration, whereas Lemma~\ref{lemma:negative_curves} describes the negative curves on a rational elliptic surface. 

\begin{lemma}\label{lemma:detect_fibers}
Let $\pi:X\to\P^1$ be a rational elliptic surface and $E$ an integral curve in $X$. If $E\cdot K_X=0$, then $E$ is a component of a fiber of $\pi$. If moreover $E^2=0$, then $E$ is a fiber.
\end{lemma}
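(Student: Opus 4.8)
The plan is to use the structure of a rational elliptic surface, specifically the fact that $-K_X$ is linearly equivalent to any fiber $F$ of $\pi$ (Theorem~\ref{thm:RES_distinguished_properties}), together with the adjunction formula. Since $-K_X \sim F$, the hypothesis $E\cdot K_X = 0$ translates immediately into $E\cdot F = 0$: the curve $E$ has zero intersection with a fiber. The key observation is that fibers are nef (indeed $F$ is base-point-free, as $|-K_X|$ defines the fibration $\pi$), so $E\cdot F = 0$ forces $E$ to be ``vertical'' in a precise sense — it cannot dominate the base $\P^1$.

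First I would argue that $E$ must be contained in a single fiber. Because $F$ is the class of a fiber and the fibration $\pi$ is a morphism, for any point $t\in\P^1$ the divisor $\pi^{-1}(t)$ is linearly equivalent to $F$. If $E$ were to map surjectively onto $\P^1$ under $\pi$, then $E$ would meet the general fiber in a positive number of points, giving $E\cdot F > 0$, contradicting $E\cdot F = 0$. Hence $\pi(E)$ is a single point $t_0\in\P^1$, so $E$ is set-theoretically contained in the fiber $\pi^{-1}(t_0)$. Since $E$ is an integral curve contained in a fiber, it is a component of that fiber, which proves the first assertion.

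For the second assertion, I would invoke adjunction. By the adjunction formula, $2p_a(E) - 2 = E^2 + E\cdot K_X$. Given $E\cdot K_X = 0$ and the additional hypothesis $E^2 = 0$, this yields $p_a(E) = 1$. Now I would combine this with the classification of fiber components from Theorem~\ref{thm:Kodaira_classification}: the components $\Theta_{v,i}$ of a reducible fiber are smooth rational $(-2)$-curves, which have $E^2 = -2 \neq 0$. Thus the only way for an integral component $E$ of a fiber to satisfy $E^2 = 0$ is for $E$ to be the entire fiber (an irreducible fiber of type $\text{I}_1$ or $\text{II}$, or a smooth fiber $\text{I}_0$). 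In each such case $E = \pi^{-1}(t_0)$ is linearly equivalent to $F$, hence is itself a fiber.

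The main obstacle, and the step requiring the most care, is ruling out the possibility that $E$ is a proper component of a reducible fiber while still having $E^2 = 0$. The resolution rests on the fact that every proper component of a reducible fiber is a $(-2)$-curve, so the condition $E^2 = 0$ cannot hold for such a component; the only integral curves in a fiber with self-intersection zero are the irreducible fibers themselves. I would phrase this by noting that if $E$ is a proper sub-divisor of the fiber $\pi^{-1}(t_0) \sim F$, then writing $\pi^{-1}(t_0) = E + E'$ with $E'$ a nonzero effective vertical divisor, the relation $0 = F^2 = (E+E')\cdot F = E\cdot F + E'\cdot F$ together with nefness of $F$ gives no contradiction directly, so the cleaner route is the self-intersection computation via the $(-2)$-curve classification rather than an intersection-with-$F$ argument.
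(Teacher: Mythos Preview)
Your proof is correct and follows essentially the same approach as the paper: both use $-K_X \sim F$ to translate $E\cdot K_X = 0$ into $E\cdot F = 0$, conclude that $E$ lies in a fiber, and then appeal to Kodaira's classification to rule out $E$ being a proper component of a reducible fiber (since such components are $(-2)$-curves). Your adjunction computation of $p_a(E)=1$ is a harmless detour that you don't actually use, and your phrasing of the first step via non-surjectivity of $\pi|_E$ is equivalent to the paper's argument that $E\cap F\neq\emptyset$ with $E\cdot F=0$ forces $E\subset F$.
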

\begin{proof}
If $P\in E$, then the fiber $F:=\pi^{-1}(\pi(P))$ intersects $E$ at $P$, i.e. $E\cap F\neq\emptyset$. On the other hand, $-K_X$ is linearly equivalent to $F$ by Theorem~\ref{thm:RES_distinguished_properties}, so $E\cdot F=-E\cdot K_X=0$. Hence $E$ must be a component of $F$. Assuming moreover that $E^2=0$, we prove that $E=F$. In case $F$ is smooth, this is clear. So we assume $F$ is singular and analyze its Kodaira type according to Theorem~\ref{thm:Kodaira_classification}. Notice that $F$ is not reducible, otherwise $E$ would be a $(-2)$-curve, which contradicts $E^2=0$. Hence $F$ is either of type $\text{I}_1$ or $\text{II}$. In both cases $F$ is an integral curve, therefore $E=F$.
\end{proof}

\begin{lemma}\label{lemma:negative_curves}
Let $\pi:X\to\P^1$ be a rational elliptic surface. Every negative curve on $X$ is either a $(-1)$-curve (section of $\pi$) or a $(-2)$-curve (component from a reducible fiber of $\pi$). 
\end{lemma}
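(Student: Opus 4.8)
The plan is to prove the statement by combining the adjunction formula with the classification of singular fibers and the basic geometry of rational elliptic surfaces established in Theorem~\ref{thm:RES_distinguished_properties}. Let $C$ be an integral curve on $X$ with $C^2<0$. Since $X$ is a smooth projective surface, the adjunction formula gives $C^2+C\cdot K_X=2p_a(C)-2\geq -2$, where $p_a(C)$ is the arithmetic genus. By Theorem~\ref{thm:RES_distinguished_properties}, $-K_X$ is linearly equivalent to a fiber of $\pi$, so $-K_X$ is nef; hence $C\cdot K_X\leq 0$, i.e. $C\cdot(-K_X)\geq 0$. The idea is that these two inequalities together with $C^2<0$ pin down the numerical possibilities very tightly.

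First I would write $d:=C\cdot(-K_X)\geq 0$, so that $C\cdot K_X=-d$. Adjunction then reads $C^2=2p_a(C)-2+d$. Since $C^2<0$ and $p_a(C)\geq 0$, we get $-2+d\leq C^2<0$, forcing $d\in\{0,1\}$ (if $d\geq 2$ then $C^2\geq 2p_a(C)\geq 0$, a contradiction). I would then split into these two cases.

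In the case $d=0$, we have $C\cdot K_X=0$, so by Lemma~\ref{lemma:detect_fibers} the curve $C$ is a component of a fiber of $\pi$. Adjunction gives $C^2=2p_a(C)-2$, and since $C^2<0$ the only option is $p_a(C)=0$ with $C^2=-2$; being a negative component of a fiber, $C$ is a smooth rational $(-2)$-curve sitting inside a reducible fiber, exactly as in the second alternative of the statement. In the case $d=1$, adjunction gives $C^2=2p_a(C)-1$, which is odd; combined with $C^2<0$ this forces $p_a(C)=0$ and $C^2=-1$, so $C$ is a $(-1)$-curve. It remains to identify such a curve with a section of $\pi$: since $C\cdot(-K_X)=1$ and $-K_X$ is a fiber $F$, the curve $C$ meets a general fiber in a single point, so $\pi|_C:C\to\P^1$ has degree $1$ and is therefore an isomorphism; hence $C$ is a section, and by Theorem~\ref{thm:RES_distinguished_properties} sections are precisely the exceptional curves, confirming the first alternative.

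The main obstacle, and the only place requiring genuine care, is the case $d=1$: one must argue cleanly that a $(-1)$-curve meeting the generic fiber once is genuinely a section rather than merely a multisection or a curve contained in a fiber. Containment in a fiber is ruled out because $C\cdot F=d=1\neq 0$ would fail for a fiber component (fiber components satisfy $C\cdot F=0$), and the degree of $\pi|_C$ equals the intersection number $C\cdot F=1$, which forces a degree-one map and hence, since $\P^1$ is smooth and $C$ is integral, an isomorphism onto $\P^1$. Everything else is a direct numerical consequence of adjunction together with the nefness of $-K_X$ and the already-proven Lemma~\ref{lemma:detect_fibers}.
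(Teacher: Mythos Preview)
Your argument is correct and follows essentially the same route as the paper: use the nefness of $-K_X$ together with adjunction to force $p_a(C)=0$ and $C^2\in\{-1,-2\}$, then invoke Lemma~\ref{lemma:detect_fibers} for the $(-2)$-case and the equality $C\cdot F=1$ for the section case. The only cosmetic difference is that the paper deduces $p_a(C)=0$ first and then reads off $C^2$, whereas you parametrize by $d=C\cdot(-K_X)$ and split into $d=0,1$; the content is the same.
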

\begin{proof}
Let $E$ be any integral curve in $X$ with $E^2<0$. By Theorem~\ref{thm:RES_distinguished_properties}, $-K_X$ is nef and linearly equivalent to any fiber of $\pi$. So $E\cdot(-K_X)\geq 0$ and by adjunction \cite[I.15]{Beauville} $2p_a(E)-2=E^2+E\cdot K_X<0$, which only happens if $p_a(E)=0$. Consequently $E^2=-1$ or $-2$. In case $E^2=-2$ we have $E\cdot K_X=0$, so $E$ is fiber a component by Lemma~\ref{lemma:detect_fibers} and this fiber is reducible by Theorem~\ref{thm:Kodaira_classification}. If $E^2=-1$, by adjunction \cite[I.15]{Beauville} $E\cdot(-K_X)=1$. But $-K_X$ is lineary equivalent to any fiber, so $E$ meets a general fiber at one point, therefore $E$ is a section by Proposition~\ref{prop:correspondence_sections_points}.
\end{proof}

\begin{remark}
\normalfont The adjunction formula in \cite[I.15]{Beauville} is proven over the complex numbers. As the proof only relies on sheaf cohomology and Riemann-Roch \cite[V.1.6]{Hartshorne}, it can be extended to algebraically closed fields (in fact, to arbitrary fields).
\end{remark}

\section{Conic bundles}\label{section:conic_bundles}\
\indent We define conic bundles and explain where they appear the general theory of algebraic surfaces and in this thesis more specifically.

\begin{defi}\label{def:conic_bundles}
A \textit{conic bundle} on a surface $S$ is a surjective morphism onto a smooth curve $\varphi:S\to C$ whose general fiber is a smooth, irreducible curve of genus zero.
\end{defi}

\begin{remark}
\normalfont When $S$ is rational, $C$ is isomorphic to $\P^1$ by Lüroth's theorem. 
\end{remark}

\begin{remark}
\normalfont Definition~\ref{def:conic_bundles} is essentially identical to the ones in \cite[Definition~2.5]{LoughSalgado}, \cite[Definition 3.1]{GarbagnatiSalgado17}. The only difference here is that we allow the base field $k$ to be algebraically closed of any characteristic.
\end{remark}

A prominent case where conic bundles arise is in the classification of minimal models of $L$-rational surfaces, where $L$ is an arbitrary field. As shown by Iskovskikh \cite[Theorem 1]{Isko}, if $S$ is an $L$-minimal rational surface, then $S$ is either (a) $\P^2_L$, (b) a quadric on $\P^3_L$ with $\text{Pic}(S)\simeq \Z$, (c) a del Pezzo surface with $\text{Pic}(S)=\Z\langle-K_S\rangle$, or (d) a surface with $\text{Pic}(S)\simeq \Z^2$ admitting a conic bundle whose singular fibers are isomorphic to a pair of lines meeting at a point. The latter was named a \textit{standard conic bundle} by Manin and Tsfasman \cite[Subsection 2.2]{ManTsfa}. We remark that the notion of standard conic bundle is often extended to higher dimension and plays a role in the classification of threefolds over $\Bbb{C}$ in the Minimal Model Program (see, for example, \cite{Sarkisov}, \cite{IskoRationalityProblem} and the survey \cite{Prokho}).

\begin{remark}
\normalfont Definition~\ref{def:conic_bundles} is more general than that of \textit{standard conic bundle} in \cite[Subsection 2.2]{ManTsfa}, in which every singular fiber is isomorphic to a pair of lines meeting at a point. In our definition, such a singular fiber is of one among four possible types, namely of type $A_2$ as in Theorem~\ref{thm:classification_conic_bundles}.
\end{remark}

As explained in Subsection~\ref{subsection:base_change}, the existence of a pencil of genus $0$ curves (i.e. a conic bundle) on a geometrically rational elliptic surface $\pi:X\to \P^1$ is relevant to the study of rank jumps in Chapter~\ref{ch:rank_jumps}. The following result by \cite{LoughSalgado} tells us that, over a number field $k$, the presence of a conic bundle over $k$ is in fact equivalent the the existence of a bisection over $k$ (Remark~\ref{remark:bisection}).

\begin{lemma}\label{lemma:bisection}{\cite[Lemma 2.6]{LoughSalgado}}
Let $\pi:X\rightarrow \P^1$ be a geometrically rational elliptic surface over a number field $k$. Then $X$ admits a conic bundle over $k$ if and only if
$\pi$ admits a bisection of arithmetic genus $0$ over $k$.
\end{lemma}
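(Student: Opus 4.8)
The plan is to translate both implications into numerical statements about divisors, using that on $X$ one has $-K_X\sim F$ for any fiber $F$ of $\pi$, with $-K_X$ nef, $K_X^2=0$, $h^1(\O_X)=h^2(\O_X)=0$ and $\chi(\O_X)=1$ (Theorem~\ref{thm:RES_distinguished_properties}).

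For the forward implication, suppose $\varphi:X\to\P^1$ is a conic bundle over $k$ and let $D$ be a fiber of $\varphi$. A general fiber is a smooth genus-zero curve with $D^2=0$, so adjunction gives $-2=D^2+D\cdot K_X=D\cdot K_X$, i.e. $D\cdot(-K_X)=2$. Since $-K_X\sim F$, the curve $D$ meets a general fiber of $\pi$ in two points and is not contained in any fiber, so $\pi|_D:D\to\P^1$ is finite of degree $2$ and $D$ is a bisection. As $k$ is infinite, I would choose $t_0\in\P^1(k)$ outside the finite set of parameters giving singular fibers of $\varphi$; then $D:=\varphi^{-1}(t_0)$ is defined over $k$, is geometrically a smooth conic (so $p_a(D)=0$), and is a bisection of $\pi$ over $k$.

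For the converse, let $D$ be a bisection with $p_a(D)=0$ defined over $k$. Adjunction together with $D\cdot(-K_X)=D\cdot F=2$ gives $D^2=2p_a(D)-2-D\cdot K_X=0$. Riemann--Roch then yields $\chi(\O_X(D))=\chi(\O_X)+\tfrac12 D\cdot(D-K_X)=1+\tfrac12(0+2)=2$, while Serre duality gives $h^2(\O_X(D))=h^0(\O_X(K_X-D))$; since $(K_X-D)\cdot(-K_X)=-2<0$ and $-K_X$ is nef, $K_X-D$ is not effective and $h^2(\O_X(D))=0$. Hence $h^0(\O_X(D))\ge 2$, so $D$ moves in a linear system of dimension $\ge 1$ defined over $k$.

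It remains to upgrade $|D|$ to a base-point-free pencil of rational curves. Writing $D\sim V+M$ with $V$ the fixed part and $|M|$ free of fixed components, the mobile part $M$ is nef (an irreducible curve meeting $M$ negatively would be a fixed component). The key step is to show $D$ itself is nef: if some irreducible $C$ had $D\cdot C<0$ then $C$ would be a component of $D$ with $C^2<0$, hence a $(-1)$- or $(-2)$-curve by Lemma~\ref{lemma:negative_curves}, and I would rule this out (or absorb it into a modified representative) using $C\cdot(-K_X)\in\{0,1\}$ together with $D\cdot(-K_X)=2$ and $D^2=0$. Once $D$ is nef, $\O_D(D)$ has non-negative multidegree on every component of $D$, so $h^1(\O_D(D))=0$; feeding this into $0\to\O_X\to\O_X(D)\to\O_D(D)\to 0$ and using $h^1(\O_X)=0$ forces $h^0(\O_X(D))=2$ and $h^1(\O_X(D))=0$. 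Two general members are then linearly equivalent with $D_1\cdot D_2=D^2=0$, which gives base-point-freeness and exhibits $|D|$ as a genus-zero fibration $\varphi_{|D|}:X\to\P^1$; Stein factorization and the rationality of $X$ make the base $\P^1$ and the general fiber irreducible, and the construction is defined over $k$ because $H^0(X,\O_X(D))$ is a $k$-vector space. I expect the main obstacle to be exactly this nefness and base-point-freeness step: controlling a possibly reducible or non-reduced bisection and its fixed components, for which the classification of negative curves and the nefness of $-K_X$ are the essential tools.
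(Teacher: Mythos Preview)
The paper does not supply its own proof of this lemma; it is quoted directly from \cite[Lemma 2.6]{LoughSalgado}, so there is no in-paper argument to compare against line by line.

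That said, the paper's Section~\ref{section:numerical_characterization} contains exactly the tools your converse needs, and using them would tighten your sketch. Your forward implication is correct. For the converse, your Riemann--Roch/Serre duality computation giving $h^0(D)\ge 2$ is fine. The step you flag as the main obstacle---nefness of $D$---is in fact easier than you suggest once you use that a bisection in the sense of Definition~\ref{def:multisection} is a curve with $\pi|_D$ finite, hence has no vertical components. Then either $D$ is integral, in which case $D^2=0$ makes it nef trivially, or $D=P_1+P_2$ is a sum of two sections; the condition $D^2=0$ forces $P_1\cdot P_2=1$, and then $D\cdot P_i=-1+1=0$, so $D$ is nef in this case too. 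There is no need to ``rule out'' hypothetical negative components.

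With nefness in hand, $[D]$ is a conic class (Definition~\ref{def:conic_class}), and Lemma~\ref{Riemann-Roch} (via \cite[Theorem III.1(a)]{Har}) gives base-point-freeness and $h^0(D)=2$ in one stroke; Theorem~\ref{thm:correspondence} then shows $\varphi_{|D|}$ is a conic bundle. This is cleaner than your sheaf-sequence route, which, as written, still leaves the fixed-component issue open: your argument ``$D_1\cdot D_2=0$ gives base-point-freeness'' only works once you already know $|D|$ has no fixed part, and you have not established that. So the genuine gap in your proposal is not the nefness (which is immediate from the bisection structure) but the incomplete treatment of fixed components; the fix is either to invoke Lemma~\ref{Riemann-Roch} or to carry out the standard argument that a nef divisor with $D^2=0$ and $-K_X\cdot D>0$ on an anticanonical rational surface has no fixed part.
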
 



\section{Thin sets and the Hilbert property}\label{section:thin_sets_Hilbert_property}\
\indent We define thin sets and the Hilbert property following Serre \cite[\S 3.1]{Serre}. We provide a brief explaination of why these concepts are relevant and how they appear in this thesis.

\begin{defi}\label{def:thin_sets}
Let $V$ be a variety over a field $k$. A subset $T\subseteq V(k)$ is called \textit{thin} if it is a finite union of subsets which are either contained in a proper closed subvariety of $V$; or in the image $f(W(k))$ where $f:W\to V$ is a generically finite dominant morphism of degree at least $2$ and $W$ is an integral variety over $k$.
\end{defi}

\begin{defi}
A variety $V$ over a field $k$ is said to \textit{satisfy the Hilbert property} if the set $V(k)$ is not thin.
\end{defi}

A classical example of a variety satisfying the Hilbert property is $\P^n_k$ when $k$ is a numbert field, as a consequence of Hilbert's irreducibility theorem \cite[Thm. 3.4.1]{Serre}. A still unproven conjecture by Colliot-Thélène states that, moreover, every unirational variety over a number field satisfies the Hilbert property \cite[Thm. 3.5.7 and Conjecture 3.5.8]{Serre}. If proven true, this is enough to give an affirmative answer to the inverse Galois problem \cite[Thm. 3.5.9]{Serre}.

In Chapter~\ref{ch:rank_jumps} we are concerned specifically with non-thin subsets of $\P^1$, which is the base of the rational elliptic fibration $\pi:X\to\P^1$. We note that any proper Zariski-closed subset of $\P^1$ is finite and that in order to prove that an infinite subset $T\subset \mathbb{P}^1(k)$ is not thin, it suffices to show that for any finite number of finite covers $\varphi_i:Y_i\to \P^1$ there is a point $P\in T$ such that $P\notin \varphi_i(Y_i)(k)\subset \P^1(k)$ for each $i$. 
\newpage
\section{General facts about the geometry of surfaces}\
\indent We list some elementary results involving divisors, fibers of morphisms $S\to\P^1$ and linear systems, which are needed in the classification of conic bundle fibers in Chapter~\ref{ch:conic_bundles_on_RES}. We remark that these results are valid for smooth surfaces in general, hence do not depend on the theory of elliptic surfaces.

\begin{lemma}\label{lemma:critical_divisors_are_nef}
Let $D$ be an effective divisor on a surface $S$ such that $D\cdot C=0$ for all {\normalfont $C\in\text{Supp }D$}. Then $D$ is nef and $D^2=0$.
\end{lemma}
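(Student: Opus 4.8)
The plan is to verify the two assertions essentially by direct computation, exploiting the bilinearity of the intersection pairing together with the basic fact that two distinct irreducible curves on a smooth surface meet nonnegatively. Write $D=\sum_{i} n_i C_i$ with each $n_i>0$ and each $C_i$ an irreducible curve in $\text{Supp }D$.

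First I would dispose of the self-intersection. Since each $C_i$ lies in $\text{Supp }D$, the hypothesis gives $D\cdot C_i=0$ for every $i$, and hence by linearity
\[
D^2=D\cdot\Big(\sum_i n_i C_i\Big)=\sum_i n_i\,(D\cdot C_i)=0,
\]
which settles $D^2=0$ at once.

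Next I would establish that $D$ is nef, for which it suffices to check $D\cdot C\geq 0$ against every irreducible curve $C\subset S$. I would split into two cases. If $C$ is one of the components $C_i$, then $C\in\text{Supp }D$ and the hypothesis gives $D\cdot C=0\geq 0$. If instead $C$ is not a component of $D$, then $C$ shares no irreducible component with any $C_i$, so each $C_i\cdot C$ is the intersection number of two distinct irreducible curves on a smooth surface and is therefore nonnegative; consequently $D\cdot C=\sum_i n_i\,(C_i\cdot C)\geq 0$ since all $n_i>0$. As these two cases exhaust all irreducible curves, $D$ is nef.

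There is no serious obstacle here: the only point requiring care is the reduction of nefness to a test against irreducible curves and the invocation of the elementary fact that distinct irreducible curves on a smooth surface have nonnegative intersection (the local intersection multiplicities being nonnegative at each common point). Both are standard, so the argument is complete once the two cases above are recorded.
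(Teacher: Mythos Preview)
Your proof is correct and follows essentially the same approach as the paper: both compute $D^2$ by expanding $D\cdot\sum_i n_iC_i$ linearly and using the hypothesis, and both verify nefness by noting that $D\cdot C'\geq 0$ for every curve $C'\notin\text{Supp }D$. Your version is simply more explicit in spelling out the case split and the reason distinct irreducible curves intersect nonnegatively.
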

\begin{proof}
To see that $D$ is nef, just notice that $D\cdot C'\geq 0$ for every $C'\notin\text{Supp }D$. To prove that $D^2=0$ let $D=\sum_in_iC_i$, where each $C_i$ is in $\text{Supp }D$. Then $D^2=D\cdot(\sum_in_iC_i)=\sum_in_iD\cdot C_i=0$.
\end{proof}

\indent The most natural application of Lemma~\ref{lemma:critical_divisors_are_nef} is when $D$ is a fiber of a surjective morphism $S\to\P^1$. In the next lemma, we explore some properties of such morphisms needed in Chapter~\ref{ch:conic_bundles_on_RES}.

\begin{lemma}\label{lemma:fibers}
Let $F$ be a fiber of a surjective morphism $f:S\to\P^1$. Then the following hold:
\begin{enumerate}[a)]
\item $F\cdot C=0$ for all {\normalfont $C\in\text{Supp }F$}.
\item If $F$ is connected and $E$ is a divisor such that {\normalfont $\text{Supp }E\subset\text{Supp }F$}, then $E^2\leq 0$. Moreover $E^2=0$ if and only if $E=rF$ for some $r\in\Q$.
\item If $F_1,...,F_n$ are the connected components of $F$, then each $F_i$ is nef with $F_i^2=0$ and $F_i\cdot K_S\in 2\Z$.
\end{enumerate}
\end{lemma}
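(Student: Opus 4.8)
The plan is to prove each part using the results already established, primarily Lemma~\ref{lemma:critical_divisors_are_nef} together with basic facts about fibers of morphisms to $\P^1$. For part a), the key observation is that any two fibers of $f:S\to\P^1$ are algebraically equivalent (being pullbacks of linearly equivalent divisors on $\P^1$), hence numerically equivalent. Since distinct fibers are disjoint, we have $F\cdot F'=0$ for a general fiber $F'$; by numerical equivalence $F^2=0$ as well. To get the stronger statement $F\cdot C=0$ for each component $C\in\text{Supp }F$, I would note that $C$ is contained in $F$ while $F$ is numerically equivalent to a disjoint general fiber $F'$, so $F\cdot C=F'\cdot C=0$ because $C\subset F$ is disjoint from $F'$.

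For part b), the plan is to apply the Hodge index theorem (or the Zariski lemma for fibers). Since $F$ is connected and $E$ is supported on $F$, I would first invoke part a) together with Lemma~\ref{lemma:critical_divisors_are_nef} to conclude $F^2=0$ and $F\cdot C=0$ for every $C\in\text{Supp }F$, so in particular $F\cdot E=0$. The statement that $E^2\leq 0$, with equality exactly when $E$ is a rational multiple of $F$, is precisely the content of Zariski's lemma for connected fibers \cite[Lemma III.8.2]{Miranda}: the intersection form on the $\Q$-vector space spanned by the components of a connected fiber is negative semidefinite, with radical spanned by $F$ itself. I would either cite this directly or reconstruct it: writing $E=\sum_i a_i C_i$ and using that the components satisfy $C_i\cdot C_j\geq 0$ for $i\neq j$ and $C_i^2<0$ for reducible fibers, one computes $E^2$ as a sum that is manifestly $\leq 0$ via a standard rearrangement, with equality forcing all the $a_i$ proportional to the multiplicities $\mu_i$ in $F$.

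For part c), I would argue componentwise using parts a) and b). Each connected component $F_i$ is itself a fiber-like divisor: by part a) we have $F\cdot C=0$ for all $C\in\text{Supp }F$, and since the $F_i$ are disjoint, $F_i\cdot C=0$ for all $C\in\text{Supp }F_i$; hence Lemma~\ref{lemma:critical_divisors_are_nef} gives that $F_i$ is nef with $F_i^2=0$. The parity statement $F_i\cdot K_S\in 2\Z$ follows from the adjunction formula: since $F_i^2=0$, the arithmetic genus of $F_i$ satisfies $2p_a(F_i)-2=F_i^2+F_i\cdot K_S=F_i\cdot K_S$, and as $2p_a(F_i)-2$ is an even integer, so is $F_i\cdot K_S$. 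This requires knowing that $F_i$, being an effective divisor, has a well-defined arithmetic genus computed by adjunction, which holds for any effective divisor on a smooth surface.

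The main obstacle I anticipate is part b), specifically establishing the negative semidefiniteness and characterizing the equality case cleanly. The inequality $E^2\leq 0$ is the easy half once one knows $F^2=0$ and $F\cdot E=0$, but pinning down that equality holds \emph{only} for rational multiples of $F$ requires the connectedness hypothesis in an essential way and is where the real work lies. If I reconstruct it from scratch rather than citing Zariski's lemma, the delicate point is verifying that the intersection form restricted to the span of components is negative definite \emph{modulo} the line spanned by $F$, which uses connectedness to rule out degenerate directions other than $F$ itself. I would lean on the cited reference \cite{Miranda} to keep this clean rather than reproving the semidefiniteness in full.
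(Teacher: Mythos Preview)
Your proposal is correct and follows essentially the same route as the paper. The only cosmetic difference is in part~c): you deduce $F_i\cdot K_S\in 2\Z$ via adjunction ($2p_a(F_i)-2=F_i^2+F_i\cdot K_S$), whereas the paper invokes Riemann--Roch directly ($F_i\cdot K_S=2(\chi(S)-\chi(S,F_i))$); these are two faces of the same identity, and for part~b) the paper likewise cites Zariski's lemma rather than reproving it.
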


\begin{proof}
a) Taking an arbitrary $C\in\text{Supp }F$ and another fiber $F'\neq F$, we have $F\cdot C=F'\cdot C=0$.
\\ \\
\indent b) This is Zariski's lemma \cite[Ch. 6, Lemma 6]{Peters}.
\\ \\
\indent c) Fix $i$. To prove that $F_i$ is nef and $F_i^2=0$ we show that $F_i\cdot C_i=0$ for any $C_i\in\text{Supp }F_i$ then apply Lemma~\ref{lemma:critical_divisors_are_nef}. Indeed, if $j\neq i$, the components of $F_i,F_j$ are disjoint, so $F_j\cdot C_i=0$. Since $F$ is a fiber and $C_i\in\text{Supp }F_i\subset\text{Supp }F$, then $F\cdot C_i=0$ by~a). Hence $F_i\cdot C_i=(F_1+...+F_n)\cdot C_i=F\cdot C_i=0$, as desired. For the last part, by Riemann-Roch $F_i\cdot K_S=2(\chi(S)-\chi(S,F_i))\in 2\Z$.
\end{proof}

\indent We end this section with a simple observation on linear systems without fixed components.
\begin{lemma}\label{fixed_component}
Let $E,E'$ be effective divisors such that $E'\leq E$ and that the linear systems $|E|,|E'|$ have the same dimension. If $|E|$ has no fixed components, then $E'=E$.
\end{lemma}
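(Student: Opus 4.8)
The plan is to work with the difference $E-E'$, which is an effective divisor since $E'\leq E$, and to show it must be zero by comparing the linear systems $|E'|$ and $|E|$ through inclusion. First I would observe that there is a natural injection of linear systems: given any effective divisor $D\in|E'|$ (so $D\sim E'$), adding the fixed effective divisor $E-E'$ produces $D+(E-E')\sim E'+(E-E')=E$, which is a member of $|E|$. This assignment $D\mapsto D+(E-E')$ is injective and identifies $|E'|$ with the linear subsystem of $|E|$ consisting of those members that contain $E-E'$ in their support (i.e. that are $\geq E-E'$ as divisors).

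Next I would exploit the hypothesis that $\dim|E|=\dim|E'|$. Since the image of the above injection is a projective linear subspace of $|E|$ of the same dimension as $|E|$ itself, it must be all of $|E|$. In other words, every member of $|E|$ is $\geq E-E'$, so $E-E'$ is contained in the support (with multiplicity) of every effective divisor linearly equivalent to $E$. This is precisely the statement that $E-E'$ is a fixed component of the linear system $|E|$ (unless $E-E'=0$).

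Finally I would invoke the hypothesis that $|E|$ has no fixed components. Since $E-E'$ is an effective divisor dividing every member of $|E|$, the no-fixed-component assumption forces $E-E'=0$, that is $E'=E$, as claimed.

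The only real subtlety—and the step I would be most careful about—is the translation between the numerical/scheme-theoretic statement ``$E-E'$ appears in every member of $|E|$'' and the formal definition of a fixed component, together with verifying that the map $|E'|\hookrightarrow|E|$ really is a bijection rather than merely an injection onto a proper subsystem. The dimension equality is what upgrades the injection to a bijection, so I would make sure the dimension count is clean: the fixed divisor $E-E'$ contributes nothing to moduli, so adding it gives an isomorphism of $|E'|$ onto its image, and a proper projective linear subspace of $|E|$ would have strictly smaller dimension. Once that is nailed down, the conclusion is immediate; there are no hard estimates or case analyses to grind through.
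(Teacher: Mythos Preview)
Your argument is correct and is essentially the same as the paper's proof: both identify $|E|$ with $|E'|+(E-E')$ via the dimension equality (the paper phrases this as $H^0(S,E')=H^0(S,E)$, you phrase it as a bijection of linear systems), and then conclude $E-E'=0$ from the absence of fixed components.
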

\begin{proof}
As $E'\leq E$, we have an inclusion of vector spaces $H^0(S,E')\subset H^0(S,E)$. By hypothesis, their dimensions coincide, therefore $H^0(S,E')=H^0(S,E)$. Hence
\begin{align*}
|E|&=\{E+\text{div}(f)\mid f\in H^0(S,E)\}\\
&=\{E'+\text{div}(f)+(E-E')\mid f\in H^0(S,E')\}\\
&=|E'|+(E-E').
\end{align*}

Assuming $|E|$ has no fixed components, we must have $E-E'=0$.
\end{proof}

\section{Lattices}\
\indent We stablish some basic terminology regarding lattices, which we adopt throughout the text, and present ADE lattices, which are the most important for us when dealing with Mordell-Weil lattices in Section~\ref{section:MW_lattice}.

\begin{defi}
A \textit{lattice} is a pair $(L,\langle\cdot,\cdot\rangle)$, where $L$ is a $\Bbb{Z}$-module and $\langle\cdot,\cdot\rangle:L\times L\to\Bbb{R}$ a non-degenerate symmetric bilinear pairing. We say that the lattice is 
\begin{enumerate}[i)]
\item \textit{positive-definite} (resp. \textit{negative-definite}) when $\langle x,x\rangle>0$ (resp. $\langle x,x\rangle<0$) for all $x\in L\setminus\{0\}$.
\item an \textit{integer} lattice when $\langle x,y\rangle\in\Bbb{Z}$ for all $x,y\in L$.
\item an \textit{even} lattice when $\langle x,x\rangle\in 2\Bbb{Z}$ for all $x\in L$.
\end{enumerate}
\end{defi}

\begin{defi}
The \textit{dual} lattice of an integral latice $(L,\langle\cdot,\cdot\rangle)$ is a sublattice of $L\otimes_\Z\Q$ given by
$$L^*:=\{x\otimes r\in L\otimes_\Bbb{Z}\Bbb{Q}\mid r\cdot \langle x,y\rangle\in\Bbb{Z}\text{ for all }y\in L\},$$
with a symmetric non-degenerate bilinear pairing naturally defined by:
$$\langle x\otimes r,x'\otimes r'\rangle:=r\cdot r'\cdot \langle x,x'\rangle\text{ for all }x,x'\in L,\,r,r'\in\Bbb{Q}.$$
In particular, there is a natural embedding $L\hookrightarrow L^*$ via $x\mapsto x\otimes 1$.
\end{defi}

\begin{defi}
We use $L^{-}$ to denote the \textit{opposite} lattice of $(L,\langle\cdot,\cdot\rangle)$, i.e. the lattice defined by the same $\Z$-module $L$ with the opposite pairing $-\langle\cdot,\cdot\rangle$.
\end{defi}

\begin{defi}
Let $(L,\langle\cdot,\cdot\rangle)$ be a lattice generated by $\alpha_1,...,\alpha_r\in L$. We define the \textit{determinant} of $L$ as the determinant of the Gram matrix
$$\det L:=\det(\langle\alpha_i,\alpha_j\rangle)_{i,j}$$
\end{defi}
\begin{remark}
\normalfont The determinant does not depend on the choice of the generators. Indeed, if $\beta_1,...,\beta_r$ generate $L$, then the base-change matrix $U$ is an invertible matrix with integer coefficients. In particular, $\det U=\pm 1$, hence $\det(\langle\beta_i,\beta_j\rangle)_{i,j}=(\det U)^2\det(\langle\alpha_i,\alpha_j\rangle)_{i,j}=\det(\langle\alpha_i,\alpha_j\rangle)_{i,j}$.
\end{remark}

We introduce the notion of \textit{root lattices}, which is central to many apparently unrelated areas of mathematics such as combinatorics, singular theory, Lie algebras and, in our case, Mordell-Weil lattices. 

\begin{defi}
A positive-definite (resp. negative-definite) integer lattice $L$ is called a \textit{root lattice} if it is generated by elements $x\in L$ such that $\langle x,x\rangle=2$ (resp. $\langle x,x\rangle=-2$). Such generators are called the \textit{roots} of $L$. 
\end{defi}

We define the fundamental root lattices, namely the ADE lattices.

\begin{defi}\label{def:ADE_lattices}
A positive-definite root lattice $L$ of rank $r$ is said to be of type $A_r$ $(r\geq 1)$, $D_r$ $(r\geq 4)$ or $E_r$ $(r=6,7,8)$ if it is generated by some set of roots $\alpha_1,...,\alpha_r$ such that $\langle\alpha_i,\alpha_j\rangle=0$ for every $i<j$ except in the following cases:
\begin{align*}
(A_r)&\,\,\,\langle \alpha_i,\alpha_j\rangle=-1\Leftrightarrow i+1=j.\\
(D_r)&\,\,\,\langle \alpha_i,\alpha_j\rangle=-1\Leftrightarrow i+1=j<r,\,\text{ or }i=r-2,\,j=r.\\
(E_r)&\,\,\,\langle \alpha_i,\alpha_j\rangle=-1\Leftrightarrow i+1=j<r,\,\text{ or }i=3,\,j=r.\\
\end{align*}
\end{defi}

\begin{figure}[h]
\begin{center}
\includegraphics[scale=0.65]{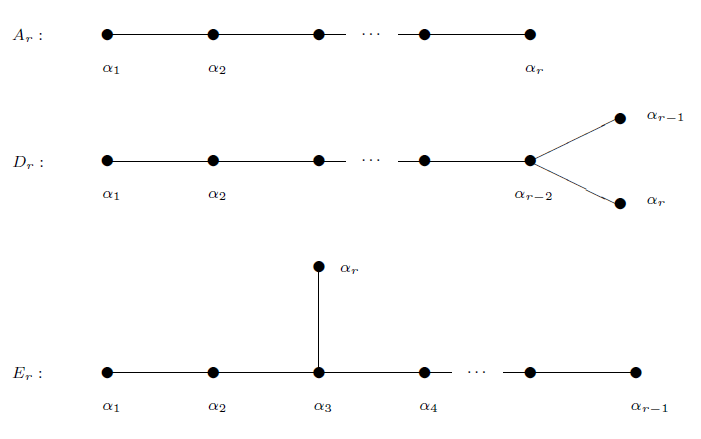}
\end{center}
\caption{Dynkin diagrams associated with each ADE lattice.}\label{figure:ADE_lattices}
\end{figure}

\begin{remark}
\normalfont ADE lattices may also be defined as negative-definite, in which case all signs should be inverted in Definition~\ref{def:ADE_lattices}.
\end{remark}

The importance of ADE lattices is explained by the following result.

\begin{teor}{\cite[Thm 1.2]{Ebeling}}
Let $L$ be a positive-definite (or negative-definite) integer lattice. Then $L$ is a root lattice if and only if it is isometric to a direct sum of ADE lattices.
\end{teor}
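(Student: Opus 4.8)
The backward implication is immediate: each ADE lattice is by Definition~\ref{def:ADE_lattices} generated by a set of vectors of norm $2$, so it is a root lattice, and an orthogonal direct sum of lattices generated by roots is again generated by the union of those roots. The substance lies in the forward implication, so the plan is to concentrate on showing that a positive-definite integer lattice $L$ generated by its roots $R := \{x \in L : \langle x, x\rangle = 2\}$ is an orthogonal direct sum of ADE lattices; the negative-definite case then follows by passing to the opposite lattice $L^{-}$.

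First I would record the elementary arithmetic of roots. For two non-proportional roots $\alpha,\beta$, the Cauchy--Schwarz inequality gives $\langle\alpha,\beta\rangle^2 < 4$, and since the lattice is integral this forces $\langle\alpha,\beta\rangle \in \{-1,0,1\}$, while $\beta = \pm\alpha$ gives $\langle\alpha,\beta\rangle = \pm 2$. Because $L$ is positive-definite there are only finitely many vectors of norm $2$, so $R$ is finite. The reflection $s_\alpha(x) := x - \langle\alpha,x\rangle\alpha$ preserves both $L$ (as $\langle\alpha,x\rangle\in\mathbb{Z}$) and the norm, hence permutes $R$; thus $R$ is a finite root system in $V := L\otimes_{\mathbb{Z}}\mathbb{Q}$ in the sense of the classical theory, and it is \emph{simply laced} since all roots have equal norm.

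Next I would invoke the standard structure theory of root systems. Fixing a generic functional splits $R$ into positive and negative roots and singles out a base (set of simple roots) $\Delta = \{\alpha_1,\dots,\alpha_r\}$; these are linearly independent, every root is an integer combination of them with coefficients of a single sign, and consequently $\Delta$ is a $\mathbb{Z}$-basis of the lattice $Q(R)$ they generate. Since $L$ is generated by $R$ and each root lies in $Q(R)$, we get $L = Q(R)$, so $\Delta$ is a $\mathbb{Z}$-basis of $L$. With $\langle\alpha_i,\alpha_j\rangle \le 0$ for $i\ne j$, the first step gives $\langle\alpha_i,\alpha_j\rangle \in \{0,-1\}$, which I encode in a graph $\Gamma$ (the Dynkin diagram) with an edge between $\alpha_i,\alpha_j$ exactly when $\langle\alpha_i,\alpha_j\rangle = -1$. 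The connected components of $\Gamma$ partition $\Delta$ into mutually orthogonal blocks, so $L$ splits as the orthogonal direct sum of the indecomposable sublattices they span; it remains to identify each connected block.

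The crux is the combinatorial classification of the connected graphs $\Gamma$ that can arise, and this is the step I expect to be the main obstacle. The governing constraint is that the Gram matrix of each block is positive-definite, being a sublattice of the positive-definite $L$. I would exploit this through the extended (affine) diagrams $\widetilde{A}_n, \widetilde{D}_n, \widetilde{E}_6, \widetilde{E}_7, \widetilde{E}_8$ already appearing in Table~\ref{table:indices_reducible_fibers}: each carries an explicit positive null vector, namely the vector of multiplicities $\mu_{v,i}$ of the corresponding fiber components, so its Gram matrix is positive-\emph{semi}definite with nontrivial kernel. A positive-definite lattice can contain no such configuration, so $\Gamma$ can contain none of these affine diagrams as a full subgraph. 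A short case analysis then shows that avoiding $\widetilde{A}_n$ forbids cycles (so $\Gamma$ is a tree), avoiding $\widetilde{D}_4$ caps vertex degrees at $3$ and allows at most one branch vertex, and avoiding $\widetilde{D}_n, \widetilde{E}_6,\widetilde{E}_7,\widetilde{E}_8$ bounds the lengths of the three arms at a branch vertex; the surviving trees are exactly the diagrams of type $A_r, D_r, E_6, E_7, E_8$. Comparing the resulting Gram matrices with Definition~\ref{def:ADE_lattices} identifies each connected block with the corresponding ADE lattice, completing the proof.
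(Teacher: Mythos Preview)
Your proposal is correct and follows the standard textbook route (finite simply laced root system, base of simple roots as a $\mathbb{Z}$-basis, classification of positive-definite Dynkin diagrams via forbidden affine subdiagrams). Note that the paper does not supply its own proof of this statement: it is quoted with a reference to \cite{Ebeling} and used as background, so there is nothing to compare against beyond observing that your argument is essentially the one in that reference.
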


\section{Mordell-Weil lattices}\label{section:MW_lattice}\
\indent We introduce the Mordell-Weil lattice, which is our central tool for Chapter~\ref{ch:gaps}. Its notion was first put forward by Elkies and Shioda independently in \cite{Elkies90}, \cite{Shioda90}, \cite{Shioda89} and consists of a lattice structure on the Mordell-Weil group $E(K)$ with an explicit connection with the Néron-Severi lattice, in a sense made precise in this seciton.

Historically, many attempts have been made, in different contexts, to define a bilinear pairing on $E(K)$. This dates back to \cite{Manin}, \cite{BS-D} and \cite{Tate66}, whose objects of interest were, respectively, heights on Abelian varieties, the Birch and Swinnerton-Dyer conjecture and the Tate conjecture. The further step of connecting a lattice structure on $E(K)$ with the Néron-Severi lattice was made by \cite{CoxZ}, whose original goal was to determine whether a given set of sections can generate $E(K)$. The idea of a Mordell-Weil lattice was already implicit in the latter, but a precise definition only appears a few years later in \cite{Elkies90}, \cite{Shioda90}, \cite{Shioda89}. 

Since then, many applications have been found for Mordell-Weil lattices of both arithmetic and geometric interest (see, for instance, \cite[Chapter 10]{MWL}). In this thesis we make a specific use of it, namely, we take advantage of the fact that Mordell-Weil lattices have been explicitly classified for rational elliptic surfaces \cite{OguisoShioda} in order to obtain information about intersection numbers of sections in Chapter~\ref{ch:gaps}.

\newpage

This section is dedicated to introducing the Mordell-Weil lattice, and is organized as follows. First we make some observations about the Néron-Severi lattice $\NS(S)$ and define two sublattices, namely the lattice $T$ (Definition~\ref{def:lattice_T}) and the trivial lattice (Definition~\ref{def:trivial_lattice}). We proceed with the construction of the height pairing, which leads to the definition of the Mordell-Weil lattice. We also present the explicit formula for computing the height pairing and introduce the narrow Mordell-Weil lattice.

We begin with a peculiar property of elliptic surfaces $f:S\to C$ in general, namely that numerical and algebraic equivalences coincide on $S$. By this feature, we may consider the Néron-Severi group $\NS(S)$ equipped with the intersection form, called the \textit{Néron-Severi lattice}.

\begin{teor}{\normalfont \cite[Thm. 3.1]{Shioda90}}\label{thm:algebraic_and_numerical_equivalences}
On an elliptic surface $S$, algebraic and numerical equivalences coincide, i.e. {\normalfont $D_1,D_2\in\text{Div}(S)$} are equivalent in {\normalfont $\NS(S)$} if and only if $D_1\cdot D=D_2\cdot D$ for every {\normalfont $D\in\Div(S)$}. Equivalently stated, {\normalfont $\NS(S)$} is torsion-free.
\end{teor}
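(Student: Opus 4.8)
The statement unwinds into a torsion-freeness assertion. Writing $\NS(S)=\text{Pic}(S)/\!\equiv_{\mathrm{alg}}$ and $\mathrm{Num}(S)=\text{Pic}(S)/\!\equiv_{\mathrm{num}}$, algebraic equivalence always implies numerical equivalence, and $\mathrm{Num}(S)$ is torsion-free by construction; hence the two equivalences coincide precisely when the natural surjection $\NS(S)\twoheadrightarrow\mathrm{Num}(S)$ is an isomorphism, i.e. when $\NS(S)$ has no torsion. So the plan is to prove that $\NS(S)$ is torsion-free, equivalently that no line bundle $L$ on $S$ is numerically trivial without being algebraically trivial (i.e. lying in $\text{Pic}^0(S)$).

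The first step is to identify the torsion of $\NS(S)$ with a cohomological invariant. Over $\C$, I would use the exponential sequence
\[
H^1(S,\O_S)\longrightarrow \text{Pic}(S)\stackrel{c_1}{\longrightarrow} H^2(S,\Z)\longrightarrow H^2(S,\O_S),
\]
in which $\ker c_1=\text{Pic}^0(S)$, so $\NS(S)\cong\operatorname{im}c_1$. Since $H^2(S,\O_S)$ is a $\C$-vector space and hence torsion-free, every torsion class of $H^2(S,\Z)$ already lies in $\operatorname{im}c_1$; thus $\mathrm{Tors}\,\NS(S)\cong\mathrm{Tors}\,H^2(S,\Z)$, which by the universal coefficient theorem and Poincaré duality equals $\mathrm{Tors}\,H_1(S,\Z)=\mathrm{Tors}\,\pi_1(S)^{\mathrm{ab}}$. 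In arbitrary characteristic I would run the same reduction with $\ell$-adic étale cohomology: for $\ell\neq\mathrm{char}(k)$ the torsion of $\NS(S)$ injects into $\mathrm{Tors}\,H^2_{\mathrm{\acute et}}(S,\Z_\ell(1))$, which is controlled by the abelianized étale fundamental group. In either setting the problem becomes: show that the abelianized fundamental group of $S$ is torsion-free.

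The second step computes this fundamental group using the fibration and its section. The section $\sigma\colon C\to S$ with $f\circ\sigma=\mathrm{id}_C$ makes $f_*\colon\pi_1(S)\to\pi_1(C)$ a split surjection, so $\pi_1(C)$ is a retract of $\pi_1(S)$, and $\pi_1(C)$ is torsion-free as the fundamental group of a smooth curve. It remains to control the complementary contribution, generated by the image of $\pi_1$ of a general fiber $F$ (an elliptic curve, so $\pi_1(F)\cong\Z^2$). Here one invokes that $S$ carries singular fibers --- guaranteed since $e(S)>0$, e.g. $e(S)=12$ for a rational elliptic surface --- so that the local monodromy (Picard--Lefschetz) around these fibers kills the vanishing cycles in $\pi_1(S)^{\mathrm{ab}}$; when the vanishing cycles span $H_1(F)$ one concludes $\pi_1(S)^{\mathrm{ab}}\cong\pi_1(C)^{\mathrm{ab}}$, which is torsion-free. (In the rational case this is immediate, since a rational surface is simply connected, giving a clean consistency check for the cases of interest.)

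The main obstacle I anticipate is precisely this last monodromy analysis: proving that the fiber classes contribute \emph{no} torsion requires a careful integral treatment of the vanishing cycles at each singular fiber, and making the conclusion uniform across all characteristics forces one to replace the topological fundamental group by its étale analogue and to verify that the same monodromy vanishing persists $\ell$-adically, with the $p$-part ($p=\mathrm{char}(k)$) either handled separately or circumvented in the situations where $q=p_g=0$. Once the abelianized (étale) fundamental group is shown torsion-free, the reduction of the first step yields $\mathrm{Tors}\,\NS(S)=0$, which is exactly the coincidence of algebraic and numerical equivalence.
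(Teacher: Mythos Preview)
The paper does not give its own proof of this statement: it is quoted verbatim as a known result with the citation \cite[Thm.~3.1]{Shioda90} and used only as input for the construction of the Mordell--Weil lattice. So there is no proof in the paper to compare against.

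On the substance of your proposal: the reduction to torsion-freeness of $\NS(S)$ and, over $\C$, the identification $\mathrm{Tors}\,\NS(S)\cong\mathrm{Tors}\,H_1(S,\Z)$ are correct and standard. The fibration-with-section argument splitting off $\pi_1(C)$ is also fine. Where the sketch remains genuinely incomplete is exactly where you flag it: the claim that the image of $\pi_1(F)$ contributes no torsion to $\pi_1(S)^{\mathrm{ab}}$ is not automatic, and ``the vanishing cycles span $H_1(F)$'' is not true for every configuration of singular fibers (e.g.\ all $\mathrm{I}_n$ fibers with parallel vanishing cycles would fail this literally, though the global monodromy still saves the day). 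A cleaner route, closer to Shioda's original argument, avoids $\pi_1$ altogether: one shows directly that a numerically trivial line bundle $L$ restricts to a degree-$0$ bundle on every fiber, uses the section to trivialize it fiberwise, and then invokes the structure of $\mathrm{Pic}(S)$ relative to the fibration (together with $q(S)=g(C)$ for an elliptic surface with section) to conclude $L$ is algebraically trivial.

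For the paper's actual needs this is all overkill: $X$ is rational, hence simply connected, so $\NS(X)=\mathrm{Pic}(X)$ is torsion-free immediately. Your parenthetical remark already notes this consistency check; it is in fact the only case the thesis uses.
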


\begin{remark}\label{remark:it_makes_sense_to_write_NS_times_Q}
\normalfont The fact that $\NS(S)$ is torsion-free allows us to extend scalars to $\Q$, i.e. to define $\NS(S)_{\Q}:=\NS(S)\otimes_{\Z}\Q$ without annihilating elements. 
\end{remark}

We define two important sublattices of $\NS(S)$, both of which contain information about the reducible fibers of $f:S\to C$. We use the notation from Subsection~\ref{subsection:singular_fibers}. 

\begin{defi}\label{def:lattice_T}
For each $v\in R$ we define the lattices $T_v$ and $T$ as
$$T_v:=\Z\langle \Theta_{v,i}\mid i> 0\rangle,$$
$$T:=\bigoplus_{v\in R}T_v.$$
\end{defi}
\begin{remark}
\normalfont By Kodaira's classification (Theorem~\ref{thm:Kodaira_classification}), each $T_v$ can be represented by a Dynkin diagram. More precisely, the opposite lattice $T_v^{-}$ is isomorphic to an ADE lattice (Figure~\ref{figure:ADE_lattices}). 
\end{remark}

The next lattice we define plays an important role in the construction of the Mordell-Weil lattice. Like the lattice $T$, it contains information about the reducible fibers, only with the addition of the neutral section $O$ and the neutral components $\Theta_{v,0}$ for $v\in R$ in the generator set.

\begin{defi}\label{def:trivial_lattice}
If $O\in E(K)$ be the neutral section, we define the \textit{trivial lattice} as
$$\text{Triv}(S):=\Z\langle O,\Theta_{v,i}\mid v\in R,\,i\geq 0\rangle.$$
\end{defi}

At this point we call the reader's attention to the distinction between the group operations in the Mordell-Weil group and in the Néron-Severi group. As explained in Subsection~\ref{subsection:correspondence_between_sections_and_points}, sections $P_1,P_2\in E(K)$ may also be seen as curves, hence defining classes $\overline{P}_1,\overline{P}_2\in \NS(S)$. However,
$$\overline{P}_1+\overline{P}_2\neq\overline{P_1+P_2}.$$

The following theorem states that, modulo the trivial lattice $\text{Triv}(S)$, these different operations actually induce a group isomorphism.

\begin{teor}{\normalfont \cite[Thm 1.3]{Shioda90}}\label{thm:shioda_tate}
The following map is a group isomorphism:
\begin{align*}
E(K)&\to {\normalfont\NS(S)/\text{Triv}(S)}\\
P&\mapsto \overline{P}\,{\normalfont \text{mod Triv}(S)}
\end{align*}
\end{teor}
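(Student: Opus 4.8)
The plan is to prove that the map $P\mapsto \overline{P}\bmod\Triv(S)$ is a well-defined group isomorphism by exhibiting it as the composition of the natural inclusion $E(K)\hookrightarrow \NS(S)$ of sections-as-curves with the quotient projection, and then checking injectivity and surjectivity separately. The conceptual heart of the matter is that the Mordell-Weil group of the generic fiber can be identified with the Picard group of the generic fiber, and that the localization sequence relating $\NS(S)$ to $\Pic(E)$ has kernel generated precisely by the fiber components and the zero section; that is, by $\Triv(S)$.

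First I would set up the restriction map $\rho:\NS(S)\to \Pic(E)$ obtained by restricting a divisor class on $S$ to the generic fiber $E=f^{-1}(\eta)$. Since $E$ is an elliptic curve over $K$ with the rational point $O$, we have $\Pic^0(E)\cong E(K)$, and composing with the degree map gives a clean description of $\Pic(E)\cong \Z\oplus E(K)$. The key algebraic input is the exact sequence
\begin{equation*}
\bigoplus_{v}\Z\langle \Theta_{v,i}\rangle \longrightarrow \NS(S)\xrightarrow{\ \rho\ } \Pic(E)\longrightarrow 0,
\end{equation*}
where the first map sends a fiber component to its class; this expresses that every divisor on $E$ extends to a divisor on $S$ (surjectivity, by taking closures of points as in Subsection~\ref{subsection:correspondence_between_sections_and_points}) and that two divisors on $S$ agree on $E$ exactly when they differ by a combination of vertical components. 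I would then identify the kernel: a vertical divisor restricts to zero on $E$, and conversely a class restricting to $0$ in $\Pic(E)$ must be supported on fibers. Passing to the quotient by the neutral section $O$ and the neutral components $\Theta_{v,0}$ — which together with the $\Theta_{v,i}$ for $i>0$ generate $\Triv(S)$ — kills exactly the $\Z$-summand corresponding to the degree and the contributions of full fibers (using that a whole fiber $F$ is linearly equivalent to $-K_S$ by Theorem~\ref{thm:RES_distinguished_properties}), leaving $E(K)$.

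I would make injectivity precise as follows: if $\overline{P}\in\Triv(S)$, then restricting to $E$ shows $P$ is linearly equivalent on $E$ to a combination of $O$ and components meeting $E$ trivially, forcing $P=O$ in $E(K)$ by the isomorphism $\Pic^0(E)\cong E(K)$. For surjectivity and well-definedness of the inverse, given any class $\overline{D}\in\NS(S)/\Triv(S)$, I restrict $D$ to $E$ to obtain a $K$-point after normalizing the degree by subtracting a suitable multiple of a fiber and adjusting by $O$; the section associated to that point maps back to $\overline{D}$. Compatibility of the two group laws is exactly the content being claimed: the group law on $E(K)$ is the one on $\Pic^0(E)$, and the isomorphism transports it to the $\NS$-addition modulo $\Triv(S)$.

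The main obstacle I anticipate is pinning down the kernel of $\rho$ cleanly — namely proving that a class in $\NS(S)$ restricting to zero on the generic fiber is genuinely a combination of fiber components (and not merely numerically so). Here the two distinguished features of elliptic surfaces established earlier are essential: that numerical and algebraic equivalence coincide on $S$ (Theorem~\ref{thm:algebraic_and_numerical_equivalences}), so $\NS(S)$ is torsion-free and we may argue numerically, and that a full fiber is anticanonical (Theorem~\ref{thm:RES_distinguished_properties}), which controls how the degree and the vertical classes interact. I expect the bookkeeping of degrees and of the neutral components $\Theta_{v,0}$ across the reducible fibers to be the fiddly part, whereas the structural statement follows formally once the localization sequence and the identification $\Pic^0(E)\cong E(K)$ are in hand.
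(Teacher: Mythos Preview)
The paper does not prove this theorem: it is stated as a quotation from \cite[Thm.~1.3]{Shioda90} and used as background input for the construction of the Mordell--Weil lattice. So there is no ``paper's proof'' to compare against; your proposal is essentially a sketch of Shioda's original argument.

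Your outline via the restriction map $\rho:\NS(S)\to\Pic(E)$, the localization-type exact sequence, and the identification $\Pic^0(E)\cong E(K)$ is the standard route and is correct in spirit. Two small points. First, you invoke Theorem~\ref{thm:RES_distinguished_properties} (that a fiber is anticanonical) to control the degree direction, but that theorem is specific to \emph{rational} elliptic surfaces, whereas the statement here is for an arbitrary elliptic surface $S$; in the general proof one simply uses that all fibers are linearly equivalent to one another (any two fibers are pulled back from linearly equivalent points on $C$), which already shows that the class of a full fiber lies in $\Triv(S)$. Second, the kernel computation you flag as the ``main obstacle'' is exactly where Theorem~\ref{thm:algebraic_and_numerical_equivalences} is used in Shioda's argument: one shows first that a class restricting to zero on $E$ is \emph{numerically} vertical (it has zero intersection with a fiber and with the zero section, hence lies in the span of fiber components in $\NS(S)_\Q$), and then torsion-freeness of $\NS(S)$ promotes this to an actual equality of classes. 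Your identification of that step as the crux is accurate.
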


We proceed to define a bilinear pairing on $E(K)$. We note that, in order to do it, we cannot use the intersection pairing directly, which only defines a lattice on $\NS(S)$ but not on $E(K)$. This difficulty is overcome by a geometric construction which involves the orthogonal projection with respect to $\text{Triv}(S)$ in the $\Q$-vectors space $\NS(S)_{\Q}:=\NS(S)\otimes_{\Z}\Q$.

\begin{lemma}{\normalfont \cite[Lemma 8.1]{Shioda90}}\label{lemma:orthogonal_projection}
There is a unique function {\normalfont $\varphi:E(K)\to \NS(X)_\Q$} with the following properties:
\begin{enumerate}[(i)]
\item {\normalfont $\varphi(P)\equiv P\text{ mod }\text{Triv}(S)_\Q$} for all $P\in E(K)$.
\item {\normalfont $\varphi(P)\perp\text{Triv}(S)$} for all $P\in E(K)$.
\end{enumerate}
Moreover, the class {\normalfont $\varphi(P)\in\NS(S)_\Q$} is represented by the following $\Q$-divisor:
$$D_P:=\overline{P}-\overline{O}-(P\cdot O+\chi(S))F+\sum_{v\in R}(\Theta_{v,1},...,\Theta_{v,m_v-1})(-A_v^{-1})\left(\begin{matrix}P\cdot \Theta_{v,1}\\\vdots\\P\cdot \Theta_{v,m_v-1}\end{matrix}\right),$$
where $F$ is a fiber of $\pi$ and $A_v$ is the matrix $(\Theta_{v,i}\cdot \Theta_{v,j})_{1\leq i,j\leq m_v-1}$.
\end{lemma}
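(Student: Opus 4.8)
The plan is to construct the $\Q$-divisor $D_P$ explicitly by the stated formula and verify that it satisfies properties (i) and (ii), then argue uniqueness separately. The strategy rests on the orthogonal decomposition of $\NS(S)_\Q$ with respect to $\Triv(S)_\Q$: since the intersection pairing is non-degenerate on $\Triv(S)_\Q$ (the neutral section $O$, a fiber $F$, and the fiber components $\Theta_{v,i}$ span a non-degenerate sublattice, as follows from the block structure of the Gram matrix and the fact that each $A_v$ is the Gram matrix of a negative-definite ADE root lattice, hence invertible), every class $\bar P$ admits a unique decomposition as an element of $\Triv(S)_\Q$ plus an element orthogonal to $\Triv(S)_\Q$. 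The function $\varphi$ is then forced to be the projection onto the orthogonal complement, which gives uniqueness immediately once existence is shown.

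First I would establish uniqueness. Suppose $\varphi_1,\varphi_2$ both satisfy (i) and (ii). Then $\varphi_1(P)-\varphi_2(P)$ lies in $\Triv(S)_\Q$ by (i) and is orthogonal to $\Triv(S)$ by (ii), hence orthogonal to itself; since the pairing restricted to $\Triv(S)_\Q$ is non-degenerate, the difference is zero. Next I would verify that the displayed $D_P$ represents a class satisfying the two properties. Property (i) is transparent from the formula: $D_P \equiv \bar P \pmod{\Triv(S)_\Q}$ because every term subtracted off---namely $\bar O$, the multiple of $F$, and the $\Q$-linear combinations of the $\Theta_{v,i}$ with $i\geq 1$---already lies in $\Triv(S)_\Q$. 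The substance is in property (ii), where I must check $D_P\cdot T = 0$ for each generator $T$ of $\Triv(S)$, i.e. for $T=O$, $T=F$, and $T=\Theta_{v,j}$ with $j\geq 0$.

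The verification of orthogonality splits into cases that I would check one at a time using standard intersection data on a rational elliptic surface. Against $F$: one uses $\bar P\cdot F = \bar O\cdot F = 1$ (sections meet a fiber once), $F^2=0$, and $\Theta_{v,i}\cdot F=0$, so the fiber-multiple term and the fiber-component terms drop out and the contributions cancel. Against $O$: here one expands using $\bar O^2 = -\chi(S)$ (from adjunction, since $O$ is a $(-1)$-curve and $K_X\equiv -F$), $\bar P\cdot\bar O = P\cdot O$, and $O\cdot F=1$, and the term $-(P\cdot O+\chi(S))F\cdot O$ is designed precisely to absorb these, while $O\cdot\Theta_{v,i}=0$ for $i\geq 1$ since $O$ meets only the neutral component $\Theta_{v,0}$. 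Against $\Theta_{v,j}$ for $j\geq 1$: this is where the matrix $-A_v^{-1}$ earns its place---the summand $\sum_i (\cdots)(-A_v^{-1})(P\cdot\Theta_{v,i})$ is constructed so that its intersection with $\Theta_{v,j}$ (computed via $A_v$) exactly cancels $\bar P\cdot\Theta_{v,j}$, since $A_v\cdot(-A_v^{-1}) = -\mathrm{Id}$ recovers the needed entries, and the $\bar O$ and $F$ terms contribute nothing. The case $j=0$ (the neutral component) is handled by the relation $\sum_i \mu_{v,i}\Theta_{v,i}=F$ together with orthogonality against $F$ and against the other components. The main obstacle I anticipate is bookkeeping in this last family of cases: one must keep the indexing of fiber components consistent with the nonstandard ordering noted for $\mathrm{I}_n^*$, and correctly invoke that $A_v$ is invertible (guaranteed by negative-definiteness of the ADE lattice $T_v^-$) so that $A_v^{-1}$ is well-defined over $\Q$. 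Once the per-generator cancellations are assembled, properties (i) and (ii) hold and uniqueness finishes the proof.
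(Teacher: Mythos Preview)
The paper does not supply its own proof of this lemma; it is quoted verbatim from Shioda's original paper as \cite[Lemma 8.1]{Shioda90} and then used as input for the height pairing construction. Your proposal is the standard argument (and is essentially Shioda's): orthogonal projection exists and is unique because the intersection form restricted to $\Triv(S)_\Q$ is non-degenerate, and the explicit formula for $D_P$ is verified generator by generator. The outline is correct and the bookkeeping you flag (invertibility of $A_v$ via negative-definiteness of the ADE lattice, the hyperbolic plane spanned by $O$ and $F$, and the fiber relation to recover orthogonality against $\Theta_{v,0}$) is exactly what is needed.
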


In fact, the map $\varphi$ is a group morphism and induces an embedding $E(K)/E(K)_\text{tor}\hookrightarrow\NS(S)_\Q$.

\begin{lemma}{\normalfont \cite[Lemma 8.2]{Shioda90}}
The map {\normalfont $\varphi:E(K)\to\NS(X)_\Q$} is a group homomorphism. Moreover, {\normalfont $\ker\varphi=E(K)_\text{tor}$}, hence $\varphi$ induces an injective morphism {\normalfont $E(K)/E(K)_\text{tor}\hookrightarrow\NS(S)$}.
\end{lemma}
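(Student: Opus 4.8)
The plan is to deduce both claims from the uniqueness statement in Lemma~\ref{lemma:orthogonal_projection} together with the Shioda--Tate isomorphism (Theorem~\ref{thm:shioda_tate}), avoiding any direct manipulation of the explicit divisor $D_P$. The key observation is that $\varphi(P)$ is \emph{characterized} by the two properties (i) $\varphi(P)\equiv P \bmod \text{Triv}(S)_\Q$ and (ii) $\varphi(P)\perp \text{Triv}(S)$; hence, to identify a given class of $\NS(S)_\Q$ with $\varphi$ of some section, it suffices to check these two properties and invoke uniqueness.

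First I would prove additivity. Fixing $P_1,P_2\in E(K)$, I would test the candidate $\varphi(P_1)+\varphi(P_2)$ against the two defining properties relative to $P_1+P_2$. Property (ii) is inherited at once, since a sum of classes orthogonal to $\text{Triv}(S)$ is again orthogonal to it. For property (i), adding the congruences $\varphi(P_i)\equiv \overline{P_i}\bmod\text{Triv}(S)_\Q$ gives $\varphi(P_1)+\varphi(P_2)\equiv \overline{P_1}+\overline{P_2}\bmod\text{Triv}(S)_\Q$; and by Theorem~\ref{thm:shioda_tate} the map $P\mapsto\overline{P}\bmod\text{Triv}(S)$ is a homomorphism, so $\overline{P_1}+\overline{P_2}\equiv\overline{P_1+P_2}\bmod\text{Triv}(S)$. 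Thus $\varphi(P_1)+\varphi(P_2)$ satisfies both (i) and (ii) for $P_1+P_2$, and uniqueness forces $\varphi(P_1+P_2)=\varphi(P_1)+\varphi(P_2)$.

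Next I would compute the kernel. Using uniqueness again, $\varphi(P)=0$ if and only if the zero class satisfies (i) for $P$ (property (ii) being automatic for $0$), that is, if and only if $\overline{P}\in\text{Triv}(S)_\Q$. It remains to identify this condition with $P\in E(K)_\text{tor}$. Clearing denominators shows that $\overline{P}\in\text{Triv}(S)_\Q$ holds exactly when $n\overline{P}\in\text{Triv}(S)$ for some $n\geq 1$, i.e. when the image of $P$ under the isomorphism of Theorem~\ref{thm:shioda_tate} is a torsion element of $\NS(S)/\text{Triv}(S)$; since that map is an isomorphism, this occurs precisely when $P$ is torsion. Therefore $\ker\varphi=E(K)_\text{tor}$, and the first isomorphism theorem yields the injection $E(K)/E(K)_\text{tor}\hookrightarrow\NS(S)_\Q$.

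The only genuinely delicate point --- the step I would treat most carefully --- is the equivalence between $\overline{P}\in\text{Triv}(S)_\Q$ and torsion in the quotient $\NS(S)/\text{Triv}(S)$. This rests on identifying $\text{Triv}(S)_\Q$ with the honest $\Q$-span of $\text{Triv}(S)$ inside $\NS(S)_\Q$, which is legitimate because $\NS(S)$ is torsion-free by Theorem~\ref{thm:algebraic_and_numerical_equivalences}: flatness of $\Q$ keeps $\text{Triv}(S)\otimes\Q\hookrightarrow\NS(S)\otimes\Q$ injective, so no collapsing occurs and the clearing-denominators argument is valid. Everything else is a formal consequence of uniqueness in Lemma~\ref{lemma:orthogonal_projection} and the Shioda--Tate isomorphism.
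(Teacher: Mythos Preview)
Your argument is correct. Note, however, that the paper does not actually supply a proof of this lemma: it simply cites \cite[Lemma 8.2]{Shioda90} and moves on. So there is no ``paper's own proof'' to compare against; your write-up is a self-contained justification that the thesis omits.

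That said, your approach is essentially the natural one and matches the spirit of Shioda's original argument: exploit the uniqueness clause of Lemma~\ref{lemma:orthogonal_projection} to avoid computing with the explicit $\Q$-divisor $D_P$, and feed in Theorem~\ref{thm:shioda_tate} for the group-theoretic input. Both steps (additivity via uniqueness, kernel via Shioda--Tate) are handled correctly, and you are right to flag the torsion-freeness of $\NS(S)$ (Theorem~\ref{thm:algebraic_and_numerical_equivalences}) as what makes the passage from $\text{Triv}(S)_\Q$ back to torsion in $\NS(S)/\text{Triv}(S)$ legitimate. One cosmetic point: the target of the induced injection is $\NS(S)_\Q$, as you write in your final sentence; the paper's displayed statement has $\NS(S)$, which is a typo you need not propagate.
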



We are ready to define a bilinear pairing on $E(K)$, which we call the \textit{height pairing}.

\begin{teor}{\normalfont \cite[Thm 6.20]{Shioda90}}
We define the \textit{height pairing} as
\begin{align*}
\langle\cdot,\cdot\rangle:E(K)\times E(K)&\to\Q\\
P,Q&\mapsto-\varphi(P)\cdot\varphi(Q),
\end{align*}
which induces a positive-definite pairing on {\normalfont $E(K)/E(K)_\text{tor}$}. The lattice {\normalfont ($E(K)/E(K)_\text{tor},\langle\cdot,\cdot\rangle)$} is called the Mordell-Weil lattice. 
\end{teor}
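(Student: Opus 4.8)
The plan is to read off the three assertions—bilinearity, descent to the torsion-free quotient, and positive-definiteness—directly from the properties of $\varphi$ established above, with the Hodge index theorem as the one external input. Bilinearity is immediate: since $\varphi$ is a group homomorphism and the intersection form on $\NS(S)_\Q$ is bilinear, the assignment $(P,Q)\mapsto-\varphi(P)\cdot\varphi(Q)$ is bilinear on $E(K)$. Because $\ker\varphi=E(K)_\text{tor}$, every torsion section maps to $0$ and hence pairs trivially with all of $E(K)$; thus the pairing factors through $E(K)/E(K)_\text{tor}$, on which it is well-defined.

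The real content is positive-definiteness, and the strategy is to exhibit $\varphi(P)$ inside a negative-definite subspace of $\NS(S)_\Q$. By property (ii) of Lemma~\ref{lemma:orthogonal_projection} we have $\varphi(P)\in\Triv(S)_\Q^{\perp}$ for all $P$, so it suffices to show that $\Triv(S)_\Q^{\perp}$ is negative-definite. I would first determine the signature of $\Triv(S)_\Q$ itself. Writing $F$ for a fiber and $O$ for the neutral section, one has $F^2=0$, $O\cdot F=1$ and $O^2=-\chi(S)$ by adjunction (for a rational elliptic surface $O^2=-1$, directly from $-K_X\equiv F$ in Theorem~\ref{thm:RES_distinguished_properties}); hence the plane $\Q\langle O,F\rangle$ has Gram determinant $-1$ and signature $(1,1)$. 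Since the zero section meets each reducible fiber only along its neutral component, $O\cdot\Theta_{v,i}=0$ for $i\geq 1$, while $F\cdot\Theta_{v,i}=0$ always, so $\Q\langle O,F\rangle$ is orthogonal to $T_\Q$. As $\Theta_{v,0}=F-\sum_{i\geq1}\mu_{v,i}\Theta_{v,i}$ lies in the span of $F$ and the $\Theta_{v,i}$ with $i\geq1$, one obtains the orthogonal decomposition $\Triv(S)_\Q=\Q\langle O,F\rangle\perp T_\Q$. Because each $T_v^{-}$ is an ADE lattice, $T$ is negative-definite, so $\Triv(S)_\Q$ is non-degenerate of signature $(1,1+\operatorname{rank}T)$.

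Now I would invoke the Hodge index theorem: on a smooth projective surface the intersection form on $\NS(S)_\Q$ has signature $(1,\rho-1)$, i.e. exactly one positive direction. Since $\Triv(S)_\Q$ is non-degenerate and already carries a positive direction (inside the hyperbolic plane $\Q\langle O,F\rangle$), the orthogonal splitting $\NS(S)_\Q=\Triv(S)_\Q\perp\Triv(S)_\Q^{\perp}$ leaves no positive direction for $\Triv(S)_\Q^{\perp}$; being non-degenerate, the latter is negative-definite, of rank equal to the Mordell-Weil rank by Theorem~\ref{thm:shioda_tate}. Consequently, for $P\notin E(K)_\text{tor}$ the class $\varphi(P)$ is a nonzero element of $\Triv(S)_\Q^{\perp}$, so $\varphi(P)^2<0$ and $\langle P,P\rangle=-\varphi(P)^2>0$, which is precisely positive-definiteness on $E(K)/E(K)_\text{tor}$.

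The main obstacle is the signature bookkeeping rather than any single hard estimate: the whole argument turns on the single positive direction guaranteed by Hodge index lying inside the trivial lattice. The care therefore goes into verifying that $\Triv(S)_\Q$ is non-degenerate and that the plane $\Q\langle O,F\rangle$ is genuinely orthogonal to $T_\Q$—which in turn rests on $O$ meeting every fiber only in its neutral component. Once the orthogonal decomposition and the signature $(1,1+\operatorname{rank}T)$ are secured, negative-definiteness of the complement, and hence positive-definiteness of the height pairing, follow formally.
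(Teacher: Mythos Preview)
The paper does not give its own proof of this statement: it is quoted as {\normalfont \cite[Thm 6.20]{Shioda90}} and used as a black box. Your argument is the standard one found in Shioda's original paper and in \cite[Ch.~6]{MWL}: use that $\varphi$ is a homomorphism with kernel $E(K)_\text{tor}$ for bilinearity and descent, then combine the orthogonal decomposition $\Triv(S)_\Q=\Q\langle O,F\rangle\perp T_\Q$ with the Hodge index theorem to see that $\Triv(S)_\Q^{\perp}$ is negative-definite. The bookkeeping is correct, including the key point that non-degeneracy of $\Triv(S)_\Q$ follows from the hyperbolic plane $\Q\langle O,F\rangle$ together with negative-definiteness of the ADE pieces, so the single positive direction of $\NS(S)_\Q$ is absorbed by the trivial lattice.
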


Once the height pairing is constructed, we also define the \textit{height} of a section and the \textit{minimal norm} of the Mordell-Weil lattice.
\begin{defi}\label{def:height_minimal_norm}
The \textit{height} of a section and the \textit{minimal norm} of $E(K)$ are defined as
$$h(P):=\langle P,P\rangle\text{ for each }P\in E(K),$$
$$\mu:=\min\{h(P)>0\mid P\in E(K)\}.$$
\end{defi}

A convenient feature of the height pairing is that it can be computed explicitly. Before we introduce the explicit formula, we define one of the terms it involves, namely the \textit{local contribution}. 
\begin{defi}
Let $v\in R$ and $P,Q\in E(K)$. If $P,Q$ meet the fiber $F_v$ at $\Theta_{v,i},\Theta_{v,j}$ respectively, we define the \textit{local contribution} as
\begin{align*}
\text{contr}_v(P,Q)&:=
\begin{cases}
(-A_v^{-1})_{i,j}&\text{if }i,j\geq 1,\\
0&\text{otherwise.}
\end{cases}\\
\text{contr}_v(P)&:=\text{contr}_v(P,P).
\end{align*}
where $(-A_v^{-1})_{i,j}$ is the $(i,j)$-entry of the matrix $-A_v^{-1}$ in Lemma~\ref{lemma:orthogonal_projection}. 
\end{defi}

The explicit values for the local contributions are in Table~\ref{table:local_contributions} \cite[Table 6.1]{MWL}. 

\begin{table}[h]
\begin{center}
\centering
\begin{tabular}{c|c|c|c|c|c|c} 
\hline
\multirow{2}{*}{\hfil $\text{Type of }F_v$} & \multirow{2}{*}{\hfil $\text{III}$} & \multirow{2}{*}{\hfil $\text{III}^*$} & \multirow{2}{*}{\hfil $\text{IV}$} & \multirow{2}{*}{\hfil $\text{IV}^*$} & \multirow{2}{*}{\hfil $\text{I}_n$} & \multirow{2}{*}{\hfil $\text{I}_n^*$}\\ 
& & & & & &\\
\hline
\multirow{2}{*}{\hfil $T_v$} & \multirow{2}{*}{\hfil $A_1$} & \multirow{2}{*}{\hfil $E_7$} & \multirow{2}{*}{\hfil $A_2$} & \multirow{2}{*}{\hfil $E_6$} & \multirow{2}{*}{\hfil $A_{n-1}$} & \multirow{2}{*}{\hfil $D_{n+4}$}\\ 
& & & & & &\\
\hline
\multirow{3}{*}{\hfil $\text{contr}_v(P)$} & \multirow{3}{*}{\hfil $\frac{1}{2}$} & \multirow{3}{*}{\hfil $\frac{3}{2}$} & \multirow{3}{*}{\hfil $\frac{2}{3}$} & \multirow{3}{*}{\hfil $\frac{4}{3}$} & \multirow{3}{*}{\hfil $\frac{i(n-i)}{n}$} & 	\multirow{3}{*}{\hfil $\begin{cases}1 & (i=1)\\1+\frac{n}{4} & (i>1)\end{cases}$}\\
& & & & & &\\
& & & & & &\\
\hline
\multirow{3}{*}{\hfil $\text{contr}_v(P,Q)$} & \multirow{3}{*}{\hfil $\text{-}$} & \multirow{3}{*}{\hfil $\text{-}$} & \multirow{3}{*}{\hfil $\frac{1}{3}$} & \multirow{3}{*}{\hfil $\frac{2}{3}$} & \multirow{3}{*}{\hfil $\frac{i(n-j)}{n}$} & \multirow{3}{*}{\hfil $\begin{cases}\frac{1}{2} & (i=1)\\\frac{1}{2}+\frac{n}{4} & (i>1)\end{cases}$}\\
& & & & & &\\
& & & & & &\\
\hline
\end{tabular}
\caption{Local contributions assuming $F_v$ meets $P$ at $\Theta_{v,i}$ and $Q$ at $\Theta_{v,j}$ with $i<j$.}\label{table:local_contributions}
\end{center}
\end{table}
\newpage
We are ready to present the explicit formula for the height pairing, called the \textit{height formula}.

\begin{teor}{\normalfont \cite[Thm. 8.6]{Shioda90}}\label{thm:height_formula}
Let $P,Q\in E(K)$ and $O\in E(K)$ the neutral section. Then
\begin{equation}\label{equation:height_formula_PQ}
{\normalfont \langle P,Q\rangle=\chi(S)+P\cdot O+Q\cdot O-P\cdot Q-\sum_{v\in R}\text{contr}_v(P,Q),}
\end{equation}
\begin{equation}\label{equation:height_formula_P}
{\normalfont h(P)=2\chi(S)+2(P\cdot O)-\sum_{v\in R}\text{contr}_v(P).}
\end{equation}
\end{teor}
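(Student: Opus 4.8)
The plan is to derive both identities by a direct computation starting from the explicit $\Q$-divisor $D_P$ that represents $\varphi(P)$ in Lemma~\ref{lemma:orthogonal_projection}, using the orthogonality $\varphi(P)\perp\Triv(S)$ to avoid ever expanding the full symmetric product $D_P\cdot D_Q$.

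First I would record the elementary intersection data on which everything rests. Each section is an exceptional curve (Theorem~\ref{thm:RES_distinguished_properties}), so by adjunction together with the canonical bundle formula $K_S\equiv(\chi(S)-2)F$ one gets $\overline{P}^2=-\chi(S)$ and $\overline{P}\cdot F=1$ for every section $P$, while $F^2=0$ and $F\cdot\Theta_{v,i}=0$ for all fiber components. Moreover a section meets each reducible fiber $F_v$ in exactly one component, and that component is simple; hence the intersection vector $(P\cdot\Theta_{v,1},\dots,P\cdot\Theta_{v,m_v-1})$ is either the zero vector (when $P$ meets the neutral component $\Theta_{v,0}$) or a standard basis vector. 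These facts are what make the fiber-contribution sum collapse cleanly.

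The crucial reduction is the following. Since $D_Q\equiv\overline{Q}\pmod{\Triv(S)_\Q}$, the difference $D_Q-\overline{Q}$ lies in $\Triv(S)_\Q$, and because $D_P=\varphi(P)$ is orthogonal to $\Triv(S)$ we get $D_P\cdot(D_Q-\overline{Q})=0$. Therefore $\langle P,Q\rangle=-D_P\cdot D_Q=-D_P\cdot\overline{Q}$, replacing the symmetric product by a single pairing against the honest class $\overline{Q}$. Expanding $-D_P\cdot\overline{Q}$ term by term then gives $-\overline{P}\cdot\overline{Q}=-P\cdot Q$, $\overline{O}\cdot\overline{Q}=Q\cdot O$, and, using $F\cdot\overline{Q}=1$, the fiber term contributes $P\cdot O+\chi(S)$. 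The remaining matrix sum, paired with $\overline{Q}$, becomes $\sum_v\sum_{i,j}(-A_v^{-1})_{i,j}(P\cdot\Theta_{v,j})(Q\cdot\Theta_{v,i})$, which by the basis-vector observation collapses to a single entry $(-A_v^{-1})_{j_Q,i_P}$ of $-A_v^{-1}$ and is precisely $\sum_{v\in R}\text{contr}_v(P,Q)$ by the definition of the local contribution. Collecting terms yields (\ref{equation:height_formula_PQ}); formula (\ref{equation:height_formula_P}) then follows by setting $Q=P$ and substituting $P\cdot P=\overline{P}^2=-\chi(S)$.

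The main obstacle is bookkeeping rather than anything conceptual. One must check carefully that the intersection vectors really are standard basis vectors, so that the matrix double sum genuinely reduces to one entry of $-A_v^{-1}$; that the ``otherwise'' case in the definition of $\text{contr}_v$ corresponds exactly to $P$ or $Q$ meeting a neutral component $\Theta_{v,0}$ (where the intersection vector vanishes); and that the symmetry $(-A_v^{-1})_{i,j}=(-A_v^{-1})_{j,i}$ makes the identification with $\text{contr}_v(P,Q)$ independent of whether one pairs $D_P$ against $\overline{Q}$ or $D_Q$ against $\overline{P}$.
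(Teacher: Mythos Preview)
Your approach is correct and is essentially the standard derivation, matching Shioda's original argument: the orthogonality trick $D_P\cdot D_Q=D_P\cdot\overline{Q}$ followed by term-by-term expansion is exactly how the formula is obtained. Note, however, that the paper does not supply its own proof of this statement; it is quoted from \cite[Thm.~8.6]{Shioda90} without argument, so there is no in-paper proof to compare against.

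One small caveat: your canonical bundle formula $K_S\equiv(\chi(S)-2)F$ presumes $C\simeq\P^1$, while the theorem is stated for general elliptic surfaces $f:S\to C$. Over an arbitrary base one has $K_S\equiv(2g(C)-2+\chi(S))F$, and adjunction applied to a section $P\cong C$ still yields $\overline{P}^2=-\chi(S)$, so your conclusion is unaffected. Likewise, the reference to Theorem~\ref{thm:RES_distinguished_properties} for ``each section is an exceptional curve'' is specific to the rational case; the self-intersection $\overline{P}^2=-\chi(S)$ is what you actually need and it holds in general by the argument just indicated.
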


\begin{remark}
\normalfont In this thesis we focus on rational elliptic surfaces, in which case $\chi(S)=1$ in equations~\ref{equation:height_formula_PQ} and \ref{equation:height_formula_P} by Theorem~\ref{thm:RES_distinguished_properties}.
\end{remark}

An important sublattice of $E(K)$ is the \textit{narrow Mordell-Weil lattice} $E(K)^0$, defined as
\begin{align*}
E(K)^0&:= \{P\in E(K)\mid P\text{ intersects }\Theta_{v,0}\text{ for all }v\in R\}\\
&=\{P\in E(K)\mid \text{contr}_v(P)=0\text{ for all } v\in R\}.
\end{align*}

As a subgroup, $E(K)^0$ is torsion-free; as a sublattice, it is a positive-definite even integral lattice with finite index in $E(K)$ \cite[Thm. 6.44]{MWL}. The importance of the narrow lattice can be explained by its considerable size as a sublattice and by the easiness to compute the height pairing on it, since all contribution terms vanish. A complete classification of the lattices $E(K)$ and $E(K)^0$ on rational elliptic surfaces is found in \cite[Main Thm.]{OguisoShioda}.

\newpage

\section{Bounds $c_\text{max}, c_\text{min}$ for the contribution term}\label{section:bounds}\
\indent We define the bounds $c_\text{max},c_\text{min}$ for the contribution term $\sum_v\text{contr}_v(P)$ and state some simple facts about them in the case of rational elliptic surfaces. We also provide an example to illustrate how they are computed.

\indent In our investigation of intersection numbers in Chapter~\ref{ch:gaps}, the need for these bounds arise naturally. Indeed, suppose we are given a section $P\in E(K)$ whose height $h(P)$ is known and we want to determine $P\cdot O$. In case $P\in E(K)^0$ we have a direct answer, namely $P\cdot O=h(P)/2-1$ by the height formula (\ref{equation:height_formula_P}). However, if $P\notin E(K)^0$ the computation of $P\cdot O$ depends on the contribution term $c_P:=\sum_v\text{contr}_v(P)$, which by Table~\ref{table:local_contributions} depends on how $P$ intersects the reducible fibers of $\pi$. Usually we do not have this intersection data at hand, hence an estimate for $c_P$ becomes imperative.

\begin{defi} Assuming $R\neq\emptyset$, we define
\begin{align*}
c_\text{max}&:=\sum_{v\in R}\max\{\text{contr}_v(P)\mid P\in E(K)\},\\
c_\text{min}&:=\min\left\{\text{contr}_v(P)>0\mid P\in E(K), v\in R\right\}.
\end{align*}
\end{defi}

\begin{remark}
\normalfont In a rational elliptic surface, $R=\emptyset$ only occurs when the Mordell-Weil rank is $r=8$ (No. 1 in Table~\ref{table:MWL_data}). In this case $E(K)^0=E(K)$ and $\sum_v\text{contr}_v(P)=0$ $\forall P\in E(K)$, hence we adopt the convention $c_\text{max}=c_\text{min}=0$.
\end{remark}
\begin{remark}
\normalfont We use $c_\text{max},c_\text{min}$ as bounds for $c_P:=\sum_{v}\text{contr}_v(P)$. For our purposes it is not necessary to know whether $c_P$ actually attains one of these bounds for some $P$, therefore $c_\text{max},c_\text{min}$ should be understood as hypothetical values.
\end{remark}

We state some facts about $c_\text{max},c_\text{min}$.

\begin{lemma}\label{lemma:bounds_are_actually_bounds}
Let $X$ be a rational elliptic surface with Mordell-Weil rank $r\geq 1$. Then
\begin{enumerate}[i)]
\item $c_\text{min}>0$ if $R\neq \emptyset$.
\item $c_\text{max}<4$.
\item ${\normalfont c_\text{min}\leq \sum_{v\in R}\text{contr}_v(P)\leq c_\text{max}}$ $\forall P\notin E(K)^0$. For $P\in E(K)^0$, only the second inequality holds.
\item If {\normalfont $\sum_{v\in R}\text{contr}_v(P)=c_\text{min}$}, then {\normalfont $\text{contr}_{v'}(P)=c_\text{min}$} for some $v'$ and {\normalfont $\text{contr}_v(P)=0$} for $v\neq v'$.
\end{enumerate}

\end{lemma}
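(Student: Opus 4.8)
The plan is to reduce all four statements to elementary bookkeeping with Table~\ref{table:local_contributions}, the only genuinely structural input being the non-emptiness needed to make $c_\text{min}$ well defined. The preliminary observation is that, for each Kodaira type, $\text{contr}_v(P)$ depends only on the component $\Theta_{v,i}$ that $P$ meets, so as $P$ ranges over $E(K)$ it takes finitely many nonnegative rational values. First I would record, for every reducible type, the quantity $M(T_v):=\max\{\text{contr}_v(P)\mid P\in E(K)\}$, reading it off the table: $\tfrac12$ for $\text{III}$, $\tfrac32$ for $\text{III}^*$, $\tfrac23$ for $\text{IV}$, $\tfrac43$ for $\text{IV}^*$, $0$ for $\text{II}^*$, $\lfloor n^2/4\rfloor/n$ for $\text{I}_n$, and $1+\tfrac n4$ for $\text{I}_n^*$.

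For (ii) the key inequality is $M(T_v)\le e(F_v)/4$ for each reducible fiber, where $e(F_v)$ is the Euler number; this is a one-line check per type (e.g. $\lfloor n^2/4\rfloor/n\le n/4=e(\text{I}_n)/4$ and $1+\tfrac n4=(n+4)/4\le (n+6)/4=e(\text{I}_n^*)/4$). Since every $e(F_v)\ge 0$ and $\sum_{v}e(F_v)\le e(X)=12$ for a rational elliptic surface, I get
\[
c_\text{max}=\sum_{v\in R}M(T_v)\le\tfrac14\sum_{v\in R}e(F_v)\le\tfrac{12}{4}=3<4,
\]
which is in fact sharper than asserted.

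For (i), positivity is automatic once $c_\text{min}$ exists, since by definition it is a minimum of positive numbers, and the minimum is attained because those positive contributions form a finite set. The real content is non-emptiness of that set, i.e.\ that some section lies outside $E(K)^0$. Here I would argue that, with $R\neq\emptyset$ and $r\ge 1$, the root lattice $T$ attached to the reducible fibers satisfies $0<\operatorname{rank}T\le 7$, so $T$ is a proper, non-unimodular sublattice of the unimodular frame lattice of $X$; hence the orthogonal projection onto $T^{\perp}\cong E(K)^0$ strictly enlarges it, producing a section $P$ with $\text{contr}_v(P)>0$ for some $v$. Equivalently, this can be read off case-by-case from the Oguiso--Shioda classification \cite{OguisoShioda} (cf.\ Table~\ref{table:MWL_data}). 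This is the only step that is not pure formalism, and it is where I expect the main difficulty to lie.

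Finally, (iii) and (iv) are formal consequences. The upper bound $\sum_{v\in R}\text{contr}_v(P)\le c_\text{max}$ holds termwise for every $P$. If $P\notin E(K)^0$ then some $\text{contr}_{v_0}(P)>0$, and since each positive contribution is at least $c_\text{min}$ and all terms are nonnegative, $\sum_v\text{contr}_v(P)\ge\text{contr}_{v_0}(P)\ge c_\text{min}$; whereas for $P\in E(K)^0$ the sum is $0<c_\text{min}$, so only the upper bound survives, proving (iii). For (iv), if the sum equals $c_\text{min}$ then at least one term is positive, and two positive terms would already force the sum to be $\ge 2c_\text{min}>c_\text{min}$; hence exactly one $v'$ has $\text{contr}_{v'}(P)>0$, where it must equal the whole sum $c_\text{min}$, while $\text{contr}_v(P)=0$ for $v\neq v'$.
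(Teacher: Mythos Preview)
Your proposal is correct; parts (iii) and (iv) match the paper's arguments essentially verbatim.

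For (ii) you take a genuinely different and more elegant route. The paper simply inspects the appendix table (Table~\ref{table:MWL_data}) case by case to verify $c_\text{max}<4$, whereas you prove the uniform inequality $M(T_v)\le e(F_v)/4$ and combine it with $\sum_v e(F_v)\le e(X)=12$ to obtain $c_\text{max}\le 3$. This is sharper and explains \emph{why} the bound holds rather than checking it post hoc. One small remark: the values you list for $M(T_v)$ are the formal maxima over all simple components of the given fiber type; a priori the actual $\max\{\text{contr}_v(P):P\in E(K)\}$ could be smaller if not every component is met by a section, but since you only need an upper bound the argument is unaffected.

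For (i) you worry about non-emptiness of $\{\text{contr}_v(P)>0\}$, which the paper does not address --- its entire proof is ``immediate from the definition of $c_\text{min}$''. Your concern is legitimate. Your resolution is correct in spirit but the phrase ``the orthogonal projection onto $T^\perp$ strictly enlarges it'' is imprecise; the clean version is that $E(K)^0$ is an even positive-definite lattice of rank $1\le r\le 7$, hence cannot be unimodular, so $E(K)^0\subsetneq(E(K)^0)^*\simeq E(K)/E(K)_\text{tor}$ by \cite{OguisoShioda}, producing a section outside $E(K)^0$. The paper, for its part, seems to treat $c_\text{min}$ as a purely formal quantity determined by the fiber types (cf.\ the Remark on ``hypothetical values'' just before the lemma and the explicit computation following it), which sidesteps the issue.
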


\begin{proof}
Item i) is immediate from the definition of $c_\text{min}$. For ii) it is enough to check the values of $c_\text{max}$ directly in Table~\ref{table:MWL_data}. For iii), the second inequality follows from the definition of $c_\text{max}$ and clearly holds for any $P\in E(K)$. If $P\notin E(K)^0$, then $c_P:=\sum_v\text{contr}_v(P)>0$, so $\text{contr}_{v_0}(P)>0$ for some $v_0$. Therefore $c_P\geq \text{contr}_{v_0}(P)\geq c_\text{min}$.

At last, we prove iv). If $R=\emptyset$, then $c_\text{min}=0$ and the claim is trivial. Hence let $R\neq\emptyset$ and $\sum_v\text{contr}_v(P)=c_\text{min}$. Assume by contradiction that there are $v_1\neq v_2$ such that $\text{contr}_{v_i}(P)>0$ for $i=1,2$. By definition of $c_\text{min}$, we have $c_\text{min}\leq\text{contr}_{v_i}(P)$ for $i=1,2$, thus
$$c_\text{min}=\sum_v\text{contr}_v(P)\geq \text{contr}_{v_1}(P)+\text{contr}_{v_2}(P)\geq 2c_\text{min},$$
\noindent which is absurd because $c_\text{min}>0$ by i). Hence there is only one $v'$ with $\text{contr}_{v'}(P)>0$, while $\text{contr}_v(P)=0$ for all $v\neq v'$. In particular, $\text{contr}_{v'}(P)=c_\text{min}$.
\end{proof}

\noindent\textbf{Explicit computation.} Once we know the lattice $T$ associated with the reducible fibers of $\pi$, the computation of $c_\text{max},c_\text{min}$ is simple. For a fixed $v\in R$, the extreme values for the local contribution $\text{contr}_v(P)$ are given in Table~\ref{table:extreme_local_contributions}, which is derived from Table~\ref{table:local_contributions}. We provide an example to illustrate this computation. 

\begin{table}[h]
\begin{center}
\centering
\begin{tabular}{c|c|c} 
\hline
\multirow{2}{*}{\hfil $T_v$} & \multirow{2}{*}{\hfil $\max\{\text{contr}_v(P)\mid P\in E(K)\}$} & \multirow{2}{*}{\hfil $\min\{\text{contr}_v(P)>0\mid P\in E(K)\}$}\\ 
&\\
\hline
\multirow{2}{*}{\hfil $A_{n-1}$} & \multirow{2}{*}{\hfil $\frac{\ell(n-\ell)}{n}$, where $\ell:=\left\lfloor\frac{n}{2}\right\rfloor$} & \multirow{2}{*}{\hfil $\frac{n-1}{n}$}\\ 
&\\
\hline
\multirow{2}{*}{\hfil $D_{n+4}$} & \multirow{2}{*}{\hfil $1+\frac{n}{4}$} & \multirow{2}{*}{\hfil $1$}\\ 
&\\
\hline
\multirow{2}{*}{\hfil $E_6$} & \multirow{2}{*}{\hfil $\frac{4}{3}$} & \multirow{2}{*}{\hfil $\frac{4}{3}$}\\ 
&\\
\hline
\multirow{2}{*}{\hfil $E_7$} & \multirow{2}{*}{\hfil $\frac{3}{2}$} & \multirow{2}{*}{\hfil $\frac{3}{2}$}\\ 
&\\
\hline
\end{tabular}
\caption{Extreme values of the local contribution $\text{contr}_v(P)$.}\label{table:extreme_local_contributions}
\end{center}
\end{table}

\noindent\textbf{Example:} Assume $\pi$ has fiber configuration $(\text{I}_4,\text{IV},\text{III},\text{I}_1)$. The reducible fibers are $\text{I}_4,\text{IV},\text{III}$, so $T=A_3\oplus A_2\oplus A_1$. By Table~\ref{table:extreme_local_contributions}, the maximal contributions for $A_3,A_2,A_1$ are $\frac{2\cdot 2}{4}=1$, $\frac{2}{3}$, $\frac{1}{2}$ respectively. The minimal positive contributions are $\frac{1\cdot 3}{4}=\frac{3}{4}$, $\frac{2}{3}$, $\frac{1}{2}$ respectively. Hence
\begin{align*}
c_\text{max}&=1+\frac{2}{3}+\frac{1}{2}=\frac{13}{6},\\
c_\text{min}&=\min\left\{\frac{3}{4},\frac{2}{3},\frac{1}{2}\right\}=\frac{1}{2}.
\end{align*}

\section{The difference $\Delta=c_\text{max}-c_\text{min}$}\
\indent We explain why the value of $\Delta:=c_\text{max}-c_\text{min}$ is relevant to the investigation in Chapter~\ref{ch:gaps}, specially in Section~\ref{section:sufficient_conditions_Delta<=2}. For rational elliptic surfaces, we verify that $\Delta<2$ in most cases and identify the exceptional ones in Table~\ref{table:Delta=2} and Table~\ref{table:Delta>2}.

\indent As noted in Section~\ref{section:bounds}, in case $P\notin E(K)^0$ and $h(P)$ is known, the difficulty of determining $P\cdot O$ lies in the contribution term $c_P:=\sum_v\text{contr}_v(P)$. In particular, the range of possible values for $c_P$ determines the possibilities for $P\cdot O$. This range is measured by the difference
$$\Delta:=c_\text{max}-c_\text{min}.$$

Hence a smaller $\Delta$ means a better control over the intersection number $P\cdot O$, which is why $\Delta$ plays an important role in determining possible intersection numbers. In Section~\ref{section:necessary_and_sufficient_Delta<=2} we assume $\Delta\leq 2$ and state necessary and sufficient conditions for having a pair $P_1,P_2$ such that $P_1\cdot P_2=k$ for a given $k\geq 0$. If however $\Delta>2$, the existence of such a pair is not guaranteed a priori, so a case-by-case treatment is needed. Fortunately by Lemma~\ref{lemma:cases_where_Delta>=2} the case $\Delta>2$ is rare. 

\newpage

\begin{lemma}\label{lemma:cases_where_Delta>=2}
Let $X$ be a rational elliptic surface with Mordell-Weil rank $r\geq 1$. The only cases with $\Delta=2$ and $\Delta>2$ are in Table \ref{table:Delta=2} and \ref{table:Delta>2} respectively. In particular we have $\Delta<2$ whenever $E(K)$ is torsion-free.

\begin{table}[h]
\begin{center}
\centering
\begin{tabular}{ccccc}
No. & $T$ & $E(K)$ & $c_\text{max}$ & $c_\text{min}$\\ 
\hline
\multirow{2}{*}{\hfil 24} & \multirow{2}{*}{\hfil $A_1^{\oplus 5}$} & \multirow{2}{*}{\hfil ${A_1^*}^{\oplus 3}\oplus\Bbb{Z}/2\Bbb{Z}$} & \multirow{2}{*}{\hfil $\frac{5}{2}$} & \multirow{2}{*}{\hfil $\frac{1}{2}$}\\
&\\
\multirow{2}{*}{\hfil 38} & \multirow{2}{*}{\hfil $A_3\oplus A_1^{\oplus 3}$} & \multirow{2}{*}{\hfil $A_1^*\oplus\langle 1/4\rangle\oplus\Bbb{Z}/2\Bbb{Z}$} & \multirow{2}{*}{\hfil $\frac{5}{2}$} & \multirow{2}{*}{\hfil $\frac{1}{2}$}\\
&\\
\multirow{2}{*}{\hfil 53} & \multirow{2}{*}{\hfil $A_5\oplus A_1^{\oplus 2}$} & \multirow{2}{*}{\hfil $\langle 1/6\rangle\oplus \Bbb{Z}/2\Bbb{Z}$} & \multirow{2}{*}{\hfil $\frac{5}{2}$} & \multirow{2}{*}{\hfil $\frac{1}{2}$}\\
&\\
\multirow{2}{*}{\hfil 57} & \multirow{2}{*}{\hfil $D_4\oplus A_1^{\oplus 3}$} & \multirow{2}{*}{\hfil $A_1^*\oplus(\Bbb{Z}/2\Bbb{Z})^{\oplus 2}$} & \multirow{2}{*}{\hfil $\frac{5}{2}$} & \multirow{2}{*}{\hfil $\frac{1}{2}$}\\
&\\
\multirow{2}{*}{\hfil 58} & \multirow{2}{*}{\hfil $A_3^{\oplus 2}\oplus A_1$} & \multirow{2}{*}{\hfil $A_1^*\oplus\Bbb{Z}/4\Bbb{Z}$} & \multirow{2}{*}{\hfil $\frac{5}{2}$} & \multirow{2}{*}{\hfil $\frac{1}{2}$} \\
&\\
\multirow{2}{*}{\hfil 61} & \multirow{2}{*}{\hfil $A_2^{\oplus 3}\oplus A_1$} & \multirow{2}{*}{\hfil  $\langle 1/6\rangle\oplus \Bbb{Z}/3\Bbb{Z}$} & \multirow{2}{*}{\hfil $\frac{5}{2}$} & \multirow{2}{*}{\hfil $\frac{1}{2}$}\\
&\\
\hline
\end{tabular}
\caption{Cases with $\Delta=2$}\label{table:Delta=2}
\end{center}
\end{table}

\begin{table}[h]
\begin{center}
\centering
\begin{tabular}{cccccc}
No. & $T$ & $E(K)$ & $c_\text{max}$ & $c_\text{min}$ & $\Delta$\\ 
\hline
\multirow{3}{*}{\hfil 41} & \multirow{3}{*}{\hfil $A_2\oplus A_1^{\oplus 4}$} & \multirow{3}{*}{\hfil $\frac{1}{6}\left(\begin{matrix}2 & 1\\1 & 2\end{matrix}\right)\oplus\Bbb{Z}/2\Bbb{Z}$} & \multirow{3}{*}{\hfil $\frac{8}{3}$} & \multirow{3}{*}{\hfil $\frac{1}{2}$} & \multirow{3}{*}{\hfil $\frac{13}{6}$}\\
&\\
&\\
\multirow{2}{*}{\hfil 42} & \multirow{2}{*}{\hfil $A_1^{\oplus 6}$} & \multirow{2}{*}{\hfil ${A_1^*}^{\oplus 2}\oplus (\Bbb{Z}/2\Bbb{Z})^{\oplus 2}$} & \multirow{2}{*}{\hfil $3$} & \multirow{2}{*}{\hfil $\frac{1}{2}$} & \multirow{2}{*}{\hfil $\frac{5}{2}$}\\
&\\
\multirow{2}{*}{\hfil 59} & \multirow{2}{*}{\hfil $A_3\oplus A_2\oplus A_1^{\oplus 2}$} & \multirow{2}{*}{\hfil $\langle 1/12\rangle\oplus \Bbb{Z}/2\Bbb{Z}$} & \multirow{2}{*}{\hfil $\frac{8}{3}$} & \multirow{2}{*}{\hfil $\frac{1}{2}$} & \multirow{2}{*}{\hfil $\frac{13}{6}$}\\
&\\
\multirow{2}{*}{\hfil 60} & \multirow{2}{*}{\hfil $A_3\oplus A_1^{\oplus 4}$} & \multirow{2}{*}{\hfil  $\langle 1/4\rangle\oplus (\Bbb{Z}/2\Bbb{Z})^{\oplus 2}$} & \multirow{2}{*}{\hfil $3$} & \multirow{2}{*}{\hfil $\frac{1}{2}$} & \multirow{2}{*}{\hfil $\frac{5}{2}$}\\
&\\
\hline
\end{tabular}
\caption{Cases with $\Delta>2$}\label{table:Delta>2}
\end{center}
\end{table}
\end{lemma}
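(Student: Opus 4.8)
The plan is to exploit the fact that, within this paper's framework, $c_{\max}$ and $c_{\min}$ are read off from the lattice $T$ alone via Table~\ref{table:extreme_local_contributions}, so that $\Delta = c_{\max}-c_{\min}$ is a function of the fiber configuration and the whole statement reduces to a finite check against the Oguiso--Shioda classification reproduced in Table~\ref{table:MWL_data}. First I would record two elementary facts from Table~\ref{table:extreme_local_contributions}: the minimal positive local contribution of a fiber is $\frac12$ exactly when $T_v=A_1$ and is at least $\frac23$ otherwise, so $c_{\min}\geq\frac12$ with equality precisely when $A_1$ is a summand of $T$; and the ratio between the maximal contribution of a summand and its rank equals $\frac12$ for $A_1$, equals $\frac14$ for every $D$-type, and is at most $\frac13$ for $A_n$ ($n\geq 2$), $E_6$ and $E_7$ (with equality only for $A_2,A_3$). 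In particular every summand other than $A_1$ has ratio at most $\frac13$.

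Next I would invoke the rank constraint $\operatorname{rank}T = 8-r \leq 7$, valid since $r\geq 1$. If $T$ has no $A_1$ summand, then $c_{\max}\leq\frac13\operatorname{rank}T\leq\frac73$ while $c_{\min}\geq\frac23$, so $\Delta\leq\frac53<2$. Hence $\Delta\geq 2$ forces $A_1\mid T$, whence $c_{\min}=\frac12$ and we must have $c_{\max}\geq\frac52$. Writing $T=A_1^{\oplus a}\oplus T'$ with $T'$ free of $A_1$ summands, and using $c_{\max}=\tfrac{a}{2}+c_{\max}(T')$ together with $a+\operatorname{rank}T'\leq 7$ and $c_{\max}(T')\leq\frac13\operatorname{rank}T'$, the inequality $c_{\max}\geq\frac52$ leaves only finitely many candidate root systems, which I would enumerate by cases on $a$ (for instance $a=1$ forces $T'\in\{A_2^{\oplus 3},A_3^{\oplus 2}\}$, while $a=6$ forces $T'=0$).

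Then I would match this finite list against the realizable configurations in Table~\ref{table:MWL_data}. This is where geometry, rather than arithmetic, enters: several numerically admissible candidates --- for example $A_1^{\oplus 7}$, $A_4\oplus A_1^{\oplus 3}$, $A_2^{\oplus 2}\oplus A_1^{\oplus 3}$ and $A_2\oplus A_1^{\oplus 5}$ --- satisfy $\Delta\geq 2$ but occur on no rational elliptic surface, and they are discarded precisely because they are absent from the Oguiso--Shioda list. The surviving configurations are exactly the ten appearing in Tables~\ref{table:Delta=2} and \ref{table:Delta>2}; for each I would compute $c_{\max}$ and $c_{\min}$ as in the worked example preceding the lemma and read off $\Delta$, separating the six cases with $\Delta=2$ from the four with $\Delta>2$.

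Finally, the ``in particular'' assertion follows by inspection: in every surviving case the group $E(K)$ carries nontrivial torsion (a factor $\Z/2\Z$, $\Z/3\Z$ or $\Z/4\Z$ appears in the $E(K)$ column), so if $E(K)$ is torsion-free none of these configurations occurs and therefore $\Delta<2$. I expect the main obstacle to be guaranteeing that the enumeration of candidates is genuinely exhaustive and correctly cross-referenced with Table~\ref{table:MWL_data}: the delicate point is that the numerical bounds alone admit phantom configurations of large $\Delta$, and their exclusion rests essentially on the realizability constraints of the classification rather than on any further inequality.
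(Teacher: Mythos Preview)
Your proposal is correct. The paper's own proof, however, is much shorter: it simply reads off the $\Delta$ column from the appendix table (Table~\ref{table:MWL_data}), which already lists $c_{\max}$, $c_{\min}$ and $\Delta$ for all 61 configurations with $r\ge 1$, and then observes that in every entry with $\Delta\ge 2$ the torsion part of $E(K)$ is nontrivial. No preliminary reduction is made.

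Your route is genuinely different in emphasis: you first establish the structural fact that $\Delta\ge 2$ forces an $A_1$ summand in $T$ (via the ratio bounds on local contributions), then enumerate candidate root systems $A_1^{\oplus a}\oplus T'$ satisfying $c_{\max}\ge\tfrac{5}{2}$, and only afterwards appeal to the Oguiso--Shioda list to discard the phantom configurations. This buys conceptual clarity --- one sees \emph{why} the $\Delta\ge 2$ cases look the way they do --- at the cost of extra bookkeeping that the paper avoids. Note, though, that your approach does not bypass the classification: the exclusion of candidates like $A_1^{\oplus 7}$ or $A_4\oplus A_1^{\oplus 3}$ still rests on Table~\ref{table:MWL_data}, so in the end both arguments reduce to the same finite check. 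The paper simply performs that check directly.
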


\begin{proof}
By searching Table~\ref{table:MWL_data} for all cases with $\Delta=2$ and $\Delta>2$, we obtain Table~\ref{table:Delta=2} and Table~\ref{table:Delta>2} respectively. Notice in particular that in both tables the torsion part of $E(K)$ is always nontrivial. Consequently, if $E(K)$ is torsion-free, then $\Delta<2$.
\end{proof}

\section{The quadratic form $Q_X$}\label{subsection:presenting_Q_X}\
\indent We define the positive-definite quadratic form with integer coefficients $Q_X$ derived from the height pairing. The relevance of $Q_X$ is due to the fact that some conditions for having $P_1\cdot P_2=k$ for some $P_1,P_2\in E(K)$ can be stated in terms of what integers can be represented by $Q_X$ (see Corollary~\ref{coro:necessary_conditions_Q_X} and Proposition~\ref{prop:summary_of_sufficient_conditions}).

We define $Q_X$ simply by clearing denominators of the rational quadratic form induced by the height pairing; the only question is how to find a scale factor that works in every case. More precisely, if $E(K)$ has rank $r\geq 1$ and $P_1,...,P_r$ are generators of its free part, then $q(x_1,...,x_r):=h(x_1P_1+...+x_rP_r)$ is a quadratic form with coefficients in $\Q$; we define $Q_X$ by multiplying $q$ by some integer $d>0$ so as to produce coefficients in $\Z$. We show that $d$ may always be chosen as the determinant of the narrow lattice $E(K)^0$.

\begin{defi}
Let $X$ with $r\geq 1$. Let $P_1,...,P_r$ be generators of the free part of $E(K)$. Define
$$Q_X(x_1,...,x_r):=(\det E(K)^0)\cdot h(x_1P_1+...+x_rP_r).$$
\end{defi}
We check that the matrix representing $Q_X$ has entries in $\Z$, therefore $Q_X$ has coefficients in $\Z$.

\begin{lemma}\label{lemma:adjugate}
Let $A$ be the matrix representing the quadratic form $Q_X$, i.e. $Q(x_1,...,x_r)=x^tAx$, where $x:=(x_1,...,x_r)^t$. Then $A$ has integer entries. In particular, $Q_X$ has integer coefficients.
\end{lemma}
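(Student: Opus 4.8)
The plan is to realize the matrix of $Q_X$ as an adjugate sandwiched between two integer matrices. Write $L := E(K)^0$ for the narrow lattice and $M := E(K)/E(K)_\text{tor}$ for the Mordell-Weil lattice, both of rank $r$ inside the $\Q$-vector space $V := M\otimes_\Z\Q$ equipped with the height pairing. The first thing I would establish is the chain of inclusions
$$L\subseteq M\subseteq L^*,$$
where $L^*$ is the dual lattice of $L$. The inclusion $L\subseteq M$ is immediate, since $E(K)^0$ is by definition a sublattice of $E(K)$. The essential point is $M\subseteq L^*$, i.e. that $\langle P,Q\rangle\in\Z$ whenever $P\in E(K)$ and $Q\in E(K)^0$. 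This is where I expect the only real work to be, and it follows from the height formula (\ref{equation:height_formula_PQ}): if $Q\in E(K)^0$, then $Q$ meets the neutral component $\Theta_{v,0}$ of every reducible fiber, so by the definition of the local contribution $\text{contr}_v(P,Q)=0$ for all $v\in R$; the height formula then reduces to $\langle P,Q\rangle=\chi(S)+P\cdot O+Q\cdot O-P\cdot Q$, which is an integer because $\chi(S)=1$ and all intersection numbers are integers.

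Next I would make the adjugate explicit. Choose a $\Z$-basis $e_1,\dots,e_r$ of $L$ and let $G:=(\langle e_i,e_j\rangle)_{i,j}$ be its Gram matrix; since $L=E(K)^0$ is an integral lattice, $G$ has integer entries, and by definition $\det G=\det E(K)^0$. Let $f_1,\dots,f_r$ be the dual basis of $L^*$, characterized by $\langle f_i,e_j\rangle=\delta_{ij}$; a standard computation (writing $f_i=\sum_k (G^{-1})_{ik}e_k$) shows that its Gram matrix is $(\langle f_i,f_j\rangle)_{i,j}=G^{-1}$. Because $M\subseteq L^*=\bigoplus_j\Z f_j$ by the previous step, the generators $P_1,\dots,P_r$ of the free part of $E(K)$ can be written as $P_i=\sum_j U_{ij}f_j$ with $U:=(U_{ij})_{i,j}$ an \emph{integer} matrix.

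Finally I would assemble the computation. The Gram matrix of $M$ in the basis $P_1,\dots,P_r$ is $(\langle P_i,P_j\rangle)_{i,j}=UG^{-1}U^t$, so the matrix $A$ representing $Q_X$ is
$$A=(\det E(K)^0)\,(\langle P_i,P_j\rangle)_{i,j}=(\det G)\,U\,G^{-1}\,U^t=U\,\operatorname{adj}(G)\,U^t,$$
using the identity $(\det G)\,G^{-1}=\operatorname{adj}(G)$. The adjugate of the integer matrix $G$ has integer entries, and $U$ is an integer matrix, so $A=U\operatorname{adj}(G)U^t$ has integer entries. Consequently $Q_X(x)=x^tAx$ has integer coefficients, as claimed. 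The only delicate point is the integrality statement $M\subseteq L^*$ in the first paragraph; everything after it is formal linear algebra over $\Z$.
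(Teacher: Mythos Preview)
Your proof is correct and uses the same adjugate idea as the paper, but the route is genuinely different in one respect. The paper invokes the structural result from \cite{OguisoShioda} that for a rational elliptic surface the free part of $E(K)$ is \emph{equal} to $(E(K)^0)^*$; this allows the author to choose the basis $P_1^0,\dots,P_r^0$ of $L=E(K)^0$ dual to the given $P_1,\dots,P_r$, so that the Gram matrices are mutually inverse and $A$ is \emph{exactly} $\operatorname{adj}(B^0)$. You instead prove from scratch only the weaker inclusion $M\subseteq L^*$, via the height formula and the vanishing of local contributions when one argument lies in $E(K)^0$; this costs you an extra integer change-of-basis matrix $U$ and yields $A=U\operatorname{adj}(G)U^t$ rather than a bare adjugate. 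Your argument is more self-contained (it does not appeal to the Oguiso--Shioda classification) and would still work on any elliptic surface where the height formula holds, while the paper's version is sharper in that it identifies $A$ precisely and uses information specific to the rational case.
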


\begin{proof}
Let $P_1,...,P_r$ be generators of the free part of $E(K)$ and let $L:=E(K)^0$. The free part of $E(K)$ is isomorphic to the dual lattice $L^*$ \cite[Main Thm.]{OguisoShioda}, so we may find generators $P_1^0,...,P_r^0$ of $L$ such that the Gram matrix $B^0:=(\langle P_i^0,P_j^0\rangle)_{i,j}$ of $L$ is the inverse of the Gram matrix $B:=(\langle P_i,P_j\rangle)_{i,j}$ of $L^*$.

We claim that $Q_X$ is represented by the adjugate matrix of $B^0$, i.e. the matrix $\text{adj}(B^0)$ such that $B^0\cdot\text{adj}(B^0)=(\det B^0)\cdot I_r$, where $I_r$ is the $r\times r$ identity matrix. Indeed, by construction $B$ represents the quadratic form $h(x_1P_1+...+x_rP_r)$, therefore\begin{align*}
Q_X(x_1,...,x_r)&=(\det E(K)^0)\cdot h(x_1P_1+...+x_rP_r)\\
&=(\det B^0)\cdot x^tBx\\
&=(\det B^0)\cdot x^t(B^0)^{-1}x\\
&=x^t\text{adj}(B^0)x,
\end{align*}

as claimed. To prove that $A:=\text{adj}(B^0)$ has integer coefficients, notice that the Gram matrix $B^0$ of $L=E(K)^0$ has integer coefficients (as $E(K)^0$ is an even lattice), then so does $A$.
\end{proof}

\indent We close this section with a simple consequence of the definition of $Q_X$.

\begin{lemma}\label{lemma:Q_X_represents_dm} 
If $h(P)=m$ for some $P\in E(K)$, then $Q_X$ represents $d\cdot m$, where $d:=\det E(K)^0$.
\end{lemma}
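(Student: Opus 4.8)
The plan is to reduce everything to the single fact, recorded in the construction of the Mordell-Weil lattice, that the height pairing factors through $E(K)/E(K)_{\text{tor}}$; equivalently, the map $\varphi$ of Lemma~\ref{lemma:orthogonal_projection} has kernel $E(K)_{\text{tor}}$, so $h(P)=-\varphi(P)\cdot\varphi(P)$ depends only on the class of $P$ modulo torsion. Once this is in hand the statement is essentially immediate from the definition of $Q_X$.

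First I would use that $P_1,\dots,P_r$ generate the free part of $E(K)$, so the classes of $P_1,\dots,P_r$ generate the image of $E(K)$ in $E(K)/E(K)_{\text{tor}}$. Hence, given $P\in E(K)$ with $h(P)=m$, there exist integers $x_1,\dots,x_r\in\Z$ and a torsion section $T\in E(K)_{\text{tor}}$ with $P=x_1P_1+\dots+x_rP_r+T$. Next, since $\varphi(T)=0$ and $\varphi$ is a group homomorphism, we get $\varphi(P)=\varphi(x_1P_1+\dots+x_rP_r)$, whence
$$h(x_1P_1+\dots+x_rP_r)=-\varphi(x_1P_1+\dots+x_rP_r)^2=-\varphi(P)^2=h(P)=m.$$

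Finally I would invoke the definition of $Q_X$ directly:
$$Q_X(x_1,\dots,x_r)=(\det E(K)^0)\cdot h(x_1P_1+\dots+x_rP_r)=d\cdot m=dm.$$
As the tuple $(x_1,\dots,x_r)$ lies in $\Z^r$, this exhibits $dm$ as a value attained by $Q_X$ on an integer vector, which is precisely the assertion that $Q_X$ represents $dm$. There is no genuine obstacle here: the only step demanding any care is the passage modulo torsion, and it is fully justified by the property $\ker\varphi=E(K)_{\text{tor}}$ already established in the construction of the height pairing. The result is therefore a one-line consequence of the definition together with torsion-invariance of $h$.
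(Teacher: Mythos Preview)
Your proof is correct and follows essentially the same route as the paper: write $P$ as an integer combination of the free generators plus a torsion section, use that torsion does not contribute to the height (you phrase this via $\ker\varphi=E(K)_{\text{tor}}$, the paper simply says torsion sections do not contribute to the height pairing), and then apply the definition of $Q_X$ directly.
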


\begin{proof}
Let $P_1,...,P_r$ be generators for the free part of $E(K)$. Let $P=a_1P_1+...+a_rP_r+Q$, where $a_i\in\Z$ and $Q$ is a torsion element (possibly zero). Since torsion sections do not contribute to the height pairing, then $h(P-Q)=h(P)=m$. Hence 
\begin{align*}
Q_X(a_1,...,a_r)&=d\cdot h(a_1P_1+...+a_rP_r)\\
&=d\cdot h(P-Q)\\
&=d\cdot m.
\end{align*}
\end{proof}

\newpage

\chapter{Conic bundles on rational elliptic surfaces}\label{ch:conic_bundles_on_RES}\
\indent In Section~\ref{section:conic_bundles} we explain how conic bundles appear in the general theory of algebraic surfaces. In this chapter we focus on  conic bundles on a rational elliptic surface, which is motivated by results and techniques from \cite{Salgado12}, \cite{LoughSalgado}, \cite{GarbagnatiSalgado17}, \cite{GarbagnatiSalgado20} and \cite{ArtebaniGarbagnatiLaface}. In \cite{Salgado12}, \cite{LoughSalgado} the existence of conic bundles on $X$ is used as a central condition for having rank jumps, in a sense made precise in Subsection~\ref{subsection:base_change} and specially in Chapter~\ref{ch:rank_jumps}. The relevance of conic bundles in \cite{GarbagnatiSalgado17}, \cite{GarbagnatiSalgado20} and \cite{ArtebaniGarbagnatiLaface} is due to other reasons, which we now briefly explain.

Over an algebraically closed field of characteristic zero, the existence of conic bundles on $X$ is used in \cite{GarbagnatiSalgado17}, \cite{GarbagnatiSalgado20} in order to classify elliptic fibrations on K3 surfaces that are quadratic covers of $X$. More precisely, given a degree two morphism $f:\P^1\to\P^1$ ramified away from nonreduced fibers of $\pi$, the induced K3 surface is $X':=X\times_f\P^1$. The base change also gives rise to an elliptic fibration $\pi':X'\to\P^1$ and a degree two map $f':X'\to X$. By composition with $f'$, every conic bundle on $X$ induces a genus $1$ fibration on $X'$, in which case we may obtain elliptic fibrations distinct from $\pi$.

In \cite{ArtebaniGarbagnatiLaface} conic bundles are also useful, although in a very different context. In this case the base field is $\C$ and the goal is to find generators for the Cox ring $\mathcal{R}(X):=\bigoplus_{[D]}H^0(X,D)$, where $[D]$ runs through $\text{Pic}(X)$. Given a rational elliptic fibration $\pi:X\to\P^1$, the ring $\mathcal{R}(X)$ is finitely generated if and only if $X$ is a Mori Dream Space \cite[Proposition 2.9]{HuKeel}, which in turn is equivalent to $\pi$ having generic rank zero \cite[Corollary 5.4]{ArtebaniLaface}. Assuming this is the case, the authors show that in many configurations of $\pi$ each minimal set of generators of $\mathcal{R}(X)$ must contain an element $g\in H^0(X,D)$, where $D$ is a fiber of a conic bundle on $X$, whose possibilities are explicitly described.

We take these instances as motivators for a detailed study of conic bundles, which is the theme of this chapter. Here we consider a rational elliptic surface $\pi:X\to\P^1$ over an arbitrary algebraically closed field and proceed by the following plan. In Section~\ref{section:numerical_characterization} we characterize conic bundles in terms of certain Néron-Severi classes and deduce some geometric properties using this point of view. By applying these results in Section~\ref{section:classification_conic_bundles}, we completely describe the possible types of conic bundle fibers. Section~\ref{section:conic_vs_elliptic} is dedicated to the study of how the fiber configuration of $\pi$ interferes with the possible fiber configuration of conic bundles on $X$. Finally in Section~\ref{section:examples_conic_bundles} we present a method to construct conic bundles and produce some examples to illustrate our results.

\section{Numerical characterization of conic bundles}\label{section:numerical_characterization}\
\indent We give a characterization of conic bundles on a rational elliptic surface $X$ from a numerical standpoint. The motivation for this approach comes from the following. Let $\varphi:X\to\P^1$ be a conic bundle and let $C$ be a general fiber of $\varphi$, hence a smooth, irreducible curve of genus zero. Clearly $C$ is a nef divisor with $C^2=0$ and, by adjunction, $C\cdot(-K_X)=2$. These three numerical properties are enough to prove that $|C|$ is a base point free pencil and consequently the induced morphism $\varphi_{|C|}:X\to\P^1$ gives $\varphi$ itself.

Conversely, let $D$ be a nef divisor with $D^2=0$ and $D\cdot(-K_X)=2$. Since numerical and algebraic equivalence coincide by Theorem~\ref{thm:algebraic_and_numerical_equivalences}, it makes sense to consider the class $[D]\in\text{NS}(X)$. The natural question is whether $[D]$ induces a conic bundle on $X$. The answer is yes, moreover there is a natural correspondence between such classes and conic bundles (Theorem~\ref{thm:correspondence}), which is the central result of this section.

In order to prove this correspondence we need a numerical analysis of a given class $[D]\in\text{NS}(X)$ so that we can deduce geometric properties of the induced morphism $\varphi_{|D|}:X\to\P^1$, such as connectivity of fibers (Proposition~\ref{prop:connectedness}) and composition of their support (Proposition~\ref{prop:components_conic_bundle_class}). These properties are also crucial to the classification of fibers in Section~\ref{section:classification_conic_bundles}.

\begin{defi}\label{def:conic_class}
A class $[D]\in\text{NS}(X)$ is called a \textit{conic class} when
\begin{enumerate}[i)]
\item $D$ is nef.
\item $D^2=0$.
\item $D\cdot (-K_X)=2$.
\end{enumerate}
\end{defi}

\begin{lemma}\label{Riemann-Roch}
Let {\normalfont $[D]\in\text{NS}(X)$} be a conic class. Then $|D|$ is a base point free pencil and therefore induces a surjective morphism $\varphi_{|D|}:X\to\P^1$.
\end{lemma}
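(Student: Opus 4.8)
The plan is to build the pencil from a Riemann--Roch estimate and then to eliminate the fixed part of $|D|$. First I would compute, using Riemann--Roch on the surface $X$ together with $\chi(\mathcal{O}_X)=\chi(X)=1$ (Theorem~\ref{thm:RES_distinguished_properties}),
$$\chi(\mathcal{O}_X(D))=\chi(\mathcal{O}_X)+\tfrac{1}{2}D\cdot(D-K_X)=1+\tfrac{1}{2}\bigl(D^2+D\cdot(-K_X)\bigr)=1+\tfrac{1}{2}(0+2)=2.$$
To turn this into a bound on $h^0(D)$ I would kill the top cohomology by Serre duality: $h^2(D)=h^0(K_X-D)$, and since $(K_X-D)\cdot(-K_X)=-K_X^2-2=-2<0$ while $-K_X$ is nef (Theorem~\ref{thm:RES_distinguished_properties}), the class $K_X-D$ meets a nef divisor negatively and hence cannot be effective. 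Therefore $h^0(D)=2+h^1(D)\geq 2$, so $|D|\neq\emptyset$ and $\dim|D|\geq 1$.

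Next I would analyse the fixed part. Write $D\sim M+V$ with $V\geq 0$ the fixed part and $|M|$ the moving part, so $\dim|M|=\dim|D|\geq 1$ and $M$ is nef (a moving class meets every curve nonnegatively). Expanding $0=D^2=D\cdot M+D\cdot V$ into nonnegative summands ($D$ nef) forces $D\cdot M=D\cdot V=0$; then $0=D\cdot M=M^2+M\cdot V$ with $M$ nef forces $M^2=0$ and $M\cdot V=0$, and consequently $V^2=0$ with $D\cdot C=M\cdot C=V\cdot C=0$ for every component $C$ of $V$. The goal is $V=0$. Splitting $2=D\cdot(-K_X)=M\cdot(-K_X)+V\cdot(-K_X)$, the parity relation $E\cdot(-K_X)\equiv E^2\pmod 2$ (Lemma~\ref{lemma:fibers}c, via adjunction) makes each summand even, so the split is $(2,0)$ or $(0,2)$. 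If $V\cdot(-K_X)=0$, then every component $C$ of $V$ has $K_X\cdot C=0$, hence lies in a fibre of $\pi$ by Lemma~\ref{lemma:detect_fibers}; since $V^2=0$, Zariski's lemma (Lemma~\ref{lemma:fibers}b) gives $V=\sum r_i F_i$, a nonnegative combination of full fibres, so $V\sim m(-K_X)$. As fibres slide in the base point free pencil $|-K_X|$, one can replace a fibre in $V$ by a general one to produce a member of $|D|$ not containing $V$, contradicting that $V$ is fixed unless $m=0$, i.e. $V=0$. In the other case $M\cdot(-K_X)=0$ the same reasoning applied to a general member of $|M|$ gives $M\sim m(-K_X)$ with $m\geq 1$, whence $M\cdot V=m(-K_X)\cdot V=2m>0$, contradicting $M\cdot V=0$. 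Thus $V=0$ and $|D|$ has no fixed component.

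With no fixed component and $D^2=0$, two general members of $|D|$ share no component and meet in $D^2=0$ points, hence are disjoint, so $|D|$ is base point free and defines a morphism $\phi:X\to\mathbb{P}^{N}$, $N=\dim|D|$, whose image is a curve. I would then pass to the Stein factorization $X\xrightarrow{g}B\xrightarrow{\nu}\mathbb{P}^N$ with $g$ having connected fibres, so $D=g^\ast H$ for a divisor $H$ on the smooth curve $B$ and $h^0(D)=h^0(B,H)$. Writing $D\equiv(\deg H)\,G$ for $G$ a general fibre of $g$, the identity $(\deg H)\,(G\cdot(-K_X))=D\cdot(-K_X)=2$ together with $G\cdot(-K_X)\equiv G^2=0\pmod 2$ forces $\deg H=1$ and $G\cdot(-K_X)=2$. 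Then $H$ is a single point, and $h^0(B,H)\geq 2$ forces $B\cong\mathbb{P}^1$ with $\dim|D|=1$; hence $\phi=g:X\to\mathbb{P}^1$ is the desired base point free pencil, surjective since $D$ is not numerically trivial.

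The step I expect to be the main obstacle is ruling out a nonzero fixed part $V$: the numerical identities by themselves leave open the possibility of fibral fixed components, and it is exactly the combination of the parity constraint, Lemma~\ref{lemma:detect_fibers}, Zariski's lemma, and the mobility of $|-K_X|$ that closes this gap.
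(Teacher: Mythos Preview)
Your proof is correct and takes a genuinely different route from the paper's. The paper disposes of the lemma in three lines by invoking an external result (\cite[Theorem~III.1(a)]{Har}) which, for a nef divisor $D$ with $D^2=0$ and $-K_X\cdot D\geq 0$ on an anticanonical rational surface, directly yields that $|D|$ is base point free and $h^1(D)=0$; Riemann--Roch together with the same $h^2(D)=0$ argument you give (intersecting $K_X-D$ against a nef class) then pins down $h^0(D)=2$.

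You instead make the argument self-contained: after the Riemann--Roch estimate $h^0(D)\geq 2$, you eliminate the fixed part by hand via the nef/parity/Zariski analysis and the mobility of $|-K_X|$, and then recover $\dim|D|=1$ through Stein factorization and the parity constraint $G\cdot(-K_X)\equiv G^2\pmod 2$ on the general fibre $G$. This buys independence from Harbourne's theorem at the cost of length, and the fixed-part argument is a nice illustration of how the geometry of the elliptic fibration (every $K_X$-trivial curve is fibral) controls linear systems on $X$. Two minor remarks: the parity relation you cite is really Riemann--Roch on $X$ for an arbitrary divisor rather than Lemma~\ref{lemma:fibers}c (which is the special case of fibre components), and in the Zariski step you are implicitly decomposing $V$ over the distinct fibres of $\pi$ before applying Lemma~\ref{lemma:fibers}b to each piece---both points are routine but worth making explicit.
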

\begin{proof}
By \cite[Theorem III.1(a)]{Har}, $|D|$ is base point free and $h^1(X,D)=0$. We have $\chi(X)=1$ by Theorem~\ref{thm:RES_distinguished_properties}, and Riemann-Roch gives $h^0(X,D)+h^2(X,D)=2$, so we only need to prove $h^2(X,D)=0$. Assume by contradiction that $h^2(X,D)\geq 1$. By Serre duality $h^0(K_X-D)\geq 1$, so $K_X-D$ is linearly equivalent to an effective divisor. Since $D$ is nef, $(K_X-D)\cdot D\geq 0$, which contradicts $D^2=0$ and $D\cdot(-K_X)=2$.
\end{proof}

\begin{remark}
\normalfont It follows from Lemma~\ref{Riemann-Roch} that a conic class has an effective representative.
\end{remark}

Notice that we do not know a priori that the morphism $\varphi_{|D|}:X\to\P^1$ in Lemma~\ref{Riemann-Roch} is a conic bundle. At this point we can only say that if $C$ is a smooth, irreducible fiber of $\varphi_{|D|}$, then $g(C)=0$ by adjunction. However it is still not clear whether a general fiber of $\varphi_{|D|}$ is irreducible and smooth. We prove that this is the case in Proposition~\ref{prop:finitely_many_sinuglar_fibers}. In order to do that we need information about the components of $D$ from Proposition~\ref{prop:components_conic_bundle_class} and the fact that $D$ is connected from Proposition~\ref{prop:connectedness}.

\newpage

\begin{prop}\label{prop:components_conic_bundle_class}
Let {\normalfont $[D]\in\text{NS}(X)$} be a conic class. If $D$ is an effective representative, then every curve {\normalfont $E\in\text{Supp }D$} is a smooth rational curve with $E^2\leq 0$.
\end{prop}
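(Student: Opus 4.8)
The plan is to take an effective representative and write it as $D=\sum_i n_iE_i$ with each $E_i$ an integral curve and $n_i>0$, then prove two things about every component: that $E_i^2\le 0$ and that its arithmetic genus $p_a(E_i)$ vanishes. The engine driving both is the intermediate observation that $D\cdot E_i=0$ for each $i$. Indeed, since $D$ is nef we have $D\cdot E_i\ge 0$ for all $i$, and expanding $0=D^2=\sum_i n_i(D\cdot E_i)$ as a sum of nonnegative terms with positive coefficients $n_i$ forces each $D\cdot E_i=0$.

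To bound the self-intersection I would isolate the diagonal term: from
$$0=D\cdot E_i=n_iE_i^2+\sum_{j\neq i}n_j(E_j\cdot E_i),$$
and using that distinct integral curves meet nonnegatively, I obtain $n_iE_i^2=-\sum_{j\neq i}n_j(E_j\cdot E_i)\le 0$, so $E_i^2\le 0$. For the genus statement I would invoke adjunction together with the fact that $-K_X$ is nef (Theorem~\ref{thm:RES_distinguished_properties}), whence $E_i\cdot(-K_X)\ge 0$ and therefore $2p_a(E_i)-2=E_i^2-E_i\cdot(-K_X)\le 0$, giving $p_a(E_i)\le 1$.

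The only real obstacle is excluding $p_a(E_i)=1$, i.e. a possible arithmetic-genus-one component. I expect to handle this by noting that equality $p_a(E_i)=1$ forces both inequalities above to be equalities, so $E_i^2=0$ and $E_i\cdot K_X=0$ hold simultaneously; Lemma~\ref{lemma:detect_fibers} then identifies $E_i$ as an entire fiber of $\pi$. But any fiber is linearly equivalent to $-K_X$, whence $D\cdot E_i=D\cdot(-K_X)=2$, contradicting $D\cdot E_i=0$. Therefore $p_a(E_i)=0$ for every component, so each $E_i$ is a smooth rational curve with $E_i^2\le 0$, as claimed. Alternatively, for components with $E_i^2<0$ one could quote Lemma~\ref{lemma:negative_curves} directly to conclude smoothness and rationality, leaving only the borderline case $E_i^2=0$ to be dispatched by the fiber argument above.
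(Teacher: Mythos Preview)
Your proof is correct and essentially parallels the paper's for the adjunction/genus step, including the exclusion of $p_a(E_i)=1$ via Lemma~\ref{lemma:detect_fibers}. Where you differ is in the first two steps. For $D\cdot E_i=0$, the paper first invokes Lemma~\ref{Riemann-Roch} to realize $D$ as a fiber of $\varphi_{|D|}$ and then cites Lemma~\ref{lemma:fibers}, whereas you extract this directly from nefness and $D^2=0$; your argument is more self-contained and does not require knowing that $|D|$ is already a base-point-free pencil. For $E_i^2\le 0$, the paper argues by contradiction via the Hodge index theorem (if $E_i^2>0$ and $D\cdot E_i=0=D^2$ then $D\equiv 0$, contradicting $D\cdot(-K_X)=2$), while you isolate the diagonal term in $D\cdot E_i=0$ and use nonnegativity of intersections among distinct components. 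Both routes are valid; yours is more elementary and stays purely at the level of intersection numbers, while the paper's use of Hodge index is slightly slicker in that it does not require decomposing $D$ explicitly.
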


\begin{proof}
Take an arbitrary $E\in\text{Supp }D$. By Lemma~\ref{Riemann-Roch}, $D$ is a fiber of the morphism $\varphi_{|D|}:X\to\P^1$ induced by $|D|$, hence $D\cdot E=0$ by Lemma~\ref{lemma:fibers}. Assuming $E^2>0$ by contradiction, the fact that $D^2=0$ implies that $D$ is numerically equivalent to zero by the Hodge index theorem \cite[Thm. V.1.9, Rmk. 1.9.1]{Hartshorne}. This is absurd because $D\cdot(-K_X)=2\neq 0$, so indeed $E^2\leq 0$.

To show that $E$ is a smooth rational curve, it suffices to prove that $p_a(E)=0$. By Theorem~\ref{thm:RES_distinguished_properties}, $-K_X$ is linearly equivalent to any fiber of $\pi$, in particular $-K_X$ is nef and $E\cdot K_X\leq 0$. By adjunction \cite[I.15]{Beauville} $2p_a(E)-2=E^2+E\cdot K_X\leq 0$, so $p_a(E)\leq 1$. Assume by contradition that $p_a(E)=1$. This can only happen if $E^2=E\cdot K_X=0$, so $E$ is a fiber of $\pi$ by Lemma~\ref{lemma:detect_fibers}. In this case $E$ is linearly equivalent to $-K_X$, so $D\cdot E=D\cdot(-K_X)=2$, which contradicts $D\cdot E=0$.
\end{proof}

\begin{remark}

\end{remark}

\indent While Proposition~\ref{prop:components_conic_bundle_class} provides information about the support of $D$, the next proposition states that $D$ is connected, which is an important fact about the composition of $D$ as a whole. 

\begin{prop}\label{prop:connectedness}
Let {\normalfont $[D]\in\text{NS}(X)$} be a conic class. If $D$ is an effective representative, then $D$ is connected.
\end{prop}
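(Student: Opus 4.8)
The plan is to deduce connectedness of $D$ from the single cohomological identity $h^0(D,\mathcal{O}_D)=1$. This suffices because, writing $D$ as a disjoint union of its connected components $D_i$, one has $H^0(\mathcal{O}_D)=\bigoplus_i H^0(\mathcal{O}_{D_i})$ with each summand nonzero, so the number of connected components is bounded above by $h^0(\mathcal{O}_D)$; thus $h^0(\mathcal{O}_D)=1$ forces $D$ to be connected. The whole argument will run through the structure sequence
$$0 \to \mathcal{O}_X(-D) \to \mathcal{O}_X \to \mathcal{O}_D \to 0,$$
whose long exact cohomology sequence expresses $h^0(\mathcal{O}_D)$ in terms of the cohomology of $\mathcal{O}_X$ and of $\mathcal{O}_X(-D)$.

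First I would record the cohomology of $\mathcal{O}_X$. By Theorem~\ref{thm:RES_distinguished_properties} we have $\chi(\mathcal{O}_X)=1$ and $h^0(\mathcal{O}_X)=1$; moreover $h^2(\mathcal{O}_X)=h^0(K_X)=0$ because $K_X\sim -F$ (with $F$ a fiber of $\pi$) is anti-effective and nonzero, so an effective representative of $K_X$ would make $K_X+F\sim 0$ effective and nonzero, which is impossible. Riemann–Roch then gives $h^1(\mathcal{O}_X)=h^0+h^2-\chi=0$. Next I would compute the cohomology of $\mathcal{O}_X(-D)$. One has $h^0(\mathcal{O}_X(-D))=0$: an effective divisor linearly equivalent to $-D$ would, added to $D$, produce an effective divisor linearly equivalent to $0$, forcing $D=0$ and contradicting $D\cdot(-K_X)=2$. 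For the top term, Serre duality gives $h^2(\mathcal{O}_X(-D))=h^0(K_X+D)$, and since $K_X+D\sim D-F$ satisfies $D\cdot(D-F)=D^2-D\cdot F=-2<0$ while $D$ is nef, the class $D-F$ cannot be effective, so this term vanishes as well.

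The crucial step is then the Riemann–Roch computation $\chi(\mathcal{O}_X(-D))=\chi(\mathcal{O}_X)+\tfrac{1}{2}\bigl(D^2+D\cdot K_X\bigr)=1+\tfrac{1}{2}(0-2)=0$. Combined with the two vanishings just obtained, this pins down $h^1(\mathcal{O}_X(-D))=0$. Substituting everything into the long exact sequence, the relevant segment
$$0 \to H^0(\mathcal{O}_X(-D)) \to H^0(\mathcal{O}_X) \to H^0(\mathcal{O}_D) \to H^1(\mathcal{O}_X(-D))$$
collapses to $0\to 0\to k\to H^0(\mathcal{O}_D)\to 0$, whence $h^0(\mathcal{O}_D)=1$ and $D$ is connected.

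I expect the main obstacle to be establishing $h^1(\mathcal{O}_X(-D))=0$ without invoking any vanishing theorem: since the classification is meant to hold in arbitrary characteristic, Kawamata–Viehweg-type statements are unavailable, and in any case $D$ is nef but not big (as $D^2=0$). The point is that the nef hypothesis is exactly what makes the bare Euler-characteristic computation decisive — it is used to rule out effectivity of $D-F$ (killing $h^2$) and, together with $D\neq 0$, to kill $h^0$, so that $\chi(\mathcal{O}_X(-D))=0$ determines $h^1$ on the nose. This is the one place where a naive translation of a characteristic-zero vanishing argument would need to be replaced by the self-contained Riemann–Roch plus Serre duality computation above.
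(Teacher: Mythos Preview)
Your proof is correct and takes a genuinely different route from the paper's. The paper argues combinatorially: it writes $D=D_1+\cdots+D_n$ as a sum of connected components, observes (via Lemma~\ref{lemma:fibers}) that each $D_i$ is nef with $D_i^2=0$ and $D_i\cdot K_X\in 2\Bbb{Z}$, and uses $-K_X$ nef together with $D\cdot(-K_X)=2$ to conclude that a single component $D_{i_0}$ carries the full intersection $D_{i_0}\cdot(-K_X)=2$ while the others satisfy $D_i\cdot(-K_X)=0$. Then $[D_{i_0}]$ is itself a conic class, so $|D_{i_0}|$ is a pencil by Lemma~\ref{Riemann-Roch}; since $D_{i_0}\leq D$ and $|D|$ is a base-point-free pencil, Lemma~\ref{fixed_component} forces $D=D_{i_0}$.

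Your cohomological approach, computing $h^0(\mathcal{O}_D)=1$ via the structure sequence and Riemann--Roch for $\mathcal{O}_X(-D)$, is self-contained and avoids the detour through Lemma~\ref{lemma:fibers} and Lemma~\ref{fixed_component}. It is the more standard ``connectedness via $h^1$-vanishing'' argument, and your care to kill $h^0(-D)$ and $h^2(-D)$ by hand (rather than citing a vanishing theorem) makes it valid in arbitrary characteristic as required. The paper's approach, by contrast, reuses machinery already set up for the classification of fibers in Theorem~\ref{thm:classification_conic_bundles}, so in context it comes essentially for free; yours is cleaner in isolation.
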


\begin{proof}
Let $D=D_1+...+D_n$, where $D_1,...,D_n$ are connected components. By Lemma~\ref{lemma:fibers} c) each $D_i$ is nef with $D_i^2=0$ and $D_i\cdot K_X\in 2\Z$. Since $-K_X$ is nef by Theorem~\ref{thm:RES_distinguished_properties} and $D\cdot(-K_X)=2$, then $D_{i_0}\cdot(-K_X)=2$ for some $i_0$ and $D_i\cdot (-K_X)=0$ for $i\neq i_0$. In particular $[D_{i_0}]\in\text{NS}(X)$ is a conic class. By Lemma~\ref{fixed_component}, both $|D|$ and $|D_{i_0}|$ are pencils, so $D=D_{i_0}$ by Lemma~\ref{fixed_component}.
\end{proof}

\indent We use Propositions~\ref{prop:components_conic_bundle_class} and \ref{prop:connectedness} to conclude that $\varphi_{|D|}:X\to\P^1$ is indeed a conic bundle.

\begin{prop}\label{prop:finitely_many_sinuglar_fibers}
Let {\normalfont $[D]\in\text{NS}(X)$} be a conic class. Then all fibers of $\varphi_{|D|}:X\to\P^1$ are smooth, irreducible curves of genus $0$ except for finitely many which are reducible and supported on negative curves. In particular, $\varphi_{|D|}$ is a conic bundle.
\end{prop}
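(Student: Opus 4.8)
The plan is to treat every fiber of $\varphi:=\varphi_{|D|}$ as a member of the pencil $|D|$, so that the structural results already proved for conic classes apply verbatim, and then to control the reducible fibers through the finiteness of the Picard number.

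First I would note that every fiber $F$ of $\varphi$ belongs to $|D|$, hence is an effective divisor linearly (in particular numerically) equivalent to $D$; thus $[F]$ is again a conic class. Consequently $F$ is connected by Proposition~\ref{prop:connectedness}, each of its components is a smooth rational curve of self-intersection $\le 0$ by Proposition~\ref{prop:components_conic_bundle_class}, and $F^2=0$, $F\cdot(-K_X)=2$. Combining $E^2\le 0$ with the nefness of $-K_X$ and adjunction pins every component $E$ of $F$ to $E^2\in\{0,-1,-2\}$, with $E\cdot(-K_X)$ equal to $2,1,0$ respectively.

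Next I would split according to whether $\mathrm{Supp}\,F$ is irreducible. If $F$ has a component $E$ with $E^2=0$, then Zariski's lemma (Lemma~\ref{lemma:fibers}(b)), applied to the connected fiber $F$, forces $E=rF$; since $E$ is irreducible this makes $F$ supported on $E$ alone, so $F=mE$, and then $F\cdot(-K_X)=2$ together with $E\cdot(-K_X)=2$ gives $m=1$. Hence such an $F$ is a single smooth rational curve, smooth and irreducible of genus $0$. If instead $F$ has at least two distinct components, then no component can be a rational multiple of $F$, so Lemma~\ref{lemma:fibers}(b) yields $E^2<0$ for every component, and by Lemma~\ref{lemma:negative_curves} each is a $(-1)$- or $(-2)$-curve. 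This shows the reducible fibers are precisely the ones supported on negative curves.

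The main obstacle is the finiteness of the set of reducible fibers, which I cannot obtain from generic smoothness since the characteristic is arbitrary. Here I would invoke the finiteness of $\rho:=\mathrm{rank}\,\mathrm{NS}(X)$. By Lemma~\ref{lemma:fibers}(b) the intersection form restricted to the components of a single fiber is negative semidefinite with radical spanned by the fiber class, while components of distinct fibers are disjoint and hence orthogonal; all fibers share the common class $[D]$. Writing $m_t$ for the number of components of $\varphi^{-1}(t)$, this forces the classes of all fiber components to span a subspace of $\mathrm{NS}(X)_\Q$ of dimension $1+\sum_t(m_t-1)$, so that $1+\sum_t(m_t-1)\le\rho<\infty$. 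Therefore $m_t\ge 2$ for only finitely many $t$, i.e. only finitely many fibers are reducible. Every remaining fiber has irreducible support and is, by the case analysis above, a smooth irreducible curve of genus $0$; this is exactly the statement, and in particular $\varphi_{|D|}$ is a conic bundle.
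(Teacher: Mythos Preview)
Your proof is correct. The initial case analysis (using connectedness, Proposition~\ref{prop:components_conic_bundle_class}, Zariski's lemma, and adjunction to separate the irreducible case from the reducible-supported-on-negative-curves case) matches the paper's argument essentially verbatim.

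The genuine difference is in the finiteness step. The paper argues more concretely, exploiting the structure of $X$ as a rational elliptic surface: since $(-2)$-curves live in reducible fibers of $\pi$ (Lemma~\ref{lemma:negative_curves}) there are only finitely many of them, so an infinite collection of reducible fibers of $\varphi_{|D|}$ would eventually be supported purely on $(-1)$-curves; picking one such curve in each fiber yields infinitely many pairwise disjoint exceptional curves, contradicting the existence of a minimal model. Your argument instead uses the Shioda--Tate style rank bound $1+\sum_t(m_t-1)\le\rho(X)$ in $\mathrm{NS}(X)_\Q$, which is the standard device and works for any surjective morphism from a smooth projective surface with finite Picard number to a curve. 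Your route is cleaner and more general; the paper's route is more hands-on and, as a byproduct, foregrounds the dichotomy $(-1)$-curve versus $(-2)$-curve that feeds directly into the classification in Theorem~\ref{thm:classification_conic_bundles}.
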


\begin{proof}
Let $F$ a fiber of $\varphi_{|D|}$. Since $F$ is linearly equivalent to $D$, then $[F]=[D]\in\text{NS}(X)$, so $F$ is connected by Proposition~\ref{prop:connectedness}. By Proposition~\ref{prop:components_conic_bundle_class} every $E\in\text{Supp }F$ has $g(E)=0$ and $E^2\leq 0$.

First assume $E^2=0$ for some $E\in\text{Supp }F$. Since $F$ is a connected fiber of $\varphi_{|D|}$, then $E=rF$ for some $r\in\Q$ by Lemma~\ref{lemma:fibers}. Because $F\cdot(-K_X)=2$, we have $E\cdot K_X=-2r$. By adjunction \cite[I.15]{Beauville}, $r=1$, so $F=E$ is a smooth, irreducible curve of genus $0$. 

Now assume $E^2<0$ for every $E\in\text{Supp }F$. Then $F$ must be reducible, otherwise $F^2=E^2<0$, which is absurd since $F$ is a fiber of $\varphi_{|D|}$. Conversely, if $F$ is reducible, then $E^2<0$ for all $E\in\text{Supp }F$, otherwise $E^2=0$ for some $E$ and by the last paragraph $F$ is irreducible, which is a contradiction. 

This shows that either $F$ is smooth, irreducible of genus $0$ or $F$ is reducible, in which case $F$ is supported on negative curves. We are left to show that $\varphi_{|D|}$ has finitely many reducible fibers.

Assume by contradiction that there is an infinite set $\{F_n\}_{n\in\Bbb{N}}$ of reducible fibers of $\varphi_{|D|}$. In particular each $F_n$ is supported on negative curves, which are either $(-1)$-curves (sections of $\pi$) or $(-2)$-curves (components of reducible fibers) by Lemma~\ref{lemma:negative_curves}.
\newpage
Since $\pi$ has finitely many singular fibers, the number of $(-2)$-curves in $X$ is finite, so there are finitely many $F_n$ with $(-2)$-curves in its support. Excluding such $F_n$, we may assume all members in $\{F_n\}_{n\in\Bbb{N}}$ are supported on $(-1)$-curves. For each $n$, take $P_n\in\text{Supp }F_n$. The fibers $F_n,F_m$ are disjoint, so $P_n,P_m$ are disjoint when $n\neq m$. By contracting finitely many exceptional curves $P_1,...,P_\ell$, we are still left with an infinite set $\{P_m\}_{m>\ell}$ of exceptional curves, so we cannot reach a minimal model for $X$, which is absurd.
\end{proof}

\begin{remark}
\normalfont In characteristic zero the proof of Proposition~\ref{prop:finitely_many_sinuglar_fibers} can be made simpler by applying Bertini's theorem \cite[Cor. 10.9]{Hartshorne}, which guarantees that all but finitely many fibers are smooth	from the fact that $X$ is smooth.
\end{remark}

We now prove the numerical characterization of conic bundles.

\begin{teor}\label{thm:correspondence}
Let $X$ be a rational elliptic surface. If {\normalfont $[D]\in\text{NS}(X)$} is a conic class, then $|D|$ is a base point free pencil whose induced morphism $\varphi_{|D|}:X\to\P^1$ is a conic bundle. Moreover, the map $[D]\mapsto \varphi_{|D|}$ has an inverse $\varphi\mapsto [F]$, where $F$ is any fiber of $\varphi$. This gives a natural correspondence between conic classes and conic bundles.
\end{teor}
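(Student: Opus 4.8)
The plan is to handle the two assertions in turn: first that every conic class induces a conic bundle (the forward map is well-defined), and then that $[D]\mapsto\varphi_{|D|}$ is a bijection onto the set of conic bundles whose inverse sends a conic bundle to the class of any of its fibers. The forward direction requires essentially no new work: by Lemma~\ref{Riemann-Roch} a conic class $[D]$ has $|D|$ a base point free pencil, hence induces a surjective morphism $\varphi_{|D|}:X\to\P^1$, and by Proposition~\ref{prop:finitely_many_sinuglar_fibers} this morphism is a conic bundle. So $[D]\mapsto\varphi_{|D|}$ lands in the set of conic bundles.

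Next I would check that $\varphi\mapsto[F]$ is well-defined, namely that $[F]$ is a conic class and does not depend on the chosen fiber $F$. Independence is immediate, since every fiber is a pullback $\varphi^{*}(p)$ of a point $p\in\P^1$ and all points of $\P^1$ are linearly equivalent, so all fibers are linearly equivalent and represent a single class in $\NS(X)$. To see that $[F]$ is a conic class I would verify the three conditions of Definition~\ref{def:conic_class}. Because the general fiber of $\varphi$ is smooth, irreducible of genus $0$ and hence connected, Stein factorization shows every fiber of $\varphi$ is connected; Lemma~\ref{lemma:fibers} then gives that $F$ is nef with $F^2=0$. For the remaining condition I would pass to a general smooth fiber $C$ and apply adjunction: $2g(C)-2=C^2+C\cdot K_X$ with $g(C)=0$ and $C^2=0$ yields $C\cdot(-K_X)=2$, and since $F\sim C$ we conclude $F\cdot(-K_X)=2$.

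It then remains to verify that the two assignments are mutually inverse. One composition is formal: starting from a conic class $[D]$, any fiber $F$ of $\varphi_{|D|}$ is a member of the pencil $|D|$, so $F\sim D$ and $[F]=[D]$. For the other composition I would start from a conic bundle $\varphi$ with fiber $F$; the fibers of $\varphi$ constitute a base point free pencil $\mathscr{P}\subseteq|F|$, while the forward direction applied to the conic class $[F]$ shows that $|F|$ is itself a pencil. Comparing dimensions, $\dim\mathscr{P}=\dim|F|=1$ forces $\mathscr{P}=|F|$, and therefore $\varphi_{|F|}=\varphi$ as fibrations to $\P^1$ (that is, up to an automorphism of the target, which is the only ambiguity in the morphism attached to a base point free pencil).

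The main obstacle is the well-definedness of the inverse map, and within it two points deserve care: establishing $F\cdot(-K_X)=2$ for an arbitrary (possibly reducible) fiber, which requires both the connectedness supplied by Stein factorization and the reduction to a general smooth fiber via adjunction; and pinning down that the complete linear system $|F|$ is no larger than the pencil cut out by $\varphi$, so that $\varphi_{|F|}$ genuinely recovers $\varphi$. Both hinge on the dimension count $h^0(X,F)=2$, which the forward direction already furnishes, so once these are in place the bijection follows at once.
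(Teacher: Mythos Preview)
Your proposal is correct and follows essentially the same route as the paper's proof: forward direction via Lemma~\ref{Riemann-Roch} and Proposition~\ref{prop:finitely_many_sinuglar_fibers}, inverse map checked on a general smooth fiber using adjunction, and the two compositions verified as you describe. The only differences are cosmetic: the paper simply picks a smooth irreducible fiber $C$ and notes directly that $C$ is nef with $C^2=0$ and $C\cdot(-K_X)=2$, rather than invoking Stein factorization and Lemma~\ref{lemma:fibers}; and for the composition $\varphi\mapsto[F]\mapsto\varphi_{|F|}$ the paper declares the equality ``tautological'' where you spell out the dimension comparison $\dim\mathscr{P}=\dim|F|=1$, which is in fact the content behind that word.
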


\begin{proof}
Given a conic class $[D]\in\text{NS}(X)$, by Proposition~\ref{prop:finitely_many_sinuglar_fibers} the general fiber of $\varphi_{|D|}:X\to\P^1$ is a smooth, irreducible curve of genus $0$, so $\varphi_{|D|}$ is a conic bundle.

Conversely, if $\varphi:X\to\P^1$ is a conic bundle and $C$ is a smooth, irreducible fiber of $\varphi$, in particular $C^2=0$, $g(C)=0$ and by adjunction \cite[I.15]{Beauville} $C\cdot(-K_X)=2$. Clearly $C$ is nef, so $[C]\in\text{NS}(X)$ is a conic class. Moreover, any other fiber $F$ of $\varphi$ is linearly equivalent to $C$, therefore $[F]=[C]\in\text{NS}(X)$ and the map $\varphi\mapsto [F]$ is well defined.

We verify that the maps are mutually inverse. Given a class $[D]$ we may assume $D$ is effective since $|D|$ is a pencil, so $D$ is a fiber of $\varphi_{|D|}$, hence $\varphi_{|D|}\mapsto [D]$. Conversely, given a conic bundle $\varphi$ with a fiber $F$, then $\varphi_{|F|}$ coincides with $\varphi$ tautologically, so $[F]\mapsto \varphi$.
\end{proof}

\section{Classification of conic bundle fibers}\label{section:classification_conic_bundles}\
\indent We prove one of the main results the chapter, which is the complete description of fibers of a conic bundle on $X$. We start with a description of fiber components in Lemma~\ref{lemma:description_of_fiber_components}, define some terminology related to intersection graphs and proceed with the proof of Theorem~\ref{thm:classification_conic_bundles}.

\begin{lemma}\label{lemma:description_of_fiber_components}
Let $\varphi:X\to\P^1$ be a conic bundle and $D$ any fiber of $\varphi$. Then $D$ is connected and 
\begin{enumerate}[(i)]
\item $D$ is either a smooth, irreducible curve of genus $0$, or
\item $D=P_1+P_2+D'$, where $P_1,P_2$ are $(-1)$-curves (sections of $\pi$), not necessarily distinct, and $D'$ is either zero or supported on $(-2)$-curves (components of reducible fibers of $\pi$).
\end{enumerate}
\end{lemma}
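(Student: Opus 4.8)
The plan is to assemble the facts already established for conic classes and then pin down the section-components of $D$ by a single intersection computation against $-K_X$.

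First I would invoke Theorem~\ref{thm:correspondence}: since $\varphi$ is a conic bundle and $D$ is a fiber of it, the class $[D]\in\text{NS}(X)$ is a conic class, so $D$ is nef with $D^2=0$ and $D\cdot(-K_X)=2$. Connectivity of $D$ is then immediate from Proposition~\ref{prop:connectedness}. The dichotomy in the statement comes directly from Proposition~\ref{prop:finitely_many_sinuglar_fibers}: either $D$ is a smooth, irreducible curve of genus $0$ (case (i)), or $D$ is reducible and supported on negative curves. In the latter case Lemma~\ref{lemma:negative_curves} tells us that each component of $D$ is either a $(-1)$-curve (a section of $\pi$) or a $(-2)$-curve (a component of a reducible fiber of $\pi$), which already matches the shape of case (ii). What remains is to determine how many section-components occur.

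The key step is the computation of $D\cdot(-K_X)$ component by component. Writing $D=\sum_i n_i C_i$ with the $C_i$ negative curves, I would use the fact from Theorem~\ref{thm:RES_distinguished_properties} that $-K_X$ is linearly equivalent to a fiber of $\pi$, hence nef, and compute each $C_i\cdot(-K_X)$ via adjunction: a $(-1)$-curve $P$ (genus $0$, $P^2=-1$) satisfies $P\cdot(-K_X)=1$, while a $(-2)$-curve $\Theta$ (genus $0$, $\Theta^2=-2$) satisfies $\Theta\cdot(-K_X)=0$. Therefore $D\cdot(-K_X)$ equals the total multiplicity with which sections appear in $D$. Since $D\cdot(-K_X)=2$, this total multiplicity is exactly $2$, so the section-part of $D$ is either $P_1+P_2$ with $P_1\neq P_2$, or $2P_1$ (the degenerate case $P_1=P_2$). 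Setting $D'$ equal to the remaining $(-2)$-curve part (possibly zero) yields $D=P_1+P_2+D'$ as claimed.

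The only delicate point is the bookkeeping in this last computation: one must be sure that \emph{every} component of $D$ is a negative curve, so that the two adjunction values above exhaust all cases, and that the multiplicities $n_i$ are correctly tracked, so that $D\cdot(-K_X)$ genuinely records the weighted count of sections. Both facts are guaranteed by Proposition~\ref{prop:finitely_many_sinuglar_fibers} together with Lemma~\ref{lemma:negative_curves}; once they are in hand, the argument reduces to a direct linear computation and presents no further obstruction.
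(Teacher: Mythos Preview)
Your proposal is correct and follows essentially the same approach as the paper: both arguments invoke the established results on conic classes to get connectivity and the dichotomy (smooth irreducible vs.\ supported on negative curves), then use Lemma~\ref{lemma:negative_curves} together with the adjunction computation $P\cdot(-K_X)=1$, $\Theta\cdot(-K_X)=0$ against $D\cdot(-K_X)=2$ to isolate exactly two section-components (counted with multiplicity). The paper's write-up is slightly terser and does not explicitly cite Proposition~\ref{prop:connectedness} for connectivity, but the content is the same.
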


\begin{proof}
By Proposition~\ref{prop:finitely_many_sinuglar_fibers}, all fibers of $\varphi$ fall into category (i) except for finitely many that are reducible and supported on negative curves. Let $D$ be one of such finitely many.

From Lemma~\ref{lemma:negative_curves}, $\text{Supp }D$ contains only $(-1)$-curves (sections of $\pi$) or $(-2)$-curves (components of reducible fibers of $\pi$). By adjunction \cite[I.15]{Beauville}, if $C\in\text{Supp }D$ is a $(-2)$-curve, then $C\cdot(-K_X)=0$, and if $P\in\text{Supp }D$ is a $(-1)$-curve, then $P\cdot(-K_X)=1$. But $D\cdot(-K_X)=2$, hence $D$ must have a term $P_1+P_2$ where $P_1,P_2$ are possibly equal $(-1)$-curves; and a possibly zero term $D'$ containing $(-2)$-curves, as desired.
\end{proof}

\indent At this point we have enough information about the curves that support a conic bundle fiber. It remains to investigate their multiplicities and how they intersect one another.
\newpage

\begin{teor}\label{thm:classification_conic_bundles}
Let $X$ be a rational elliptic surface with elliptic fibration $\pi:X\to\P^1$ and let $\varphi:X\to\P^1$ be a conic bundle. If $D$ is a fiber of $\varphi$, then the intersection graph of $D$ fits one of the types below. Conversely, if the intersection graph of a divisor $D$ fits any of these types, then $|D|$ induces a conic bundle $\varphi_{|D|}:X\to\P^1$.
\begin{table}[h]
\begin{center}
\centering
\begin{tabular}{|c|c|} 
\hline
Type & Intersection Graph\\ 
\hline
\multirow{3}{*}{\hfil $0$} & \multirow{3}{*}{\hfil \TypeE}\\ 
&\\
&\\
\hline
\multirow{3}{*}{\hfil $A_2$} & \multirow{3}{*}{\hfil \TypeD}\\
& \\
& \\
\hline
\multirow{3}{*}{\hfil $A_n$ ($n\geq 3$)} & \multirow{3}{*}{\hfil \TypeA}\\ 
& \\ 
& \\
\hline
\multirow{3}{*}{\hfil $D_3$} & \multirow{3}{*}{\hfil \TypeC}\\ 
& \\
& \\
\hline
\multirow{4}{*}{\hfil $D_m$ ($m\geq 4$)} & \multirow{4}{*}{\hfil \TypeB}\\ 
& \\ 
& \\
& \\
\hline
\end{tabular}
\end{center}
\begin{align*}
\star&\,\,\text{smooth, irreducible curve of genus }0\\
\circ&\,(-1)\text{-curve (section of }\pi)\\
\bullet&\,(-2)\text{-curve (component of a reducible fiber of }\pi)
\end{align*}
\end{table}
\end{teor}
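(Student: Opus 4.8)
The plan is to prove the two directions separately; the converse is short and the classification direction carries the weight. I would dispatch the converse first. If the weighted intersection graph of $D$ is one of the five listed types, then the displayed multiplicities are arranged exactly so that $D\cdot C=0$ for every $C\in\text{Supp}\,D$: this is a one-vertex-at-a-time check using $C^2=-1$ at a $\circ$-vertex (a section), $C^2=-2$ at a $\bullet$-vertex, and $C\cdot C'\in\{0,1\}$ read from the edges. Lemma~\ref{lemma:critical_divisors_are_nef} then yields immediately that $D$ is nef with $D^2=0$. Since a section pairs with $-K_X$ to $1$ and a $(-2)$-curve to $0$ (adjunction, cf. Lemma~\ref{lemma:negative_curves}) and the total $\circ$-multiplicity is $2$ in each type, we also get $D\cdot(-K_X)=2$. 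Thus $[D]$ is a conic class and Theorem~\ref{thm:correspondence} shows $|D|$ induces a conic bundle.

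For the forward direction I would start from Lemma~\ref{lemma:description_of_fiber_components}: a fiber $D$ is either a smooth irreducible genus-$0$ curve (Type $0$), or $D=P_1+P_2+D'$ with $P_1,P_2$ sections (possibly equal) and $D'$ supported on $(-2)$-curves. In the reducible case the governing facts are that $D$ is connected (Proposition~\ref{prop:connectedness}), that $D\cdot C=0$ for every component (Lemma~\ref{lemma:fibers}(a)), and that by Zariski's lemma (Lemma~\ref{lemma:fibers}(b)) the intersection form on the $\mathbb{Q}$-span of $\text{Supp}\,D$ is negative semi-definite with radical exactly $\mathbb{Q}[D]$. First I would use negative semi-definiteness to force distinct components to meet transversally in at most one point: a $2\times2$ Gram matrix with diagonal entries in $\{-1,-2\}$ and an off-diagonal entry $\geq2$ already has non-negative determinant, producing an isotropic vector independent of $D$, which is impossible; so the dual graph is simple and each section meets $D'$ with local intersection at most $1$. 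Reading $D\cdot P_i=0$ gives $P_1\cdot P_2+P_i\cdot D'=1$, which separates into $D'=0$ (forcing $P_1\cdot P_2=1$, Type $A_2$) and $D'\neq0$ (forcing $P_1\cdot P_2=0$ and each section attached to $D'$ at a single end).

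The heart of the matter, and the step I expect to be the main obstacle, is the combinatorial classification of the weighted dual graph: a connected, corank-one, negative semi-definite configuration whose positive null vector is $D$, whose self-intersections lie in $\{-1,-2\}$, and whose total $(-1)$-weight $\sum_{\text{sections}}m_{P}$ equals $2$ (because $D\cdot(-K_X)=2$ and only sections pair nontrivially with $-K_X$). The cleanest route I see is to note that the $(-2)$-curves of $D'$ are components of reducible fibers of $\pi$ and hence span a negative-\emph{definite} root sublattice (a sub-configuration of the lattice $T$ of Definition~\ref{def:lattice_T}); so $D'$ is a disjoint union of finite-type $A$--$D$--$E$ diagrams, and connectivity of $D$ through the sections limits the number of pieces and their attachment points. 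Solving the finite linear system $D\cdot C=0$ then pins down the multiplicities and shows that the only shapes compatible with total $(-1)$-weight $2$ are a chain of $(-2)$-curves capped by two disjoint sections when $P_1\neq P_2$ (types $A_n$, $n\geq3$), and a single doubled section capping a fork when $P_1=P_2$ (type $D_3$ when $D'$ consists of two curves, and type $D_m$, $m\geq4$, for a longer tail terminating in a fork). The genuinely delicate part is excluding the remaining affine possibilities uniformly — the cycles $\widetilde{A}$, the doubly-forked $\widetilde{D}$, and all $\widetilde{E}$. Each would require either no section (a genuine $\pi$-fiber, with $D\cdot(-K_X)=0$) or strictly more than two units of $(-1)$-weight; concretely, the null equation at a section $P$ reads $\sum_{\text{neighbours }C}m_{C}=m_{P}$, and this mark-matching rigidity fails at every leaf of the affine $\widetilde{D}$ and $\widetilde{E}$ diagrams, whose leaf neighbours carry mark $2$ against leaf mark $1$. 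Turning this observation into a clean, exhaustive exclusion is where I would spend most of the effort.
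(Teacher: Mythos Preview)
Your converse is the same as the paper's, and your forward direction starts identically (Lemma~\ref{lemma:description_of_fiber_components}, connectivity, $D\cdot C=0$). The divergence is in how you finish the reducible case, especially $P_1=P_2$.

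The paper does not run a general affine-diagram exclusion. Instead it uses the elementary bound of Lemma~\ref{neighbours}, namely $n(E_i)\le -n_iE_i^2$, to control the number of neighbours directly. For $P_1\neq P_2$ this gives $n(P_i)\le 1$, so each section is an extremity and a path argument with successive applications of $D\cdot C_j=0$ forces all multiplicities equal to $1$ (types $A_2$, $A_n$). For $P_1=P_2$, the multiplicity-$2$ section has $n(P_1)\le 2$; the case $n(P_1)=2$ gives $D_3$ immediately. The crucial simplification you do not exploit is in the remaining case $n(P_1)=1$: since the unique neighbour $C_1$ lies in one reducible fiber $F$ of $\pi$ and every other $(-2)$-curve of $D$ is connected to $C_1$ through $(-2)$-curves only, \emph{all} of $D'$ sits inside the Dynkin diagram of that single Kodaira fiber. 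The paper then lists just three sub-shapes of a Kodaira diagram attached to a simple component---a straight chain (a), a chain ending in a trivalent fork (b), and a cycle (c)---and a short induction on $D\cdot C_j=0$ kills (a) and (c), leaving (b), i.e.\ $D_m$.

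Your lattice-theoretic route (Zariski semi-definiteness, transversality via $2\times2$ Gram minors, then excluding $\widetilde A$, $\widetilde D$, $\widetilde E$ by $(-1)$-weight count) is not wrong, but it re-proves structure that comes for free from the Kodaira classification, and your own final paragraph correctly flags the exclusion step as where the work piles up. The paper sidesteps that entirely: once $D'$ is inside a single Kodaira fiber, there is no zoo of affine diagrams to exclude---only three concrete pictures and two one-line contradictions.
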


\noindent\textbf{Terminology.}\label{terminology} Before we prove Theorem~\ref{thm:classification_conic_bundles}, we introduce a natural terminology for dealing with the intersection graph of $D$. When $C,C'\in\text{Supp }D$ are distinct, we say that $C'$ is a \textit{neighbour} of $C$ when $C\cdot C'>0$. If $C$ has exactly one neighbour, we call $C$ an \textit{extremity}. We denote the number of neighbours of $C$ by $n(C)$. A simple consequence of these definitions is the following.

\begin{lemma}\label{neighbours}
If $D=\sum_in_iE_i$ is a fiber of a morphism $X\to\P^1$, then $n(E_i)\leq -n_i^2E_i^2$ for all $i$.
\end{lemma}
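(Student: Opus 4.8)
The plan is to exploit the fact that $D$ is a fiber, which by Lemma~\ref{lemma:fibers}~a) forces $D\cdot E_i=0$ for every component $E_i\in\text{Supp }D$. The entire statement should fall out of expanding this single intersection identity and reading off the resulting inequality, so no deep input beyond Lemma~\ref{lemma:fibers} is needed.

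First I would fix an index $i$ and expand $0=D\cdot E_i=\sum_j n_j(E_j\cdot E_i)$, isolating the self-intersection term. This yields
\[
-n_iE_i^2=\sum_{j\neq i}n_j(E_j\cdot E_i).
\]
Multiplying both sides by $n_i$, which is legitimate since the $n_i$ are (positive integer) multiplicities in a fiber, turns the left-hand side into exactly the quantity appearing in the statement:
\[
-n_i^2E_i^2=\sum_{j\neq i}n_in_j(E_j\cdot E_i).
\]

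Next I would bound this sum from below. For $j\neq i$ the curves $E_i,E_j$ are distinct and irreducible on a smooth surface, so $E_j\cdot E_i\geq 0$, with $E_j\cdot E_i\geq 1$ precisely when $E_j$ is a neighbour of $E_i$ (by the definition of neighbour together with the integrality of intersection numbers). Since every $n_j\geq 1$, each summand is non-negative, and each neighbour contributes a term $n_in_j(E_j\cdot E_i)\geq 1$. Discarding the non-negative contributions coming from non-neighbours, the sum is at least the number of neighbours $n(E_i)$, which gives $-n_i^2E_i^2\geq n(E_i)$, as claimed.

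The argument has no real obstacle; the only point demanding care is the sign bookkeeping. Specifically, one must note that the non-neighbour terms are non-negative—so they can only strengthen the inequality rather than violate it—and that it is exactly the positivity $n_j\geq 1$ together with $E_j\cdot E_i\geq 1$ for neighbours that produces the count $n(E_i)$ on the right.
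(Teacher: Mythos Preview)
Your proof is correct and follows the same approach as the paper: expand $D\cdot E_i=0$, isolate the self-intersection term, and use $n_j\geq 1$ and $E_j\cdot E_i\geq 1$ for neighbours to produce the count $n(E_i)$. The only cosmetic difference is that the paper skips your multiplication by $n_i$ and directly obtains the slightly sharper bound $n(E_i)\leq -n_iE_i^2$, which of course implies the stated inequality since $n_i\geq 1$ and $E_i^2\leq 0$.
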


\begin{proof}
By definition of $n(E_i)$, clearly $n(E_i)\leq \sum_{j\neq i}E_i\cdot E_j$. Since $D\cdot E_i=0$ by Lemma~\ref{lemma:fibers}, then
\begin{align*}
0=D\cdot E_i&=\sum_jn_jE_j\cdot E_i\\
&=n_iE_i^2+\sum_{j\neq i}n_jE_j\cdot E_i\\
&\geq n_iE_i^2+\sum_{j\neq i}E_j\cdot E_i\\
&\geq n_iE_i^2+n(E_i).
\end{align*}
\end{proof}

\begin{proof}[Proof of Theorem~\ref{thm:classification_conic_bundles}]
We begin by the converse. If $D$ fits one of the types, we must prove $[D]\in\text{NS}(X)$ is a conic class, so that $\varphi_{|D|}:X\to\P^1$ is a conic bundle by Theorem~\ref{thm:correspondence}. A case-by-case verification gives $D\cdot C=0$ for all $C\in\text{Supp }D$. Since $D$ is effective, it is nef with $D^2=0$ by Lemma~\ref{lemma:critical_divisors_are_nef}. The condition $D\cdot(-K_X)=2$ is satisfied in type $0$ by adjunction. We have $D\cdot(-K_X)=2$ in types $A_2,A_n$ for they contain two distinct sections of $\pi$ and also in types $D_3,D_m$ for they contain a section with multiplicity 2. Hence $[D]\in\text{NS}(X)$ is a conic class, as desired.

Now let $D$ be a fiber of $\varphi$. By Lemma~\ref{lemma:description_of_fiber_components}, $D$ is connected and has two possible forms. If $D$ is irreducible, we get type $0$. Otherwise $D=P_1+P_2+D'$, where $P_1,P_2$ are $(-1)$-curves and $D'$ is either zero or supported on $(-2)$-curves. If $D'=\sum_in_iC_i$ then $n(C_i)\leq 2n_i$ by Lemma~\ref{neighbours}. The bounds for $n(P_1), n(P_2)$ depend on whether i) $P_1\neq P_2$ or ii) $P_1=P_2$. In what follows we use Lemma~\ref{lemma:fibers}~a) implicitly several times.

\indent i) $P_1\neq P_2$. In this case $n(P_1)\leq 1$ and $n(P_2)\leq 1$. Since $D$ is connected, both $P_1,P_2$ must have some neighbour, so $n(P_1)=n(P_2)=1$, therefore $P_1,P_2$ are extremities. If the extremities $P_1,P_2$ meet, they form the whole graph, so $D=P_1+P_2$. This is type $A_2$.

If $P_1,P_2$ do not meet, by connectedness there must be a path on the intersection graph joining them, say $P_1,C_1,...,C_k,P_2$. Since $P_1$ is an extremity, it has only $C_1$ as a neighbour, so $0=D\cdot P_1=-1+n_1$ gives $n_1=1$. Moreover $n(C_1)\leq 2$ and by the position of $C_1$ in the path we have $n(C_1)=2$. We prove by induction that $n_i=1$ and $n(C_i)=2$ for all $i=1,...,k$. Assume this is true for $i=1,...,\ell<k$. Then $0=D\cdot C_\ell=1-2+n_{\ell+1}$, so $n_{\ell+1}=1$. Moreover, $n(C_{\ell+1})\leq 2$ and by the position of $C_{\ell+1}$ in the path we have $n(C_{\ell+1})=2$. So the graph is precisely the chain $P_1,C_1,...,C_k,P_2$. This is type $A_n$ ($n\geq 3$).
\\ \\
\indent ii) $P_1=P_2$. In this case $n(P_1)\leq 2$. We cannot have $n(P_1)=0$, otherwise $D^2=(2P_1)^2=-4$, so $n(P_1)=1$ or $2$. If $P_1$ has two neighbours, say $C_1,C_2$, then $0=D\cdot P_1=-2+n_1+n_2$, which only happens if $n_1=n_2=1$. Moreover, $n(C_1)\leq 2$, so $C_1$ can possibly have another neighbour $C_3$ in addition to $P_1$. But then $D\cdot P_1=0$ gives $n_3=0$, which is absurd, so $C_1$ has only $P_1$ as a neighbour. By symmetry $C_2$ also has only $P_1$ as a neighbour. This is type $D_3$.

Finally let $n(P_1)=1$ and $C_1$ be the only neighbour of $P_1$. Then $C_i\cdot P_1=0$ when $i>1$. Notice that $C_1,C_i$ come from the same fiber of $\pi$, say $F$, otherwise $C_i$ would be in a different connected component as $C_1,P_1$, which contradicts $D$ being connected. The possible Dynkin diagrams for $F$ are listed in Theorem~\ref{thm:Kodaira_classification}. Since $P_1$ intersects $F$ in a simple component, the possibilities are

\begin{table}[h]
\begin{center}
\centering
\begin{tabular}{ccc} 
\multirow{1}{*}{(a)} & \multirow{1}{*}{(b)} & \multirow{1}{*}{(c)}\\
\multirow{4}{*}{\DiagramX} & \multirow{4}{*}{\hfil\DiagramY} & \multirow{4}{*}{\hfil\DiagramZ}\\
& &\\
& &
\end{tabular}
\end{center}
\end{table}

In (a), (b) and (c), $D\cdot P_1=0$ implies $n_1=2$. In (c), $D\cdot C_1=0$ gives $n_2+n_m=2$, hence $n_2=n_m=1$. But $D\cdot C_2=0$ gives $n_3=0$, which is absurd, so (c) is ruled out. For (a), (b) we proceed by induction: if $k<m$ and $n_1=...=n_k=2$, then $D\cdot C_k=0$ gives $n_{k+1}=2$, therefore $n_1=...=n_m=2$. But in (a), $D\cdot C_m=0$ implies $n_m=1$, which is absurd, so (a) is also ruled out.

In (b), let $C_{m+1},C_{m+2}$ be the first elements in branches $1$ and $2$ respectively. Then $D\cdot C_m=0$ gives $n_{m+1}=n_{m+2}=1$. Consequently $n(C_{m+1})\leq 2$ and $n(C_{m+2})\leq 2$. If $C_{m+1}$ has another neighbour $C_{m+3}$ in addition to $C_m$, then $D\cdot C_{m+1}=0$ implies $n_{m+3}=0$, which is absurd, so $C_{m+1}$ is an extremity. By symmetry, $C_{m+2}$ is also an extremity. This is type $D_m$ ($m\geq 4$).
\end{proof}

\section{Fibers of conic bundles vs. fibers of the elliptic fibration}\label{section:conic_vs_elliptic}\
\indent Let $X$ be a rational elliptic surface with elliptic fibration $\pi:X\to\P^1$. The existence of a conic bundle $\varphi:X\to\P^1$ with a given fiber type is strongly dependent on the fiber configuration of $\pi$. This relationship is explored in Theorem~\ref{thm:interplay_conic_bundle_elliptic_fibration}, which provides simple criteria to identify when a certain fiber type is possible. Before we prove it, we need the following result about the existence of disjoint sections.

\begin{lemma}\label{lemma:there_is_a_pair_of_disjoint_sections}
If $X$ is a rational elliptic surface with nontrivial Mordell-Weil group, then there exists a pair of disjoint sections.
\end{lemma}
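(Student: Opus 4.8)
The plan is to produce a single section disjoint from the zero section $O$; since $O$ is itself a section, the pair $\{O,S\}$ will then be the desired pair of disjoint sections. By the height formula (\ref{equation:height_formula_P}) with $\chi(X)=1$ (Theorem~\ref{thm:RES_distinguished_properties}), a section $S\neq O$ satisfies $S\cdot O=0$ if and only if $h(S)+\sum_{v\in R}\text{contr}_v(S)=2$. Thus the whole problem reduces to exhibiting a nontrivial section realizing this equality.

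First I would dispose of the torsion case. If $E(K)_\text{tor}\neq\{O\}$, pick any nontrivial torsion section $S$; by Theorem~\ref{thm:torsion_sections_disjoint} we have $S\cdot O=0$, so $\{O,S\}$ works immediately. This already settles every surface of Mordell-Weil rank $0$ with nontrivial group, so from now on I may assume $E(K)$ is torsion-free of rank $r\geq 1$.

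In the torsion-free case I would take a section $S$ of minimal height $h(S)=\mu$. Rewriting the height formula as $S\cdot O=\tfrac12\big(h(S)-2+\sum_{v\in R}\text{contr}_v(S)\big)$, the nonnegativity of $S\cdot O$ gives a trivial lower bound, while the upper bound $\sum_{v}\text{contr}_v(S)\le c_\text{max}<4$ (Lemma~\ref{lemma:bounds_are_actually_bounds}) together with the fact that $\mu\le 2$ for every rational elliptic surface (read off from the classification in Table~\ref{table:MWL_data}) forces $2(S\cdot O)=\mu-2+\sum_{v}\text{contr}_v(S)<4$, so that $S\cdot O\in\{0,1\}$. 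The goal is then to rule out the value $1$, i.e. to show that a minimal-norm section is in fact disjoint from $O$.

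The main obstacle is precisely this last exclusion. The heuristic is that a section of minimal height is forced to meet the reducible fibers at non-identity components (that is what makes its height small), so its contribution term is as large as possible subject to keeping $h(S)=2-\sum_{v}\text{contr}_v(S)>0$; this is exactly the regime $S\cdot O=0$. To turn this into a proof I would suppose the minimal section had $S\cdot O=1$, so that $\mu=4-\sum_{v}\text{contr}_v(S)$, and compare this with the height $2-\sum_{v}\text{contr}_v(S')$ of a section $S'$ meeting the same non-identity components but with $S'\cdot O=0$; if the latter is positive it is strictly smaller, contradicting minimality. Making this comparison uniform is delicate because it depends on which components can be met simultaneously, so the cleanest route is to finish by direct inspection of the finitely many torsion-free configurations in Table~\ref{table:MWL_data}, where $\Delta=c_\text{max}-c_\text{min}<2$ by Lemma~\ref{lemma:cases_where_Delta>=2} keeps the contribution term under tight control and confirms $S\cdot O=0$ in every case.
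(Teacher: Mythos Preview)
Your approach is far more laborious than the paper's, which dispatches the lemma in one line by citing \cite[Thm.~2.5]{OguisoShioda}: the Mordell-Weil group of a rational elliptic surface is generated by sections disjoint from $O$. So if $E(K)\neq\{O\}$ there must be at least one nontrivial generator, and that generator together with $O$ is the desired pair. No height computations, no case analysis.

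Your torsion case is fine. In the torsion-free case, however, the heuristic you sketch for ruling out $S\cdot O=1$ is circular: you posit a section $S'$ with $S'\cdot O=0$ meeting the same non-identity components as $S$ in order to produce a smaller height, but the existence of any section with $S'\cdot O=0$ is precisely what you are trying to prove. There is no mechanism in your argument that manufactures $S'$; you are assuming the conclusion. You recognize this and retreat to ``direct inspection'' of Table~\ref{table:MWL_data}, which would indeed work, but you do not carry it out, and the torsion-free entries number in the dozens. So as written the proposal is incomplete. If you want a self-contained argument avoiding the Oguiso--Shioda citation, a cleaner route than the table sweep is to note that the generators they construct are exactly the sections of minimal height in each case, and for those the height formula forces $P\cdot O=0$; but proving this uniformly is essentially reproving their Theorem~2.5.
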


\begin{proof}
Let $E(K)$ be the Mordell-Weil group of $\pi$, whose neutral section we denote by $O$. By \cite[Thm. 2.5]{OguisoShioda}, $E(K)$ is generated by sections which are disjoint from $O$. Hence there must be a generator $P\neq O$ disjoint from $O$, otherwise $E(K)=\{O\}$, which contradicts the hypothesis.
\end{proof}

\indent We now state and prove the main result of this section.

\begin{teor}\label{thm:interplay_conic_bundle_elliptic_fibration}
Let $X$ be a rational elliptic surface with elliptic fibration $\pi:X\to\P^1$. Then the following statements hold:
\begin{enumerate}[a)]
\item $X$ admits a conic bundle with an $A_2$ fiber $\Leftrightarrow$ $\pi$ has positive generic rank and no {\normalfont $\text{III}^*$} fiber.
\item $X$ admits a conic bundle with an $A_{n\geq 3}$ fiber $\Leftrightarrow$ $\pi$ has a reducible fiber distinct from {\normalfont $\text{II}^*$}.
\item $X$ admits a conic bundle with a $D_3$ fiber $\Leftrightarrow$ $\pi$ has at least two reducible fibers.	
\item $X$ admits a conic bundle with a $D_{m\geq 4}$ fiber $\Leftrightarrow$ $\pi$ has a nonreduced fiber or a fiber {\normalfont $\text{I}_{n\geq 4}$}.
\end{enumerate}
\end{teor}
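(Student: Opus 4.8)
The plan is to combine the correspondence between conic bundles and conic classes (Theorem~\ref{thm:correspondence}) with the classification of admissible fibres (Theorem~\ref{thm:classification_conic_bundles}): a conic bundle with a fibre of a prescribed type exists precisely when $X$ carries an effective divisor whose intersection graph is of that type. Each equivalence then splits into a necessity direction, where I read constraints on the fibres of $\pi$ off the shape of such a divisor, and a sufficiency direction, where I exhibit a divisor $D$ realising the relevant graph; that $|D|$ is a conic bundle follows from the converse half of Theorem~\ref{thm:classification_conic_bundles}, whose verification rests on checking $D\cdot C=0$ for all $C\in\text{Supp }D$ (Lemma~\ref{lemma:critical_divisors_are_nef}) together with $D\cdot(-K_X)=2$. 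I will repeatedly use that a section meets each fibre of $\pi$ at a single point lying on a \emph{simple} (multiplicity one) component: if $F=\sum_i\mu_i\Theta_i$ then $P\cdot F=1$ forces a unique $\Theta_{i_0}$ with $\mu_{i_0}=1$ and $P\cdot\Theta_{i_0}=1$.

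For \textbf{c)}, a $D_3$ fibre has the form $C_1+2P+C_2$ with $P$ a section and $C_1,C_2$ disjoint $(-2)$-curves each meeting $P$; as $P$ meets a reducible fibre only once, $C_1$ and $C_2$ must come from different fibres, so $\pi$ has at least two reducible fibres. Conversely, given reducible fibres $F_1,F_2$, the divisor $\Theta_{F_1,0}+2O+\Theta_{F_2,0}$, with $O$ the neutral section through the neutral components, has the $D_3$ graph. For \textbf{d)}, the $(-2)$-part of a $D_{m\geq 4}$ fibre lies in a single reducible fibre $F$ by connectedness, and the fork of the graph exhibits a component of $F$ with two neighbours that are disjoint from one another; by Theorem~\ref{thm:Kodaira_classification} the dual graphs of $\text{I}_2,\text{I}_3,\text{III},\text{IV}$ (an edge, respectively a triangle) admit no such configuration, so $F$ is nonreduced or of type $\text{I}_{n\geq 4}$. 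For sufficiency I take $P=O$ through the neutral component and run a chain to a trivalent node of $F$, which exists in every $\widetilde D$ or $\widetilde E$ diagram, doubling the multiplicities along the spine and terminating in the two further arms leaving that node; for $\text{I}_{n\geq 4}$ the two cyclic neighbours $\Theta_1,\Theta_{n-1}$ of $\Theta_0$ are disjoint exactly when $n\geq 4$ and form the fork, giving $2O+2\Theta_0+\Theta_1+\Theta_{n-1}$. The relations $2-4+2=0$ along the spine and $2-4+1+1=0$ at the node yield $D\cdot C=0$ throughout.

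For \textbf{b)}, an $A_{n\geq 3}$ fibre contains a $(-2)$-curve and two distinct sections, so $\pi$ has a reducible fibre and $E(K)\neq\{O\}$; since a rational elliptic surface has trivial Mordell-Weil group exactly when $T=E_8$, equivalently when the sole reducible fibre is $\text{II}^*$ \cite{OguisoShioda}, that reducible fibre differs from $\text{II}^*$. Conversely, if $\pi$ has a reducible fibre $F\neq\text{II}^*$ then $T\neq E_8$, so $E(K)\neq\{O\}$ and Lemma~\ref{lemma:there_is_a_pair_of_disjoint_sections} furnishes a section $Q$ disjoint from $O$. Writing $\Theta_a$ for the component of $F$ met by $Q$, if $\Theta_a=\Theta_0$ then $O+\Theta_0+Q$ is an $A_3$ fibre, while otherwise a chordless path $\Theta_0=\Theta_{j_0}-\cdots-\Theta_{j_s}=\Theta_a$ in the dual graph of $F$ (a shortest arc when $F=\text{I}_n$) produces the $A_{s+3}$ fibre $O+\Theta_{j_0}+\cdots+\Theta_{j_s}+Q$; here $O\cdot Q=0$ places the two sections at the extremities and the single-component incidence of sections guarantees the graph is exactly a chain.

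Finally, \textbf{a)} is the crux and the one point where the hands-on methods of this chapter do not suffice. An $A_2$ fibre is $P_1+P_2$ with distinct sections $P_1,P_2$, and $D^2=0$ forces $P_1\cdot P_2=1$; hence $X$ admits such a conic bundle if and only if $X$ fails to have a $1$-gap. I therefore invoke Theorem~\ref{thm:surfaces_with_a_1-gap}, by which $X$ has a $1$-gap exactly when $r=0$, or $r=1$ with a $\text{III}^*$ fibre. Negating this yields ``$r\geq 1$ and not ($r=1$ with a $\text{III}^*$ fibre)'', which coincides with ``$r\geq 1$ and no $\text{III}^*$ fibre'' because an $E_7$ summand forces $r=8-\operatorname{rank}(T)\leq 1$, so a $\text{III}^*$ fibre can occur only when $r\leq 1$. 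The main obstacle is thus isolated in a): unlike b)--d) it cannot be resolved by writing down divisors, and reduces to deciding when the integer $1$ arises as an intersection number of two sections, which is precisely the Mordell-Weil lattice analysis behind the $1$-gap theorem.
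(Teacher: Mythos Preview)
Your proof is correct and follows essentially the same approach as the paper's: parts b), c), d) are handled by reading constraints from the shape of the singular fibre for necessity and by explicitly constructing a divisor with the required intersection graph for sufficiency, while part a) is deferred to Theorem~\ref{thm:surfaces_with_a_1-gap} on $1$-gaps. Your argument for d) necessity is slightly more streamlined than the paper's (you observe directly that the fork gives a component of $F$ with two disjoint neighbours and exclude $\text{I}_2,\text{I}_3,\text{III},\text{IV}$, whereas the paper splits into the cases $\ell=1$ and $\ell>1$), and your explicit use of $O$ and the neutral components in the constructions is a harmless specialisation of the paper's choice of an arbitrary disjoint pair of sections; otherwise the two arguments coincide.
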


\begin{proof}
a) We leave the proof of this item to Chapter~\ref{ch:gaps}, since the claim is equivalent to Theorem~\ref{thm:surfaces_with_a_1-gap} in Section~\ref{subsection:surfaces_with_a_1-gap}.

b) Assume $X$ admits a conic bundle with an $A_{n\geq 3}$ fiber. Since type $A_n$ contains a $(-2)$-curve, by Lemma~\ref{lemma:negative_curves}, $\pi$ has a reducible fiber $F$. We claim that $F\neq \text{II}^*$, which is equivalent to saying that the lattice $T$ (see Definition~\ref{def:lattice_T}) is not $E_8$. Indeed, if this were so, then Mordell-Weil group $E(K)$ would be trivial \cite[Main Thm.]{OguisoShioda}, which is impossible, since the $A_n$ fiber of the conic bundle contains two distinct sections. Conversely, assume $\pi$ has a reducible fiber $F\neq \text{II}^*$. Then $E(K)$ is not trivial \cite[Main Thm.]{OguisoShioda} and by Lemma~\ref{lemma:there_is_a_pair_of_disjoint_sections} we may find two disjoint sections $P,P'$. Let $C,C'\in\text{Supp }F$ be the components hit by $P,P'$. Since $F$ is connected, there is a path $C,C_1,...,C_\ell,C'$ in the intersection graph of $F$. Let $D:=P+C+C_1+...+C_\ell+C'+P'$. By Theorem~\ref{thm:classification_conic_bundles}, $\varphi_{|D|}:X\to\P^1$ is a conic bundle and $D$ is an $A_n$ fiber of it.
\begin{center}
\includegraphics[scale=0.65]{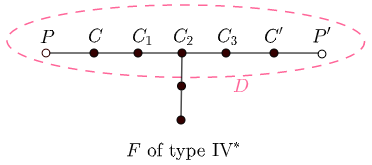}
\end{center}
\newpage
c) Assume $X$ admits a conic bundle with a $D_3$ fiber $D=C_1+2P+C_2$, where $C_1,C_2$ are $(-2)$-curves and $P$ is a section with $C_1\cdot P=C_2\cdot P=1$. Since $P$ hits each fiber of $\pi$ at exactly one point, then $C_1,C_2$ must come from two distinct reducible fibers of $\pi$. Conversely, let $F_1,F_2$ be two reducible fibers of $\pi$. If $P$ is a section, then $P$ hits $F_i$ at some $(-2)$-curve $C_i\in\text{Supp }F_i$. Let $D:=C_1+2P+C_2$. By Theorem~\ref{thm:classification_conic_bundles}, $\varphi_{|D|}:X\to\P^1$ is a conic bundle and $D$ is a $D_3$ fiber of it.

\begin{center}
\includegraphics[scale=0.65]{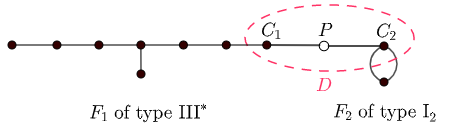}
\end{center}

d) Assume $X$ admits a conic bundle with a $D_m$ fiber $D=2P+2C_1+...+2C_\ell+(C_{\ell+1}+C_{\ell+2})$, where all $C_i$'s come from a reducible fiber $F$ of $\pi$. Notice that if $\ell> 1$ then $C_\ell$ meets three $(-2)$-curves, namely $C_{\ell-1},C_{\ell+1},C_{\ell+2}$ (see picture below). Going through the list in Theorem~\ref{thm:Kodaira_classification}, we see that this intersection behavior only happens if $F$ is $\text{I}_n^*$, $\text{II}^*$, $\text{III}^*$ or $\text{IV}^*$, all of which are nonreduced. If $\ell=1$, then $C_1$ meets the section $P$ and two $(-2)$-curves which do not intersect, namely $C_2,C_3$. Again by examining the list in Theorem~\ref{thm:Kodaira_classification}, $F$ must be $\text{I}_n$ with $n\geq 4$. Conversely, let $F$ be nonreduced or of type $\text{I}_n$ with $n\geq 4$. Take a section $P$ that hits $F$ at $C_1$. Now take a chain $C_2,...,C_\ell$ so that $C_\ell$ meets two other components of $F$. We name these two $C_{\ell+1},C_{\ell+2}$ and define $D:=2P+2C_1+...+2C_\ell+(C_{\ell+1}+C_{\ell+2})$. By Theorem~\ref{thm:classification_conic_bundles}, $\varphi_{|D|}:X\to\P^1$ is a conic bundle and $D$ is a type $D_m$ fiber of it.
\end{proof}

\begin{center}
\includegraphics[scale=0.65]{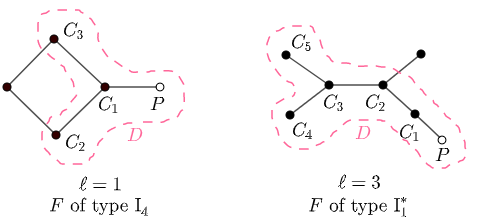}
\end{center}

\newpage

\section{Examples of conic bundles}\label{section:examples_conic_bundles}\
\indent We present examples of conic bundles over rational elliptic surfaces to illustrate how the fiber types in Theorem~\ref{thm:classification_conic_bundles} may appear. For simplicity, in this section we work over $k=\Bbb{C}$, although similar constructions are possible over different fields. In Subsection~\ref{subsection:construction_and_notation} we describe how the examples are constructed and stablish some notation, then present the examples in Subsection~\ref{subsection:examples_conic_bundles}.

\subsection{Construction and notation}\label{subsection:construction_and_notation}\
\indent As explained in Section~\ref{section:RES}, $\pi$ is induced by a pencil of cubics $\mathcal{P}$ from the blowup $p:X\to\P^2$ of the base locus of $\mathcal{P}$. We describe a method for constructing a conic bundle $\varphi:X\to\P^1$ from a pencil of curves with genus zero on $\P^2$. 
\\ \\
\noindent\textbf{Construction.} Let $\mathcal{Q}$ be a pencil of conics (or a pencil of lines) given by a dominant rational map $\psi:\P^2\dasharrow\P^1$ with the following properties:
\begin{enumerate}[(a)]
\item $\psi^{-1}(t)$ is smooth for all but finitely many $t\in\P^1$, i.e. the general member of $\mathcal{Q}$ is smooth. 
\item The indeterminacy locus of $\psi$ (equivalently, the base locus of $\mathcal{Q})$ is contained in the base locus of $\mathcal{P}$ (including infinitely near points).
\end{enumerate}

Now define a surjective morphism $\varphi:X\to\P^1$ by the composition 
\begin{displaymath}
\begin{tikzcd}
X\arrow{r}{p}\arrow[bend right, swap]{rr}{\varphi} & \P^2\arrow[dashed]{r}{\psi} &\P^1
\end{tikzcd}
\end{displaymath}

Notice that $\varphi$ is a well defined conic bundle. Indeed, by property (b) the points of indeterminacy of $\psi$ are blown up under $p$, so $\varphi$ is a morphism. By property (a) the general fiber of $\varphi$ is a smooth, irreducible curve. Since $\mathcal{Q}$ is composed of conics (or lines), the general fiber of $\varphi$ has genus zero.
\\ \\
\noindent\textbf{Example.} Let $C$ be a smooth cubic and let $L_1,L_2,L_3$ be concurrent lines. Define $\mathcal{P}$ as the pencil generated by $C$ and $L_1+L_2+L_3$. Let $P_1,P_2\in L_1\cap C$ and $P_3,P_4\in L_3\cap C$ and let $\mathcal{Q}$ be pencil of conics through $P_1,P_2,P_3,P_4$. In the following picture, $Q\in\mathcal{Q}$ is a general conic, so the strict transform of $Q$ under $p$ is a general fiber of the conic bundle $\varphi:X\to\P^1$.
\begin{center}
\includegraphics[scale=0.5]{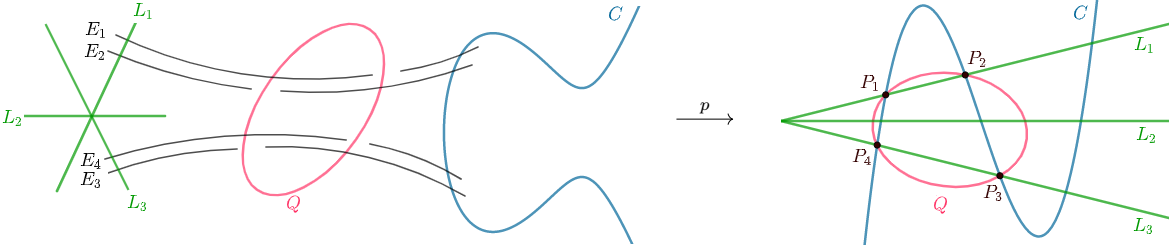}
\end{center}

\begin{remark}
\normalfont Since the base points $P_1,P_2,P_3,P_4$ of $\mathcal{Q}$ are blown up under $p$, then the pullback pencil $p^*\mathcal{Q}$ has four fixed components, namely the exceptional divisors $E_1,E_2,E_3,E_4$. By eliminating these we obtain a base point free pencil $p^*\mathcal{Q}-E_1-E_2-E_3-E_4$, which is precisely the one given by $\varphi:X\to\P^1$.
\end{remark}

\noindent\textbf{Notation.}
\begin{align*}
\mathcal{P}&\,\,\,\,\,\,\,\text{pencil of cubics on }\P^2\text{ inducing }\pi.\\
p&\,\,\,\,\,\,\,\text{blowup }p:X\to\P^2\text{ at the base locus of }\mathcal{P}.\\
\mathcal{Q}&\,\,\,\,\,\,\,\text{pencil of conic (or lines) on }\P^2.\\
Q&\,\,\,\,\,\,\,\text{conic in }\mathcal{Q},\text{ not necessarily smooth}.\\
L&\,\,\,\,\,\,\,\text{line in }\mathcal{Q}.\\
\varphi&\,\,\,\,\,\,\,\text{conic bundle }\varphi:X\to\P^1\text{ induced by }\mathcal{Q}.\\
D&\,\,\,\,\,\,\,\text{singular fiber of }\varphi\text{ such that }\varphi(D)=Q\text{ (or }\varphi(D)=L).\\
D'&\,\,\,\,\,\,\,\text{another singular fiber of }\varphi.
\end{align*}

\begin{remark}
\normalfont For simplicity, the strict transform of a curve $E\subset \P^2$ under $p$ is also denoted by $E$ instead of the usual $\widetilde{E}$.
\end{remark}

\subsection{Examples}\label{subsection:examples_conic_bundles}\
\indent We exhibit two classes of examples: the extreme cases, i.e. where the conic bundle admits only one type of single fiber; and the ones with various fiber types. 

\subsubsection{Extreme cases}\
\indent By Theorem~\ref{thm:interplay_conic_bundle_elliptic_fibration}, there are two cases in which $X$ can only admit conic bundles with exactly one type of singular fiber.
\begin{enumerate}[(1)]
\item When $\pi$ has a $\text{II}^*$ fiber: $X$ only admits conic bundles with singular fibers of type $D_{m\geq 4}$.
\item When $\pi$ has no reducible fibers: $X$ only admits conic bundles with singular fibers of type $A_2$.
\end{enumerate}

In Persson's classification list \cite{Persson}, case (1) corresponds to the first two entries in the list (the ones with trivial Mordell-Weil group) and case (2) corresponds to the last six entries (the ones with maximal Mordell-Weil rank). Examples~\ref{only_type_5} and \ref{only_type_2} illustrate cases (1) and (2) respectively.
\newpage
\begin{example}\label{only_type_5}
Let $\mathcal{P}$ be induced by a smooth cubic {\color{customBlue}$C$} and a triple line {\color{customGreen}$3L$} as defined below. We blow up the base locus $\{9P_1\}$ and obtain a rational elliptic surface whose configuration of singular fibers is $(\text{II}^*,\text{II})$. By Theorem~\ref{thm:interplay_conic_bundle_elliptic_fibration}, $X$ can only admit conic bundles with singular fibers of type $D_{m\geq 4}$. We construct a conic bundle from the pencil $\mathcal{Q}$ of lines through $P_1$. The curve ${\color{customPink}D}:=p^*{\color{customPink}L}-E_1$ is in the base point free pencil $p^*\mathcal{Q}-E_1$, which induces the conic bundle $\varphi_{|{\color{customPink}D}|}:X\to\P^1$. The curve ${\color{customPink}D}$ is a $D_9$ fiber of $\varphi_{|{\color{customPink}D}|}$.
\begin{center}
\includegraphics[scale=0.65]{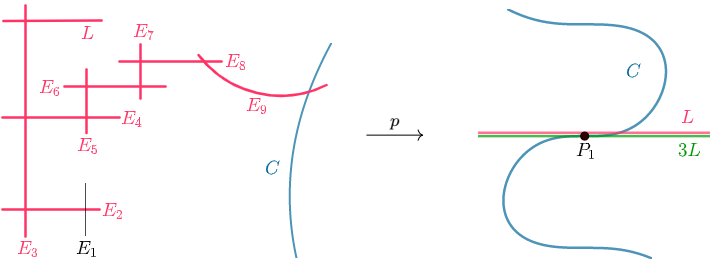}
\end{center}
\begin{align*}
&{\color{customBlue}C}:x^3+y^3-yz^2=0.\\
&{\color{customPink}L}:y=0.\\
&\mathcal{P}:\text{pencil of cubics induced by }{\color{customBlue}C}\text{ and }{\color{customGreen}3L}.\\
&\mathcal{Q}:\text{pencil of lines through }P_1.\\
&{\color{customPink}D}=2E_9+2E_8+2E_7+2E_6+2E_5+2E_4+2E_3+(E_2+{\color{customPink}L}), \text{ type }D_9.\\
&\text{Sequence of contractions:}\,E_9,E_8,E_7,E_6,E_5,E_4,E_3,E_2,E_1.
\end{align*}
\end{example}

\begin{remark}\label{single_singular_fiber}
\normalfont The conic bundle in Example~\ref{only_type_5} is in fact the only conic bundle on $X$. This follows from the fact that $\text{II}^*$ is the only reducible fiber of $\pi$ and that $E_9$ is the only section of $X$, since the Mordell-Weil group is trivial \cite{Persson}. By examining the intersection graph of $\text{II}^*$, we conclude that ${\color{customPink}D}$ is the only divisor that constitutes a $D_{m\geq 4}$ fiber.
\end{remark}

\begin{remark}
\normalfont A similar construction can be made to obtain $\pi:X\to\P^1$ with configuration $(\text{II}^*,2\text{I}_1)$, in which case Remark~\ref{single_singular_fiber} also applies.
\end{remark}

\newpage

\begin{example}\label{only_type_2}
Let $\mathcal{P}$ be induced by a smooth cubic {\color{customBlue} $C$} and a cubic {\color{customGreen}$C'$} with a node, as given below. By blowing up the base locus $\{P_1,...,P_9\}$ we obtain $\pi:X\to\P^1$ with configuration $(\text{II},10\text{I}_1)$. By Theorem~\ref{thm:interplay_conic_bundle_elliptic_fibration}, $X$ can only admit conic bundles with singular fibers of type $A_2$. Let $\mathcal{Q}$ be the pencil of lines through $P_1$. The curve {\color{customPink}$D$}$:=p^*{\color{customPink}L}-E_1$ is in the base point free pencil $p^*\mathcal{Q}-E_1$, which induces the conic bundle $\varphi_{|{\color{customPink}D}|}:X\to\P^1$. The curve ${\color{customPink}D}$ is an $A_2$ fiber of $\varphi_{|{\color{customPink}D}|}$.

Notice moreover that ${\color{customPink}D}$ is not the only singular fiber of the conic bundle. Indeed, each line $L_{1i}$ through $P_1$ and $P_i$ with $i=2,...,9$ corresponds to an $A_2$ fiber of $\varphi_{|{\color{customPink}D}|}$, namely $L_{1i}+E_i$.

\begin{center}
\includegraphics[scale=0.65]{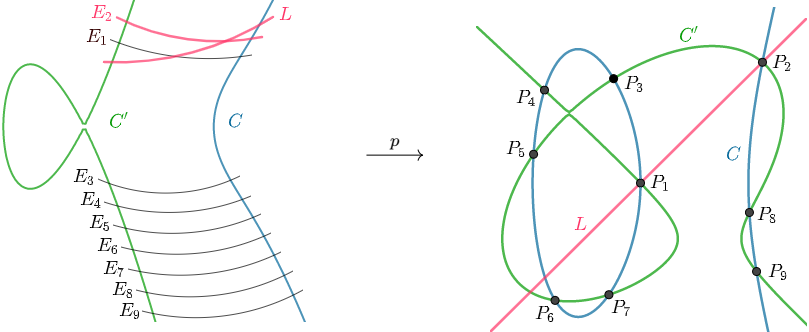}
\end{center}
\begin{align*}
&{\color{customBlue}C}:y^2z-4x^3+4xz^2=0.\\
&{\color{customGreen}C'}:y^2z-4x^3+4xz^2+(127/100)(xz^2-4y^3+4yz^2)=0.\\
&{\color{customPink}L}:\text{ line through }P_1,P_2.\\
&\mathcal{P}:\text{pencil of cubics induced by }{\color{customBlue}C}\text{ and }{\color{customGreen}C'}.\\
&\mathcal{Q}:\text{pencil of lines through }P_1.\\
&{\color{customPink}D}=p^*{\color{customPink}L}-E_1={\color{customPink}L}+E_2, \text{ type }A_2.\\
&\text{Sequence of contractions:}\,E_9,E_8,E_7,E_6,E_5,E_4,E_3,E_2,E_1.
\end{align*}
\end{example}


\begin{remark}
\normalfont Since the conic bundle in Example~\ref{only_type_2} only admits singular fibers of type $A_2$, which are isomorphic to a pair of lines meeting at a point, we have a \textit{standard conic bundle} in the sense of Manin and Tsfasman \cite[Subsection 2.2]{ManTsfa}.
\end{remark}

\newpage

\subsubsection{Mixed fiber types}
\begin{example}\label{types_2_and_3}
Let $\mathcal{P}$ be induced by a cubic {\color{customBlue}$C$} with a cusp and the triplet of lines {\color{customGreen}$L_1+L_2+L_3$} as below. By blowing up the base locus $\{P_1,...,P_9\}$ we obtain $\pi:X\to\P^1$ with configuration $(\text{IV}, \text{II}, 6\text{I}_1)$. By Theorem~\ref{thm:interplay_conic_bundle_elliptic_fibration}, $X$ admits only conic bundles with singular fibers of types $A_2$ or $A_{n\geq 3}$. We construct a conic bundle with singular fibers of types $A_2$ and $A_3$. Let $\mathcal{Q}$ be the pencil of lines through $P_1$. We define ${\color{customPink}L}$ as the line through $P_1,P_4$ and ${\color{customPink}L'}:={\color{customGreen}L_1}$. 

Then ${\color{customPink}D}:=p^*{\color{customPink}L}-E_1$ and ${\color{customPink}D'}:=p^*{\color{customPink}L'}-E_1$ are curves in the base point free pencil $p^*\mathcal{Q}-E_1$, which induces the conic bundle $\varphi_{|{\color{customPink}D}|}:X\to\P^1$. The curves $\color{customPink}D$, $\color{customPink}D'$  are fibers of $\varphi_{|\color{customPink}D|}$ of type $A_2$, $A_3$ respectively. In addition to ${\color{customPink}D}$ and ${\color{customPink}D'}$, the conic bundle has five other singular fibers, each of type $A_2$. Indeed, each line $L_{1i}$ through $P_1,P_i$ with $i\in\{2,3,5,7,8\}$ induces the $A_2$ fiber $L_{1i}+E_i$.
\begin{center}
\includegraphics[scale=0.65]{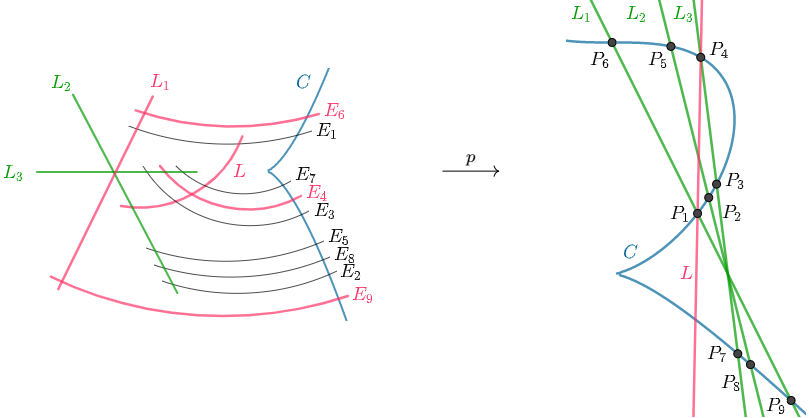}
\end{center}
\begin{align*}
&{\color{customBlue}C}:x^3+y^3-y^2z=0.\\
&{\color{customGreen}L_1}:y+2x-z=0.\\
&{\color{customGreen}L_2}:y+4x-2z=0.\\
&{\color{customGreen}L_3}:y+8x-4z=0.\\
&{\color{customPink}L}:\text{ line through }P_1,P_4.\\
&{\color{customPink}L'}={\color{customGreen}L_1}.\\
&\mathcal{P}:\text{ pencil of cubics induced by }{\color{customBlue}C}\text{ and }{\color{customGreen}L_1+L_2+L_3}.\\
&\mathcal{Q}:\text{ pencil of lines through }P_1.\\
&{\color{customPink}D}={\color{customPink}L}+E_4,\text{ type }A_2.\\
&{\color{customPink}D'}=E_6+{\color{customPink}L'}+E_9,\text{ type }A_3.\\
&\text{Sequence of contractions:}\,E_9,E_8,E_7,E_6,E_5,E_4,E_3,E_2,E_1.
\end{align*}
\end{example}

\newpage

\begin{example}
Let $\mathcal{P}$ be induced by a smooth cubic {\color{customBlue}$C$} and a triplet of lines {\color{customGreen}$L_1+L_2+L_3$} as below. By blowing up the base locus $\{2P_1,3P_2, 2P_3,P_4,P_5\}$ we obtain $\pi:X\to\P^1$ with configuration $(\text{I}_7, \text{II}, 3\text{I}_1)$. By Theorem~\ref{thm:interplay_conic_bundle_elliptic_fibration}, $X$ admits conic bundles with singular fibers of types $A_{n\geq 3}$, $D_{m\geq 4}$ or $A_2$. We construct a conic bundle with types $A_2$, $A_4$, $D_4$. Let $\mathcal{Q}$ be the pencil of lines through $P_1$. We also define ${\color{customPink}L}:={\color{customGreen}L_1}$, ${\color{customPink}L'}:={\color{customGreen}L_2}$ and ${\color{customPink}L''}$ as the line through $P_1,P_5$.

Then ${\color{customPink}D}:=p^*{\color{customPink}L}-E_1$, ${\color{customPink}D'}:=p^*{\color{customPink}L'}-E_1$ and ${\color{customPink}D''}:=p^*{\color{customPink}L''}-E_1$ are curves in the base point free pencil $p^*\mathcal{Q}-E_1$, which induces the conic bundle $\varphi_{|{\color{customPink}D}|}:X\to\P^1$. The curves $\color{customPink}D$, $\color{customPink}D'$, $\color{customPink}D''$ are fibers of $\varphi_{|\color{customPink}D|}$ of types $D_4$, $A_4$, $A_2$ respectively.
\begin{center}
\includegraphics[scale=0.65]{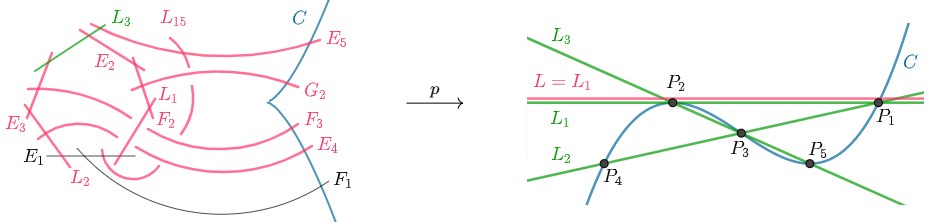}
\end{center}
\begin{align*}
&{\color{customBlue}C}:yz^2-2x^2(x-z)=0.\\
&{\color{customGreen}L_1}:y=0.\\
&{\color{customGreen}L_2}:2x-9y-2z=0.\\
&{\color{customGreen}L_3}:4x+9y=0.\\
&{\color{customPink}L}={\color{customGreen}L_1}.\\
&{\color{customPink}L'}={\color{customGreen}L_2}.\\
&{\color{customPink}L''}:\text{ line through }P_1,P_5.\\
&\mathcal{P}:\text{ pencil of cubics induced by }{\color{customBlue}C}\text{ and }{\color{customGreen}L_1+L_2+L_3}.\\
&\mathcal{Q}:\text{ pencil of lines through }P_1.\\
&{\color{customPink}D}=2G_2+2F_2+(E_2+{\color{customPink}L}),\text{ type }D_4.\\
&{\color{customPink}D'}=E_4+{\color{customPink}L'}+E_3+F_3,\text{ type }A_4.\\
&{\color{customPink}D''}={\color{customPink}L''}+E_5,\text{ type }A_2.\\
&\text{Sequence of contractions:}\,E_5,E_4,F_3,E_3,G_2,F_2,E_2,F_1,E_1.
\end{align*}
\end{example}

\newpage

\begin{example}
Let $\mathcal{P}$ be induced by a cubic with {\color{customBlue}$C$} with a cusp and a triplet of lines {\color{customGreen}$L_1+L_2+L_3$} as given below. By blowing up the base locus $\{P_1,...,P_9\}$ we obtain $\pi:X\to\P^1$ with configuration $(\text{I}^*_2,\text{III},\text{I}_1)$. By Theorem~\ref{thm:interplay_conic_bundle_elliptic_fibration}, $X$ admits conic bundles with singular fibers of types $A_{n\geq 3}$, $D_3$, $D_{m\geq 4}$ or $A_2$. We construct a conic bundle with singular fiber of types $A_3$, $D_3$, $D_5$. Let $\mathcal{Q}$ be the pencil of lines through $P_1$. We also define ${\color{customPink}L}:={\color{customGreen}L_2}$, ${\color{customPink}L'}:={\color{customGreen}L_3}$ and ${\color{customPink}L''}:={\color{customGreen}L_1}$.

Then ${\color{customPink}D}:=p^*{\color{customPink}L}-E_1$, ${\color{customPink}D'}:=p^*{\color{customPink}L'}-E_1$ and ${\color{customPink}D''}:=p^*{\color{customPink}L''}-E_1$ are curves in the base point free pencil $p^*\mathcal{Q}-E_1$, which induces the conic bundle $\varphi_{|{\color{customPink}D}|}:X\to\P^1$. The curves $\color{customPink}D$, $\color{customPink}D'$, $\color{customPink}D''$ are fibers of $\varphi_{|{\color{customPink}D}|}$ of types $A_3$, $D_3$, $D_5$ respectively.

\begin{center}
\includegraphics[scale=0.65]{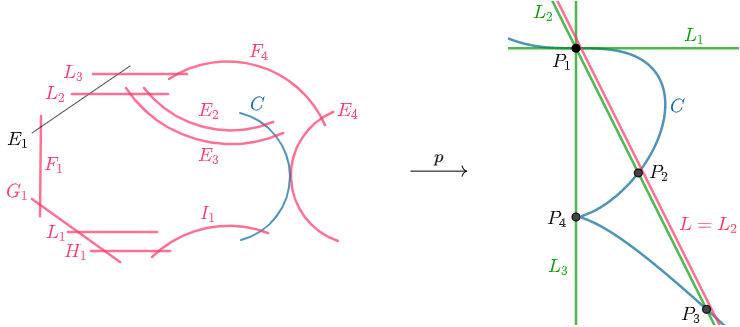}
\end{center}
\begin{align*}
&{\color{customBlue}C}:x^3+y^3-y^2z=0.\\
&{\color{customGreen}L_1}:y-z=0.\\
&{\color{customGreen}L_2}:x=0.\\
&{\color{customGreen}L_3}:y+2x-z	=0.\\
&{\color{customPink}L}:={\color{customGreen}L_2}.\\
&{\color{customPink}L'}:={\color{customGreen}L_3}.\\
&{\color{customPink}L''}:={\color{customGreen}L_1}.\\
&\mathcal{P}:\text{ pencil of cubics induced by }{\color{customBlue}C}\text{ and }{\color{customGreen}L_1+L_2+L_3}.\\
&\mathcal{Q}:\text{ pencil of lines through }P_1.\\
&{\color{customPink}D}=E_2+{\color{customPink}L}+E_3,\text{ type }A_3.\\
&{\color{customPink}D'}=E_4+2F_4+{\color{customPink}L'},\text{ type }D_3.\\
&{\color{customPink}D''}=2I_1+2H_1+2G_1+({\color{customPink}L''}+F_1),\text{ type }D_5.\\
&\text{Sequence of contractions:}\,F_4,E_4,E_3,E_2,I_1,H_1,G_1,F_1,E_1.
\end{align*}
\end{example}

\chapter{Gaps on the intersection numbers of sections}\label{ch:gaps}\
\indent As in Chapter~\ref{ch:conic_bundles_on_RES}, we let $\pi:X\to\P^1$ be a rational elliptic surface over any algebraically closed field $k$. We remind the reader that in the proof of Theorem~\ref{thm:interplay_conic_bundle_elliptic_fibration} we leave item a) to be treated in the present chapter. This involves answering the following question: which rational elliptic surfaces admit a pair of sections $P_1,P_2\in E(K)$ such that $P_1\cdot P_2=1$? The answer is given in Theorem~\ref{thm:surfaces_with_a_1-gap}, which was only a conjecture at the time of our investigation on conic bundles and could only be proven after a more careful study of intersection numbers of sections. This motivating problem lead to a broader broader investigation of the possible values of intersection numbers of sections, whose results are gathered in this chapter.

The main tool we need is the Mordell-Weil lattice (Section~\ref{section:MW_lattice}), which consists of a lattice structure on $E(K)/E(K)_\text{tor}$ having a close connection with the Néron-Severi lattice. We take advantage of the fact that all possibilities for the Mordell-Weil lattices of $X$ have been classified in \cite{OguisoShioda}, so that the well-know information of the height-pairing gives us information about the intersection numbers of sections of $X$.

By adopting this strategy, we often come across a classical problem in number theory, which is the representation of integers by positive-definite quadratic forms. Indeed, if the free part of $E(K)$ is generated by $r$ terms, then the height $h(P):=\langle P,P\rangle$ induces a positive-definite quadratic form on $r$ variables with coefficients in $\Bbb{Q}$. If $O\in E(K)$ is the neutral section and $R$ is the set of reducible fibers of $\pi$, then by the height formula (\ref{equation:height_formula_P})
$$h(P)=2+2(P\cdot O)-\sum_{v\in R}\text{contr}_v(P),$$
where the sum over $v$ is a rational number which can be estimated (Table~\ref{table:local_contributions}). By clearing denominators, we see that the possible values of $P\cdot O$ depend on a certain range of integers represented by a positive-definite quadratic form with coefficients in $\Z$. 
\newpage
We organize this chapter as follows. In Section~\ref{section:gap_numbers} we define some terminology and dedicate Section~\ref{section:intersection_with_a_torsion_section} to explaining the role of torsion sections in our investigation. The technical core of this chapter is in Sections~\ref{section:necessary_conditions}, \ref{section:sufficient_conditions_Delta<=2} and \ref{section:necessary_and_sufficient_Delta<=2}, where we find necessary and sufficient conditions for $k\in\Z_{\geq 0}$ to be the intersection number of two sections. In Section~\ref{section:summary_sufficient_conditions} we list the total of sufficient conditions obtained. The main results are in Section~\ref{section:applications}, namely: the description of gap numbers when $E(K)$ is torsion-free with $r=1$ (Subsection~\ref{subsection:identification_of_gaps_r=1}), the absence of gap numbers for $r\geq 5$ (Subsection~\ref{subsection:gap_free_r>=5}), density of gap numbers when $r\leq 2$ (Subsection~\ref{subsection:gaps_probability_1_r=1,2}) and the classification of surfaces with a $1$-gap (Subsection~\ref{subsection:surfaces_with_a_1-gap}). We use the appendix in Chapter~\ref{ch:appendix} to obtain the information we need about Mordell-Weil lattices.

\section{Gap numbers}\label{section:gap_numbers}\
\indent We introduce some convenient terminology to express the possibility of finding a pair of sections with a given intersection number.

\begin{defi}
If there are no sections $P_1,P_2\in E(K)$ such that $P_1\cdot P_2=k$, we say that $X$ has a $k$-\textit{gap} or that $k$ is a \textit{gap number} of $X$.
\end{defi}

\begin{defi}
We say that $X$ is \textit{gap-free} if for every $k\in\Bbb{Z}_{\geq 0}$ there are sections $P_1,P_2\in E(K)$ such that $P_1\cdot P_2=k$.
\end{defi}

\begin{remark}
\normalfont In case the Mordell-Weil rank is $r=0$, we have $E(K)=E(K)_\text{tor}$. In particular, any two distinct sections are disjoint \cite[Cor. 8.30]{MWL}, hence every $k\geq 1$ is a gap number of $X$. For positive rank, the description of gap numbers is less trivial, hence our focus on $r\geq 1$.
\end{remark}

\section{Intersection with a torsion section}\label{section:intersection_with_a_torsion_section}\
\indent Before dealing with more technical details in Sections~\ref{section:necessary_conditions} and \ref{section:sufficient_conditions_Delta<=2}, we explain how torsion sections can be of help in our investigation.

We first note some general properties of torsion sections. As the height pairing is positive-definite on $E(K)/E(K)_\text{tor}$, torsion sections are inert in the sense that for each $Q\in E(K)_\text{tor}$ we have $\langle Q,P\rangle=0$ for all $P\in E(K)$. Moreover, in the case of rational elliptic surfaces, torsion sections also happen to be mutually disjoint by Theorem~\ref{thm:torsion_sections_disjoint}.

%

By considering the properties above, we use torsion sections to help us find $P_1,P_2\in E(K)$ such that $P_1\cdot P_2=k$ for a given $k\in\Bbb{Z}_{\geq 0}$. This is particularly useful when $\Delta\geq 2$, in which case $E(K)_\text{tor}$ is not trivial by Lemma~\ref{lemma:cases_where_Delta>=2}. 

The idea works as follows. Given $k\in\Bbb{Z}_{\geq 0}$, suppose we can find $P\in E(K)^0$ with height $h(P)=2k$. By the height formula (\ref{equation:height_formula_P}), $P\cdot O=k-1<k$, which is not yet what we need. In the next lemma we show that replacing $O$ with a torsion section $Q\neq O$ gives $P\cdot Q=k$, as desired.

\newpage

\begin{lemma}\label{lemma:PO_plus_one}
Let $P\in E(K)^0$ such that $h(P)=2k$. Then $P\cdot Q=k$ for all $Q\in E(K)_\text{tor}\setminus\{O\}$.
\end{lemma}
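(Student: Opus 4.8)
The statement to prove is Lemma~\ref{lemma:PO_plus_one}: if $P \in E(K)^0$ satisfies $h(P) = 2k$, then $P \cdot Q = k$ for every torsion section $Q \in E(K)_\text{tor} \setminus \{O\}$. The plan is to reduce the computation of $P \cdot Q$ to known quantities by exploiting two facts already established in the excerpt, namely the height formula (\ref{equation:height_formula_PQ}) and the disjointness of torsion sections (Theorem~\ref{thm:torsion_sections_disjoint}).

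First I would apply the general height formula (\ref{equation:height_formula_PQ}) to the pair $P, Q$:
\begin{equation*}
\langle P, Q\rangle = \chi(X) + P\cdot O + Q\cdot O - P\cdot Q - \sum_{v\in R}\text{contr}_v(P,Q).
\end{equation*}
Since $Q$ is torsion and the height pairing is positive-definite on $E(K)/E(K)_\text{tor}$, we have $\langle P, Q\rangle = 0$. For a rational elliptic surface $\chi(X) = 1$. Next I would eliminate each remaining term on the right using the torsion hypotheses: by Theorem~\ref{thm:torsion_sections_disjoint}, $Q \cdot O = 0$ since $Q \neq O$ is torsion and $O$ is the neutral section. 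The local contribution term $\sum_v \text{contr}_v(P,Q)$ vanishes because $P \in E(K)^0$ means $P$ meets the neutral component $\Theta_{v,0}$ at every reducible fiber, so $\text{contr}_v(P,Q) = 0$ for all $v$ by the definition of the local contribution (the case $i = 0$). Substituting these into the formula leaves $0 = 1 + P\cdot O - P\cdot Q$, hence $P\cdot Q = P\cdot O + 1$.

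Finally I would compute $P \cdot O$ directly from the simpler height formula (\ref{equation:height_formula_P}). Since $P \in E(K)^0$, all contribution terms vanish, so $h(P) = 2\chi(X) + 2(P\cdot O) = 2 + 2(P\cdot O)$. Combined with the hypothesis $h(P) = 2k$, this gives $P\cdot O = k - 1$. Therefore $P\cdot Q = (k-1) + 1 = k$, as desired.

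I do not expect a genuine obstacle here: the proof is a short substitution once the two formulas and the torsion disjointness are invoked. The only point requiring minor care is confirming that $\text{contr}_v(P,Q) = 0$ for the mixed pairing when $P$ lands on the neutral component --- this follows from the definition of $\text{contr}_v(P,Q)$, which is set to $0$ whenever either section meets $\Theta_{v,0}$. Everything else is bookkeeping with $\chi(X)=1$ and the two instances of the height formula.
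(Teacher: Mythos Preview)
Your proposal is correct and follows essentially the same approach as the paper: both proofs combine the height formula (\ref{equation:height_formula_PQ}) with $\langle P,Q\rangle=0$, the torsion disjointness $Q\cdot O=0$ from Theorem~\ref{thm:torsion_sections_disjoint}, the vanishing of $\text{contr}_v(P,Q)$ from $P\in E(K)^0$, and the computation $P\cdot O=k-1$ via (\ref{equation:height_formula_P}). The only difference is the order of the steps --- the paper computes $P\cdot O$ first, you compute it last --- which is immaterial.
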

\begin{proof}
Assume there is some $Q\in E(K)_\text{tor}\setminus\{O\}$. By Theorem~\ref{thm:torsion_sections_disjoint}, $Q\cdot O=0$ and by the height formula (\ref{equation:height_formula_P}), $2k=2+2(P\cdot O)-0$, hence $P\cdot O=k-1$. We use the height formula (\ref{equation:height_formula_PQ}) for $\langle P,Q\rangle$ in order to conclude that $P\cdot Q=k$. Since $P\in E(K)^0$, it intersects the neutral component $\Theta_{v,0}$ of every reducible fiber $\pi^{-1}(v)$, so $\text{contr}_v(P,Q)=0$ for all $v\in R$. Hence
\begin{align*}
0&=\langle P,Q\rangle\\
&=1+P\cdot O+Q\cdot O-P\cdot Q-\sum_{v\in R}\text{contr}_v(P,Q)\\
&=1+(k-1)+0-P\cdot Q-0\\
&=k-P\cdot Q.
\end{align*}
\end{proof}

\section{Necessary conditions}\label{section:necessary_conditions}\
\indent If $k\in\Bbb{Z}_{\geq 0}$, we state necessary conditions for having $P_1\cdot P_2=k$ for some sections $P_1,P_2\in E(K)$. We note that the value of $\Delta$ is not relevant in this section, but plays a decisive role for sufficient conditions in Section~\ref{section:sufficient_conditions_Delta<=2}.

\begin{lemma}\label{lemma:necessary_conditions}\
Let $k\in\Bbb{Z}_{\geq 0}$. If $P_1\cdot P_2=k$ for some $P_1,P_2\in E(K)$, then one of the following holds:
\begin{enumerate}[i)]
\item $h(P)=2+2k$ for some $P\in E(K)^0$.
\item $h(P)\in [2+2k-c_\text{max},\,2+2k-c_\text{min}]$ for some $P\notin E(K)^0$.
\end{enumerate}
\end{lemma}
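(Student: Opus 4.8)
The plan is to start from the hypothesis $P_1\cdot P_2=k$ and reduce everything to a single section by translating by $-P_2$. Specifically, I would set $P:=P_1-P_2\in E(K)$ (the group operation in the Mordell-Weil group). Since the height pairing is invariant under translation on $E(K)/E(K)_\text{tor}$ and $h$ is a quadratic form, the key is to relate $h(P)=h(P_1-P_2)$ to the given intersection number $P_1\cdot P_2$. First I would use the bilinearity of the height pairing to write
\begin{equation*}
h(P_1-P_2)=h(P_1)-2\langle P_1,P_2\rangle+h(P_2).
\end{equation*}
However, the cleaner route avoids expanding both heights: I would instead compute $\langle P, O\rangle$-type quantities directly. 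The essential observation is that $P=P_1-P_2$ is a single section, and I want to read off its height from the height formula (\ref{equation:height_formula_PQ}) applied to the pair $(P_1,P_2)$.

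The concrete calculation I expect to carry out is the following. Apply the height formula (\ref{equation:height_formula_PQ}) to $\langle P_1,P_2\rangle$ with $\chi(X)=1$, giving
\begin{equation*}
\langle P_1,P_2\rangle=1+P_1\cdot O+P_2\cdot O-P_1\cdot P_2-\sum_{v\in R}\text{contr}_v(P_1,P_2).
\end{equation*}
The trick is that, up to translating by an element of the Mordell-Weil group, I may assume $P_2=O$ is the neutral section without loss of generality, because the intersection number $P_1\cdot P_2$ and the whole configuration are invariant under the group translation by $-P_2$ (more precisely, $P_1\cdot P_2$ equals $(P_1-P_2)\cdot O$ as a consequence of the height formula applied symmetrically). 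Thus I reduce to the case $P:=P_1-P_2$ with $P\cdot O=k$. Then the height formula (\ref{equation:height_formula_P}) yields
\begin{equation*}
h(P)=2+2(P\cdot O)-\sum_{v\in R}\text{contr}_v(P)=2+2k-\sum_{v\in R}\text{contr}_v(P).
\end{equation*}

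From here the dichotomy is immediate. If $P\in E(K)^0$, then $\sum_{v\in R}\text{contr}_v(P)=0$ by definition of the narrow lattice, so $h(P)=2+2k$ and we are in case i). If $P\notin E(K)^0$, then $\sum_{v\in R}\text{contr}_v(P)>0$, and by Lemma~\ref{lemma:bounds_are_actually_bounds} iii) we have $c_\text{min}\leq\sum_{v\in R}\text{contr}_v(P)\leq c_\text{max}$; substituting these bounds into the height expression gives $2+2k-c_\text{max}\leq h(P)\leq 2+2k-c_\text{min}$, which is exactly case ii). This packages both alternatives cleanly.

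The main obstacle I anticipate is justifying the reduction to $P_2=O$ rigorously, i.e.\ showing that $P\cdot O=k$ where $P=P_1-P_2$. This requires more than a hand-wave about translation invariance, since the correspondence between sections and points (Proposition~\ref{prop:correspondence_sections_points}) does \emph{not} send group operations to additive operations on Néron-Severi classes — indeed the text explicitly warns that $\overline{P}_1+\overline{P}_2\neq\overline{P_1+P_2}$. The correct argument must instead go through the height pairing: one shows $\langle P_1-P_2, P_1-P_2\rangle$ relates to $P_1\cdot P_2$ via repeated application of formula (\ref{equation:height_formula_PQ}), exploiting that contributions and the $\cdot O$ terms rearrange. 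I would handle this either by a direct symmetric manipulation of the two height formulas, or by noting that setting $P:=P_1-P_2$ and using $h(P)=\langle P_1,P_1\rangle-2\langle P_1,P_2\rangle+\langle P_2,P_2\rangle$ together with (\ref{equation:height_formula_PQ})–(\ref{equation:height_formula_P}) forces the contribution and $\cdot O$ terms to collapse into $2+2k-\sum_v\text{contr}_v(P)$; verifying that the leftover contribution term is genuinely of the form $\sum_v\text{contr}_v(P)$ for the single section $P$ is the only delicate bookkeeping step.
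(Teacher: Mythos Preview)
Your overall strategy is correct and is essentially what the paper does: reduce to a single section $P$ with $P\cdot O=k$, apply the height formula (\ref{equation:height_formula_P}), and split into the two cases according to whether $P\in E(K)^0$. The paper's proof simply writes ``without loss of generality we may assume that $P_2$ is the neutral section'' and then takes $P=P_1$; from there the argument is exactly your final two paragraphs.

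Where you diverge from the paper is in how much effort you spend justifying the reduction. You propose to show $(P_1-P_2)\cdot O=P_1\cdot P_2$ by expanding $h(P_1-P_2)$ bilinearly via (\ref{equation:height_formula_PQ})--(\ref{equation:height_formula_P}) and then checking that the leftover contribution terms reassemble into $\sum_v\text{contr}_v(P_1-P_2)$. That works, but it is the hard way: the contribution identity $\text{contr}_v(P_1-P_2)=\text{contr}_v(P_1)-2\,\text{contr}_v(P_1,P_2)+\text{contr}_v(P_2)$ requires knowing that the component a section meets is governed by the group law on the component group of each fibre, and then that $-A_v^{-1}$ gives a quadratic form on that group. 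This is true but it is more than the lemma needs.

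The cleaner justification, implicit in the paper's ``without loss of generality'', is geometric: translation by $-P_2$ on the generic fibre extends to an automorphism of the surface $X$ over $\P^1$ (a standard fact for elliptic surfaces with section). Automorphisms preserve intersection numbers, and this one sends the curves $P_1\mapsto P_1-P_2$ and $P_2\mapsto O$, so $(P_1-P_2)\cdot O=P_1\cdot P_2=k$ immediately. Equivalently, since the Mordell-Weil lattice, the narrow lattice, and the bounds $c_{\max},c_{\min}$ are all insensitive (up to isomorphism) to which section is declared neutral, one may simply choose $O=P_2$ at the outset. Either viewpoint lets you bypass the height-formula bookkeeping you were worried about.
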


\begin{proof}
Without loss of generality we may assume that $P_2$ is the neutral section, so that $P_1\cdot O=k$. By the height formula~(\ref{equation:height_formula_P}), $h(P_1)=2+2k-c$, where $c:=\sum_v\text{contr}_v(P_1)$. If $P_1\in E(K)^0$, then $c=0$ and $h(P_1)=2+2k$, hence i) holds. If $P_1\notin E(K)^0$, then $c_\text{min}\leq c\leq c_\text{max}$ by Lemma~\ref{lemma:bounds_are_actually_bounds}. But $h(P_1)=2+2k-c$, therefore $2+2k-c_\text{max}\leq h(P_1)\leq 2+2k-c_\text{min}$, i.e. ii) holds.
\end{proof}

\begin{coro}\label{coro:necessary_conditions_Q_X}
Let $k\in\Bbb{Z}_{\geq 0}$. If $P_1\cdot P_2=k$ for some $P_1,P_2\in E(K)$, then $Q_X$ represents some integer in $[d\cdot (2+2k-c_\text{max}),d\cdot(2+2k)]$, where $d:=\det E(K)^0$.
\end{coro}

\begin{proof}
We apply Lemma~\ref{lemma:necessary_conditions} and rephrase it in terms of $Q_X$. If i) holds, then $Q_X$ represents $d\cdot (2+2k)$ by Lemma~\ref{lemma:Q_X_represents_dm}. If ii) holds, then $h(P)\in[2+2k-c_\text{max},2+2k-c_\text{min}]$ and by Lemma~\ref{lemma:Q_X_represents_dm}, $Q_X$ represents $d\cdot h(P)\in[d\cdot (2+2k-c_\text{max}),d\cdot(2+2k-c_\text{min})]$. In both i) and ii), $Q_X$ represents some integer in $[d\cdot(2+2k-c_\text{max}),d\cdot(2+2k)]$.
\end{proof}

\section{Sufficient conditions when $\Delta\leq 2$}\label{section:sufficient_conditions_Delta<=2}\
\indent In this section we state sufficient conditions for having $P_1\cdot P_2=k$ for some $P_1,P_2\in E(K)$ under the assumption that $\Delta\leq 2$. By Lemma~\ref{lemma:cases_where_Delta>=2}, this covers almost all cases (more precisely, all but No. 41, 42, 59, 60 in Table~\ref{table:MWL_data}). We treat $\Delta<2$ and $\Delta=2$ separately, as the latter needs more attention.

\newpage

\subsection{The case $\Delta<2$}\label{subsection:the_case_Delta<2}\
\indent We first prove Lemma~\ref{lemma:sufficient_conditions_Delta<2}, which gives sufficient conditions assuming $\Delta<2$, then Corollary~\ref{coro:sufficient_conditions_Q_X_Delta<2}, which states sufficient conditions in terms of integers represented by $Q_X$. This is followed by Corollary~\ref{coro:sufficient_conditions_mu_Delta<2}, which is a simplified version of Corollary~\ref{coro:sufficient_conditions_Q_X_Delta<2}.

\begin{lemma}\label{lemma:sufficient_conditions_Delta<2}
Assume $\Delta<2$ and let $k\in\Bbb{Z}_{\geq 0}$. If $h(P)\in[2+2k-c_\text{max}, 2+2k-c_\text{min}]$ for some $P\notin E(K)^0$, then $P_1\cdot P_2=k$ for some $P_1,P_2\in E(K)$.
\end{lemma}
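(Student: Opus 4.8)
The plan is to show that the given section $P$ together with the neutral section $O$ already form the required pair, i.e. that $P\cdot O=k$; no genuine existence argument is needed beyond the hypothesis, the whole content being an integrality squeeze. First I would use the height formula (\ref{equation:height_formula_P}) with $\chi(S)=1$ to solve for the intersection number, writing $P\cdot O=\tfrac{1}{2}\bigl(h(P)-2+c_P\bigr)$, where $c_P:=\sum_{v\in R}\text{contr}_v(P)$. Since $P\notin E(K)^0$, Lemma~\ref{lemma:bounds_are_actually_bounds} guarantees $c_\text{min}\le c_P\le c_\text{max}$.

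Next I would combine this with the hypothesis $h(P)\in[2+2k-c_\text{max},\,2+2k-c_\text{min}]$, equivalently $h(P)-2-2k\in[-c_\text{max},-c_\text{min}]$. Adding $c_P$ and using the two-sided bound on $c_P$, the quantity $(h(P)-2-2k)+c_P$ lies in $[c_\text{min}-c_\text{max},\,c_\text{max}-c_\text{min}]=[-\Delta,\Delta]$. Dividing by two yields $P\cdot O-k\in[-\Delta/2,\,\Delta/2]$.

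The decisive step is then the strict inequality $\Delta<2$: it forces $\Delta/2<1$, so $P\cdot O-k$ is a real number strictly between $-1$ and $1$. But $P\cdot O$ is an intersection number of two curves on $X$, hence an integer, and $k\in\Z$; the only integer in the open interval $(-1,1)$ is $0$. Therefore $P\cdot O=k$, and taking $P_1:=P$, $P_2:=O$ finishes the argument.

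I expect the only subtle point to be this final squeeze, and in particular the role of the \emph{strict} inequality. When $\Delta=2$ the same estimate only confines $P\cdot O-k$ to the closed interval $[-1,1]$, where the endpoints $\pm 1$ are admissible integers and the conclusion can fail; this is precisely why the borderline case $\Delta=2$ must be treated separately, using a torsion section in place of $O$ as in Lemma~\ref{lemma:PO_plus_one}.
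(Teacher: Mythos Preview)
Your proof is correct and follows essentially the same approach as the paper: use the height formula together with the bounds $c_\text{min}\le c_P\le c_\text{max}$ to trap the integer $P\cdot O-k$ in an interval of length less than~$2$, forcing it to equal~$0$. The paper's version pins $P\cdot O-k$ to the sharper interval $\bigl[\tfrac{c_P-c_\text{max}}{2},\tfrac{c_P-c_\text{min}}{2}\bigr]$ of length $\Delta/2$, whereas you use the symmetric but wider $[-\Delta/2,\Delta/2]$; both suffice under $\Delta<2$.
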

\begin{proof}
Let $O\in E(K)$ be the neutral section. By the height formula (\ref{equation:height_formula_P}), $h(P)=2+2(P\cdot O)-c$, where $c:=\sum_v\text{contr}_v(P)$. Since $h(P)\in [2+2k-c_\text{max},2+2k-c_\text{min}]$, then
\begin{align*}
2+2k-c_\text{max}&\leq 2+2(P\cdot O)-c\leq 2+2k-c_\text{min}\\
\Rightarrow \frac{c-c_\text{max}}{2} &\leq P\cdot O-k\leq \frac{c-c_\text{min}}{2}.
\end{align*}

Therefore $P\cdot O-k$ is an integer in $I:=\left[\frac{c-c_\text{max}}{2},\frac{c-c_\text{min}}{2}\right]$. We prove that $0$ is the only integer in $I$, so that $P\cdot O-k=0$, i.e. $P\cdot O=k$. First notice that $c\neq 0$, as $P\notin E(K)^0$. By Lemma~\ref{lemma:bounds_are_actually_bounds} iii), $c_\text{min}\leq c\leq c_\text{max}$, consequently $\frac{c-c_\text{max}}{2}\leq 0\leq \frac{c-c_\text{min}}{2}$, i.e. $0\in I$. Moreover $\Delta<2$ implies that $I$ has length $\frac{c_\text{max}-c_\text{min}}{2}=\frac{\Delta}{2}<1$, so $I$ contains no integer except $0$ as desired.
\end{proof}

\begin{remark}
\normalfont Lemma~\ref{lemma:sufficient_conditions_Delta<2} also applies when $c_\text{max}=c_\text{min}$, in which case the closed interval degenerates into a point.
\end{remark}

\indent The following corollary of Lemma~\ref{lemma:sufficient_conditions_Delta<2} states a sufficient condition in terms of integers represented by the quadratic form $Q_X$ (Subsection~\ref{subsection:presenting_Q_X}).

\begin{coro}\label{coro:sufficient_conditions_Q_X_Delta<2}
Assume $\Delta<2$ and let $d:=\det E(K)^0$. If $Q_X$ represents an integer not divisible by $d$ in the interval $[d\cdot(2+2k-c_\text{max}),d\cdot(2+2k-c_\text{min})]$, then $P_1\cdot P_2=k$ for some $P_1,P_2\in E(K)$.
\end{coro}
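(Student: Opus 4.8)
The plan is to deduce this corollary directly from Lemma~\ref{lemma:sufficient_conditions_Delta<2} by exhibiting a section $P\notin E(K)^0$ whose height lies in the interval $[2+2k-c_\text{max},\,2+2k-c_\text{min}]$ required by that lemma. Thus the whole argument is a translation between the arithmetic language of $Q_X$ (integers represented) and the geometric language of the height pairing (values $h(P)$).

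First I would unwind the definition of $Q_X$. By hypothesis $Q_X$ represents some integer $N$ with $d\nmid N$ and $N\in[d(2+2k-c_\text{max}),\,d(2+2k-c_\text{min})]$; writing $N=Q_X(a_1,\dots,a_r)$ for integers $a_i$ and setting $P:=a_1P_1+\dots+a_rP_r\in E(K)$, where $P_1,\dots,P_r$ are the fixed generators of the free part, the definition $Q_X=d\cdot h(\,\cdot\,)$ gives $N=d\cdot h(P)$, hence $h(P)=N/d$. Dividing the two interval bounds by $d$ then places $h(P)$ in $[2+2k-c_\text{max},\,2+2k-c_\text{min}]$, which is exactly the hypothesis of Lemma~\ref{lemma:sufficient_conditions_Delta<2}.

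Next I would verify $P\notin E(K)^0$, and this is precisely where the divisibility hypothesis is used. Since $d\nmid N$, the quotient $h(P)=N/d$ is not an integer. On the other hand, the narrow Mordell-Weil lattice $E(K)^0$ is even integral, so every section in $E(K)^0$ has integral (indeed even) height. A non-integral height therefore forces $P\notin E(K)^0$. With $P\notin E(K)^0$ and $h(P)$ in the prescribed interval, Lemma~\ref{lemma:sufficient_conditions_Delta<2} applies and produces $P_1,P_2\in E(K)$ with $P_1\cdot P_2=k$.

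There is no genuine obstacle here: the content lies entirely in the bookkeeping that converts ``$Q_X$ represents $N$'' into ``$h(P)=N/d$'', together with the single observation that non-divisibility by $d$ is the integrality obstruction ruling out membership in the even integral lattice $E(K)^0$. Note that the assumption $\Delta<2$ is not invoked directly in this reduction; it is already absorbed into Lemma~\ref{lemma:sufficient_conditions_Delta<2}, whose proof exploits that the relevant interval has length $\Delta/2<1$.
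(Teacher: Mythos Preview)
Your argument is correct and essentially identical to the paper's own proof: you take $P:=a_1P_1+\dots+a_rP_r$, use $Q_X=d\cdot h$ to get $h(P)=N/d\in[2+2k-c_\text{max},2+2k-c_\text{min}]$, observe that $d\nmid N$ forces $h(P)\notin\Bbb{Z}$ hence $P\notin E(K)^0$ (as $E(K)^0$ is an integer lattice), and invoke Lemma~\ref{lemma:sufficient_conditions_Delta<2}.
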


\begin{proof}
Let $a_1,...,a_r\in\Bbb{Z}$ such that $Q_X(a_1,...,a_r)\in[d\cdot (2+2k-c_\text{max}),d\cdot(2+2k-c_\text{min})]$ with $d\nmid Q_X(a_1,...,a_r)$. If $P_1,...,P_r$ are generators of the free part of $E(K)$, let $P:=a_1P_1+...+a_rP_r$. Then $d\nmid Q_X(a_1,...,a_r)=d\cdot h(P)$, which implies that $h(P)\notin\Bbb{Z}$. In particular $P\notin E(K)^0$ since $E(K)^0$ is an integer lattice. Moreover $h(P)=\frac{1}{d}Q_X(a_1,...,a_r)\in [2+2k-c_\text{max},2+2k-c_\text{min}]$ and we are done by Lemma~\ref{lemma:sufficient_conditions_Delta<2}.
\end{proof}

\indent The next corollary, although weaker than Corollary~\ref{coro:sufficient_conditions_Q_X_Delta<2}, is more practical for concrete examples and is frequently used in Subsection~\ref{subsection:surfaces_with_a_1-gap}. It does not involve finding integers represented by $Q_X$, but only finding perfect squares in an interval depending on the minimal norm $\mu$ (Definition~\ref{def:height_minimal_norm}).

\begin{coro}\label{coro:sufficient_conditions_mu_Delta<2}
Assume $\Delta<2$. If there is a perfect square $n^2\in\left[\frac{2+2k-c_\text{max}}{\mu},\frac{2+2k-c_\text{min}}{\mu}\right]$ such that $n^2\mu\notin\Bbb{Z}$, then $P_1\cdot P_2=k$ for some $P_1,P_2\in E(K)$.
\end{coro}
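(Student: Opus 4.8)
The plan is to reduce Corollary~\ref{coro:sufficient_conditions_mu_Delta<2} to the already-established Corollary~\ref{coro:sufficient_conditions_Q_X_Delta<2} (or directly to Lemma~\ref{lemma:sufficient_conditions_Delta<2}) by exhibiting a concrete section $P$ whose height lands in the required interval. The natural candidate is $P:=nP_0$, where $P_0$ realizes the minimal norm $\mu$, i.e. $h(P_0)=\mu$. Since the height pairing is a quadratic form, $h(nP_0)=n^2h(P_0)=n^2\mu$, so the hypothesis $n^2\in\left[\frac{2+2k-c_\text{max}}{\mu},\frac{2+2k-c_\text{min}}{\mu}\right]$ translates, after multiplying through by $\mu>0$, into exactly $h(P)=n^2\mu\in[2+2k-c_\text{max},\,2+2k-c_\text{min}]$.

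The remaining point is to verify the hypothesis of Lemma~\ref{lemma:sufficient_conditions_Delta<2}, namely that $P\notin E(K)^0$. Here I would use the extra assumption $n^2\mu\notin\Bbb{Z}$: since $E(K)^0$ is an integer lattice, every section in $E(K)^0$ has integer height, so $h(P)=n^2\mu\notin\Bbb{Z}$ forces $P\notin E(K)^0$. With both conditions of Lemma~\ref{lemma:sufficient_conditions_Delta<2} met under the standing assumption $\Delta<2$, we conclude that there exist $P_1,P_2\in E(K)$ with $P_1\cdot P_2=k$, which is the desired statement. In fact, the proof need not even invoke Corollary~\ref{coro:sufficient_conditions_Q_X_Delta<2} directly; applying Lemma~\ref{lemma:sufficient_conditions_Delta<2} to the explicit $P=nP_0$ is cleaner.

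The only mild subtlety, and the step I would state carefully, is the existence of a section $P_0$ with $h(P_0)=\mu$: this is immediate from the definition of the minimal norm $\mu=\min\{h(P)>0\mid P\in E(K)\}$ (Definition~\ref{def:height_minimal_norm}) as a minimum actually attained, provided the rank is at least $1$ so that the set of positive heights is nonempty. I expect no real obstacle here; the entire argument is a short substitution once the candidate $nP_0$ is identified, and the role of each hypothesis ($\Delta<2$ to control the interval length, $n^2\mu\notin\Bbb{Z}$ to guarantee $P\notin E(K)^0$) is transparent. The proof is therefore a direct specialization rather than a new argument.

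\begin{proof}
Since $\Delta<2$ we are in the setting of Lemma~\ref{lemma:sufficient_conditions_Delta<2}. Let $P_0\in E(K)$ attain the minimal norm, so that $h(P_0)=\mu$, and set $P:=nP_0$. Because the height is a quadratic form, $h(P)=n^2\mu$. By hypothesis $n^2\in\left[\frac{2+2k-c_\text{max}}{\mu},\frac{2+2k-c_\text{min}}{\mu}\right]$; multiplying by $\mu>0$ gives
$$h(P)=n^2\mu\in[2+2k-c_\text{max},\,2+2k-c_\text{min}].$$
Moreover $h(P)=n^2\mu\notin\Bbb{Z}$ by assumption, and since $E(K)^0$ is an integer lattice every element of $E(K)^0$ has integer height; hence $P\notin E(K)^0$. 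Both hypotheses of Lemma~\ref{lemma:sufficient_conditions_Delta<2} are satisfied, so there exist $P_1,P_2\in E(K)$ with $P_1\cdot P_2=k$.
\end{proof}
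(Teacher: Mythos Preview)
Your proof is correct and follows essentially the same approach as the paper: pick $P_0$ with $h(P_0)=\mu$, set $P=nP_0$, use $n^2\mu\notin\Z$ together with the integrality of $E(K)^0$ to conclude $P\notin E(K)^0$, and apply Lemma~\ref{lemma:sufficient_conditions_Delta<2}. The paper's proof is identical in structure and brevity.
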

\begin{proof}
Take $P\in E(K)$ such that $h(P)=\mu$. Since $h(nP)=n^2\mu\notin\Bbb{Z}$, we must have $nP\notin E(K)^0$ as $E(K)^0$ is an integer lattice. Moreover $h(nP)=n^2\mu\in[2+2k-c_\text{max},2+2k-c_\text{min}]$ and we are done by Lemma~\ref{lemma:sufficient_conditions_Delta<2}.
\end{proof}

\subsection{The case $\Delta=2$}\label{subsection:the_case_Delta=2}\
\indent The statement of sufficient conditions for $\Delta=2$ is almost identical to the one for $\Delta<2$, the only difference being that the closed interval Lemma~\ref{lemma:sufficient_conditions_Delta<2} is substituted by a right half-open interval in Lemma~\ref{lemma:sufficient_conditions_Delta=2}. This small change, however, is associated with a technical difficulty in the case of a section with minimal contribution term, hence the separate treatment for $\Delta=2$.

The results are presented in the following order. First we prove a statement about sections with minimal contribution term (Lemma~\ref{lemma:minimal_contribution_Delta=2}). Next we provide sufficient conditions when $\Delta=2$ in Lemma~\ref{lemma:sufficient_conditions_Delta=2}, then prove Corollaries~\ref{coro:sufficient_conditions_Q_X_Delta=2} and \ref{coro:sufficient_conditions_mu_Delta=2}.

\begin{lemma}\label{lemma:minimal_contribution_Delta=2}
Assume $\Delta=2$. If there is $P\in E(K)$ such that {\normalfont $\sum_{v\in R}\text{contr}_v(P)=c_\text{min}$}, then $P\cdot Q=P\cdot O+1$ for every $Q\in E(K)_\text{tor}\setminus\{O\}$.
\end{lemma}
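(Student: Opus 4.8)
The plan is to start from the mixed height formula (\ref{equation:height_formula_PQ}) and translate the desired identity $P\cdot Q = P\cdot O + 1$ into the vanishing of the total mixed contribution $\sum_{v\in R}\text{contr}_v(P,Q)$. First I would fix $Q\in E(K)_\text{tor}\setminus\{O\}$ and invoke two facts about torsion sections: the height pairing is trivial on torsion, so $\langle P,Q\rangle = 0$, and $Q\cdot O = 0$ by Theorem~\ref{thm:torsion_sections_disjoint}. Substituting these, together with $\chi(X)=1$, into (\ref{equation:height_formula_PQ}) yields
$$P\cdot Q = 1 + P\cdot O - \sum_{v\in R}\text{contr}_v(P,Q),$$
so it suffices to prove $\sum_{v\in R}\text{contr}_v(P,Q) = 0$.

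Next I would localize this sum to a single fiber. Since $\sum_{v\in R}\text{contr}_v(P) = c_\text{min}$, Lemma~\ref{lemma:bounds_are_actually_bounds}~iv) supplies a unique $v'\in R$ with $\text{contr}_{v'}(P)=c_\text{min}$ and $\text{contr}_v(P)=0$ for $v\neq v'$. The vanishing $\text{contr}_v(P)=0$ means $P$ meets the neutral component $\Theta_{v,0}$, and by the very definition of the mixed local contribution this forces $\text{contr}_v(P,Q)=0$ for every $v\neq v'$. Hence $\sum_{v\in R}\text{contr}_v(P,Q) = \text{contr}_{v'}(P,Q)$, and the problem is reduced to the single fiber $v'$.

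The decisive step is to pin down the type of $F_{v'}$ and then exploit an integrality constraint. Inspecting Table~\ref{table:Delta=2}, every configuration with $\Delta=2$ has $c_\text{min}=\tfrac12$, and by Table~\ref{table:extreme_local_contributions} a minimal positive contribution of $\tfrac12$ can only come from a fiber with $T_{v'}=A_1$ (type III). Such a fiber has exactly one non-neutral component, so $\text{contr}_{v'}(P,Q)$ equals $\tfrac12$ precisely when both $P$ and $Q$ meet that component, and $0$ otherwise; in particular $\text{contr}_{v'}(P,Q)\in\{0,\tfrac12\}$. On the other hand, the displayed identity rewrites as $\text{contr}_{v'}(P,Q) = 1 + P\cdot O - P\cdot Q$, a difference of intersection numbers, hence an integer. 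The only integer in $\{0,\tfrac12\}$ is $0$, so $\text{contr}_{v'}(P,Q)=0$ and therefore $P\cdot Q = P\cdot O + 1$.

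I expect the main obstacle to be the identification of $F_{v'}$ as an $A_1$ fiber, since the entire half-integrality argument hinges on $c_\text{min}=\tfrac12$; this is not a formal consequence of $\Delta=2$ but must be read off the explicit list of $\Delta=2$ configurations in Table~\ref{table:Delta=2} (equivalently, from the Oguiso--Shioda classification). Once that fact is secured, the concluding parity argument is immediate.
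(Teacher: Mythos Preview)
Your argument is correct and follows essentially the same route as the paper: apply the height formula~(\ref{equation:height_formula_PQ}) with $\langle P,Q\rangle=0$ and $Q\cdot O=0$, use Lemma~\ref{lemma:bounds_are_actually_bounds}~iv) to localize to a single fiber $v'$, read off $c_\text{min}=\tfrac12$ from Table~\ref{table:Delta=2} to force $T_{v'}=A_1$, and conclude by the integrality of $\text{contr}_{v'}(P,Q)$. One cosmetic point: $T_{v'}=A_1$ corresponds to Kodaira type $\text{III}$ \emph{or} $\text{I}_2$, not just $\text{III}$, though this does not affect the proof.
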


\begin{proof}
If $Q\in E(K)_\text{tor}\setminus\{O\}$, then $Q\cdot O=0$ by Theorem~\ref{thm:torsion_sections_disjoint}. By the height formula (\ref{equation:height_formula_PQ}), 
$$0=\langle P,Q\rangle=1+P\cdot O+0-P\cdot Q-\sum_v\text{contr}_v(P,Q).\,\,(*)$$

Hence it suffices to show that $\text{contr}_v(P,Q)=0$ $\forall v\in R$. By Lemma~\ref{lemma:bounds_are_actually_bounds}~iv), $\text{contr}_{v'}(P)=c_\text{min}$ for some $v'$ and $\text{contr}_v(P)=0$ for all $v\neq v'$. In particular $P$ meets $\Theta_{v,0}$, hence $\text{contr}_v(P,Q)=0$ for all $v\neq v'$. Thus from $(*)$ we see that $\text{contr}_{v'}(P,Q)$ is an integer, which we prove is $0$.

We claim that $T_{v'}=A_1$, so that $\text{contr}_{v'}(P,Q)=0$ or $\frac{1}{2}$ by Table~\ref{table:local_contributions}. In this case, as $\text{contr}_{v'}(P,Q)$ is an integer, it must be $0$, and we are done. To see that $T_{v'}=A_1$ we analyse $\text{contr}_{v'}(P)$. Since $\Delta=2$, then $c_\text{min}=\frac{1}{2}$ by Table~\ref{table:Delta=2} and $\text{contr}_{v'}(P)=c_\text{min}=\frac{1}{2}$. By Table~\ref{table:local_contributions}, this only happens if $T_{v'}=A_{n-1}$ and $\frac{1}{2}=\frac{i(n-i)}{n}$ for some $0\leq i<n$. The only possibility is $i=1$, $n=2$, $T_{v'}=A_1$.
\end{proof}

\indent With the aid of Lemma~\ref{lemma:minimal_contribution_Delta=2} we are able to state sufficient conditions for having $P_1\cdot P_2=k$ for some $P_1,P_2\in E(K)$ when $\Delta=2$.

\begin{lemma}\label{lemma:sufficient_conditions_Delta=2}
Assume that $\Delta=2$ and let $k\in\Bbb{Z}_{\geq 0}$. If $h(P)\in[2+2k-c_\text{max}, 2+2k-c_\text{min})$ for some $P\notin E(K)^0$, then $P_1\cdot P_2=k$ for some $P_1,P_2\in E(K)$.
\end{lemma}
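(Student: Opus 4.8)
The plan is to mimic the proof of Lemma~\ref{lemma:sufficient_conditions_Delta<2}, but to track carefully the single boundary integer that a half-open interval of length exactly $1$ can produce. Writing $c:=\sum_{v\in R}\text{contr}_v(P)$, the height formula (\ref{equation:height_formula_P}) gives $h(P)=2+2(P\cdot O)-c$, so the hypothesis $h(P)\in[2+2k-c_\text{max},\,2+2k-c_\text{min})$ translates, after subtracting $2$, adding $c$ and dividing by $2$, into
$$\frac{c-c_\text{max}}{2}\leq P\cdot O-k<\frac{c-c_\text{min}}{2}.$$
Thus $P\cdot O-k$ is an integer lying in the right half-open interval $I:=\left[\tfrac{c-c_\text{max}}{2},\tfrac{c-c_\text{min}}{2}\right)$, whose length is $\tfrac{c_\text{max}-c_\text{min}}{2}=\tfrac{\Delta}{2}=1$. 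Since any right half-open interval of length $1$ contains exactly one integer, $P\cdot O-k$ is pinned down as soon as we locate that integer.

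First I would record the two relevant facts about $c$: since $P\notin E(K)^0$ we have $c>0$, and by Lemma~\ref{lemma:bounds_are_actually_bounds}~iii) we have $c_\text{min}\leq c\leq c_\text{max}$. The argument then splits on whether $c>c_\text{min}$ or $c=c_\text{min}$. In the first case, $c\leq c_\text{max}$ and $c>c_\text{min}$ give $\tfrac{c-c_\text{max}}{2}\leq 0<\tfrac{c-c_\text{min}}{2}$, so $0\in I$; being the unique integer of $I$ it must equal $P\cdot O-k$, whence $P\cdot O=k$ and the pair $(P,O)$ already works.

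The delicate case — and the reason the interval is taken right half-open and why the separate Lemma~\ref{lemma:minimal_contribution_Delta=2} is needed — is $c=c_\text{min}$. Here $c_\text{max}-c_\text{min}=\Delta=2$ forces $I=[-1,0)$, whose unique integer is $-1$, so $P\cdot O=k-1$ rather than $k$. To repair this I would replace $O$ by a nontrivial torsion section: because $\Delta=2$, Lemma~\ref{lemma:cases_where_Delta>=2} (equivalently Table~\ref{table:Delta=2}) guarantees $E(K)_\text{tor}\neq\{O\}$, so we may choose $Q\in E(K)_\text{tor}\setminus\{O\}$. Since $\sum_{v\in R}\text{contr}_v(P)=c_\text{min}$, Lemma~\ref{lemma:minimal_contribution_Delta=2} yields $P\cdot Q=P\cdot O+1=k$, and the pair $(P,Q)$ does the job. (For $k=0$ this branch is vacuous, since $O\in E(K)^0$ forces $P\neq O$, hence $P\cdot O\geq 0$ contradicts $P\cdot O=-1$; so the hypothesis cannot hold there.) The main obstacle is exactly this boundary analysis at $c=c_\text{min}$, which is where passing from $\Delta<2$ to $\Delta=2$ destroys the slack that previously isolated $0$ as the only admissible integer, and where the torsion-section trick of Lemma~\ref{lemma:minimal_contribution_Delta=2} becomes indispensable.
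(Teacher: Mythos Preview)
Your proof is correct and follows essentially the same approach as the paper: translate the height hypothesis into $P\cdot O-k$ lying in a right half-open interval of length $1$, identify the unique integer as $0$ when $c>c_\text{min}$ (giving $P\cdot O=k$), and as $-1$ when $c=c_\text{min}$ (giving $P\cdot O=k-1$, then fixed via Lemma~\ref{lemma:minimal_contribution_Delta=2} and the nontrivial torsion guaranteed by Table~\ref{table:Delta=2}). The only cosmetic difference is that the paper phrases the dichotomy as $0\in I'$ versus $0\notin I'$ before deducing $c=c_\text{min}$, whereas you split directly on $c$; your added parenthetical about $k=0$ is extra but harmless.
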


\begin{proof}
By the height formula (\ref{equation:height_formula_P}), $h(P)=2+2(P\cdot O)-c$, where $c:=\sum_v\text{contr}_v(P)$. We repeat the arguments from Lemma~\ref{lemma:sufficient_conditions_Delta<2}, in this case with the right half-open interval, so that the hypothesis that $h(P)\in [2+2k-c_\text{max},2+2k-c_\text{min})$, implies that $P\cdot O-k$ is an integer in $I':=\left[\frac{c-c_\text{max}}{2},\frac{c-c_\text{min}}{2}\right)$. 

Since $I'$ is half-open with length $\frac{c_\text{max}-c_\text{min}}{2}=\frac{\Delta}{2}=1$, then $I'$ contains exactly one integer. If $0\in I'$, then $P\cdot O-k=0$, i.e. $P\cdot O=k$ and we are done. Hence we assume $0\notin I'$. 

We claim that $P\cdot O=k-1$. First, notice that if $c>c_\text{min}$, then the inequalities $c_\text{min}<c\leq c_\text{max}$ give $\frac{c-c_\text{max}}{2}\leq 0<\frac{c-c_\text{min}}{2}$, i.e. $0\in I'$, which is a contradiction. Hence $c=c_\text{min}$. Since $\Delta=2$, then $I'=[-1,0)$. Thus $P\cdot O-k=-1$, i.e. $P\cdot O=k-1$, as claimed. 

Finally, let $Q\in E(K)_\text{tor}\setminus\{O\}$, so that $P\cdot Q=P\cdot O+1=k$ by Lemma~\ref{lemma:minimal_contribution_Delta=2} and we are done. We remark that $E(K)_\text{tor}$ is not trivial by Table~\ref{table:Delta=2}, therefore such $Q$ exists.
\end{proof}

\indent The following corollaries are analogues to Corollary~\ref{coro:sufficient_conditions_Q_X_Delta<2} and Corollary~\ref{coro:sufficient_conditions_mu_Delta<2} adapted to $\Delta=2$. Similarly to the case $\Delta<2$, Corollary~\ref{coro:sufficient_conditions_Q_X_Delta=2} is stronger than Corollary~\ref{coro:sufficient_conditions_mu_Delta=2}, although the latter is more practical for concrete examples. We remind the reader that $\mu$ denotes the minimal norm (Definition~\ref{def:height_minimal_norm}).

\begin{coro}\label{coro:sufficient_conditions_Q_X_Delta=2}
Assume $\Delta=2$ and let $d:=\det E(K)^0$. If $Q_X$ represents an integer not divisible by $d$ in the interval $[d\cdot(2+2k-c_\text{max}),d\cdot(2+2k-c_\text{min}))$, then $P_1\cdot P_2=k$ for some $P_1,P_2\in E(K)$.
\end{coro}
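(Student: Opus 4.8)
The plan is to mirror, essentially verbatim, the proof of Corollary~\ref{coro:sufficient_conditions_Q_X_Delta<2}, replacing the closed interval by the right half-open one and replacing the appeal to the $\Delta<2$ sufficiency lemma by Lemma~\ref{lemma:sufficient_conditions_Delta=2}. The whole content of the corollary is a translation of the hypothesis ``$Q_X$ represents a suitable non-multiple of $d$'' into the hypothesis of Lemma~\ref{lemma:sufficient_conditions_Delta=2}, namely ``$h(P)$ lies in $[2+2k-c_\text{max},\,2+2k-c_\text{min})$ for some $P\notin E(K)^0$.'' Once that translation is made, the conclusion is immediate.

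Concretely, I would first pick integers $a_1,\dots,a_r\in\mathbb{Z}$ witnessing the representation, so that $Q_X(a_1,\dots,a_r)$ lies in $[d\cdot(2+2k-c_\text{max}),\,d\cdot(2+2k-c_\text{min}))$ and is not divisible by $d$. Letting $P_1,\dots,P_r$ be generators of the free part of $E(K)$, I set $P:=a_1P_1+\dots+a_rP_r$. By the very definition of $Q_X$ one has the identity $Q_X(a_1,\dots,a_r)=d\cdot h(P)$, so the non-divisibility $d\nmid Q_X(a_1,\dots,a_r)=d\cdot h(P)$ forces $h(P)\notin\mathbb{Z}$; since $E(K)^0$ is an integer lattice, this gives $P\notin E(K)^0$, which is exactly the membership condition required by Lemma~\ref{lemma:sufficient_conditions_Delta=2}. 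Dividing the interval bounds by the positive integer $d$ preserves the half-open structure and yields $h(P)=\tfrac{1}{d}Q_X(a_1,\dots,a_r)\in[2+2k-c_\text{max},\,2+2k-c_\text{min})$. Applying Lemma~\ref{lemma:sufficient_conditions_Delta=2} then produces sections with $P_1\cdot P_2=k$.

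The only two points that need a moment's care are bookkeeping rather than substance: that multiplication by $d>0$ sends the half-open endpoint $d\cdot(2+2k-c_\text{min})$ to the half-open endpoint $2+2k-c_\text{min}$ (so the strict inequality is inherited in the right direction), and that the non-divisibility hypothesis is precisely what guarantees $P\notin E(K)^0$ and hence legitimizes the use of the $\Delta=2$ lemma rather than a statement about $E(K)^0$. I do not expect any genuine obstacle here: all the delicate work, in particular the technical difficulty arising from a section of minimal contribution term (handled via the torsion section in Lemma~\ref{lemma:minimal_contribution_Delta=2}), has already been absorbed into the proof of Lemma~\ref{lemma:sufficient_conditions_Delta=2}. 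The corollary is therefore a purely formal consequence, and the proof should be only a few lines long.
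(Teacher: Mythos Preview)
Your proposal is correct and matches the paper's approach exactly: the paper's proof simply says to repeat the arguments of Corollary~\ref{coro:sufficient_conditions_Q_X_Delta<2} with the half-open interval, which is precisely what you have spelled out. There is nothing to add.
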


\begin{proof}
We repeat the arguments in Corollary~\ref{coro:sufficient_conditions_Q_X_Delta<2}, in this case with the half-open interval.
\end{proof}

\begin{coro}\label{coro:sufficient_conditions_mu_Delta=2}
Assume $\Delta=2$. If there is a perfect square $n^2\in\left[\frac{2+2k-c_\text{max}}{\mu},\frac{2+2k-c_\text{min}}{\mu}\right)$ such that $n^2\mu\notin\Bbb{Z}$, then $P_1\cdot P_2=k$ for some $P_1,P_2\in E(K)$.
\end{coro}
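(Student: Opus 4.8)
The plan is to mirror the proof of Corollary~\ref{coro:sufficient_conditions_mu_Delta<2} almost verbatim, the only structural change being that the closed interval is replaced by a right half-open one and that Lemma~\ref{lemma:sufficient_conditions_Delta=2} is invoked in place of Lemma~\ref{lemma:sufficient_conditions_Delta<2}. I would begin by choosing a section $P\in E(K)$ realizing the minimal norm, i.e. with $h(P)=\mu$; such a $P$ exists by Definition~\ref{def:height_minimal_norm}. The one computational input I need is the bilinearity of the height pairing, which gives $h(nP)=\langle nP,nP\rangle=n^2\langle P,P\rangle=n^2\mu$ for the integer $n$ supplied by the hypothesis.

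Next I would check that the section $nP$ satisfies both hypotheses of Lemma~\ref{lemma:sufficient_conditions_Delta=2}. For the first, recall that the narrow lattice $E(K)^0$ is an integral (indeed even) lattice, so every element of it has integral height; the assumption $n^2\mu\notin\Z$ therefore forces $nP\notin E(K)^0$. For the second, since $\mu>0$ I may multiply the containment $n^2\in\left[\tfrac{2+2k-c_\text{max}}{\mu},\tfrac{2+2k-c_\text{min}}{\mu}\right)$ through by $\mu$; this preserves both inequalities as well as the right-open endpoint, yielding $h(nP)=n^2\mu\in[2+2k-c_\text{max},2+2k-c_\text{min})$, which is exactly the interval condition required by Lemma~\ref{lemma:sufficient_conditions_Delta=2}. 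Applying that lemma to $nP$ then produces sections $P_1,P_2\in E(K)$ with $P_1\cdot P_2=k$, finishing the argument.

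The only point that genuinely departs from the $\Delta<2$ case — and hence the sole place deserving attention — is the bookkeeping around the half-open interval: I must ensure the strict inequality at the right endpoint survives the scaling by $\mu$ and lines up precisely with the hypothesis of Lemma~\ref{lemma:sufficient_conditions_Delta=2}, since that lemma's conclusion in the $\Delta=2$ regime ultimately rests on Lemma~\ref{lemma:minimal_contribution_Delta=2} together with the existence of a nontrivial torsion section (guaranteed by Table~\ref{table:Delta=2}). Beyond this routine verification there is no real obstacle, as all the substantive work has already been carried out inside Lemma~\ref{lemma:sufficient_conditions_Delta=2}.
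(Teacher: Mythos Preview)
Your proposal is correct and follows essentially the same approach as the paper, which simply says to repeat the argument of Corollary~\ref{coro:sufficient_conditions_mu_Delta<2} with the half-open interval. Your exposition is in fact more detailed than the paper's, spelling out the scaling by $\mu$ and the role of the integral lattice $E(K)^0$, but the logical skeleton is identical.
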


\begin{proof}
We repeat the arguments in Corollary~\ref{coro:sufficient_conditions_mu_Delta<2}, in this case with the half-open interval.
\end{proof}

\section{Necessary and sufficient conditions when $\Delta\leq 2$}\label{section:necessary_and_sufficient_Delta<=2}\
\indent For completeness, we present a unified statement of necessary and sufficient conditions for having $P_1\cdot P_2=k$ for some $P_1,P_2\in E(K)$ assuming $\Delta\leq 2$, which follows naturally from results in Sections~\ref{section:necessary_conditions} and \ref{section:sufficient_conditions_Delta<=2}.

\begin{lemma}\label{lemma:necessary_sufficient_conditions_Delta<=2}
Assume $\Delta\leq 2$ and let $k\in\Bbb{Z}_{\geq 0}$. Then $P_1\cdot P_2=k$ for some $P_1,P_2\in E(K)$ if and only if one of the following holds:
\begin{enumerate}[i)]
\item $h(P)=2+2k$ for some $P\in E(K)^0$.
\item {\normalfont $h(P)\in[2+2k-c_\text{max},2+2k-c_\text{min})$} for some $P\notin E(K)^0$.
\item {\normalfont $h(P)=2+2k-c_\text{min}$} and {\normalfont $\sum_{v\in R}\text{contr}_v(P)=c_\text{min}$} for some $P\in E(K)$.
\end{enumerate}
\end{lemma}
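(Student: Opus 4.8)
The plan is to assemble this biconditional from the one-directional results already proved: the necessary conditions of Lemma~\ref{lemma:necessary_conditions} and the sufficient conditions of Lemmas~\ref{lemma:sufficient_conditions_Delta<2} and \ref{lemma:sufficient_conditions_Delta=2}. The one genuinely new ingredient is reconciling the \emph{closed} interval $[2+2k-c_\text{max},2+2k-c_\text{min}]$ appearing in the necessary condition with the \emph{half-open} interval of clause ii) together with the boundary clause iii); this is precisely where the value $h(P)=2+2k-c_\text{min}$ must be isolated and treated on its own.

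For the forward implication I would start from $P_1\cdot P_2=k$ and reduce to $P_1\cdot O=k$ by translating by $-P_2$, using that translation by a section is an automorphism of $X$ preserving intersection numbers, exactly as already invoked in Section~\ref{section:necessary_conditions}. Writing $c:=\sum_{v\in R}\text{contr}_v(P_1)$, the height formula (\ref{equation:height_formula_P}) gives $h(P_1)=2+2k-c$. I would then split into three cases. If $P_1\in E(K)^0$ then $c=0$ and $h(P_1)=2+2k$, giving clause i). If $P_1\notin E(K)^0$, then $c_\text{min}\leq c\leq c_\text{max}$ by Lemma~\ref{lemma:bounds_are_actually_bounds}; when $c>c_\text{min}$ this places $h(P_1)$ in $[2+2k-c_\text{max},2+2k-c_\text{min})$, giving clause ii), and when $c=c_\text{min}$ it gives $h(P_1)=2+2k-c_\text{min}$ with contribution term exactly $c_\text{min}$, i.e. clause iii). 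This case split is what converts the single closed interval of Lemma~\ref{lemma:necessary_conditions} into the split between clauses ii) and iii).

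For the converse, each clause is handled separately. Clauses i) and iii) both admit a direct argument via the height formula: in i) the vanishing of all contributions forces $P\cdot O=k$, while in iii) substituting $\sum_{v\in R}\text{contr}_v(P)=c_\text{min}$ into (\ref{equation:height_formula_P}) again yields $P\cdot O=k$ outright, so $P_1=P,\ P_2=O$ works in both cases. Clause ii) is where $\Delta$ matters: if $\Delta<2$ the half-open interval is contained in the closed interval of Lemma~\ref{lemma:sufficient_conditions_Delta<2} (whose hypothesis $P\notin E(K)^0$ is part of clause ii)), and if $\Delta=2$ it is verbatim the hypothesis of Lemma~\ref{lemma:sufficient_conditions_Delta=2}; either way the desired pair is produced.

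The main subtlety to watch is the endpoint $2+2k-c_\text{min}$ when $\Delta=2$. There the sufficient-condition lemma only covers the half-open interval, so clause ii) cannot reach this endpoint; clause iii) exists precisely to absorb it, and the direct height-formula computation (rather than an appeal to Lemma~\ref{lemma:sufficient_conditions_Delta=2}) is what makes it go through. I expect no real obstacle beyond careful bookkeeping, provided one keeps track of the fact that the contribution-term hypothesis in iii) is exactly what pins $P\cdot O$ to $k$, so that the torsion machinery of Lemma~\ref{lemma:minimal_contribution_Delta=2} is not needed in this direction at all.
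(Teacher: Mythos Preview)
Your proposal is correct and follows essentially the same approach as the paper: the forward direction reduces to $P_1\cdot O=k$, applies the height formula, and splits according to whether $c=0$, $c=c_\text{min}$, or $c_\text{min}<c\leq c_\text{max}$; the converse handles clauses i) and iii) directly via the height formula and clause ii) by invoking Lemma~\ref{lemma:sufficient_conditions_Delta<2} or Lemma~\ref{lemma:sufficient_conditions_Delta=2} according to whether $\Delta<2$ or $\Delta=2$. Your discussion of the endpoint subtlety and why clause iii) bypasses the torsion machinery is accurate and matches the paper's reasoning.
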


\begin{proof}
If i) or iii) holds, then $P\cdot O=k$ directly by the height formula (\ref{equation:height_formula_P}). But if ii) holds, it suffices to to apply Lemma~\ref{lemma:sufficient_conditions_Delta<2} when $\Delta<2$ and by Lemma~\ref{lemma:sufficient_conditions_Delta=2} when $\Delta=2$.

Conversely, let $P_1\cdot P_2=k$. Without loss of generality, we may assume that $P_2=O$, so that $P_1\cdot O=k$. By the height formula (\ref{equation:height_formula_P}), $h(P_1)=2+2k-c$, where $c:=\sum_v\text{contr}_v(P_1)$. 

If $c=0$, then $P_1\in E(K)^0$ and $h(P_1)=2+2k$, so i) holds. Hence we let $c\neq 0$, i.e. $P_1\notin E(K)^0$, so that $c_\text{min}\leq c\leq c_\text{max}$ by Lemma~\ref{lemma:bounds_are_actually_bounds}. In case $c=c_\text{min}$, then $h(P_1)=2+2k-c_\text{min}$ and iii) holds. Otherwise $c_\text{min}<c\leq c_\text{max}$, which implies $2+2k-c_\text{max}\leq h(P_1)<2+2k-c_\text{min}$, so ii) holds.
\end{proof}

\section{Summary of sufficient conditions}\label{section:summary_sufficient_conditions}\
\indent For the sake of clarity, we summarize in a single proposition all sufficient conditions for having $P_1\cdot P_2=k$ for some $P_1,P_2\in E(K)$ proven in this chapter.

\begin{prop}\label{prop:summary_of_sufficient_conditions}
Let $k\in\Bbb{Z}_{\geq 0}$. If one of the following holds, then $P_1\cdot P_2=k$ for some $P_1,P_2\in E(K)$.
\begin{enumerate}[1)]
\item $h(P)=2+2k$ for some $P\in E(K)^0$.
\item $h(P)=2k$ for some $P\in E(K)^0$ and $E(K)_\text{tor}$ is not trivial.
\item $\Delta<2$ and there is a perfect square $n^2\in \left[\frac{2+2k-c_\text{max}}{\mu},\frac{2+2k-c_\text{min}}{\mu}\right]$ with $n^2\mu\notin\Bbb{Z}$, where $\mu$ is the minimal norm (Definition~\ref{def:height_minimal_norm}). In case $\Delta=2$, consider the right half-open interval.
\item $\Delta<2$ and the quadratic form $Q_X$ represents an integer not divisible by $d:=\det E(K)^0$ in the interval $[d\cdot(2+2k-c_\text{max}),d\cdot(2+2k-c_\text{min})]$. In case $\Delta=2$, consider the right half-open interval.
\end{enumerate}
\end{prop}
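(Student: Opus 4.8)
The plan is to observe that this proposition is purely a consolidation: each of its four sufficient conditions has already been established, in slightly different packaging, earlier in the chapter. So rather than a new argument, the proof is an exercise in matching each item to the precise statement that proves it, together with a one-line direct verification for the first item. The only point demanding care is keeping the interval conventions straight (closed interval when $\Delta<2$, right half-open when $\Delta=2$), since that is exactly the distinction that separates the $\Delta<2$ and $\Delta=2$ corollaries.

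First I would dispatch conditions 1) and 2). For 1), take $P_2=O$ the neutral section and $P_1=P$. By the height formula~(\ref{equation:height_formula_P}) with $\chi(X)=1$ (Theorem~\ref{thm:RES_distinguished_properties}) and the vanishing of the contribution term for $P\in E(K)^0$, one gets $h(P)=2+2(P\cdot O)$; the hypothesis $h(P)=2+2k$ then forces $P\cdot O=k$, so $P_1\cdot P_2=k$. Condition 2) is immediate from Lemma~\ref{lemma:PO_plus_one}: when $P\in E(K)^0$ has $h(P)=2k$, that lemma yields $P\cdot Q=k$ for every $Q\in E(K)_\text{tor}\setminus\{O\}$, and the assumed nontriviality of $E(K)_\text{tor}$ guarantees such a $Q$ exists, so we may take $P_1=P$, $P_2=Q$.

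Next I would handle conditions 3) and 4), which are verbatim restatements of the corollaries of Section~\ref{section:sufficient_conditions_Delta<=2}. In the $\Delta<2$ regime, condition 3) (closed interval) is Corollary~\ref{coro:sufficient_conditions_mu_Delta<2} and condition 4) (closed interval) is Corollary~\ref{coro:sufficient_conditions_Q_X_Delta<2}. In the $\Delta=2$ regime, where the interval is taken right half-open, they are respectively Corollary~\ref{coro:sufficient_conditions_mu_Delta=2} and Corollary~\ref{coro:sufficient_conditions_Q_X_Delta=2}. Invoking these four corollaries completes the argument. The main obstacle is therefore not conceptual but organizational: all the genuine work—the interval-length estimate that forces $P\cdot O-k=0$ when $\Delta<2$, and the subtler half-open analysis together with Lemma~\ref{lemma:minimal_contribution_Delta=2} that recovers the missing unit via a torsion section when $\Delta=2$—has already been carried out, and what remains is simply to certify that the present hypotheses exactly reproduce the hypotheses of those results.
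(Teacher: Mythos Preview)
Your proposal is correct and matches the paper's approach exactly: items 1)--4) are dispatched via the height formula, Lemma~\ref{lemma:PO_plus_one}, and the four corollaries of Section~\ref{section:sufficient_conditions_Delta<=2}, respectively. (The paper's printed proof actually cites Lemma~\ref{lemma:minimal_contribution_Delta=2} for item 2), but that lemma requires $\Delta=2$ and a minimal contribution term, neither of which is assumed; your citation of Lemma~\ref{lemma:PO_plus_one} is the right one.)
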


\begin{proof}
In 1) a height calculation gives $2+2k=h(P)=2+2(P\cdot O)-0$, so $P\cdot O=k$. For 2), we apply Lemma~\ref{lemma:minimal_contribution_Delta=2} to conclude that $P\cdot Q=k$ for any $Q\in E(K)_\text{tor}\setminus\{O\}$. In 3) we use Corollary~\ref{coro:sufficient_conditions_mu_Delta<2} when $\Delta<2$ and Corollary~\ref{coro:sufficient_conditions_mu_Delta=2} when $\Delta=2$. In 4), we apply Corollary~\ref{coro:sufficient_conditions_Q_X_Delta<2} if $\Delta<2$ and Corollary~\ref{coro:sufficient_conditions_Q_X_Delta=2} if $\Delta=2$.
\end{proof}

\section{Applications}\label{section:applications}\
\indent We prove the four main theorems of this chapter, which are independent applications of the results from Sections~\ref{section:necessary_conditions} and \ref{section:sufficient_conditions_Delta<=2}. The first two are general attempts to describe when and how gap numbers occur: Theorem~\ref{thm:gap_free_r>=5} tells us that large Mordell-Weil groups prevent the existence of gap numbers, more precisely for Mordell-Weil rank $r\geq 5$; in Theorem~\ref{thm:gaps_probability_1_r=1,2} we show that for a smaller Mordell-Weil group, more precisely when $r\leq 2$, gap numbers occur with probability $1$. The last two theorems, on the other hand, deal with explicit values of gap numbers: Theorem~\ref{thm:identification_of_gaps_r=1} provides a complete description of gap numbers in certain cases, whereas Theorem~\ref{thm:surfaces_with_a_1-gap} is a classification of all cases with a $1$-gap.

\subsection{No gap numbers in $r\geq 5$}\label{subsection:gap_free_r>=5}\
\indent We show that if $E(K)$ has rank $r\geq 5$, then $X$ is gap-free. Our strategy is to prove that for every $k\in\Bbb{Z}_{\geq 0}$ there is some $P\in E(K)^0$ such that $h(P)=2+2k$, and by Proposition~\ref{prop:summary_of_sufficient_conditions} 1) we are done. We accomplish this in two steps. First we show that this holds when there is an embedding of $A_1^{\oplus}$ or of $A_4$ in $E(K)^0$ (Lemma~\ref{lemma:sublattice_A1_times_4_or_A4}). Second, we show that if $r\geq 5$, then such embedding exists, hence $X$ is gap-free (Theorem~\ref{thm:gap_free_r>=5}).

\begin{lemma}\label{lemma:sublattice_A1_times_4_or_A4}
Assume $E(K)^0$ has a sublattice isomorphic to $A_1^{\oplus 4}$ or $A_4$. Then for every $\ell\in\Bbb{Z}_{\geq 0}$ there is $P\in E(K)^0$ such that $h(P)=2\ell$.
\end{lemma}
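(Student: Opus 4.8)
The plan is to reduce the statement to a purely lattice-theoretic fact. Since $E(K)^0$ is an even, positive-definite integral lattice, the height $h$ takes only even values on it, so it suffices to show that the prescribed sublattice $L_0\cong A_1^{\oplus 4}$ or $L_0\cong A_4$ already represents every non-negative even integer: a vector $P\in L_0\subseteq E(K)^0$ with $h(P)=2\ell$ is then exactly the section required. Thus the whole argument comes down to proving that the norm forms of $A_1^{\oplus 4}$ and of $A_4$ are universal on the even integers.

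For $A_1^{\oplus 4}$ this is immediate: in a basis of roots the norm form is $2(x_1^2+x_2^2+x_3^2+x_4^2)$, so $h$ represents $2\ell$ precisely when $\ell$ is a sum of four squares, which holds for every $\ell\ge 0$ by Lagrange's four-square theorem \cite[\S 20.5]{HardyWright}.

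For $A_4$ I would pass to the standard model $A_4\cong\{a\in\mathbb{Z}^5 : a_1+\cdots+a_5=0\}$ with norm $\sum_i a_i^2$, and reduce to the Gauss three-square theorem. When $2\ell$ is itself a sum of three squares, I set one coordinate equal to $0$ and solve the remaining sum-zero problem inside the resulting copy of $A_3$; here one uses that a three-square representation of an even number automatically has even coordinate sum, so it does land in $A_3$. The only remaining case is the exceptional set of the three-square theorem, $2\ell=4^a(8b+7)$. Since $2\ell$ is even this forces $\ell$ even, whence $2(\ell-1)\equiv 2\pmod 4$ is never of the excluded form and is a sum of three squares $c_1^2+c_2^2+c_3^2$. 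I then assemble the five-tuple $(c_1,c_2,c_3,\pm 1,\mp 1)$, using the two unit coordinates both to supply the missing norm $2$ and to correct the total coordinate sum to zero.

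The hard part will be the $A_4$ case, and specifically this last step: after matching the norm one must still force the coordinate sum to vanish, which amounts to choosing signs of $c_1,c_2,c_3$ so that $c_1+c_2+c_3\in\{-2,0,2\}$. This is not automatic for a single representation, so I expect either to vary over the available three-square representations of $2(\ell-1)$ or to apply the three-square theorem to a suitably shifted target. A cleaner fallback, should the sign bookkeeping become unwieldy, is to invoke that $A_4$ is the unique class in its genus and represents every even integer locally, so that universality over the even integers follows from the Hasse--Minkowski principle together with a finite check of the small values.
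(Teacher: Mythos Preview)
Your $A_1^{\oplus 4}$ argument is identical to the paper's. For $A_4$ you take a genuinely different route: the paper writes the half-norm form in the root basis as $q(x_1,\ldots,x_4)=x_1^2+x_2^2+x_3^2+x_4^2-x_1x_2-x_2x_3-x_3x_4$ and applies the Bhargava--Hanke 290-theorem, reducing the question to a finite verification of the 29 critical integers. Your approach via the standard model $A_4\subset\mathbb{Z}^5$ and the three-square theorem is more elementary in spirit, and your Case~1 (using $A_3\cong D_3$) is clean.

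The gap, as you yourself flag, is Case~2. The construction $(c_1,c_2,c_3,\pm 1,\mp 1)$ has coordinate sum $c_1+c_2+c_3$, and allowing the unit coordinates to be $(\pm1,\pm1)$ as well only shifts this by $0$ or $\pm2$. So you need $\epsilon_1 c_1+\epsilon_2 c_2+\epsilon_3 c_3\in\{-2,0,2\}$ for some choice of signs, and this can fail for a fixed three-square representation (e.g.\ $c_1$ large, $c_2=c_3=0$). Varying the representation may work but requires a separate argument. Your fallback is also not quite right as stated: Hasse--Minkowski governs \emph{equivalence} of forms, not representation by a fixed positive-definite form; what you would actually need is Tartakowsky's theorem (every sufficiently large locally represented integer is globally represented by a form in $\geq 4$ variables) plus a finite check --- class number~1 is not the issue. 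That route is valid but considerably heavier than the paper's 290-theorem approach, which packages exactly this kind of universality statement into a finite criterion.
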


\begin{proof}
First assume $A_1^{\oplus 4}\subset E(K)^0$ and let $P_1,P_2,P_3,P_4$ be generators for each factor $A_1$ in $A_1^{\oplus 4}$. Then $h(P_i)=2$ and $\langle P_i,P_j\rangle=0$ for distinct $i,j=1,2,3,4$. By Lagrange's four-square theorem \cite[\S 20.5]{HardyWright} there are integers $a_1,a_2,a_3,a_4$ such that $a_1^2+a_2^2+a_3^2+a_4^2=\ell$. Defining $P:=a_1P_1+a_2P_2+a_3P_3+a_4P_4\in A_1^{\oplus 4}\subset E(K)^0$, we have
$$h(P)=2a_1^2+2a_2^2+2a_3^2+2a_4^2=2\ell.$$

Now let $A_4\subset E(K)^0$ with generators $P_1,P_2,P_3,P_4$. Then $h(P_i)=2$ for $i=1,2,3,4$ and $\langle P_i,P_{i+1}\rangle=-1$ for $i=1,2,3$. We need to find integers $x_1,...,x_4$ such that $h(P)=2\ell$, where $P:=x_1P_1+...+x_4P_4\in A_4\subset E(K)^0$. Equivalently, we need that
$$\ell=\frac{1}{2}\langle P,P\rangle=x_1^2+x_2^2+x_3^2+x_4^2-x_1x_2-x_2x_3-x_3x_4.$$

Therefore $\ell$ must be represented by $q(x_1,...,x_4):=x_1^2+x_2^2+x_3^2+x_4^2-x_1x_2-x_2x_3-x_3x_4$. We prove that $q$ represents all positive integers. Notice that $q$ is positive-definite, since it is induced by $\langle\cdot,\cdot\rangle$. By Bhargava-Hanke's 290-theorem \cite{BhargavaHanke}[Thm. 1], $q$ represents all positive integers if and only if it represents the following integers:
$$2, 3, 5, 6, 7, 10, 13, 14, 15, 17, 19, 21, 22, 23, 26,$$
$$29, 30, 31, 34, 35, 37, 42, 58, 93, 110, 145, 203, 290.$$

The representation for each of the above is found in Table~\ref{table:representation_critical_integers}.
\end{proof}

\indent We now prove the main theorem of this section.

\begin{teor}\label{thm:gap_free_r>=5} 
If $r\geq 5$, then $X$ is gap-free. 
\end{teor}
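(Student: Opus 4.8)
The plan is to reduce the statement, via the machinery already assembled in this chapter, to a purely lattice-theoretic claim about the narrow Mordell-Weil lattice $E(K)^0$. By Lemma~\ref{lemma:sublattice_A1_times_4_or_A4}, if $E(K)^0$ contains a sublattice isomorphic to $A_1^{\oplus 4}$ or to $A_4$, then for every $\ell\in\Z_{\geq 0}$ there is $P\in E(K)^0$ with $h(P)=2\ell$. Taking $\ell=1+k$ produces $P\in E(K)^0$ with $h(P)=2+2k$, and then Proposition~\ref{prop:summary_of_sufficient_conditions}~1) yields sections $P_1,P_2\in E(K)$ with $P_1\cdot P_2=k$. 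Since $k\in\Z_{\geq 0}$ is arbitrary, $X$ is gap-free. Thus the entire theorem reduces to the following assertion: \emph{if $r\geq 5$, then $E(K)^0$ has a sublattice isomorphic to $A_1^{\oplus 4}$ or $A_4$.}

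To prove this I would first translate the rank hypothesis into a condition on the reducible-fiber lattice $T$ (Definition~\ref{def:lattice_T}). Since $r=8-\operatorname{rank}T$ for a rational elliptic surface (a consequence of the Shioda-Tate isomorphism, Theorem~\ref{thm:shioda_tate}, together with $\rho(X)=10$), the hypothesis $r\geq 5$ is equivalent to $\operatorname{rank}T\leq 3$, and the narrow lattice is the orthogonal complement of $T$ inside the frame lattice $E_8$ of the surface, $E(K)^0\cong T^{\perp}_{E_8}$. Enumerating the root lattices of rank at most $3$, namely $0,\ A_1,\ A_1^{\oplus 2},\ A_2,\ A_1^{\oplus 3},\ A_2\oplus A_1,\ A_3$, and computing the corresponding complements, one finds that $E(K)^0$ is always one of
\[
E_8,\quad E_7,\quad E_6,\quad D_6,\quad D_5,\quad A_5,\quad D_4\oplus A_1.
\]
Equivalently, and more safely for the write-up, I would simply read off these narrow lattices from the Oguiso-Shioda table reproduced in Chapter~\ref{ch:appendix} (Table~\ref{table:MWL_data}), restricting attention to the rows with $r\geq 5$; this bypasses any by-hand computation of orthogonal complements in $E_8$.

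It then remains to exhibit, for each lattice on this short list, a sublattice of type $A_4$ or $A_1^{\oplus 4}$. The lattices $E_8,E_7,E_6,D_6,D_5,A_5$ each contain $A_4$ (for $D_n$ with $n\geq 5$ this is the $A_{n-1}$ subdiagram, and for $A_5$ it is immediate). The only case requiring care is $D_4\oplus A_1$, which need \emph{not} contain $A_4$; here one uses instead that $D_4$ already carries four mutually orthogonal roots, e.g. $e_1-e_2,\ e_1+e_2,\ e_3-e_4,\ e_3+e_4$, giving an embedded $A_1^{\oplus 4}$.

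I expect the only delicate point to be exactly this last bookkeeping: making sure the correct alternative is invoked in every case, and in particular recognizing that $D_4\oplus A_1$ must be treated through $A_1^{\oplus 4}$ rather than $A_4$ (the latter does not embed, as a quick determinant comparison $\det A_4=5$ against $\det(D_4\oplus A_1)=8$ confirms). Everything else is either a finite table lookup or the standard identification of root-lattice complements in $E_8$, so no genuine estimate or analytic difficulty arises.
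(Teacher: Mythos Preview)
Your proposal is correct and follows essentially the same route as the paper: reduce via Lemma~\ref{lemma:sublattice_A1_times_4_or_A4} and Proposition~\ref{prop:summary_of_sufficient_conditions}~1) to showing $E(K)^0$ contains $A_1^{\oplus 4}$ or $A_4$, read off the seven narrow lattices for $r\geq 5$ from the Oguiso--Shioda table, and then check each case, treating $D_4\oplus A_1$ through $A_1^{\oplus 4}$ rather than $A_4$. The paper cites \cite{Nishiyama} for the sublattice embeddings, while you give direct justifications (the $A_{n-1}$ subdiagram of $D_n$, explicit orthogonal roots in $D_4$); both are adequate, and your determinant argument for the non-embedding $A_4\not\hookrightarrow D_4\oplus A_1$ is valid precisely because the ranks coincide.
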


\begin{proof}
We show that for every $k\geq 0$ there is $P\in E(K)^0$ such that $h(P)=2+2k$, so that by Proposition~\ref{prop:summary_of_sufficient_conditions} 1) we are done. By Lemma~\ref{lemma:sublattice_A1_times_4_or_A4} it suffices to prove that $E(K)^0$ has a sublattice isomorphic to $A_1^{\oplus 4}$ or $A_4$.

The cases with $r\geq 5$ are No. 1-7 (Table~\ref{table:MWL_data}). In No. 1-6, $E(K)^0= E_8,E_7,E_6,D_6,D_5,A_5$ respectively. Each of these admit an $A_4$ sublattice \cite[Lemmas 4.2, 4.3]{Nishiyama}. In No. 7 we claim that $E(K)^0=D_4\oplus A_1$ has an $A_1^{\oplus 4}$ sublattice. This is the case because $D_4$ admits an $A_1^{\oplus 4}$ sublattice \cite[Lemma 4.5 (iii)]{Nishiyama}.

\end{proof}


\begin{table}[h]
\begin{center}
\centering
\begin{tabular}{c|c} 
$n$ & $x_1,x_2,x_3,x_4$ with $x_1^2+x_2^2+x_3^2+x_4^2-x_1x_2-x_2x_3-x_3x_4=n$\\
\hline
$1$ & $1,0,0,0$\\
$2$ & $1,0,1,0$\\
$3$ & $1,1,2,0$\\
$5$ & $1,0,2,0$\\
$6$ & $1,1,-2,-1$\\
$7$ & $1,1,-2,0$\\
$10$ & $1,0,3,0$\\
$13$ & $2,0,3,0$\\
$14$ & $1,2,5,1$\\
$15$ & $1,5,5,2$\\
$17$ & $1,0,4,0$\\
$19$ & $1,5,3,-1$\\
$21$ & $1,5,0,0$\\
$22$ & $1,5,0,-1$\\
$23$ & $1,6,6,2$\\
$26$ & $1,0,5,0$\\
$29$ & $2,0,5,0$\\
$30$ & $1,5,0,-3$\\
$31$ & $1,3,-4,-2$\\
$34$ & $3,0,5,0$\\
$35$ & $1,2,-2,4$\\
$37$ & $1,0,6,0$\\
$42$ & $1,1,-4,3$\\
$58$ & $3,0,7,0$\\
$93$ & $1,1,-10,0$\\
$110$ & $1,-2,3,-8$\\
$145$ & $1,0,12,0$\\
$203$ & $1,-5,-9,8$\\
$290$ & $1,0,17,0$\\
\end{tabular}\caption{Representation of the critical integers in Bhargava-Hanke's 290-theorem.}\label{table:representation_critical_integers}
\end{center}
\end{table}

\subsection{Gaps with probability $1$ in $r\leq 2$}\label{subsection:gaps_probability_1_r=1,2}\
\indent Fix a rational elliptic surface $\pi:X\to\P^1$ with Mordell-Weil rank $r\leq 2$. We prove that if $k$ is a uniformly random natural number, then $k$ is a gap number with probability $1$. More precisely, if $G:=\{k\in\Bbb{N}\mid k\text{ is a gap number of }X\}$ is the set of gap numbers, then $G\subset\Bbb{N}$ has density $1$, i.e. 
$$d(G):=\lim_{n\to\infty}\frac{\#G\cap\{1,...,n\}}{n}=1.$$

\indent We adopt the following strategy. If $k\in\Bbb{N}\setminus G$, then $P_1\cdot P_2=k$ for some $P_1,P_2\in E(K)$ and by Corollary~\ref{coro:necessary_conditions_Q_X} the quadratic form $Q_X$ represents some integer $t$ depending on $k$. This defines a function $f:\Bbb{N}\setminus G\to T$, where $T$ is the set of integers represented by $Q_X$. Since $Q_X$ is a quadratic form on $r\leq 2$ variables, $T$ has density $0$ in $\Bbb{N}$ by Lemma~\ref{lemma:representable_integers_density_0}. By analyzing the pre-images of $f$, in Theorem~\ref{thm:gaps_probability_1_r=1,2} we conclude that $d(\Bbb{N}\setminus G)=d(T)=0$, hence $d(G)=1$ as desired.

\begin{lemma}\label{lemma:representable_integers_density_0}
Let $Q$ be a positive-definite quadratic form on $r=1,2$ variables with integer coefficients. Then the set of integers represented by $Q$ has density $0$ in $\Bbb{N}$.
\end{lemma}

\begin{proof}
Let $S$ be the set of integers represented by $Q$. If $d$ is the greatest common divisor of the coefficients of $Q$, let $S'$ be the set of integers representable by the primitive form $Q':=\frac{1}{d}\cdot Q$. By construction $S'$ is a rescaling of $S$, so $d(S)=0$ if and only if $d(S')=0$.

If $r=1$, then $Q'(x_1)=x_1^2$ and $S'$ is the set of perfect squares, so clearly $d(S')=0$. If $r=2$, then $Q'$ is a binary quadratic form and the number  of elements in $S'$ bounded from above by $x>0$ is given by $C\cdot \frac{x}{\sqrt{\log x}}+o(x)$ with $C>0$ a constant and $\lim_{x\to\infty}\frac{o(x)}{x}=0$ \cite[p. 91]{Bernays}. Thus 
$$d(S')=\lim_{x\to\infty}\frac{C}{\sqrt{\log x}}+\frac{o(x)}{x}=0.$$
\end{proof}

We now prove the main result of this section.

\begin{teor}\label{thm:gaps_probability_1_r=1,2}
Let $\pi:X\to\P^1$ be a rational elliptic surface with Mordell-Weil rank $r\leq 2$. Then the set $G:=\{k\in\Bbb{N}\mid k\text{ is a gap number of }X\}$ of gap numbers of $X$ has density $1$ in $\Bbb{N}$.
\end{teor}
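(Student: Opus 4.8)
The plan is to follow the strategy sketched just before the statement: I will show that the complement $\Bbb{N}\setminus G$ is contained in a set of density $0$, which forces $d(G)=1$. The case $r=0$ is immediate, since then $E(K)=E(K)_\text{tor}$ and every $k\geq 1$ is a gap number, so $G$ already has density $1$. Hence I assume $r\in\{1,2\}$, so that the quadratic form $Q_X$ is defined and Lemma~\ref{lemma:representable_integers_density_0} applies.

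First I would translate non-gap numbers into the representation problem. If $k\in\Bbb{N}\setminus G$, then $P_1\cdot P_2=k$ for some $P_1,P_2\in E(K)$, so by Corollary~\ref{coro:necessary_conditions_Q_X} the form $Q_X$ represents an integer lying in the interval $I_k:=[d\cdot(2+2k-c_\text{max}),\,d\cdot(2+2k)]$, where $d:=\det E(K)^0$. Writing $T$ for the set of integers represented by $Q_X$, this says
$$\Bbb{N}\setminus G\subseteq\{k\in\Bbb{N}\mid I_k\cap T\neq\emptyset\}.$$
It therefore suffices to bound the density of the right-hand side.

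The key point — and the step I expect to carry the real weight — is that each represented integer $t\in T$ can witness only boundedly many $k$. Indeed, $t\in I_k$ is equivalent to $k\in\left[\tfrac{t/d-2}{2},\,\tfrac{t/d-2+c_\text{max}}{2}\right]$, an interval of length $c_\text{max}/2$; since $c_\text{max}<4$ by Lemma~\ref{lemma:bounds_are_actually_bounds}, this length is strictly less than $2$, so at most two integers $k$ correspond to a given $t$. Consequently, for every $n$,
$$\#\big((\Bbb{N}\setminus G)\cap\{1,\dots,n\}\big)\leq 2\cdot\#\big(T\cap\{1,\dots,d(2+2n)\}\big),$$
because any $t$ witnessing some $k\leq n$ satisfies $t\leq d(2+2n)$.

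Finally I would invoke the density statement for $T$. By Lemma~\ref{lemma:representable_integers_density_0}, $T$ has density $0$, i.e. $\#(T\cap\{1,\dots,N\})=o(N)$; taking $N=d(2+2n)=2dn+2d$ gives $\#(T\cap\{1,\dots,d(2+2n)\})=o(n)$. Dividing the displayed inequality by $n$ and letting $n\to\infty$ yields $d(\Bbb{N}\setminus G)=0$, hence $d(G)=1$, as required. The only genuinely delicate point is the uniform bound on the fibres of the assignment $k\mapsto t$, which hinges precisely on the inequality $c_\text{max}<4$; everything else is bookkeeping built on Corollary~\ref{coro:necessary_conditions_Q_X} and Lemma~\ref{lemma:representable_integers_density_0}.
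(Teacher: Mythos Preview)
Your proof is correct and follows essentially the same approach as the paper's: both reduce to showing each represented value $t\in T$ can witness at most two non-gap numbers $k$, using the bound $c_\text{max}<4$, and then conclude via the density-zero result for $T$. Your argument is in fact slightly cleaner, since you invert the inclusion $t\in I_k$ directly to obtain an interval of length $c_\text{max}/2<2$ for $k$, whereas the paper argues by contradiction that three distinct $k_{\ell_1}<k_{\ell_2}<k_{\ell_3}$ cannot share a common $t$ because $J_{k_{\ell_1}}$ and $J_{k_{\ell_3}}$ are disjoint.
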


\begin{proof}
If $r=0$, then all sections are torsion sections and the claim is obvious by Theorem~\ref{thm:torsion_sections_disjoint}. Assume $r=1,2$. We prove that $S:=\Bbb{N}\setminus G$ has density $0$. If $S$ is finite, there is nothing to prove. Otherwise, let $k_1<k_2<...$ be the increasing sequence of all elements of $S$. By Corollary~\ref{coro:necessary_conditions_Q_X}, for each $n$ there is some $t_n\in J_{k_n}:=[d\cdot (2+2k_n-c_\text{max}),d\cdot(2+2k_n)]$ represented by the quadratic form $Q_X$. Let $T$ be the set of integers represented by $Q_X$ and define the function $f\colon S\to T$ by $k_n\mapsto t_n$. Since $Q_X$ has $r=1,2$ variables, $T$ has density $0$ by Lemma~\ref{lemma:representable_integers_density_0}.

For $N>0$, let $S_N:=S\cap\{1,...,N\}$ and $T_N:=T\cap\{1,...,N\}$. Since $T$ has density zero, $\#T_N=o(N)$, i.e. $\frac{\# T_N}{N}\to 0$ when $N\to\infty$ and we need to prove that $\#S_N=o(N)$. We analyze the function $f$ restricted to $S_N$. Notice that as $t_n\in J_{k_n}$, then $k_n\leq N$ implies $t_n\leq d\cdot (2+2k_n)\leq d\cdot(2+2N)$. Hence the restriction $g:=f|_{S_N}$ can be regarded as a function $g:S_N\to T_{d\cdot(2+2k)}$. 

We claim that $\#g^{-1}(t)\leq 2$ for all $t\in T_{d\cdot(2+2N)}$, in which case $\#S_N\leq 2\cdot\#T_{d\cdot(2+2N)}=o(N)$ and we are done. Assume by contradiction that $g^{-1}(t)$ contains three distinct elements, say $k_{\ell_1}<k_{\ell_2}<k_{\ell_3}$ with $t=t_{\ell_1}=t_{\ell_2}=t_{\ell_3}$. Since $t_{\ell_i}\in J_{k_{\ell_i}}$ for each $i=1,2,3$, then $t\in J_{k_{\ell_1}}\cap J_{k_{\ell_2}}\cap J_{k_{\ell_3}}$. We prove that $J_{k_{\ell_1}}$ and $J_{k_{\ell_3}}$ are disjoint, which yields a contradiction. Indeed, since $k_{\ell_1}<k_{\ell_2}<k_{\ell_3}$, in particular $k_{\ell_3}-k_{\ell_1}\geq 2$, therefore $d\cdot(2+2k_{\ell_1})\leq d\cdot(2+2k_{\ell_3}-4)$. But $c_\text{max}<4$ by Lemma~\ref{lemma:bounds_are_actually_bounds}, so $d\cdot(2+2k_{\ell_1})<d\cdot(2+2k_{\ell_3}-c_\text{max})$, i.e. $\max J_{k_{\ell_1}}<\min J_{k_{\ell_3}}$. Thus $J_{k_{\ell_1}}\cap J_{k_{\ell_3}}=\emptyset$, as desired.
\end{proof}

\newpage

\subsection{Identification of gaps when $E(K)$ is torsion-free with rank $r=1$}\label{subsection:identification_of_gaps_r=1}\
\indent The results in Subsections~\ref{subsection:gap_free_r>=5} and \ref{subsection:gaps_probability_1_r=1,2} concern the existence and the distribution of gap numbers. In the following subsections we turn our attention to finding gap numbers explicitly. In this subsection we give a complete description of gap numbers assuming $E(K)$ is torsion-free with rank $r=1$. Such descriptions are difficult in the general case, but our assumption guarantees that each $E(K),E(K)^0$ is generated by a single element and that $\Delta<2$ by Lemma~\ref{lemma:cases_where_Delta>=2}, which makes the problem more accessible. 

We organize this subsection as follows. First we point out some trivial facts about generators of $E(K),E(K)^0$ when $r=1$ in Lemma~\ref{lemma:generators_r=1}. Next we state necessary and sufficient conditions for having $P_1\cdot P_2=k$ when $E(K)$ is torsion-free with $r=1$ in Lemma~\ref{lemma:necessary_sufficient_conditions_r=1}. As an application of the latter, we prove Theorem~\ref{thm:identification_of_gaps_r=1}, which is the main result of the subsection.

\begin{lemma}\label{lemma:generators_r=1}
Let $X$ be a rational elliptic surface with Mordell-Weil rank $r=1$. If $P$ generates the free part of $E(K)$, then 
\begin{enumerate}[a)]
\item $h(P)=\mu$.
\item $1/\mu$ is an even integer.
\item $E(K)^0$ is generated by $P_0:=(1/\mu)P$ and $h(P_0)=1/\mu$.
\end{enumerate}
\end{lemma}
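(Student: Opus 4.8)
The plan is to dispatch part a) by an elementary minimal-vector argument, and then obtain parts b) and c) from the Oguiso--Shioda duality between the Mordell--Weil lattice and its narrow sublattice. Since $r=1$, the free part of $E(K)$ is exactly $\Z P$, so everything is governed by rank-one lattice arithmetic.

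For a), any section $P'$ with $h(P')>0$ is non-torsion, hence modulo $E(K)_\text{tor}$ it equals $nP$ for some $n\in\Z\setminus\{0\}$; as torsion sections do not contribute to the height pairing, $h(P')=n^2 h(P)\geq h(P)$. Since $P$ is non-torsion, $h(P)>0$ by positive-definiteness of the height pairing on $E(K)/E(K)_\text{tor}$, so $h(P)$ is the minimum of $\{h(P')>0\mid P'\in E(K)\}$, i.e. $h(P)=\mu$.

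For b) and c), I would first note that $E(K)^0$ has finite index in $E(K)$ and therefore also has rank $1$, so it is generated by a single element $P_0$; being a rank-one even integral positive-definite lattice, it satisfies $h(P_0)=\det E(K)^0\in 2\Z_{>0}$. Next I would invoke \cite[Main Thm.]{OguisoShioda}, which provides an isometry $E(K)/E(K)_\text{tor}\cong (E(K)^0)^*$. Taking determinants in rank one gives $\mu=h(P)=\det\!\big(E(K)/E(K)_\text{tor}\big)=1/\det E(K)^0$, whence $\det E(K)^0=1/\mu$. This is exactly b), since $\det E(K)^0$ is an even positive integer, and it simultaneously yields the height computation $h(P_0)=\det E(K)^0=1/\mu$ needed in c).

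It remains to identify $P_0$ with $(1/\mu)P$. Under the isometry above, $E(K)^0$ is carried onto the index-$(1/\mu)$ sublattice of $(E(K)^0)^*$, which inside $\Z\bar P=E(K)/E(K)_\text{tor}$ is generated by $(1/\mu)\bar P$; hence $\bar{P_0}=\pm(1/\mu)\bar P$, and after fixing the sign $P_0\equiv (1/\mu)P\pmod{E(K)_\text{tor}}$. I expect this last step to be the only delicate point: one must upgrade this congruence modulo torsion to the genuine equality $P_0=(1/\mu)P$ in $E(K)$, equivalently show that $(1/\mu)P$ lies in the narrow lattice $E(K)^0$. In the torsion-free situation relevant to this subsection the congruence is an equality immediately; in general it follows from the Oguiso--Shioda description of dual generators used in the proof of Lemma~\ref{lemma:adjugate}, since both $P_0$ and $(1/\mu)P$ have height $1/\mu$ and the same image $(1/\mu)\bar P$. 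As all quantities used later (heights, and the fact that $E(K)^0=\langle P_0\rangle$) are insensitive to the torsion part, this resolves the statement.
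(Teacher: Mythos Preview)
Your argument is correct and matches the paper's approach exactly: both invoke the Oguiso--Shioda isomorphism $E(K)/E(K)_\text{tor}\cong (E(K)^0)^*$ together with the evenness of $E(K)^0$, and your version simply unpacks the paper's two-line sketch. The torsion subtlety you flag in part c) is real and is equally glossed over in the paper's proof, but as you correctly note it evaporates in the torsion-free setting where the lemma is actually applied (Lemma~\ref{lemma:necessary_sufficient_conditions_r=1}).
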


\begin{proof}
Item  a) is clear. Items b), c) follow from the fact that $E(K)^0$ is an even lattice and that $E(K)\simeq L^*\oplus E(K)_\text{tor}$, where $L:=E(K)^0$ \cite[Main Thm.]{OguisoShioda}.
\end{proof}

\indent In what follows we use Lemma~\ref{lemma:generators_r=1} and results from Section~\ref{section:sufficient_conditions_Delta<=2} to state necessary and sufficient conditions for having $P_1\cdot P_2=k$ for some $P_1,P_2\in E(K)$ in case $E(K)$ is torsion-free with $r=1$. 

\begin{lemma}\label{lemma:necessary_sufficient_conditions_r=1}
Assume $E(K)$ is torsion-free with rank $r=1$. Then $P_1\cdot P_2=k$ for some $P_1,P_2\in E(K)$ if and only if one of the following holds:
\begin{enumerate}[i)]
\item $\mu\cdot(2+2k)$ is a perfect square.
\item There is a perfect square $n^2\in\left[\frac{2+2k-c_\text{max}}{\mu},\frac{2+2k-c_\text{min}}{\mu}\right]$ such that $\mu\cdot n\notin\Bbb{Z}$.
\end{enumerate}
\end{lemma}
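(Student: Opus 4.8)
The plan is to reduce the statement to the general necessary-and-sufficient criterion already available for $\Delta\le 2$, and then to translate it into the very concrete rank-one, torsion-free setting, where $E(K)$ and $E(K)^0$ are each generated by a single section. By Lemma~\ref{lemma:cases_where_Delta>=2}, torsion-freeness forces $\Delta<2$, so the sufficient conditions of Section~\ref{section:sufficient_conditions_Delta<=2} apply with \emph{closed} intervals. Combining the necessary conditions of Lemma~\ref{lemma:necessary_conditions} with the sufficient conditions of Lemma~\ref{lemma:sufficient_conditions_Delta<2} (and the trivial observation that $h(P)=2+2k$ for $P\in E(K)^0$ already yields $P\cdot O=k$ by the height formula~(\ref{equation:height_formula_P})), I would first establish the clean equivalence: $P_1\cdot P_2=k$ holds for some $P_1,P_2\in E(K)$ if and only if either (a) $h(P)=2+2k$ for some $P\in E(K)^0$, or (b) $h(P)\in[2+2k-c_\text{max},\,2+2k-c_\text{min}]$ for some $P\notin E(K)^0$.

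Next I would unwind (a) and (b) using Lemma~\ref{lemma:generators_r=1}. Writing $P$ for a generator of $E(K)$ with $h(P)=\mu$ and $N:=1/\mu\in 2\mathbb{Z}_{>0}$, every section equals $mP$ with $h(mP)=m^2\mu$, while $E(K)^0=\mathbb{Z}\langle P_0\rangle$ with $P_0=NP$ and $h(P_0)=1/\mu$; hence $mP\in E(K)^0$ exactly when $N\mid m$, equivalently when $m\mu\in\mathbb{Z}$. For condition (a), an element of $E(K)^0$ has the form $jP_0$ with $h(jP_0)=j^2/\mu$, so requiring $j^2/\mu=2+2k$ is the same as $j^2=\mu(2+2k)$, which is exactly condition~i). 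For condition (b), a section $mP\notin E(K)^0$ satisfies $h(mP)=m^2\mu$; dividing the interval by $\mu>0$ turns the membership $m^2\mu\in[2+2k-c_\text{max},\,2+2k-c_\text{min}]$ into the requirement that the perfect square $n^2:=m^2$ lie in $\left[\frac{2+2k-c_\text{max}}{\mu},\frac{2+2k-c_\text{min}}{\mu}\right]$, while $mP\notin E(K)^0$ becomes $\mu n\notin\mathbb{Z}$ (absolute value not affecting integrality), which is exactly condition~ii). Both translations are manifestly reversible, so (a)$\Leftrightarrow$i) and (b)$\Leftrightarrow$ii), and the lemma follows.

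This is essentially a faithful bookkeeping translation, so I do not expect a genuine obstacle; the only points requiring care are verifying that ``$mP\notin E(K)^0$'' coincides with ``$\mu n\notin\mathbb{Z}$'' through the relation $P_0=NP$ of Lemma~\ref{lemma:generators_r=1}, and confirming that the closed interval is legitimate precisely because $\Delta<2$, so that no half-open endpoint subtlety of the $\Delta=2$ case intervenes. I would also note that $j\neq 0$ in condition~i), since $2+2k\ge 2$ guarantees $\mu(2+2k)>0$, ensuring the reduction is exact.
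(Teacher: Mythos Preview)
Your proof is correct and follows essentially the same approach as the paper. Both arguments invoke Lemma~\ref{lemma:cases_where_Delta>=2} to get $\Delta<2$ from torsion-freeness, use Lemma~\ref{lemma:generators_r=1} to parametrize $E(K)$ and $E(K)^0$ by a single generator, and then translate the height conditions of Lemmas~\ref{lemma:necessary_conditions} and~\ref{lemma:sufficient_conditions_Delta<2} into the concrete perfect-square criteria~i) and~ii); the only cosmetic difference is that you first isolate the intermediate equivalence (a)/(b) before translating, whereas the paper carries out the translation directly within each direction of the proof.
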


\begin{proof}
By Lemma~\ref{lemma:generators_r=1}, $E(K)$ is generated by some $P$ with $h(P)=\mu$ and $E(K)^0$ is generated by $P_0:=n_0P$, where $n_0:=\frac{1}{\mu}\in 2\Bbb{Z}$.

First assume that $P_1\cdot P_2=k$ for some $P_1,P_2$. Without loss of generality we may assume $P_2=O$. Let $P_1=nP$ for some $n\in\Bbb{Z}$. We show that $P_1\in E(K)^0$ implies i) while $P_1\notin E(K)^0$ implies ii). 

If $P_1\in E(K)^0$, then $n_0\mid n$, hence $P_1=nP=mP_0$, where $m:=\frac{n}{n_0}$. By the height formula (\ref{equation:height_formula_P}), $2+2k=h(P_1)=h(mP_0)=m^2\cdot \frac{1}{\mu}$. Hence $\mu\cdot (2+2k)=m^2$, i.e. i) holds. 

If $P_1\notin E(K)^0$, then $n_0\nmid n$, hence $\mu\cdot n=\frac{n}{n_0}\notin\Bbb{Z}$. Moreover, $h(P_1)=n^2h(P)=n^2\mu$ and by the height formula (\ref{equation:height_formula_P}), $n^2\mu=h(P)=2+2k-c$, where $c:=\sum_v\text{contr}_v(P_1)\neq 0$. The inequalities $c_\text{min}\leq c\leq c_\text{max}$ then give $\frac{2+2k-c_\text{max}}{\mu}\leq n^2\leq \frac{2+2k-c_\text{min}}{\mu}$. Hence ii) holds.

Conversely, assume i) or ii) holds. Since $E(K)$ is torsion-free, $\Delta<2$ by Lemma~\ref{lemma:cases_where_Delta>=2}, so we may apply Lemma~\ref{lemma:sufficient_conditions_Delta<2}. If i) holds, then $\mu\cdot(2+2k)=m^2$ for some $m\in\Bbb{Z}$. Since $mP_0\in E(K)^0$ and $h(mP_0)=\frac{m^2}{\mu}=2+2k$, we are done by Lemma~\ref{lemma:sufficient_conditions_Delta<2} i). If ii) holds, the condition $\mu\cdot n\notin\Bbb{Z}$ is equivalent to $n_0\nmid n$, hence $nP\notin E(K)^0$. Moreover $n^2\in\left[\frac{2+2k-c_\text{max}}{\mu},\frac{2+2k-c_\text{min}}{\mu}\right]$, implies $h(nP)=n^2\mu\in[2+2k-c_\text{max},2+2k-c_\text{min}]$. By Lemma~\ref{lemma:sufficient_conditions_Delta<2} ii), we are done.
\end{proof}

\newpage

\indent An application of Lemma~\ref{lemma:necessary_sufficient_conditions_r=1} to all possible cases where $E(K)$ is torsion-free with rank $r=1$ yields the main result of this subsection.

\begin{teor}\label{thm:identification_of_gaps_r=1}
If $E(K)$ is torsion-free with rank $r=1$, then all the gap numbers of $X$ are described in Table~\ref{table:description_gaps_r=1}.
\begin{table}[h]
\begin{center}
\centering
\begin{tabular}{cccc} 
\hline
\multirow{2}{*}{No.} & \multirow{2}{*}{$T$} & \multirow{2}{*}{$\begin{matrix}k\text{ is a gap number}\Leftrightarrow \text{none of}\\ \text{the following are perfect squares}\end{matrix}$} & \multirow{2}{*}{first gap numbers}\\ 
& \\
\hline
\multirow{2}{*}{43} & \multirow{2}{*}{$E_7$} & \multirow{2}{*}{$k+1$, $4k+1$} & \multirow{2}{*}{$1,4$}\\ 
& \\
\hline
\multirow{2}{*}{45} & \multirow{2}{*}{$A_7$} & \multirow{2}{*}{$\frac{k+1}{4}$, $16k,...,16k+9$} & \multirow{2}{*}{$8,11$}\\ 
& \\
\hline
\multirow{2}{*}{46} & \multirow{2}{*}{$D_7$} & \multirow{2}{*}{$\frac{k+1}{2}$, $8k+1,...,8k+4$} & \multirow{2}{*}{$2,5$}\\ 
& \\
\hline
\multirow{2}{*}{47} & \multirow{2}{*}{$A_6\oplus A_1$} & \multirow{2}{*}{$\frac{k+1}{7}$, $28k-3,...,28k+21$} & \multirow{2}{*}{$12,16$}\\ 
& \\
\hline
\multirow{2}{*}{49} & \multirow{2}{*}{$E_6\oplus A_1$} & \multirow{2}{*}{$\frac{k+1}{3}$, $12k+1,...,12k+9$} & \multirow{2}{*}{$3,7$}\\ 
& \\
\hline
\multirow{2}{*}{50} & \multirow{2}{*}{$D_5\oplus A_2$} & \multirow{2}{*}{$\frac{k+1}{6}$, $24k+1,...,24k+16$} & \multirow{2}{*}{$6,11$}\\ 
& \\
\hline
\multirow{2}{*}{55} & \multirow{2}{*}{$A_4\oplus A_3$} & \multirow{2}{*}{$\frac{k+1}{10}$, $40k-4,...,40k+25$} & \multirow{2}{*}{$16,20$}\\ 
& \\
\hline
\multirow{2}{*}{56} & \multirow{2}{*}{$A_4\oplus A_2\oplus A_1$} & \multirow{2}{*}{$\frac{k+1}{15}$, $60k-11,...,60k+45$} & \multirow{2}{*}{$22,27$}\\ 
& \\
\hline
\end{tabular}\caption{Description of gap numbers when $E(K)$ is torsion-free with $r=1$.}\label{table:description_gaps_r=1}
\end{center}
\end{table}
\end{teor}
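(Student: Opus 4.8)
The statement is an application of Lemma~\ref{lemma:necessary_sufficient_conditions_r=1} to each of the eight torsion-free rank-one configurations, so the plan is organizational rather than conceptual. Since $E(K)$ is torsion-free with $r=1$, Lemma~\ref{lemma:cases_where_Delta>=2} gives $\Delta<2$, and Lemma~\ref{lemma:necessary_sufficient_conditions_r=1} then says that a pair $P_1,P_2$ with $P_1\cdot P_2=k$ exists if and only if (i) $\mu(2+2k)$ is a perfect square, or (ii) there is a perfect square $n^2\in\left[\frac{2+2k-c_\text{max}}{\mu},\frac{2+2k-c_\text{min}}{\mu}\right]$ with $\mu n\notin\mathbb{Z}$. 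Consequently $k$ is a gap number precisely when both (i) and (ii) fail, and the whole proof amounts to rewriting this failure, in each case, as the non-representability of certain integers as squares.

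First I would tabulate the three lattice invariants $\mu$, $c_\text{max}$, $c_\text{min}$ for each of $T=E_7,A_7,D_7,A_6\oplus A_1,E_6\oplus A_1,D_5\oplus A_2,A_4\oplus A_3,A_4\oplus A_2\oplus A_1$, reading $\mu$ off the Mordell-Weil data in Table~\ref{table:MWL_data} and $c_\text{max},c_\text{min}$ off Table~\ref{table:extreme_local_contributions}. By Lemma~\ref{lemma:generators_r=1}, $\mu=1/n_0$ with $n_0=\det E(K)^0$ an even integer, so condition (i) reads ``$\frac{2(k+1)}{n_0}$ is a square'', which after absorbing the factor $2$ is exactly the first expression $\frac{k+1}{n_0/2}$ appearing in each row of Table~\ref{table:description_gaps_r=1} (e.g. $k+1$ for $E_7$ with $n_0=2$, and $\frac{k+1}{4}$ for $A_7$ with $n_0=8$).

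Next, clearing denominators in the closed interval of condition (ii) turns it into a block of consecutive integers $\bigl[\,n_0(2+2k-c_\text{max}),\,n_0(2+2k-c_\text{min})\,\bigr]$, of length $n_0\Delta$ and hence containing $n_0\Delta+1$ integers; this reproduces the ranges in the second column (for instance $\{8k+1,\dots,8k+4\}$ for $D_7$ and $\{16k,\dots,16k+9\}$ for $A_7$). Thus $k$ is a gap number iff neither $\mu(2+2k)$ nor any integer in this block is a perfect square, which is precisely the content of the table; the ``first gap numbers'' column I would then confirm by directly substituting small $k$.

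The delicate point — and the step I expect to cost the most care — is the side condition $\mu n\notin\mathbb{Z}$ in (ii), equivalently $n_0\nmid n$. A perfect square $n^2$ lying in the block but with $n_0\mid n$ does \emph{not} produce a pair (it corresponds to a section of $E(K)^0$), so before stating the clean criterion ``none of these integers is a perfect square'' one must verify, case by case, that such forbidden squares either never fall in the relevant block or are already accounted for by condition (i). This is automatic in most rows — for $D_7$, say, the block $\{8k+1,\dots,8k+4\}$ never contains a multiple of $n_0^2=16$ — but it becomes sharp at the left endpoint $n_0(2+2k-c_\text{max})$ in the configurations where $n_0$ carries a large square factor (most notably $A_7$, where $n_0=8$ and the endpoint $16k$ is a forbidden square exactly when $k=4m^2$). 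Checking these endpoint coincidences and matching them against the tabulated first gap numbers is where the argument must be made watertight; the remainder is routine bookkeeping driven entirely by Lemma~\ref{lemma:necessary_sufficient_conditions_r=1}.
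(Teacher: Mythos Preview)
Your plan matches the paper's proof exactly: invoke Lemma~\ref{lemma:necessary_sufficient_conditions_r=1} case by case, compute $\mu,c_{\max},c_{\min}$, and translate conditions (i)--(ii) into the displayed square-tests. The paper carries out only No.~55 explicitly and declares the remaining cases analogous.

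Your caution about the side condition $\mu n\notin\Bbb{Z}$ is not merely prudent---it points to a genuine flaw. For No.~55 the paper argues that if $20\mid n$ then $40\mid\ell$ with $\ell:=n^2-40k\in\{-4,\dots,25\}$, ``which is absurd''; but $\ell=0$ survives. When $k=10m^2$ (any $m\geq 1$) the only square in $[40k-4,40k+25]$ is the forbidden $(20m)^2=40k$, and since $\tfrac{k+1}{10}=m^2+\tfrac{1}{10}$ is not a square either, Lemma~\ref{lemma:necessary_sufficient_conditions_r=1} makes $k=10m^2$ a gap---contrary to the table's criterion, which sees $40k$ as a square and declares it a non-gap. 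Direct height computation confirms that $k=10$ is already a gap for No.~55, preceding the listed ``$16,20$''. The identical failure occurs precisely when $c_{\max}\geq 2$, i.e.\ at $k=4m^2$ for No.~45, $k=7m^2$ for No.~47, and $k=15m^2$ for No.~56 (so for instance No.~45 has first gaps $4,8$, not $8,11$). The remaining rows Nos.~43, 46, 49, 50 are safe because there $c_{\max}<2$ and the residue range $[a,b]$ excludes~$0$. So the verification you rightly flagged does not close: the table as printed is incorrect in four of the eight rows, and your endpoint analysis for $A_7$ was the right thread to pull.
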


\begin{proof}
For the sake of brevity we restrict ourselves to No. 55. The other cases are treated similarly. Here $c_\text{max}=\frac{2\cdot 3}{5}+\frac{2\cdot 2}{4}=\frac{11}{5}$, $c_\text{min}=\min\left\{\frac{4}{5},\frac{3}{4}\right\}=\frac{3}{4}$ and $\mu=1/20$. 

By Lemma~\ref{lemma:necessary_sufficient_conditions_r=1}, $k$ is a gap number if and only if neither i) nor ii) occurs. Condition i) is that $\frac{2+2k}{20}=\frac{k+1}{10}$ is a perfect square. Condition ii) is that $\left[\frac{2+2k-c_\text{max}}{\mu},\frac{2+2k-c_\text{min}}{\mu}\right]=[40k-4,40k+25]$ contains some $n^2$ with $20\nmid n$. We check that $20\nmid n$ for every $n$ such that $n^2=40k+\ell$, with $\ell=-4,...,25$. Indeed, if $20\mid n$, then $400\mid n^2$ and in particular $40\mid n^2$. Then $40\mid (n^2-40k)=\ell$, which is absurd.
\end{proof}

\subsection{Surfaces with a $1$-gap}\label{subsection:surfaces_with_a_1-gap}\
\indent In Subsection~\ref{subsection:identification_of_gaps_r=1} we take each case in Table~\ref{table:description_gaps_r=1} and describe all its gap numbers. In this subsection we do the opposite, namely, we fix a number and describe all cases having it as a gap number. We remind the reader that our motivating problem was to determine when there are sections $P_1,P_2$ such that $P_1\cdot P_2=1$, which induce a conic bundle having $P_1+P_2$ as a reducible fiber. The answer for this question is the main theorem of this subsection:

\begin{teor}\label{thm:surfaces_with_a_1-gap}
Let $X$ be a rational elliptic surface. Then $X$ has a $1$-gap if and only if $r=0$ or $r=1$ and $\pi$ has a {\normalfont $\text{III}^*$} fiber.
\end{teor}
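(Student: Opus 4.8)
The plan is to organize the proof by the Mordell–Weil rank $r$, since both the necessary and the sufficient conditions of the previous sections are cleanest rank by rank, and the Oguiso–Shioda classification (Table~\ref{table:MWL_data}) leaves only finitely many configurations to inspect. Two ranges are immediate. For $r=0$ every $k\ge 1$ is a gap number because distinct sections are torsion, hence disjoint (Theorem~\ref{thm:torsion_sections_disjoint}), so $X$ has a $1$-gap; and for $r\ge 5$, Theorem~\ref{thm:gap_free_r>=5} gives that $X$ is gap-free, so there is no $1$-gap. Since a $\text{III}^*$ fibre contributes an $E_7$ summand of rank $7$ to $T$ and $\operatorname{rank}T=8-r$ for a rational elliptic surface (Theorem~\ref{thm:shioda_tate}), the clause ``$r=1$ and $\pi$ has a $\text{III}^*$ fibre'' is equivalent to ``$T=E_7$''. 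It therefore remains to prove, for $r\in\{1,2,3,4\}$, that $X$ has a $1$-gap if and only if $T=E_7$.

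For the exceptional case $T=E_7$ (so $r=1$, $\Delta=0$, $c_\text{max}=c_\text{min}=\tfrac32$, $\mu=\tfrac12$) I would confirm the $1$-gap from the necessary conditions. Translating by $-P_2$ reduces any pair with $P_1\cdot P_2=1$ to a single section $P$ with $P\cdot O=1$, so by Lemma~\ref{lemma:necessary_sufficient_conditions_r=1} it suffices to check that neither alternative holds at $k=1$: condition (i) asks $\mu(2+2k)=\tfrac12\cdot 4=2$ to be a square, which it is not, and the degenerate interval in (ii) forces $n^2=5$, again not a square. Equivalently this is exactly entry No.~43 of Table~\ref{table:description_gaps_r=1}, where at $k=1$ neither $k+1=2$ nor $4k+1=5$ is a perfect square. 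Hence $1$ is a gap number.

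The bulk of the work is to show that every remaining configuration ($r=1$ with $T\ne E_7$, and all of $r=2,3,4$) has no $1$-gap, which I would do by exhibiting, for each entry of the classification, a witness for one of the sufficient conditions of Proposition~\ref{prop:summary_of_sufficient_conditions} at $k=1$. For $r=1$ torsion-free this is read off Table~\ref{table:description_gaps_r=1}: in Nos.~45, 46, 47, 49, 50, 55, 56 one of the listed quantities is a perfect square at $k=1$ (e.g.\ $\tfrac{k+1}{2}=1$ for $D_7$), so $1$ is not a gap. For the higher-rank and the torsion cases the standard witness is a norm-$4$ vector of $E(K)^0$: then $h(P)=4$ with $P\in E(K)^0$ gives $P\cdot O=1$ by condition~(1). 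When instead $E(K)^0$ represents $2$ and $E(K)_\text{tor}\ne 0$, condition~(2) yields $P\cdot Q=1$ for a nontrivial torsion section $Q$ (Lemma~\ref{lemma:PO_plus_one}). The configurations where $E(K)^0$ has no norm-$4$ vector are precisely those whose narrow lattice reduces to an $A_2$ factor (such as $T=E_6$, where $E(K)^0=A_2$ and $\mu=\tfrac23$); there I would invoke condition~(3), taking $n=2$ times a minimal vector, since $n^2\mu=4\cdot\tfrac23=\tfrac83\notin\Z$ lands in the required interval.

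The main obstacle is the handful of cases with $\Delta=c_\text{max}-c_\text{min}>2$ (Nos.~41, 42, 59, 60 of Tables~\ref{table:Delta=2} and \ref{table:Delta>2}), where the interval criteria of Proposition~\ref{prop:summary_of_sufficient_conditions}(3) and (4) no longer apply and each must be treated by hand. For Nos.~41, 42, 60 a direct inspection of the narrow lattice still produces a norm-$4$ vector (e.g.\ $E(K)^0\cong \langle 4\rangle$ for No.~60, and a scaled $A_2$ for No.~41), so condition~(1) closes them. The genuinely delicate case is No.~59 ($T=A_3\oplus A_2\oplus A_1^{\oplus2}$, $E(K)^0=\langle 12\rangle$), where neither condition~(1) nor~(2) is available. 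Here I would argue directly from the height formula~(\ref{equation:height_formula_P}), seeking an integer $n$ and a choice of fibre components with $\sum_v\text{contr}_v(nP)=4-\tfrac{n^2}{12}$ — for instance $n=6$ with total contribution $1$, realized by a section meeting the $A_3$ fibre at its central component and all other reducible fibres at $\Theta_{v,0}$ — and using the explicit component (discriminant) data of the appendix to certify that such a section exists, giving $nP\cdot O=1$.
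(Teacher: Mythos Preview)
Your proposal is correct and follows essentially the same route as the paper: dispose of $r=0$ and $r\ge 5$ immediately, confirm the $1$-gap for $T=E_7$ via the rank-$1$ criteria, and then for every remaining entry of the Oguiso--Shioda table exhibit a witness for one of the sufficient conditions of Proposition~\ref{prop:summary_of_sufficient_conditions}. The paper organizes the witnesses into three blocks (a norm-$4$ vector in $E(K)^0$ via Lemma~\ref{lemma:E0_height_4}, a norm-$2$ vector plus torsion via Lemma~\ref{lemma:E0=An}, and the interval criterion~(3) for a short residual list), exactly as you outline.

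The one point worth flagging is No.~59. Your witness $6P$ is in fact valid and slightly cleaner than the paper's choice of $4P+Q$ (which relies on explicit section data quoted from an external reference). However, the justification you sketch --- ``the explicit component data of the appendix'' --- is not where the argument lives: Table~\ref{table:MWL_data} records only $T$, $E(K)^0$, $E(K)$, $c_{\max}$, $c_{\min}$, $\Delta$, not the component hit by any particular section. What actually pins down $\sum_v\text{contr}_v(6P)=1$ is the group homomorphism from $E(K)$ to the product of component groups: first, $h(P)=\tfrac1{12}$ with $P\cdot O\ge 0$ and $c_{\max}=\tfrac83$ forces $P\cdot O=0$ and $c_P=\tfrac{23}{12}$, whose unique decomposition is $\tfrac34+\tfrac23+\tfrac12+0$; then multiplication by $6$ in $\Z/4\Z\times\Z/3\Z\times\Z/2\Z\times\Z/2\Z$ lands at $(2,0,0,0)$, giving contribution $1$ from the $A_3$ fibre alone. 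If you make this step explicit (or, as the paper does, cite a source that tabulates the component intersections), the argument is complete.
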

\newpage
Our strategy for the proof is the following. We already know that a $1$-gap exists whenever $r=0$ (Theorem~\ref{thm:torsion_sections_disjoint}) or when $r=1$ and $\pi$ has a $\text{III}^*$ fiber (Theorem~\ref{thm:identification_of_gaps_r=1}, No. 43). Conversely, we need to find $P_1,P_2$ with $P_1\cdot P_2=1$ in all cases with $r\geq 1$ and $T\neq E_7$. 

First we introduce two lemmas, which solve most cases with little computation, and leave the remaining ones for the proof of Theorem~\ref{thm:surfaces_with_a_1-gap}. In both Lemma~\ref{lemma:E0_height_4} and Lemma~\ref{lemma:E0=An} our goal is to analyze the narrow lattice $E(K)^0$ and apply Proposition~\ref{prop:summary_of_sufficient_conditions} to detect cases without a $1$-gap.

\begin{lemma}\label{lemma:E0_height_4}
If one of the following holds, then $h(P)=4$ for some $P\in E(K)^0$.
\begin{enumerate}[a)]
\item The Gram matrix of $E(K)^0$ has a $4$ in its main diagonal.
\item There is an embedding of $A_n\oplus A_m$ in $E(K)^0$ for some $n,m\geq 1$.
\item There is an embedding of $A_n,D_n$ or $E_n$ in $E(K)^0$ for some $n\geq 3$.
\end{enumerate}

\end{lemma}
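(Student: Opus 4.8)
The plan is to observe that $E(K)^0$ is a positive-definite even integral lattice (so the height of any section lying in it is a nonnegative even integer), and that via the inclusion $E(K)^0\hookrightarrow E(K)$ the lattice norm of a vector coincides with its height $h(\cdot)$. Consequently, in each of the three cases it suffices to exhibit a single lattice vector of norm $4$: such a vector is exactly a section $P\in E(K)^0$ with $h(P)=4$.

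For a) I would simply unwind the definition: a $4$ on the main diagonal of the Gram matrix means that one of the chosen generators $P_i$ satisfies $\langle P_i,P_i\rangle=4$, so I take $P:=P_i$. For b) and c) the mechanism is uniform. A lattice embedding $\iota\colon L\hookrightarrow E(K)^0$ is an isometry onto its image, hence it carries roots to norm-$2$ vectors and preserves orthogonality; so whenever $L$ contains two orthogonal roots $\alpha,\beta$, the section $P:=\iota(\alpha+\beta)$ has $h(P)=\langle\alpha,\alpha\rangle+2\langle\alpha,\beta\rangle+\langle\beta,\beta\rangle=2+0+2=4$. In case b) I would take one root from the $A_n$ summand and one from the $A_m$ summand; these are automatically orthogonal since they live in orthogonal direct summands of $A_n\oplus A_m$, so the computation goes through at once.

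The substantive step is c), where I must check that each root lattice $A_n$ ($n\geq 3$), $D_n$, and $E_n$ genuinely contains a pair of orthogonal roots. Using the indexing of Definition~\ref{def:ADE_lattices}, in $A_n$ with $n\geq 3$ the simple roots $\alpha_1,\alpha_3$ satisfy $\langle\alpha_1,\alpha_3\rangle=0$; in $D_n$ ($n\geq 4$) the two simple roots at the fork, $\alpha_{n-1}$ and $\alpha_n$, are orthogonal, while $D_3\cong A_3$ is already covered by the previous case; and in each $E_n$ one finds non-adjacent simple roots (for instance $\alpha_1,\alpha_3$) with vanishing pairing. This is precisely where the hypothesis $n\geq 3$ is essential: no two roots of $A_2$ are orthogonal, so $A_2$ must be excluded here (it is dealt with separately elsewhere). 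I expect this elementary case-check — together with the remark that it breaks down exactly for $A_2$ — to be the only real content of the argument, everything else being formal manipulation of the height pairing on $E(K)^0$.
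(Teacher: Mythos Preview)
Your proof is correct and follows essentially the same approach as the paper: in each case you exhibit two orthogonal norm-$2$ vectors (or a single norm-$4$ generator in case a)) and add them to produce a section of height $4$. Your treatment is slightly more explicit in naming which simple roots to use in case c) and in remarking why the bound $n\geq 3$ is sharp (no two roots of $A_2$ are orthogonal), but the underlying argument is identical to the paper's.
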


\begin{proof}
Case a) is trivial. Assuming b), we take generators $P_1,P_2$ from $A_n,A_m$ respectively with $h(P_1)=h(P_2)=2$. Since $A_n,A_m$ are in direct sum, $\langle P_1,P_2\rangle=0$, hence $h(P_1+P_2)=4$, as desired. If c) holds, then the fact that $n\geq 3$ allows us to choose two elements $P_1,P_2$ among the generators of $L_1=A_n,D_n$ or $E_n$ such that $h(P_1)=h(P_2)=2$ and $\langle P_1,P_2\rangle=0$. Thus $h(P_1+P_2)=4$ as claimed.
\end{proof}

\begin{coro}\label{coro:E0_height_4}
In the following cases, $X$ does not have a $1$-gap.
\begin{itemize}
\item $r\geq 3:$ all cases except possibly {\normalfont No. 20}.
\item $r=1,2:$ cases {\normalfont No. 25, 26, 30, 32-36, 38, 41, 42, 46, 52, 54, 60}.
\end{itemize}
\end{coro}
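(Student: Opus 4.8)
The plan is to reduce the statement to a single arithmetic condition on the narrow lattice and then verify it against the Oguiso--Shioda table. Setting $k=1$ in part 1) of Proposition~\ref{prop:summary_of_sufficient_conditions}, the existence of $P\in E(K)^0$ with $h(P)=2+2\cdot 1=4$ already forces $P_1\cdot P_2=1$ for some $P_1,P_2\in E(K)$, i.e. $X$ has no $1$-gap. So it suffices to exhibit, in each listed case, a narrow section of height $4$, and Lemma~\ref{lemma:E0_height_4} packages three readily checkable sufficient conditions (a)/(b)/(c) for exactly this. The corollary is thus a finite inspection: for each case I would read off $E(K)^0$ from Table~\ref{table:MWL_data} and confirm that one of (a), (b), (c) holds.

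For $r\geq 3$ I would argue structurally rather than entry by entry. Since $E(K)^0$ has finite index in $E(K)$, its rank equals $r\geq 3$. In every such case except No.~20 the table exhibits $E(K)^0$ as a nonzero root lattice, i.e. an orthogonal direct sum of $ADE$ lattices. Any root lattice of rank $\geq 3$ either has an irreducible summand of rank $\geq 3$, in which case condition (c) of Lemma~\ref{lemma:E0_height_4} applies, or splits into at least two summands, in which case two roots from distinct summands are orthogonal and condition (b) applies with a copy of $A_1\oplus A_1$. Either way $h(P)=4$ is attained and there is no $1$-gap. The case No.~20 is set aside precisely because there the roots of $E(K)^0$ span only a subsystem of rank at most $2$, so (b) and (c) both fail while (a) does not hold for the displayed generators; it will be settled separately in the proof of Theorem~\ref{thm:surfaces_with_a_1-gap}.

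For $r=1,2$ the verification is genuinely case by case over the finite list No.~25, 26, 30, 32--36, 38, 41, 42, 46, 52, 54, 60. Here $E(K)^0$ has rank $1$ or $2$, so (c) is vacuous and only (a) or (b) can occur. For the rank-$2$ entries I would check directly whether $E(K)^0$ contains an orthogonal pair of roots (condition (b)), or whether its Gram matrix---obtained as the inverse of the Gram matrix of the free part $E(K)\cong E(K)^{0\,*}$ recorded in the table---carries a $4$ on its diagonal (condition (a)); for instance No.~41 has narrow Gram matrix $\left(\begin{smallmatrix}4&-2\\-2&4\end{smallmatrix}\right)$, yielding (a) at once. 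For the rank-$1$ entries, Lemma~\ref{lemma:generators_r=1} identifies $E(K)^0=\langle P_0\rangle$ with $h(P_0)=1/\mu$, so (a) amounts exactly to $\mu=1/4$, and the listed $r=1$ cases (e.g. No.~46, $T=D_7$) are precisely those.

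The main obstacle is not any single estimate but the reliability of the bookkeeping: one must correctly extract each $E(K)^0$ from Table~\ref{table:MWL_data}, passing through the dual lattice for the torsion and non-root entries, and one must be careful that No.~20 is the unique $r\geq 3$ exception---that is, confirm that for every other $r\geq 3$ entry the narrow lattice really carries either a rank-$\geq 3$ root sublattice or two orthogonal roots, and that No.~20's root subsystem has rank at most $2$. Once the reduction through Proposition~\ref{prop:summary_of_sufficient_conditions} and Lemma~\ref{lemma:E0_height_4} is fixed, the remainder is mechanical.
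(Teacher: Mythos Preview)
Your reduction is exactly the paper's: via part 1) of Proposition~\ref{prop:summary_of_sufficient_conditions} it suffices to find $P\in E(K)^0$ with $h(P)=4$, and Lemma~\ref{lemma:E0_height_4} supplies criteria (a)/(b)/(c). The issue is in your structural argument for $r\geq 3$.

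You assert that ``in every such case except No.~20 the table exhibits $E(K)^0$ as a nonzero root lattice, i.e. an orthogonal direct sum of $ADE$ lattices.'' This is false. For No.~12, 17, 19 the narrow lattice is given by an explicit Gram matrix that is not a root lattice (each has a diagonal entry equal to $4$); for No.~22 one has $E(K)^0=A_1\oplus\langle 4\rangle$, and for No.~23 it is $A_1\oplus\left(\begin{smallmatrix}4&-2\\-2&4\end{smallmatrix}\right)$. None of these is an $ADE$ sum, so your dichotomy ``rank-$\geq 3$ irreducible summand or at least two orthogonal roots'' does not apply. These five cases are precisely the ones the paper catches with condition (a), the $4$ on the Gram diagonal, which you invoke only in your $r\leq 2$ discussion. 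Your argument as written therefore leaves No.~12, 17, 19, 22, 23 uncovered.

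The fix is simple but undoes the structural shortcut: you must inspect those five entries separately and note that (a) holds, exactly as the paper does. Once that is added, the proof is correct and essentially identical to the paper's, which just performs the three checks (a)/(b)/(c) against the full table without attempting a uniform argument.
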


\begin{proof}
We look at column $E(K)^0$ in Table~\ref{table:MWL_data} to find which cases satisfy one of the conditions a), b), c) from Lemma~\ref{lemma:E0_height_4}. 
\begin{enumerate}[a)]
\item Applies to No. 12, 17, 19, 22, 23, 25, 30, 32, 33, 36, 38, 41, 46, 52, 54, 60.
\item Applies to No. 10, 11, 14, 15, 18, 24, 26, 34, 35, 42.
\item Applies to No. 1-10, 13, 16, 21.
\end{enumerate}

In particular, this covers all cases with $r\geq 3$ (No. 1-24) except No. 20. By Lemma~\ref{lemma:E0_height_4} in each of these cases there is $P\in E(K)^0$ with $h(P)=4$ and we are done by Proposition~\ref{prop:summary_of_sufficient_conditions} 1).
\end{proof}

\indent In the next lemma we also analyze $E(K)^0$ to detect surfaces without a $1$-gap.

\begin{lemma}\label{lemma:E0=An}
Assume $E(K)^0\simeq A_n$ for some $n\geq 1$ and that $E(K)$ has nontrivial torsion part. Then $X$ does not have a $1$-gap. This applies to cases {\normalfont No. 28, 39, 44, 48, 51, 57, 58} in Table~\ref{table:MWL_data}.
\end{lemma}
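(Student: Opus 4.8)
The plan is to invoke directly the second sufficient condition of Proposition~\ref{prop:summary_of_sufficient_conditions}, specialized to $k=1$. That condition says: if $h(P)=2k$ for some $P\in E(K)^0$ and $E(K)_{\text{tor}}$ is nontrivial, then there exist $P_1,P_2\in E(K)$ with $P_1\cdot P_2=k$. Its underlying mechanism is Lemma~\ref{lemma:PO_plus_one}, which guarantees that a section $P\in E(K)^0$ with $h(P)=2k$ satisfies $P\cdot Q=k$ for every torsion section $Q\neq O$. So for the case $k=1$ I only need to produce a section $P\in E(K)^0$ of height exactly $2$ and then intersect it with any nontrivial torsion section.

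The key observation is that the existence of such a $P$ is automatic from the hypothesis $E(K)^0\simeq A_n$. Indeed, $A_n$ is by definition a root lattice (Definition~\ref{def:ADE_lattices}), hence it is generated by roots, i.e.\ elements of norm $2$. Thus any root $P$ of $E(K)^0\simeq A_n$ gives $h(P)=2=2\cdot 1$. By hypothesis $E(K)_{\text{tor}}$ is nontrivial, so we may pick $Q\in E(K)_{\text{tor}}\setminus\{O\}$; then Lemma~\ref{lemma:PO_plus_one} yields $P\cdot Q=1$. Consequently $X$ admits a pair of sections with intersection number $1$ and therefore does \emph{not} have a $1$-gap.

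For the final assertion that this criterion applies exactly to cases No.\ $28,39,44,48,51,57,58$, the argument is a direct inspection of Table~\ref{table:MWL_data}: I would read off the column $E(K)^0$ to single out the entries isomorphic to some $A_n$ with $n\geq 1$, and then retain among those precisely the ones whose Mordell--Weil group $E(K)$ carries a nontrivial torsion summand. This is a bookkeeping step rather than a mathematical difficulty.

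Honestly there is no serious obstacle in this lemma; the content is entirely in recognizing that condition 2) of Proposition~\ref{prop:summary_of_sufficient_conditions} is tailor-made for this situation, the two hypotheses of the lemma matching exactly its two requirements (a norm-$2$ vector in $E(K)^0$, supplied freely by the root-lattice structure of $A_n$, and nontrivial torsion, supplied by assumption). The only point meriting a line of justification is why $A_n$ always contains a vector of norm $2$, which I would dispatch in one sentence by citing that it is a root lattice. The case identification against the classification table is the only remaining routine verification.
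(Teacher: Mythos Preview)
Your proposal is correct and matches the paper's proof essentially verbatim: the paper simply says ``Take a generator $P$ of $E(K)^0$ with $h(P)=2$ and apply Proposition~\ref{prop:summary_of_sufficient_conditions} 2).'' Your additional remarks (that $A_n$ is a root lattice, that Lemma~\ref{lemma:PO_plus_one} is the mechanism behind condition 2), and that the case list comes from reading Table~\ref{table:MWL_data}) are all accurate elaborations of the same one-line argument.
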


\begin{proof}
Take a generator $P$ of $E(K)^0$ with $h(P)=2$ and apply Proposition~\ref{prop:summary_of_sufficient_conditions} 2).
\end{proof}

\newpage
\indent We are ready to prove the main result of this subsection.

\begin{proof}[Proof of Theorem~\ref{thm:surfaces_with_a_1-gap}]
We need to show that in all cases where $r\geq 1$ and $T\neq E_7$ there are $P_1,P_2\in E(K)$ such that $P_1\cdot P_2=1$. This corresponds to cases No. 1-61 except 43 in Table~\ref{table:MWL_data}. 

The cases where $r=1$ and $E(K)$ is torsion-free can be solved by Theorem~\ref{thm:identification_of_gaps_r=1}, namely No. 45-47, 49, 50, 55, 56. Adding these cases to the ones treated in Corollary~\ref{coro:E0_height_4} and Lemma~\ref{lemma:E0=An}, we have therefore solved the following:
$$\text{No. }1\text{-}19,\,21\text{-}26,\,28,\,30,\,32\text{-}36,\,38,\,39,\,41\text{-}52,\,54\text{-}58,\,60.$$

For the remaining cases, we apply Proposition~\ref{prop:summary_of_sufficient_conditions} 3), which involves finding perfect squares in the interval $\left[\frac{4-c_\text{max}}{\mu},\frac{4-c_\text{min}}{\mu}\right]$ (see Table~\ref{table:perfect_squares_in_the_interval}), considering the half-open interval in the cases with $\Delta=2$ (No. 53, 61).

\begin{table}[h]
\begin{center}
\centering
\begin{tabular}{cccccc} 

\multirow{2}{*}{No.} & \multirow{2}{*}{$T$} & \multirow{2}{*}{$E(K)$} & \multirow{2}{*}{$\mu$} & \multirow{2}{*}{$I$} & \multirow{2}{*}{$n^2\in I$}\\ 
& \\
\hline
\multirow{2}{*}{\hfil 20} & \multirow{2}{*}{\hfil $A_2^{\oplus 2}\oplus A_1$} & \multirow{2}{*}{\hfil $A_2^*\oplus\langle1/6\rangle$} & \multirow{2}{*}{$\frac{1}{6}$} & \multirow{2}{*}{$[13,23]$} & \multirow{2}{*}{$4^2$}\\ 
&\\
\multirow{2}{*}{\hfil 27} & \multirow{2}{*}{\hfil $E_6$} & \multirow{2}{*}{\hfil $A_2^*$} & \multirow{2}{*}{$\frac{2}{3}$} & \multirow{2}{*}{$[4,4]$} & \multirow{2}{*}{$2^2$}\\ 
&\\
\multirow{2}{*}{\hfil 29} & \multirow{2}{*}{\hfil $A_5\oplus A_1$} & \multirow{2}{*}{\hfil $A_1^*\oplus\langle 1/6\rangle$} & \multirow{2}{*}{$\frac{1}{6}$} & \multirow{2}{*}{$[12,21]$} & \multirow{2}{*}{$4^2$}\\
& \\ 
\multirow{3}{*}{\hfil 31} & \multirow{3}{*}{\hfil $A_4\oplus A_2$} & \multirow{3}{*}{\hfil $\frac{1}{15}\left(\begin{matrix} 2 & 1\\1 & 8\end{matrix}\right)$} & \multirow{3}{*}{$\frac{2}{15}$} & \multirow{3}{*}{$[16,21]$} & \multirow{3}{*}{$4^2$}\\
& \\
& \\
\multirow{2}{*}{\hfil 37} & \multirow{2}{*}{\hfil $A_3\oplus A_2\oplus A_1$} & \multirow{2}{*}{\hfil $A_1^*\oplus\langle 1/12\rangle$} & \multirow{2}{*}{$\frac{1}{12}$} & \multirow{2}{*}{$[22,28]$} & \multirow{2}{*}{$5^2$}\\
& \\ 
\multirow{2}{*}{\hfil 40} & \multirow{2}{*}{\hfil $A_2^{\oplus 2}\oplus A_1^{\oplus 2}$} & \multirow{2}{*}{\hfil $\langle 1/6\rangle^{\oplus 2}$}  & \multirow{2}{*}{$\frac{1}{6}$} & \multirow{2}{*}{$\left[10,21\right]$} & \multirow{2}{*}{$4^2$}\\
& \\ 
\multirow{2}{*}{\hfil 53} & \multirow{2}{*}{\hfil $A_5\oplus A_1^{\oplus 2}$} & \multirow{2}{*}{\hfil $\langle 1/6\rangle\oplus \Bbb{Z}/2\Bbb{Z}$} &  \multirow{2}{*}{$\frac{1}{6}$} & \multirow{2}{*}{$[9,12]$} & \multirow{2}{*}{$3^2$}\\
& \\ 
\multirow{2}{*}{\hfil 59} & \multirow{2}{*}{\hfil $A_3\oplus A_2\oplus A_1^{\oplus 2}$} & \multirow{2}{*}{\hfil $\langle 1/12\rangle\oplus \Bbb{Z}/2\Bbb{Z}$} & \multirow{2}{*}{$\frac{1}{12}$} & \multirow{2}{*}{$[16,42]$} & \multirow{2}{*}{$4^2,5^2,6^2$}\\
& \\ 
\multirow{3}{*}{\hfil 61} & \multirow{3}{*}{\hfil $A_2^{\oplus 3}\oplus A_1$} & \multirow{3}{*}{\hfil $\langle 1/6\rangle\oplus\Bbb{Z}/3\Bbb{Z}$} & \multirow{3}{*}{$\frac{1}{6}$} & \multirow{3}{*}{$[9,12]$} & \multirow{3}{*}{$3^2$}\\
& \\ 
\hline
\end{tabular}\caption{Perfect squares in the interval $I:=\left[\frac{4-c_\text{max}}{\mu},\frac{4-c_\text{min}}{\mu}\right]$.}\label{table:perfect_squares_in_the_interval}
\end{center}
\end{table}

\indent In No. 59 we have $\Delta>2$, so a particular treatment is needed. Let $T=T_{v_1}\oplus T_{v_2}\oplus T_{v_3}\oplus T_{v_4}=A_3\oplus A_2\oplus A_1\oplus A_1$. If $P$ generates the free part of $E(K)$ and $Q$ generates its torsion part, then $h(P)=\frac{1}{12}$ and $4P+Q$ meets the reducible fibers at $\Theta_{v_1,2},\Theta_{v_2,1},\Theta_{v_3,1},\Theta_{v_4,1}$ \cite{Kurumadani}[Example 1.7]. By Table~\ref{table:local_contributions} and the height formula (\ref{equation:height_formula_P}), 
$$\frac{4^2}{12}=h(4P+Q)=2+2(4P+Q)\cdot O-\frac{2\cdot 2}{4}-\frac{1\cdot 2}{3}-\frac{1}{2}-\frac{1}{2},$$

hence $(4P+Q)\cdot O=1$, as desired.
\end{proof}

\newpage

\chapter{Large rank jumps and the Hilbert property}\label{ch:rank_jumps}\
\indent Throughout this chapter $k$ denotes a number field and $X$ a geometrically rational elliptic surface over $k$ with elliptic fibration $\pi:X\to\P^1$ and Mordell-Weil rank $r$ over $k$. We study the variation of Mordell-Weil rank $r_t$ (over $k$) of the fiber $\pi^{-1}(t)$ as $t$ runs through $t\in\P^1_k$ in comparison to the generic rank $r$ and say that there is a \textit{rank jump} if $r_t>r$. We cover some of the approaches used in the literature, explain their limitations and how we adapt them in order to prove the main theorem of this chapter, namely 

\begin{teor}\label{thm:rank_jump_3_times}
Let $\pi:X\to \P^1$ be a geometrically rational elliptic surface over a number field $k$ with generic rank $r$. Assume that $\pi$ admits a \textit{RNRF}-conic bundle (Definition~\ref{def:RNRF}). Then the subset of the base of the elliptic fibration
$$\{t\in \P^1_k\mid r_t\geq r+3\}\subset \P^1_k$$
is not thin (Definition~\ref{def:thin_sets}).
\end{teor}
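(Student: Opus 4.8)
The plan is to exploit the RNRF-conic bundle to manufacture, by quadratic base change, a family of \emph{rational} elliptic surfaces on which the generic rank has already jumped by $3$, and then to spread this jump across the base $\P^1$ in a way that defeats every finite collection of thin covers. First I would translate the analytic datum into geometry. The conic bundle is a pencil $\mathscr{L}=\{D_\lambda\}_{\lambda\in\P^1}$ of genus-$0$ curves; since a general fiber $D_\lambda$ satisfies $D_\lambda\cdot(-K_X)=2$ and $-K_X$ is linearly equivalent to a fiber of $\pi$ (Theorem~\ref{thm:RES_distinguished_properties}), the curve $D_\lambda$ is a bisection of $\pi$, and $\varphi_\lambda:=\pi|_{D_\lambda}:D_\lambda\to\P^1$ is finite of degree $2$ (compare Lemma~\ref{lemma:bisection}). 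Fixing a general $\lambda$, I form the quadratic base change $\pi'_\lambda:X'_\lambda:=X\times_{\P^1}D_\lambda\to D_\lambda$ as in Subsection~\ref{subsection:base_change}. The content of the RNRF hypothesis (Definition~\ref{def:RNRF}) is that the branch points of $\varphi_\lambda$ lie over non-reduced fibers of $\pi$ and that the reducible fibers over non-branch points are controlled; reading Table~\ref{table:quadratic_base_change} fiber by fiber then forces $e(X'_\lambda)=12$, so that $X'_\lambda$ is again geometrically rational by Lemma~\ref{lemma:euler_number_12}.

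The decisive point is the \emph{size} of the jump. As both $X$ and $X'_\lambda$ are rational elliptic surfaces, the Shioda--Tate isomorphism (Theorem~\ref{thm:shioda_tate}) gives $r=8-\operatorname{rank}T$ and, geometrically, $r'_\lambda=8-\operatorname{rank}T'_\lambda$, where $T,T'_\lambda$ are the reducible-fiber lattices of $\pi,\pi'_\lambda$. The RNRF configuration is arranged exactly so that the comparison of these lattices through Table~\ref{table:quadratic_base_change} lowers the total fiber-lattice rank by $3$ --- the prototype being $\text{III}^*\to\text{I}_0^*$, in which $E_7$ of rank $7$ is replaced by $D_4$ of rank $4$ (one checks, e.g., that a single $\text{III}^*$ branch point together with a smooth branch point accounts for the full correction $2e(\text{III}^*)-e(\text{I}_0^*)=12$). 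Hence $r'_\lambda\ge r+3$ geometrically. To realize this jump over $k$, I invoke Theorem~\ref{thm:pencil_general}: the inclusion $D_\lambda\hookrightarrow X$ furnishes a new section of $\pi'_\lambda$ independent of those pulled back from $\pi$, while the three anti-invariant classes produced by the splitting of the $\text{III}^*/E_7$ configuration into $\text{I}_0^*/D_4$ are defined over $k$, so that the generic rank of $\pi'_\lambda$ over $k(D_\lambda)$ is genuinely $\ge r+3$; their independence I would verify via the height pairing. Since $D_\lambda$ is a smooth rational curve with a $k$-point it has infinitely many, and Silverman's specialization theorem applied to $\pi'_\lambda$ yields, for all but finitely many $s\in D_\lambda(k)$, a fiber $\pi'^{-1}_\lambda(s)\cong\pi^{-1}(\varphi_\lambda(s))$ of rank $\ge r+3$; thus $r_{\varphi_\lambda(s)}\ge r+3$.

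It remains to prove non-thinness, and this is where I expect the real difficulty. A single $D_\lambda$ is insufficient, because $\varphi_\lambda$ has degree $2$, so the image $\varphi_\lambda(D_\lambda(k))$ of a non-thin set may itself be thin in the target $\P^1$. I therefore use the full two-parameter family, organized over the total space $X$: a point $P\in X(k)$ determines both the bisection $D_{\varphi(P)}$ and the fiber $t=\pi(P)$ that it meets, and by the previous paragraph $t\in\{t'\in\P^1_k\mid r_{t'}\ge r+3\}$ as soon as $P$ avoids a thin ``bad'' locus (singular bisections, the finitely many exceptional $s$, and loci where the new sections degenerate). Given finitely many covers $\psi_i:Y_i\to\P^1$ of degree $\ge 2$, I pull them back along $\pi$ to covers $X\times_{\P^1}Y_i\to X$ of degree $\ge 2$, whose sets of $k$-points are thin in $X(k)$ and whose avoidance forces $t=\pi(P)\notin\psi_i(Y_i(k))$. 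Choosing $P\in X(k)$ outside all of these thin sets then produces exactly the point demanded by the non-thinness criterion recalled in Section~\ref{section:thin_sets_Hilbert_property}.

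The crux, and the main obstacle, is to guarantee that such a $P$ exists, i.e.\ that $X(k)$ is not thin. I would deduce the Hilbert property for $X$ from its conic bundle structure $\varphi:X\to\P^1$, combining the Hilbert property of the base $\P^1$ with that of the generic genus-$0$ fiber (which becomes $\P^1$ once it acquires a $k$-point) and carefully handling the finitely many degenerate fibers of $\varphi$. This fibration-theoretic Hilbert-property input --- rather than the lattice bookkeeping of Step~2 or the specialization of Step~3 --- is the heart of the argument and the step most likely to require delicate hypotheses on $\varphi$.
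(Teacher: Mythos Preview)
Your approach diverges from the paper's at the crucial second step, and the divergence creates a genuine gap. The paper does \emph{not} extract a rank jump of $3$ from a single quadratic base change. Instead, it performs \emph{three} successive base changes by bisections $D_1,D_2,D_3\in|D|$, each contributing $+1$ to the generic rank via Theorem~\ref{thm:rank_jump_pencil}. The r\^ole of the RNRF hypothesis is not to force a large drop in the fiber lattice; it is to guarantee that all $D_i$ share a common branch point (over the unique nonreduced fiber), so that $D_1\times_{\P^1}D_2$ still has genus~$0$ and $C:=D_1\times_{\P^1}D_2\times_{\P^1}D_3$ has genus~$1$ (Lemma~\ref{lemma:genus1}). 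Lemma~\ref{lemma:againrational} is used only to ensure $X_{D_1}$ is again rational with a conic bundle, so that the machinery of \cite{LoughSalgado} can be reapplied. Non-thinness is then obtained not from a Hilbert property of $X$, but from Faltings: the curves $C\times_{\P^1}Y_i$ have genus $\ge2$ and hence finitely many $k$-points (Proposition~\ref{prop:proof} and the proof of Theorem~\ref{thm:rank_jump_3_times}).

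Your single-base-change argument fails on two counts. First, the Shioda--Tate comparison is purely geometric: it bounds the rank over $\overline{k}(D_\lambda)$, not over $k(D_\lambda)$, and your assertion that ``the three anti-invariant classes\dots are defined over $k$'' is neither made precise nor justified --- the new generators of $E(\overline{k}(D_\lambda))$ arising from the collapse $E_7\rightsquigarrow D_4$ have no a~priori reason to be Galois-stable. Second, even the geometric count is wrong in general: any \emph{reduced} reducible fiber of $\pi$ lies away from the branch locus of a generic $D_\lambda$ and therefore acquires \emph{two} copies in $X'_\lambda$, inflating $T'_\lambda$ and cutting the geometric jump below~$3$. (Your reading of Definition~\ref{def:RNRF} is also slightly off: only \emph{one} branch point of $\varphi_\lambda$ is forced over the nonreduced fiber; the second is free.) These issues cannot be repaired without reverting to the paper's triple-base-change strategy.
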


The starting point for our investigation is a specialization theorem by Silverman \cite[Thm. C]{Silverman}, which states that $r_t\geq r$ for all but finitely many $t\in\P^1_k$. We remark that this result is a development from a more general theorem by Néron \cite[Thm. 6]{Neron52} over higher-dimensional bases which says that the rank is at least the generic rank outside of a thin subset (see Definition~\ref{def:thin_sets}) of the base curve. In light of this, our natural guiding question is: what is the nature of the set of fibers where rank jumps occur?
	
Techniques to deal with this question include the study of the behavior of the root numbers in families first carried out by Rohrlich \cite{Rohrlich}; height theory estimations as done by Billard \cite{Billard} and geometric techniques, more precisely base change, introduced by Néron \cite{Neron52}. 

A considerable limitation of the root number approach is that it relies on the Birch and Swinnerton-Dyer conjecture and, even with that assumption, one is restricted to $\Q$ since the functional equation is only conjectural over number fields in general. As for the use of height theory, oftentimes one is also restricted to $\Q$, as many properties of the height machinery are proven only over the rationals. The geometric techniques, on the other hand, tend to allow more flexibility on the base field and have been used by several autors, such as \cite{Shioda91, Salgado12, Salgado15, HindrySalgado, ColliotThelene20}

\newpage

We explain Néron's geometric method. His goal in \cite[\S 5]{Neron56} is to start with $X$ with generic rank $8$ and build an elliptic surface $f:Y\to C$ such that $Y$ is a degree $6$ cover of $X$ with generic rank at least $11=8+3$. More precisely, the elliptic fibrations $\pi:X\to \P^1$ and $f:Y\to C$ are related by the following commutative diagram.

\begin{displaymath}
\begin{tikzcd}
Y\arrow[swap]{d}{6:1} \arrow{r}{f} & C \arrow{d}\\
X\arrow[swap]{r}{\pi}   & \P^1
\end{tikzcd}
\end{displaymath}

By applying Néron's specialization theorem, the fact that $f:Y\to C$ has rank at least $11$ implies that there are infinitely many fibers of $\pi$ whose ranks are at least $11=8+3$, hence a rank jump of $3$. The construction of $Y$ is performed by taking three rational bisections $L_1, L_2,L_3$ in $X$ (Definition~\ref{def:multisection}) such that the base change of $\pi:X\to \P^1$ by $C:=L_1\times_{\P^1}L_2 \times_{\P^1} L_3$ gives the new fibration $f:Y\to C$, which he shows admits $3$ new and independent sections. A key feature of this construction is the fact that the morphisms of the base $L_i\to \P^1$ share a common branch point, hence the curve $L_1\times_{\P^1}L_2 \times_{\P^1} L_3$ has genus $1$. 

A generalization of this technique was made by Salgado in \cite{Salgado12} when $X$ is any rational elliptic surface admitting a conic bundle over $k$. One key difference in comparison to Néron's construction is that the curves $L_i$ need not to be given explicitly. Instead, she shows a more general result for elliptic surfaces considering base change with respect to curves on a linear system, which produce a strictly greater rank:

\begin{teor}{\cite[Cor. 4.3]{Salgado12}}\label{thm:rank_jump_pencil}
Let $f:S\to C$ be an elliptic surface over a number field and $\mathscr{L}$ a pencil of curves on $S$ which does not contain a fiber of $f$. Then for all but finitely many $D\in\mathscr{L}$, the base-changed fibration $S\times_CD\to D$ has generic rank strictly greater than that of $f:S\to C$.
\end{teor}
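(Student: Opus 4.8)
The plan is to prove a stronger \emph{generic} statement first and then descend by specialization. Write $K=k(C)$ and, for a member $D$ not contained in a fiber, set $\varphi=f|_D\colon D\to C$, which is then a finite multisection map; the base-changed generic fiber is $E\otimes_K k(D)$, so the pullback $\varphi^*\colon E(K)\hookrightarrow E(k(D))$ is injective and the rank can only grow. As recalled in the base-change discussion, the inclusion $\iota\colon D\hookrightarrow S$ produces a genuinely new section $\tau_D:=(\iota,\mathrm{id})\colon D\to S\times_CD$, whose image under the projection $S\times_CD\to S$ is the curve $D$ itself rather than a section of $f$. Thus the entire content of the theorem is that, for all but finitely many $D$, the class $\tau_D$ is independent of $\varphi^*E(K)$ modulo torsion.

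To exploit that $D$ moves, I would relativize over the parameter line of the pencil. Let $B\cong\P^1$ parametrize $\mathscr{L}$, put $\kappa:=k(B)$, and let $D_\eta$ be the generic member, a curve over $\kappa$ equipped with $\varphi\colon D_\eta\to C_\kappa$. Since sections of a relatively minimal elliptic surface are rigid, adjoining the transcendental parameter creates no new sections, i.e. $E(\kappa(C_\kappa))=E(K)$. The problem then reduces to a single statement: the tautological point $\tau_\eta\in E(\kappa(D_\eta))$ is non-torsion modulo $\varphi^*E(K)\otimes\Q$.

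The heart of the argument is this independence, and it is where the hypothesis that $\mathscr{L}$ contains no fiber --- so that its members genuinely sweep out $S$ --- becomes decisive. Suppose $\tau_\eta$ were dependent. Multiplying through by the order of the torsion part, one obtains an integer $m\ge 1$ with $[m]\tau_\eta=P_0\in E(K)$. Reading this fibrewise, the dominant, generically finite rational self-map $\mu_m\colon S\dashrightarrow S$ given by fibrewise multiplication-by-$m$ carries the tautological multisection onto a fixed section: $\mu_m(D_\eta)=\sigma_{P_0}(C)$. Hence $D_\eta\subseteq\mu_m^{-1}\big(\sigma_{P_0}(C)\big)$, a curve defined over $\overline{k}$ and \emph{independent of the parameter}. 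But the generic member of a nonconstant pencil cannot lie in any fixed curve: being irreducible it would then equal a fixed component, contradicting that $D_\eta$ varies with $B$. This contradiction establishes the independence of $\tau_\eta$, so the base change by the generic member has generic rank at least $r+1$. I expect this step --- converting a putative linear relation in $E(\kappa(D_\eta))$ into the geometric identity $\mu_m(D_\eta)=\sigma_{P_0}(C)$, and absorbing the torsion uniformly --- to be the main obstacle.

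Finally I would specialize. The construction assembles into a family of elliptic surfaces over the curve $B$ whose generic member base change has rank $\ge r+1$; applying Silverman's specialization theorem \cite{Silverman} over $B$, the fibre rank can fall below the generic rank only on a finite subset of $B$. Therefore for all but finitely many $\lambda\in B$, equivalently all but finitely many $D\in\mathscr{L}$, the base-changed fibration $S\times_CD\to D$ has generic rank at least $r+1$, strictly exceeding that of $f\colon S\to C$.
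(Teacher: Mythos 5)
Your core mechanism is sound, and it is essentially the known one (note that the thesis itself does not prove this statement: it imports it from \cite[Cor.~4.3]{Salgado12}, cf.\ also Theorem~\ref{thm:pencil_general} quoted from \cite{SalgadoPhD}). Relativizing over the parameter line $B$, identifying $\kappa(D_\eta)$ with $k(S)$ so that the tautological point is essentially the generic point of $S$, and converting a putative relation $[m]\tau_\eta=\varphi^*P_0$ (after killing torsion) into the inclusion $D_\eta\subseteq\mu_m^{-1}\bigl(\sigma_{P_0}(C)\bigr)$, which no moving member of a pencil without fixed part can satisfy — this is exactly the right geometric heart of the argument. Two repairable caveats: you tacitly assume the generic member is irreducible, and the claim $E(\kappa(C_\kappa))=E(K)$ should not be justified by ``rigidity of sections'' in general (it fails as stated for constant fibrations, where sections move in families); it does hold here either because $\chi(S)>0$ in the rational case, or more robustly because $B\cong\P^1$, so a $B$-family of sections is necessarily constant.

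The genuine gap is your final step. Silverman's specialization theorem \cite[Thm.~C]{Silverman} applies to an elliptic surface over a number field: it specializes $E(k(C))$ into the fibres $E_t(k)$ for $t\in C(k)$. What you need is structurally different: a specialization statement for the one-parameter family $\lambda\mapsto E(k(D_\lambda))$, whose generic object is an elliptic curve over the two-variable function field $\kappa(D_\eta)=k(S)$ and whose special objects are Mordell--Weil groups over \emph{function fields}, not number fields. No off-the-shelf form of \cite{Silverman} covers this, and the naive substitute — for each nonzero $Q$ in the group generated by $\varphi^*E(K)$ and $\tau_\eta$, the locus of $\lambda$ where $Q$ specializes to torsion is finite — only yields ``all but countably many'' members, since infinitely many classes $Q$ could each die at one member. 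To obtain finiteness one must inject uniformity along the pencil: the torsion orders of the surfaces $S\times_C D_\lambda$ are uniformly bounded (their Euler numbers are bounded, the class of $D_\lambda$ being fixed), and the height data $h(\tau_{D_\lambda})$ and $\langle\tau_{D_\lambda},\varphi^*Q_i\rangle$ are intersection numbers that are uniformly bounded and lie in a fixed discrete set $\tfrac{1}{N}\Z$; a dependence then forces $\tau_{D_\lambda}$ into a finite list of critical classes, each of which accounts for only finitely many members via your curves $\mu_m^{-1}\bigl(\sigma_{P_0}(C)\bigr)$. Some height-theoretic control of this kind is precisely what the proof in \cite{Salgado12} supplies; citing \cite{Silverman} at this point papers over the step that carries the ``all but finitely many'' conclusion.
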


A drawback in this construction, however, is that the morphisms to the base $D\to\P^1$ which play role of $L_i\to\P^1$ in Néron's construction do not necessarily admit a common branch point. In this case $D_1\times_{\P^1}D_2$ may already have genus $1$, which limits the number of base changes to $2$. As a consequence, one is only allowed to conclude about rank jumps of at least $2$, but not $3$, which is precisely the case in \cite{Salgado12} and \cite{LoughSalgado}. One way to overcome this difficulty is to consider conic bundles with certain ramification properties, which we call \textit{RNRF-conic bundles} (for \textit{ramified at a nonreduced fiber}, Definition~\ref{def:RNRF}), which allows us to detect rank jumps of at least $3$.

We want, moreover, to study the structure of set of fibers with rank jump of at least $3$. Given Néron's specialization theorem, it is natural to ask whether this set is thin or not. In the case of Néron's construction of rank jump $3$, the fact that the curves $L_i$ are rigid imply that the set is thin. On the other hand, in Salgado's construction \cite{Salgado12}, the pencil of curves implies a larger set of rank jumps which turns out to be \textit{not} thin, as proven later in collaboration with Loughran \cite{LoughSalgado}. By combining the technique from \cite{Salgado12} with the later developments from \cite{LoughSalgado}, we arrive at Theorem~\ref{thm:rank_jump_3_times}, which we prove in this chapter.

We organize the chapter as follows. In Section~\ref{section:nonreduced_fibers} we explore some properties of rational elliptic surfaces which admit a nonreduced fiber. In Section~\ref{section:RNRF} we introduce a key tool, which are the \textit{RNRF} conic bundles and present their constructions. Section~\ref{section:RNRF_and_multiple_base_changes} is a commentary on the need to consider nonreduced fibers, which introduces the proof of the main theorem in Section~\ref{section:rank_jump_3_times}. At last, a series of examples is provided in Section~\ref{section:examples_rank_jumps}.

\section{Nonreduced fibers}\label{section:nonreduced_fibers}\
\indent The central hypothesis of Theorem~\ref{thm:rank_jump_3_times} is the presence of an \textit{RNRF} conic bundle, i.e. a conic bundle ramified at a nonreduced fiber (Definition~\ref{def:RNRF}). By \textit{nonreduced} we mean a fiber with one or more components with multiplicity $\geq 2$, which by Kodaira's classification (Theorem~\ref{thm:Kodaira_classification}) correspond to starred types, i.e. $\text{I}^*_{n\geq 0}, \text{II}^*, \text{III}^*,\text{IV}^*$. In this section we present some results for the case when $\pi:X\to\P^1$ admits a nonreduced fiber (Proposition~\ref{prop:conic_bundles_from_Weierstrass_form}, Corollaries~\ref{coro:unirational} and \ref{coro:rank_jump_2}) and prove Lemma~\ref{lemma:support_RNRF}, which is helpful in the construction of conic bundles in Section~\ref{section:RNRF}.

We begin by considering a local Weierstrass form for our rational elliptic surface, namely
$$y^2+a_1xy+a_3y=x^3+a_2x^2+a_4x+a_6,\,\text{ where } a_i\in k(\Bbb{P}^1)\text{ for all }i.$$

Dokchitsers' Table~\ref{table:Dokchitser} allows us to readily see that rational elliptic fibrations with a nonreduced fiber at $t=\infty$, except possibly I$_0^*$, admit a conic bundle over the $x$-line. More precisely, we have the following result.

\begin{prop}\label{prop:conic_bundles_from_Weierstrass_form}
Let $\pi: X\rightarrow \P^1$ be a rational elliptic surface defined over $k$ with a nonreduced fiber $F$ at $t=\infty$ which is not of type $I_0^*$. Then $X$ admits a Weierstrass equation of the form
$$y^2+a_1(t)xy+a_3(t)y=x^3+a_2(t)x^2+a_4(t)x+a_6(t),\,\text{ with } \deg a_i\leq 2 \text{ for all }i.$$
In particular, $X$ admits a conic bundle by projection over the $x$-line.
\end{prop}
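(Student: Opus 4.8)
The plan is to produce the Weierstrass model explicitly by starting from a global minimal model and pushing all coefficient degrees down using the vanishing forced by the nonreduced fiber at $t=\infty$; the conic bundle then drops out of the projection onto the $x$-coordinate. Since $X$ is rational, $\chi(X)=1$ by Theorem~\ref{thm:RES_distinguished_properties}, so a minimal model has $a_i\in H^0(\P^1,\mathcal O(i))$, i.e. $\deg a_i\le i$; as $k$ is a number field (hence $\mathrm{char}\,k=0$) I may complete the square and the cube to reduce to a short model $y^2=x^3+A(t)x+B(t)$ with $\deg A\le 4$ and $\deg B\le 6$. Writing $s=1/t$, the local coefficients at infinity are $A_\infty=s^4A(1/s)$ and $B_\infty=s^6B(1/s)$, so that $v_\infty(A_\infty)=4-\deg A$ and $v_\infty(B_\infty)=6-\deg B$. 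The hypothesis that the fiber at infinity is nonreduced and not of type $\mathrm{I}_0^*$ means, reading Dokchitsers' Table~\ref{table:Dokchitser}, that the relevant value $\min_i v_\infty(\cdot)/i$ is at least $\tfrac12$ (equal to $\tfrac12$ for $\mathrm{I}_n^*$, and $\tfrac23,\tfrac34,\tfrac56$ for $\mathrm{IV}^*,\mathrm{III}^*,\mathrm{II}^*$). In particular $v_\infty(A_\infty)\ge 2$ and $v_\infty(B_\infty)\ge 3$, which already gives $\deg A\le 2$ and $\deg B\le 3$.

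The one real obstacle is to improve $\deg B\le 3$ to $\deg B\le 2$, and this is where the exclusion of $\mathrm{I}_0^*$ is used. I would argue through the discriminant. Because $\mathrm{char}\,k=0$ there is no wild ramification, so the Euler number of the fiber at infinity equals $v_\infty(\Delta)$; and for every nonreduced fiber other than $\mathrm{I}_0^*$ this Euler number exceeds $6$ (it is $n+6,8,9,10$ for $\mathrm{I}_n^*,\mathrm{IV}^*,\mathrm{III}^*,\mathrm{II}^*$, while $\mathrm{I}_0^*$ gives exactly $6$, by Theorem~\ref{thm:Kodaira_classification}). Under the bounds just obtained $\Delta=-16(4A^3+27B^2)$ has degree at most $6$ in $t$, and its coefficient of $t^6$ is $-16(4\alpha^3+27\beta^2)$, where $\alpha$ and $\beta$ denote the coefficients of $t^2$ in $A$ and of $t^3$ in $B$. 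Since $\deg_t\Delta=12-v_\infty(\Delta)<6$, this leading coefficient vanishes: $4\alpha^3+27\beta^2=0$. If $\beta=0$ then $\deg B\le 2$ and there is nothing more to do; otherwise $\alpha\ne 0$, and the vanishing of $4\alpha^3+27\beta^2$ says precisely that the depressed cubic $\rho^3+\alpha\rho+\beta$ has a repeated root, namely the $k$-rational number $\rho_0=-3\beta/(2\alpha)$.

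Translating $x\mapsto x+\rho_0 t$ keeps the model in short form (so $a_1=a_3=0$) and replaces $B$ by $\rho_0^3t^3+\alpha\rho_0 t+B+\cdots$, whose coefficient of $t^3$ is exactly $\rho_0^3+\alpha\rho_0+\beta=0$; the resulting model has $a_2=3\rho_0 t$ of degree $1$, $a_4=3\rho_0^2t^2+A$ of degree $\le 2$, and $a_6$ of degree $\le 2$. This produces a Weierstrass equation with $\deg a_i\le 2$ for all $i$. I would flag here that for $\mathrm{I}_0^*$ one has $v_\infty(\Delta)=6$, so $4\alpha^3+27\beta^2\ne 0$, the cubic $\rho^3+\alpha\rho+\beta$ has distinct and generally non-$k$-rational roots, and no $k$-rational translation lowers $\deg B$; this is the structural reason the hypothesis rules that case out.

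For the conic bundle I would project $(x,y,t)\mapsto x$. Over a general value $x=x_0$ the fiber is the curve $y^2=x_0^3+a_2(t)x_0^2+a_4(t)x_0+a_6(t)=:c(t)$ with $\deg_t c\le 2$ (using $a_1=a_3=0$, so no term $(a_1x_0+a_3)y$ contributes). A double cover $y^2=c(t)$ of the $t$-line with $\deg c\le 2$ is a smooth rational curve for general $x_0$, since the discriminant of $c$ in $t$ is a nonzero polynomial in $x_0$; hence the general fiber of the projection is a smooth, irreducible, genus-$0$ curve. As $X$ is a smooth projective surface, the rational map $X\dashrightarrow\P^1_x$ extends to a morphism, which is therefore a conic bundle in the sense of Definition~\ref{def:conic_bundles}. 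I expect the only delicate bookkeeping in the final write-up to be the degree tracking through the translation in the $\mathrm{I}_n^*$ case; the remaining steps are a direct reading of Table~\ref{table:Dokchitser} together with the Euler-number formula.
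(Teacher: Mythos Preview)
Your argument is correct. Both the paper and you read off the valuation inequalities from Dokchitsers' table, but the mechanisms diverge after that. The paper works with the long Weierstrass form throughout and splits into cases: for $\mathrm{IV}^*,\mathrm{III}^*,\mathrm{II}^*$ the stronger bound $\min_i v_\infty(a_i)/i\ge\tfrac23$ already gives $\deg_t a_i\le i/3\le 2$ for all $i$, with no further work; for $\mathrm{I}_{n\ge1}^*$ the bound $\min_i v_\infty(a_i)/i\ge\tfrac12$ only yields $\deg_t a_6\le 3$, and the paper then appeals to the extra Dokchitser condition on $v_\infty(d)$ to push this down to $2$, without spelling out the reduction explicitly. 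You instead pass to the short model, obtain $\deg A\le 2$, $\deg B\le 3$ uniformly from $v\ge i/2$, and use $v_\infty(\Delta)>6$ (equivalently the Euler number) to force $4\alpha^3+27\beta^2=0$; your $x\mapsto x+\rho_0 t$ translation then concretely kills the $t^3$ term in $a_6$. This trades the paper's case split for a single, more hands-on step, and it has the bonus of making transparent exactly why $\mathrm{I}_0^*$ is the exception: there $v_\infty(\Delta)=6$, so $4\alpha^3+27\beta^2\neq 0$ and no $k$-rational translation is available.

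One cosmetic point: your sentence ``keeps the model in short form'' is not quite right, since the translation introduces $a_2=3\rho_0 t$; you immediately say so, but it would be cleaner to phrase it as ``keeps $a_1=a_3=0$.'' The conic-bundle paragraph is fine and in fact goes a bit further than the paper, which simply asserts the conic bundle as a consequence of $\deg a_i\le 2$.
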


\begin{proof}
To analyze the behavior of the fiber at infinity, i.e. $t=(1:0) \in \mathbb{P}^1$, we write the (long) Weierstrass equation of $X$ with homogeneous coefficients:
$$y^2+a_1(t,u)xy+a_3(t,u)y=x^3+a_2(t,u)x^2+a_4(t,u)x+a_6(t,u).$$

Since $X$ is rational, $\deg a_i(t,u)=i$. Dokchitsers' Table~\ref{table:Dokchitser} tells us that $v_{\infty}(\frac{a_i}{i})\geq \frac{2}{3}$, if $F$ is of type IV$^*$, III$^*$ or II$^*$. Hence $\deg_u a_i\geq\frac{2i}{3}$ or $a_i=0$, which immediately bounds the degrees of $a_1, a_2, a_3, a_4$ and $a_6$ in the variable $t$ from above by 2, as claimed.

It remains to check that this also holds for fibers of type $\text{I}_n^*$ with $n\neq 0$. In this case, Dokschitsers' table tells us that $v_{\infty}(\frac{a_i}{i})\geq \frac{1}{2}$ and $v_{\infty}(d)\geq 6$. The former implies that $\deg_t a_i(u,t)\leq 2$ or $a_i=0$, for $i=1,2,3$ and $4$, while the latter gives us $\deg_t a_6(u,t)\leq 2$, or $a_6=0$.
\end{proof}

\begin{coro}\label{coro:unirational}
Let $\pi:X\to \P^1$ be a geometrically rational elliptic surface over $k$. Suppose that $\pi$ admits a unique nonreduced fiber. Then $X$ is $k$-unirational.
\end{coro}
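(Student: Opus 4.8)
The plan is to convert the nonreduced fibre into a conic bundle defined over $k$, and then exploit the principle that a conic bundle over $\P^1$ carrying a rational point is $k$-unirational.

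First I would record that the hypothesis forces the nonreduced fibre to be defined over $k$. The absolute Galois group $\mathrm{Gal}(\bar k/k)$ permutes the singular fibres of $\pi$ and preserves their Kodaira types (Theorem~\ref{thm:Kodaira_classification}), hence it permutes the set of nonreduced fibres; since this set is a singleton, the point $t_0\in\P^1$ beneath it is Galois-fixed and therefore lies in $\P^1(k)$. Composing with a $k$-automorphism of $\P^1$, I may assume $t_0=\infty$. If the fibre over $\infty$ is not of type $\mathrm{I}_0^*$, Proposition~\ref{prop:conic_bundles_from_Weierstrass_form} applies verbatim and produces, over $k$, a Weierstrass model with $\deg a_i\le 2$ together with the conic bundle $\varphi\colon X\to\P^1$ given by projection to the $x$-line. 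For the excluded case of a unique $\mathrm{I}_0^*$ fibre I would instead invoke Theorem~\ref{thm:interplay_conic_bundle_elliptic_fibration}(d): any nonreduced fibre yields a conic bundle with a $D_{m}$ fibre, and carrying out that construction with the $k$-rational section $O$ and the $k$-rational fibre $F_\infty$ keeps $\varphi$ defined over $k$. Either way, the outcome of this step is a conic bundle $\varphi\colon X\to\P^1$ over $k$.

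Next I would deduce unirationality from $\varphi$. A useful sanity check is the starred types of low $t$-degree: for $\mathrm{II}^*$ and $\mathrm{III}^*$ the bounds of Table~\ref{table:Dokchitser} force the coefficients entering the fibrewise equation of $\varphi$ to be linear in $t$, so the generic fibre of $\varphi$ is a conic with a rational point over $k(\P^1)$, whence $X$ is even $k$-rational. In general the generic fibre of $\varphi$ is a smooth conic $C$ over $k(\P^1)$ which need not have a rational point, so I would argue for unirationality rather than rationality. Concretely, $X$ is geometrically rational and carries the rational curve $O\cong\P^1$, so $X(k)\ne\varnothing$; a $k$-point lying on a smooth fibre of $\varphi$ identifies that fibre with $\P^1_k$, and a standard argument for conic bundles (base change along a rational bisection supplied by the ruling through that point, after which $\varphi$ acquires a section, the total space becomes $k$-rational, and it dominates $X$) produces a dominant map onto $X$ from a $k$-rational surface. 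This is precisely the statement that a conic bundle over $\P^1$ with a rational point is $k$-unirational, which I would either cite or reproduce.

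The main obstacle is the arithmetic input in the last step: guaranteeing a $k$-rational point on a smooth fibre of $\varphi$, equivalently the solubility over $k$ of one of the fibre conics. Over a number field this is governed by the Hasse principle for conics, so one must verify that the Brauer-type obstruction vanishes for at least one smooth fibre above a point of $\P^1(k)$; geometric rationality makes almost all fibre conics everywhere locally soluble, whence a local--global/density argument over the infinitely many points of $\P^1(k)$ supplies the required fibre. The book-keeping needed to descend the rational bisection to $k$ (and to handle split versus non-split singular fibres of $\varphi$, together with the exceptional $\mathrm{I}_0^*$ case) is where the genuine care lies; by contrast the geometric existence of the conic bundle is immediate from the preceding results.
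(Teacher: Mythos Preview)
Your outline diverges substantially from the paper's proof, and the place where it diverges is exactly where the real content lies.

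The paper's argument is a single sentence: contract the zero section to obtain a surface with $K^2=1$ which carries a conic bundle structure (this is what is meant by ``conic bundle of degree~1''), and then cite Koll\'ar--Mella \cite[Thm.~7]{KollarMella}, which asserts unirationality for precisely such surfaces. Your first paragraph, locating the nonreduced fibre over a $k$-point and invoking Proposition~\ref{prop:conic_bundles_from_Weierstrass_form}, is fine and matches the implicit setup of the paper; the gap is in what follows.

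The statement you plan to ``either cite or reproduce''---that a conic bundle over $\P^1$ with a rational point is $k$-unirational---is not a general theorem. It is known when $K^2\ge 2$ by classical work and when $K^2=1$ by Koll\'ar--Mella; it is open (and likely false in this strength) for smaller $K^2$. Your conic bundle lives on $X$ itself, where $K_X^2=0$, so you are not yet in the range where any such citation applies. The sketch you give (Hasse principle for fibre conics, rational bisection through a smooth point, base change to acquire a section) is essentially an outline of the Koll\'ar--Mella argument, not an independent elementary proof; the step ``geometric rationality makes almost all fibre conics everywhere locally soluble'' is not justified, and the ``ruling through that point'' producing a rational bisection needs the del Pezzo structure that only appears after contracting a $(-1)$-curve. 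In other words, you have to contract the zero section to reach $K^2=1$ before any known unirationality result takes over, and once you do, you are citing Koll\'ar--Mella---which is exactly the paper's proof.

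A secondary issue: your fallback for $\mathrm{I}_0^*$ via Theorem~\ref{thm:interplay_conic_bundle_elliptic_fibration}(d) is proved there over an algebraically closed field. Over $k$, the $D_m$-fibre construction for a lone $\mathrm{I}_0^*$ requires picking two of the three far simple components $\Theta_1,\Theta_2,\Theta_3$, and by Lemma~\ref{lemma:support_RNRF}(iv) these may form a Galois orbit of size~$3$, so the divisor you write down need not be $k$-rational.
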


\begin{proof}
We apply \cite[Thm. 7]{KollarMella} to the surface obtained by contracting the zero section, which is a conic bundle of degree $1$. 
\end{proof}

\begin{coro}\label{coro:rank_jump_2}
Let $\pi: X\rightarrow \P^1$  be a rational elliptic fibration defined over $k$ with a nonreduced fiber that is not of type {\normalfont I$_0^*$}. Then the subset 
$$\{t\in \P^1_k\mid r_t\geq r+2\}\subset\P^1_k$$

is not thin.
\end{coro}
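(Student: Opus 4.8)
The plan is to split the statement into two imported facts: that the distinguished nonreduced fiber forces a conic bundle on $X$ defined over $k$, and that the existence of such a conic bundle is precisely the hypothesis under which Loughran and Salgado establish non-thinness of the rank-jump-by-$2$ locus in \cite{LoughSalgado} (the $r+2$ result the introduction attributes to that paper). Thus the corollary is engineered to be an immediate consequence of Proposition~\ref{prop:conic_bundles_from_Weierstrass_form} fed into the machinery of \cite{LoughSalgado}, with no new rank computation of its own.

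First I would observe that the base point of the nonreduced fiber is $k$-rational. Among the types allowed by the hypothesis, namely $\text{II}^*,\text{III}^*,\text{IV}^*,\text{I}_{n\ge 1}^*$, the Euler number satisfies $e(F)\ge 7$, so by the additivity of Euler numbers and $e(X)=12$ at most one such fiber can occur; being unique, its support is Galois-stable and hence its image in $\P^1$ is a $k$-point. (This is also the conceptual reason $\text{I}_0^*$, with $e=6$, is excluded: two conjugate $\text{I}_0^*$ fibers need not sit over $k$-rational points.) A $k$-automorphism of $\P^1$ then moves the fiber to $t=\infty$, after which Proposition~\ref{prop:conic_bundles_from_Weierstrass_form} supplies a Weierstrass model with $\deg a_i\le 2$ and the projection onto the $x$-line is a conic bundle $\varphi\colon X\to\P^1$ defined over $k$. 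By Lemma~\ref{lemma:bisection} this is equivalent to producing a genus-zero bisection of $\pi$ over $k$, which is the form of the input the argument of \cite{LoughSalgado} consumes.

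With $\varphi$ in hand, the second step is simply to invoke \cite{LoughSalgado}: for a geometrically rational elliptic surface over a number field carrying a conic bundle over $k$, the set $\{t\in\P^1_k\mid r_t\ge r+2\}$ is not thin. A rational elliptic fibration is in particular geometrically rational, so the hypotheses are met and the conclusion follows verbatim. Heuristically, $\varphi$ is a pencil of bisections of $\pi$, and base-changing $\pi$ along (fiber products of) two general members yields two new independent sections, in the spirit of Theorem~\ref{thm:rank_jump_pencil}; the presence of a whole pencil is exactly what promotes ``infinitely many jumping fibers'' to a non-thin locus.

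I expect no serious obstacle inside the corollary, since both key steps are quotations; the real content lives upstream, in the degree bound of Proposition~\ref{prop:conic_bundles_from_Weierstrass_form} (where $\text{I}_{n>0}^*$ must be handled separately from $\text{II}^*,\text{III}^*,\text{IV}^*$) and in the non-thinness engine of \cite{LoughSalgado}. The single point demanding care is that the conic bundle, and hence the new sections contributing to the jump, be genuinely defined over $k$, so that the count reaches $r+2$ rather than $r+1$. By contrast, the stronger conclusion $r_t\ge r+3$ of Theorem~\ref{thm:rank_jump_3_times} is out of reach at this level of generality precisely because it requires the \emph{RNRF} refinement (Definition~\ref{def:RNRF}) permitting a third base change without raising the genus of the fiber-product curve.
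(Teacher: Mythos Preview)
Your proposal is correct and follows essentially the same route as the paper: invoke Proposition~\ref{prop:conic_bundles_from_Weierstrass_form} to obtain a conic bundle over $k$, then apply \cite[Theorem~1.1]{LoughSalgado}. You helpfully make explicit the Euler-number argument ensuring the nonreduced fiber lies over a $k$-rational point (so it can be moved to $t=\infty$), a step the paper leaves implicit in its one-line proof.
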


\begin{proof}
Proposition~\ref{prop:conic_bundles_from_Weierstrass_form} puts us in a position to apply \cite[Theorem 1.1]{LoughSalgado}.
\end{proof}

\begin{remark} 
\textbf{ }
\begin{enumerate}[i)]
\item {\normalfont Rational elliptic surfaces with 2 fibers of type $\text{I}_0^*$ have been treated separately in \cite{LoughSalgado}}.
\item {\normalfont Unfortunately one might not be able to use the conic bundles from Proposition~\ref{prop:conic_bundles_from_Weierstrass_form} to show that there is a rank jump of 3, which is our main goal. Indeed, the hypothesis that the restriction of $\pi$ to the conics has a common ramification is crucial. By \cite[Lemma 2.10]{LoughSalgado}, this can only happen over nonreduced fibers. Example~\ref{example2} presents a surface with a fiber of type $\text{IV}^*$ whose conic bundle over the $x$-line is not ramified over the nonreduced fiber}.
\end{enumerate}
\end{remark}

The following lemma is relevant for the construction of conic bundles in Section~\ref{section:RNRF}. It provides a supply of divisors that can be used to form the support of a genus $0$ bisection over $k$.

\begin{lemma}\label{lemma:support_RNRF}
Let $\pi:X\rightarrow \P^1$ be a rational elliptic surface defined over $k$ with a nonreduced fiber $F$ with components $\Theta_i$'s as in Table~\ref{table:indices_reducible_fibers}. Then one of the following holds:
\begin{enumerate}[i)]
\item If {\normalfont $F=\text{II}^*$} or {\normalfont $\text{III}^*$}, then all its components are defined over $k$.
\item If {\normalfont $F=\text{IV}^*$}, then $\Theta_0,\Theta_1,\Theta_2$ are defined over $k$. The other components are defined over some extension of $k$ of degree at most $2$.
\item If {\normalfont $F=\text{I}_n^*$ ($n\geq 1$)}, then $\Theta_0,\Theta_1$ and all the nonreduced components are defined over $k$. The far components $\Theta_2,\Theta_3$ are defined over some extension of $k$ of degree at most $2$.
\item If {\normalfont $F=\text{I}_0^*$} and $F$ is the only reducible fiber of $\pi$, then $\Theta_0$ and $\Theta_4$ are defined over $k$. The other components are defined over an extension of $k$ of degree at most $3$.
\end{enumerate} 
\end{lemma}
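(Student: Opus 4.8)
The plan is to exploit the action of the absolute Galois group $\Gamma:=\mathrm{Gal}(\overline{k}/k)$ on the set of geometric components of $F$. First I would check that the base point of $F$ lies in $\P^1(k)$, so that $F$ itself is defined over $k$. Since $X$ is geometrically rational, $e(X)=12$ by Lemma~\ref{lemma:euler_number_12}; in cases (i)--(iii) this forces the given fiber to be the \emph{unique} nonreduced fiber of $\pi$ (each of $\mathrm{II}^*,\mathrm{III}^*,\mathrm{IV}^*,\mathrm{I}^*_{n\geq 1}$ contributes $e\geq 7$, leaving at most $e=5<6$ for any further nonreduced fiber), while in case (iv) the hypothesis that $F$ is the only reducible fiber makes it the only nonreduced fiber. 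In every case the nonreduced locus is a $\Gamma$-stable one-point subset of $\P^1$, hence $k$-rational. Consequently $\Gamma$ permutes the components $\Theta_i$, and as each $\sigma\in\Gamma$ preserves intersection numbers and fixes the $\Gamma$-stable divisor $F=\sum_i\mu_i\Theta_i$, the induced permutation is an automorphism of the \emph{weighted} dual graph, i.e. of the extended Dynkin diagram of Table~\ref{table:indices_reducible_fibers} preserving all multiplicities $\mu_i$.

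The second ingredient is that $\Gamma$ fixes the neutral component $\Theta_0$. By Theorem~\ref{thm:RES_distinguished_properties} the zero section $O$ is defined over $k$, and $O\cdot F=1$, so $O\cap F$ is a single $k$-rational point lying on $\Theta_0$; any $\sigma\in\Gamma$ therefore sends $\Theta_0$ to the unique component meeting $O$, which is $\Theta_0$ itself. Thus $\Gamma$ acts through the stabilizer of $\Theta_0$ inside $\mathrm{Aut}(\text{weighted graph})$, and the lemma reduces to computing this stabilizer diagram by diagram and reading off the orbit sizes, an orbit of size $d$ meaning that its components are defined over an extension of degree $\leq d$.

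I would then run through the five diagrams. For $\mathrm{II}^*$ the graph $\widetilde{E}_8$ has trivial automorphism group, so every component is fixed; for $\mathrm{III}^*$ the graph $\widetilde{E}_7$ has automorphism group $\mathbb{Z}/2$ exchanging the two extremities $\Theta_0,\Theta_6$, which is killed by fixing $\Theta_0$, giving (i). For $\mathrm{IV}^*$ ($\widetilde{E}_6$) the group $S_3$ permutes the three legs; fixing the leg through $\Theta_0$ leaves only the transposition of the other two legs, so $\Theta_0,\Theta_1,\Theta_2$ are fixed and the remaining four components fall into orbits of size $\leq 2$, giving (ii). For $\mathrm{I}^*_{n\geq 1}$ ($\widetilde{D}_{n+4}$) the automorphisms are generated by swapping the two near nodes, swapping the two far nodes, and reflecting the whole chain; fixing $\Theta_0$ eliminates both the reflection and the near-swap, leaving only the far-swap, so $\Theta_0,\Theta_1$ and the entire multiplicity-$2$ chain $\Theta_4,\dots,\Theta_{n+4}$ are fixed while $\Theta_2,\Theta_3$ lie in an orbit of size $\leq 2$, giving (iii).

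The delicate case, and the main obstacle, is $\mathrm{I}_0^*$ ($\widetilde{D}_4$), whose automorphism group is the full $S_4$ on the four simple components. Here the ``unique reducible fiber'' hypothesis is genuinely needed to ensure $F/k$ in the first place (two $\mathrm{I}_0^*$ fibers have equal Euler number and could be Galois-conjugate), and one must observe that the central node $\Theta_4$ is the unique multiplicity-$2$ component, hence $\Gamma$-fixed. With $\Theta_0$ and $\Theta_4$ fixed, $\Gamma$ acts on $\{\Theta_1,\Theta_2,\Theta_3\}$ through a subgroup of $S_3$, so each of these is defined over an extension of degree $\leq 3$, giving (iv). Beyond this bookkeeping the only substantive point to verify is that preservation of multiplicities together with fixing $\Theta_0$ really does cut each diagram automorphism group down to the claimed stabilizer, a finite check on the five diagrams; I do not need the Galois image to be all of the stabilizer, only contained in it, which is exactly what yields the stated upper bounds on the degrees.
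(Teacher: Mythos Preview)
Your proposal is correct and follows essentially the same approach as the paper: both argue that the Galois group acts on the components of $F$ through automorphisms of the weighted Dynkin diagram fixing $\Theta_0$ (since the zero section is defined over $k$), and then analyze the resulting orbits case by case. The paper carries out this analysis via a step-by-step ``Intersection Fact'' (if $C$ is Galois-stable and meets $D$ but not $E$, then $D,E$ cannot be conjugate) rather than invoking the stabilizer of $\Theta_0$ in $\mathrm{Aut}(\widetilde{E}_n)$ or $\mathrm{Aut}(\widetilde{D}_n)$ directly, but this is only a difference in presentation; your Euler-number justification that $F$ is the unique nonreduced fiber in cases (i)--(iii) is a point the paper leaves implicit.
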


\begin{proof}
The fibration $\pi$ is defined over $k$, and, by assumption, so is its zero section $O$. The Galois group $G:=\text{Gal}(\overline{k}/k)$ acts on the Néron-Severi group preserving intersection multiplicities.  Since $F$ is assumed to be the unique fiber of its type, the elements of $G$ permute the components of $F$. 

In particular, the presence of conjugate components implies symmetries in the intersection graph of $F$. For the sake of clarity and to avoid repetition in what follows, we state as a fact one of the immediate consequences of the intersection-preserving action of $G$ in the Néron-Severi group. We refer to it as \textbf{\emph{I.F.}} in what follows.
\\ \\
\noindent\textbf{Intersection Fact (I.F.):} Let $C,D,E\subset X$ be integral curves such that $C$ meets $D$ but not $E$. If $C$ is stable under $G$, then $D,E$ cannot be conjugate.
\\ \\
\indent We also note that since $O$ is defined over $k$ (hence stable under $G$) and meets $\Theta_0$, then $\Theta_0$ is necessarily stable under $G$. We now analyze i), ii), iii), iv) with the notation from Table~\ref{table:nonreduced_fibers}.

\begin{table}[h]
\begin{center}
\centering
\begin{tabular}{c c} 
\multirow{4}{*}{\hfil II$^*\,(\widetilde{E}_8)$} & \multirow{4}{*}{\hfil\EEight}\\
& \\
& \\
& \\
\multirow{4}{*}{\hfil III$^*\,(\widetilde{E}_7)$} & \multirow{4}{*}{\hfil \ESeven}\\ 
& \\
& \\
& \\
\multirow{4}{*}{\hfil IV$^*\,(\widetilde{E}_6)$} & \multirow{4}{*}{\hfil \ESix}\\ 
& \\
& \\
& \\
\multirow{4}{*}{\hfil I$_n^*\,(\widetilde{D}_{n+4})$} & \multirow{4}{*}{\hfil \Dn}\\ 
& \\
& \\
& \\
\end{tabular}
\end{center}
\caption{Nonreduced fibers of $\pi$}
\label{table:nonreduced_fibers}
\end{table}	
\newpage

\noindent i) Let $F=\text{II}^*$. For lack of symmetry, each component of $F$ is stable under $G$, as desired. Now let $F=\text{III}^*$. By symmetry $\Theta_3, \Theta_7$ are stable and the possible conjugate pairs are $(\Theta_0,\Theta_6)$, $(\Theta_1,\Theta_5)$, $(\Theta_2,\Theta_4)$. Since $\Theta_0$ is stable, then so is $\Theta_6$. We use \textbf{I.F.} with $(C,D,E)=(\Theta_0,\Theta_1,\Theta_5)$ so that $\Theta_1,\Theta_5$ are stable. Then for $(C,D,E)=(\Theta_1,\Theta_2,\Theta_4)$ we conclude $\Theta_2,\Theta_4$ are stable.
\\ \\
\noindent ii) Let $F=\text{IV}^*$. By symmetry, $\Theta_2$ is stable and the possible $G$-orbits are $(\Theta_0,\Theta_4,\Theta_6)$ and $(\Theta_1,\Theta_3,\Theta_5)$. Since $\Theta_0$ is stable, then $\Theta_4,\Theta_6$ are possibly conjugates, in which case both are defined over some $L/k$ with $[L:k]\leq 2$. Applying \textbf{I.F.} with $(C,D,E)=(\Theta_0,\Theta_1,\Theta_3)$ and then with $(C,D,E)=(\Theta_0,\Theta_1,\Theta_5)$ we conclude that $\Theta_1$ is stable and $(\Theta_3,\Theta_5)$ is possibly an orbit. In this case, $\Theta_3,\Theta_5$ are defined over some $L/k$ with $[L:k]\leq 2$.
\\ \\
\noindent iii) Let $F=\text{I}_{n\geq 1}^*$. By symmetry, the following pairs is a possible $G$-orbits: $(\Theta_i,\Theta_{n+j})$ where both $\Theta_i,\Theta_{n+j}$ are components with multiplicity $2$ in $F$ and $i+j=8$. Since $\Theta_0$ is stable, we apply \textbf{I.F.} with $(C,D,E)=(\Theta_0,\Theta_4,\Theta_{n+4})$ to conclude that $\Theta_4,\Theta_{n+4}$ are stable. We now use induction on $i$: assuming $\Theta_{i}$ is stable and applying \textbf{I.F.} to $(C,D,E)=(\Theta_i,\Theta_{i+1},\Theta_{n+j+1})$ with $j+i=8$, we conclude that $\Theta_{i+1},\Theta_{n+j+1}$ are stable, hence all components with multiplicity $2$ are stable. We are left with $\Theta_2,\Theta_3$. By symmetry, $(\Theta_0,\Theta_1,\Theta_2,\Theta_3)$ is a possible orbit. But $\Theta_0,\Theta_4$ are stable, so $\Theta_2,\Theta_3$ are possibly conjugates, hence both are defined over some $L/k$ with $[L:k]\leq 2$.
\\ \\
\noindent iv) Let $F=\text{I}_0^*$. By symmetry, $\Theta_4$ is stable and $(\Theta_0,\Theta_1,\Theta_2,\Theta_3)$ is a possible $G$-orbit. But $\Theta_0$ is stable, so $\Theta_1,\Theta_2,\Theta_3$ are possibly conjugates, all three defined over some $L/k$ with $[L:k]\leq 3$.
\end{proof}


\section{Conic bundles ramified at a nonreduced fiber (RNRF)}\label{section:RNRF}\
\indent We introduce conic bundles with the property that the restriction of the elliptic fibration to all conics share a ramification point, which are the main tool in the proof of Theorem~\ref{thm:rank_jump_3_times}.


We begin by defining a multisection \textit{ramified} at a fiber of $\pi:X\to\P^1$, which leads to the central object of this chapter, namely the \textit{RNRF-conic bundle}. We remind the reader that the presence of a conic bundle over $k$ is equivalent to the existence of a bisection of genus $0$ over $k$ by Lemma~\ref{lemma:bisection}.

\begin{defi}\label{def:bisection_ramifies}
Let $C\subset X$ be a multisection $\pi:X\to\P^1$. We say that $C$ is \textit{ramified} at the fiber $\pi^{-1}(t)$ if the finite morphism $\pi|_C:C\to\P^1$ branches at $t$.
\end{defi}

\begin{defi}\label{def:RNRF}
Let $\varphi_{|D|}:X\to\P^1$ be a conic bundle induced by a bisection $D\subset X$. If $\pi$ has a unique nonreduced fiber $F$, we say that this is an \textit{RNRF-conic bundle} if $D$ is ramified at $F$. Equivalently, if each $C\in |D|$ intersects (transversally) a nonreduced component of $F$.  The acronym \textit{RNRF} stands for \textit{Ramified at a Nonreduced Fiber}.
\end{defi}

RNRF-conic bundles arise naturally on rational elliptic surfaces with a nonreduced fiber. The following result tells us that they exist over $k$ without any further assumption depending on the type of nonreduced fiber, and gives conditions for it to happen in the remaining configurations.

\begin{prop}\label{prop:RNRF}
Let $\pi:X\to\P^1$ be a rational elliptic surface over $k$. Assume that one of the following holds:
\begin{enumerate}
\item It admits a fiber of type {\normalfont II$^*$, III$^*$} or {\normalfont I$_n^*$} for $n\in \{2,3,4\}$.
\item It admits a fiber of type {\normalfont IV$^*$} or {\normalfont I$_m^*$}, for $m\in\{0,1\}$ and a reducible, reduced fiber.
\item It admits a fiber of type {\normalfont IV$^*$} and a non-trivial section defined over $k$.
\item It admits a fiber of type {\normalfont I$_1^*$} and a non-trivial section defined over $k$ that intersects the near component.
\item It admits a fiber of type {\normalfont I$_1^*$} and two non-intersecting sections that are conjugate under $\text{Gal}(\overline{k}/k)$.
\item It admits 2 fibers of type {\normalfont I$_0^*$} and a non-trivial 2-torsion section defined over $k$.
\end{enumerate}

Then $X$ admits a RNRF-conic bundle over $k$.
\end{prop}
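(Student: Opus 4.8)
The plan is to reduce everything to a single intersection-theoretic construction and then run it case by case. By Lemma~\ref{lemma:bisection}, producing a conic bundle over $k$ is the same as producing a genus-$0$ bisection of $\pi$ defined over $k$; in view of Definition~\ref{def:RNRF} it therefore suffices to exhibit, for each configuration, a conic class $[D]\in\NS(X)$ defined over $k$ whose general member is ramified at the nonreduced fiber $F$. First I would translate the ramification requirement into numerics. Since $-K_X$ is linearly equivalent to any fiber of $\pi$ (Theorem~\ref{thm:RES_distinguished_properties}), every conic class satisfies $D\cdot F=D\cdot(-K_X)=2$. Writing $F=\sum_i\mu_i\Theta_i$ as in Table~\ref{table:indices_reducible_fibers}, the restriction $\pi|_C$ of a general conic $C$ is ramified over the point carrying $F$ exactly when $C$ meets $F$ in one point with local multiplicity $2$; the clean way to force this is to demand a component $\Theta$ of $F$ with $\mu(\Theta)=2$ that is a \emph{section} of $\varphi_{|D|}$, i.e. $\Theta\cdot D=1$. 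Indeed $\Theta\cdot D=1$ together with $\mu(\Theta)=2$ already accounts for all of $D\cdot F=2$, so the general conic meets $F$ only along $\Theta$ and does so with multiplicity $2$. Thus the whole proposition reduces to: in each case, realize some multiplicity-$2$ component $\Theta$ of $F$ as a section of a conic bundle defined over $k$.

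Next I would carry out the construction uniformly. Fixing such a $\Theta$, I would assemble an effective representative $D$ of the desired class as a conic-bundle fiber supported on the components of $F$ other than $\Theta$ together with a section of $\pi$ (the neutral section $O$ when nothing else is needed), arranged along the extended Dynkin diagram of $F$ so that $\Theta\cdot D=1$ and $D\cdot\Theta'=0$ for every remaining component $\Theta'$. That $[D]$ is genuinely a conic class is then a purely combinatorial check: one verifies that the intersection graph of $D$ is one of the admissible shapes $A_n$, $D_3$ or $D_m$, and the converse direction of Theorem~\ref{thm:classification_conic_bundles} does the rest. The only delicate point is descent: $D$ must be stable under $G=\mathrm{Gal}(\overline{k}/k)$. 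Here Lemma~\ref{lemma:support_RNRF} is exactly the needed input. For $F=\mathrm{II}^*$ or $\mathrm{III}^*$ every component is $k$-rational, so any such $D$ descends; for $F=\mathrm{I}_n^*$ with $n\in\{2,3,4\}$ the near and all nonreduced (multiplicity-$2$) components are $k$-rational, and the far pair $\Theta_2,\Theta_3$ --- possibly conjugate --- enters only through the Galois-stable sum $\Theta_2+\Theta_3$. This settles case (1), since there are enough $k$-rational multiplicity-$2$ components to serve as $\Theta$ and enough $k$-rational neighbours to build $D$.

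For $F=\mathrm{IV}^*$, $\mathrm{I}_1^*$ and $\mathrm{I}_0^*$ the chain cannot be completed over $k$ from the components of $F$ alone, because some of them are defined only over a quadratic (or, for $\mathrm{I}_0^*$, cubic) extension. This is precisely where hypotheses (2)--(6) enter: each supplies a $k$-rational curve replacing the offending component. A second reducible, reduced fiber in (2) provides a $k$-rational $(-2)$-curve on which to terminate a $D_3$-type bisection; a nontrivial $k$-section in (3) and (4), a conjugate pair of disjoint sections in (5), and a nontrivial $2$-torsion $k$-section in (6) each provide the section (or Galois-stable combination of sections) needed to balance the intersection graph over $k$ while preserving $\Theta\cdot D=1$. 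In every instance I would certify $G$-stability of $\mathrm{Supp}\,D$ using the intersection-preserving action of $G$ recorded as the fact \emph{I.F.} inside the proof of Lemma~\ref{lemma:support_RNRF}, and certify that the class is a conic class via Theorem~\ref{thm:classification_conic_bundles}.

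The main obstacle is exactly these extension-field configurations. There one must meet three constraints simultaneously: $D$ must descend to $k$ even though some components of $F$ do not; the bisection must have arithmetic genus $0$, so that $\varphi_{|D|}$ is a conic bundle rather than a higher-genus fibration; and the unique ramification point forced on the conics must sit over $F$ rather than over some other fiber. Hypotheses (2)--(6) are tailored so that all three can be achieved at once, and verifying this --- playing the field of definition of the auxiliary section against the rigid combinatorics of the extended Dynkin diagram of $F$ --- is the crux of the argument, with the bookkeeping of Lemma~\ref{lemma:support_RNRF} as the indispensable tool.
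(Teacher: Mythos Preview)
Your proposal is correct and follows essentially the same approach as the paper. The paper's proof is terse: it refers to an explicit table of effective divisors $D$ (one for each configuration in hypotheses (1)--(6)), checks that $D^2=0$, $D\cdot F=2$, and that every $D'\in|D|$ meets a nonreduced component of $F$, and invokes Lemma~\ref{lemma:support_RNRF} for Galois stability. Your write-up unpacks the same construction more conceptually --- explaining why the condition ``$\Theta\cdot D=1$ for some multiplicity-$2$ component $\Theta$'' forces ramification, and appealing to Theorem~\ref{thm:classification_conic_bundles} (the classification of conic-bundle fibers) to certify that the intersection graph shapes $A_n$, $D_3$, $D_m$ you assemble really are conic classes --- but the underlying divisors, the role of the extra hypotheses (2)--(6), and the descent via Lemma~\ref{lemma:support_RNRF} are identical.
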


\begin{proof}
For each possible configuration listed in the hypothesis of the theorem, we provide an effective divisor $D$ on $X$ (see Table~\ref{table:class-of-conics}) such that $D^2=0$, $D\cdot F=2$ and such that every $D'\in|D|$ intersects a nonreduced component of $F$. Moreover, by Lemma~\ref{lemma:support_RNRF}, the divisor $D$ is defined over $k$, i.e. it remains invariant by the action of $\text{Gal}(\overline{k}/k)$.

To prove that $\pi|_D:D\to\P^1$ is ramified above $F$, we only need to observe that each $D$ constructed meets $F$ at a nonreduced component. 
\end{proof}

\newpage

\begin{table}[h]
\begin{center}
\centering
\begin{tabular}{|c|c|c|c|} 
\hline
$\text{MW rank} $ & \text{nonreduced fiber} & \text{class of conics} & \text{extra information}\\
\hline
\multirow{4}{*}{\hfil 0} & \multirow{4}{*}{\hfil II$^*$} & \multirow{4}{*}{\hfil \DiagramOne} &  \multirow{4}{*}{\hfil }\\
& & &\\
& & &\\
& & &\\
\hline
\multirow{4}{*}{\hfil 0,1} & \multirow{4}{*}{\hfil III$^*$} & \multirow{4}{*}{\hfil \DiagramTwo} & \multirow{4}{*}{\hfil }\\ 
& & &\\
& & &\\
& & &\\
\hline
\multirow{3}{*}{\hfil 0} & \multirow{3}{*}{\hfil IV$^*$} & \multirow{3}{*}{\hfil \DiagramThree} & \multirow{3}{*}{\hfil }\\ 
& & &\\
& & &\\
\hline
\multirow{3}{*}{0} & \multirow{3}{*}{\hfil I$_4^*$} & \multirow{3}{*}{\hfil \DiagramFour} & \multirow{3}{*}{\hfil }\\ 
& & &\\
& & &\\
\hline
\multirow{3}{*}{\hfil 0,1} & \multirow{3}{*}{\hfil I$_3^*$} & \multirow{3}{*}{\hfil \DiagramFive} & \multirow{3}{*}{\hfil }\\
& & &\\
& & &\\
\hline
\multirow{3}{*}{\hfil 0,1,2} & \multirow{3}{*}{\hfil I$_2^*$} & \multirow{3}{*}{\hfil \DiagramSix} & \multirow{3}{*}{\hfil }\\
& & &\\
& & &\\
\hline
\multirow{2}{*}{\hfil 0} & \multirow{2}{*}{\hfil I$_1^*$} & \multirow{2}{*}{\hfil \DiagramSeven} & \multirow{2}{*}{\hfil }\\
& & &\\
\hline
\multirow{3}{*}{\hfil 1,2} & \multirow{3}{*}{\hfil I$_1^*$} & \multirow{3}{*}{\hfil \DiagramEight} & \multirow{3}{*}{\hfil }\\
& & &\\
& & &\\
\hline
\multirow{3}{*}{\hfil 3} & \multirow{3}{*}{\hfil I$_1^*$} & \multirow{3}{*}{\hfil \DiagramNine} & \multirow{3}{*}{\thead{$P_1,P_2$ conjugate sections\\ intersecting near\\ components}}\\
& & &\\
& & &\\
\hline
\multirow{3}{*}{\hfil 1} & \multirow{3}{*}{\hfil IV$^*$} & \multirow{3}{*}{\hfil \DiagramTen} & \multirow{3}{*}{\hfil }\\
& & &\\
& & &\\
\hline
\multirow{2}{*}{\hfil 2} & \multirow{2}{*}{\hfil IV$^*$} & \multirow{2}{*}{\hfil \DiagramEleven} & \multirow{2}{*}{\thead{$P_1,P_2$ non-intersecting\\ conjugate sections}}\\
& & &\\
\hline
\end{tabular}
\end{center}
\caption{Class of conics for the proof of Proposition~\ref{prop:RNRF}}
\label{table:class-of-conics}
\end{table}

\newpage

\begin{remark}\label{rmk:stillrational}
\indent\par
\begin{enumerate}[i)]
\item {\normalfont Conics in a RNRF-conic bundle are distinguished in the following sense. Generally, a degree 2 base change of a rational elliptic surface produces a K3 surface. This is true if the base change is ramified in smooth fibers, and remais true even if it ramifies at reduced singular fibers if one considers the desingularization of the base changed surface. On the other hand, if a degree 2 base change ramifies at a nonreduced fiber, then by an Euler number calculation one can show that the base changed surface is still rational (see Lemma \ref{lemma:againrational}).} 

\item {\normalfont It is natural to wonder about the extra conditions on surfaces with fibers of type $\text{IV}^*$, $\text{I}_1^*$, $\text{I}_0^*$. One can show that surfaces with such fiber configuration always admit a conic bundle over $k$. Nevertheless, such a conic bundle is not necessarily a RNRF-conic bundle. It is worth noticing that, on the other hand, the isotrivial rational surfaces with $2$I$_0^*$ admit a RNRF-conic bundle. In this particular setting it is called a Châtelet bundle (see \cite[Lemma 3.3]{LoughSalgado}) since they occur as conic bundles on a Châtelet surface obtained after blowing down the sections of $\pi$. Still, they are useless for our purposes as all fibers ramify above the same $2$ nonreduced fibers, not leaving the degree of freedom needed to avoid certain covers when verifying that the rank jump occurs on a not thin subset.}
\end{enumerate}
\end{remark}

The following result justifies the study of RNRF-conic bundles on rational elliptic surfaces. It tells us that the base change of a rational elliptic fibration by a bisection in an RNRF-conic bundle is again a rational elliptic fibration. This allows us to apply \cite[Thm. 1.1]{LoughSalgado} to the base-changed rational elliptic surface and achieve a higher rank jump. 

\begin{lemma}\label{lemma:againrational}
Let $\pi:X\to\Bbb{P}^1$ be a rational elliptic surface with only one nonreduced fiber $F$. Let $D\subset X$ be a genus $0$ bisection ramified at $F$, and let $X_D$ be the normalization of the base change surface $X\times_{\P^1} D$. Then $X_D$ is a rational elliptic surface and $|D|$ induces a conic bundle on $X_D$.
\end{lemma}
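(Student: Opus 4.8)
The plan is to establish the two assertions separately. First I would show that $X_D$ is a geometrically rational elliptic surface by an Euler number computation, and second that the conic bundle $\varphi_{|D|}$ on $X$ pulls back to a conic bundle on $X_D$, which is where the ramification hypothesis does the essential work. For the first assertion, note that $D$ has arithmetic genus $0$ and is a bisection, so $D\cong\P^1$ and $\varphi:=\pi|_D:D\to\P^1$ is finite of degree $2$; by Riemann--Hurwitz it has exactly two branch points. Since $X$ is smooth and $\varphi$ is a quadratic base change ramified only at fibers, the normalization $X_D$ is a smooth, relatively minimal elliptic surface over $D\cong\P^1$, whose fibers above branch points are governed by Table~\ref{table:quadratic_base_change} (cf. \cite[VI.4]{Miranda}). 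As the base of $\pi_D:X_D\to D$ is $\P^1$, by Lemma~\ref{lemma:euler_number_12} it suffices to prove $e(X_D)=12$.

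I would compute $e(X_D)$ by summing fiber Euler numbers. Over each point of $\P^1$ that is not a branch point there lie two points of $D$, each carrying a fiber isomorphic to the corresponding fiber of $\pi$; over each branch point $t^\ast$ there is a single fiber, of the type dictated by Table~\ref{table:quadratic_base_change}. Grouping the sum gives
$$e(X_D)=2\,e(X)-\sum_{t^\ast}\bigl(2\,e(\pi^{-1}(t^\ast))-e(\pi_D^{-1}(t^\ast))\bigr).$$
A case check against the table and the Euler number formula shows that the correction $2\,e(\pi^{-1}(t^\ast))-e(\pi_D^{-1}(t^\ast))$ vanishes whenever $\pi^{-1}(t^\ast)$ is reduced (e.g. $\text{I}_n\to\text{I}_{2n}$, $\text{II}\to\text{IV}$, $\text{III}\to\text{I}_0^\ast$, $\text{IV}\to\text{IV}^\ast$) and equals $12$ whenever $\pi^{-1}(t^\ast)$ is nonreduced (e.g. $\text{I}_n^\ast\to\text{I}_{2n}$ gives $2(n+6)-2n$, and $\text{II}^\ast\to\text{IV}^\ast$ gives $20-8$). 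Because $F$ is the unique nonreduced fiber of $\pi$ and $D$ is ramified at $F$, one branch point lies over $F$ (correction $12$) and the other over a reduced fiber (correction $0$). Hence $e(X_D)=2\cdot 12-12-0=12$, and $X_D$ is geometrically rational.

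For the second assertion, let $p_1:X_D\to X$ be the degree $2$ projection and consider the composite $\psi:=\varphi_{|D|}\circ p_1:X_D\to\P^1$, equivalently the pencil $p_1^\ast|D|$. Its general fiber is the normalization of $C\times_{\P^1}D$ for a general conic $C\in|D|$, a double cover of $C\cong\P^1$ whose branch points are those points of $C$ lying over a branch point of $\varphi$ at which $\pi|_C$ is unramified. The key computation is local at $t_F=\pi(F)$: the RNRF hypothesis forces $C\cdot\Theta=1$ on the multiplicity-two component $\Theta$ of $F$ with $C\cdot F=2$ concentrated at that point, so $\pi|_C$ ramifies there with the same index as $\varphi$; the fiber product of two identically ramified double covers is \'etale at that point (locally $s^2=u^2$ splits into two smooth branches), contributing no branch point. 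Over the second branch point $b$, a general $C$ meets $\pi^{-1}(b)$ transversally in two points, each producing a branch point. Thus the cover is branched at exactly two points, has genus $0$ and is connected, so $\psi$ is a conic bundle induced by $|D|$.

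The main obstacle is precisely this local analysis at the common ramification point $t_F$: one must check carefully that $C\cdot F=2$ together with transversal intersection at a multiplicity-two component forces $\pi|_C$ to ramify there with index exactly $2$, and that this renders the fiber product \'etale rather than ramified. This single cancellation is what drops the general fiber from genus $1$ (the outcome of a generic quadratic base change, which would yield a K3 surface) to genus $0$, and it is where the hypothesis that $D$ ramifies at a \emph{nonreduced} fiber is indispensable. A secondary technical point, to be dispatched by standard elliptic surface theory, is that the normalization of $X\times_{\P^1}D$ is already the smooth relatively minimal model, so that Table~\ref{table:quadratic_base_change} applies verbatim in the Euler number step.
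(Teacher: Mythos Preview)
Your proposal is correct and follows essentially the same route as the paper: an Euler number count using Table~\ref{table:quadratic_base_change} to get $e(X_D)=12$, and a Hurwitz computation on the normalization of $C\times_{\P^1}D$ to show the pulled-back pencil has genus $0$ fibers. Your local analysis at $t_F$ is in fact more explicit than the paper's---you correctly identify that the two ramification points of $E\to C$ lie over the \emph{second} branch point of $\varphi$ (not of $\psi$, as the paper somewhat loosely writes), with the common ramification at $t_F$ cancelling under normalization.
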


\begin{proof}
Let $\varphi:=\pi|_D:D\to \Bbb{P}^1$ be the base change map. The curve $D$ is rational, so by Hurwitz formula $\varphi$ has two branch points. By hypothesis, these two points correspond to $F$ and some other reduced fiber. By inspection of possible singular fibers in a rational elliptic surface \cite{Persson}, $F$ is one of the following:
$$\text{I}_n^*\,\text{with }0\leq n\leq 4,\,\text{II}^*,\,\text{III}^*\text{ or IV}^*.$$

By the ramification of $\varphi$, there is precisely one fiber $F'$ in $X_D$ above $F$. An inspection of Table~\ref{table:quadratic_base_change}, confirms a pattern for the Euler number, namely, $e(F')=2\,e(F)-12$ ($*$).

Moreover, if $G$ is any singular fiber of $\pi$ distinct from $F$, we claim that $\sum_{G'}e(G')=2e(G)$ ($**$), where $G'$ runs through the fibers of $X_D$ above $G$ (since $\varphi$ has degree $2$, there are at most $2$ possibilities for $G'$). Indeed, if there is no ramification associated to $G$, then there are two fibers $G_1',G_2'$ isomorphic to $G$, hence $e(G_1')+e(G_2')=2\,e(G)$. In case there is ramification, there is only one possibility for $G'$ and $e(G')=2\,e(G)$ for all cases by Table~\ref{table:quadratic_base_change}. This proves $(**)$.
\newpage
We prove that $X_D$ is rational by checking that $e(X_D)=12$ and applying Lemma~\ref{lemma:euler_number_12}. Since $X$ is rational, then $e(X)=12$ and combining ($*$), ($**$) we get
\begin{align*}
e(X_D)&=e(F')+\sum_{G'\neq F'}e(G')\\
&=e(F')+\sum_{G\neq F}\sum_{G'}e(G')\\
&=2\,e(F)-12+2\sum_{G\neq F} 2\,e(G)\\
&=2\,e(X)-12\\
&=12,
\end{align*}

as desired. In order to prove that $|D|$ induces a conic bundle on $X_D$, take an arbitrary $C\in |D|$ and let $\psi:=\pi|_C:C\to\P^1$. As in $\varphi$, the map $\psi$ has $2$ branch points. Let $\nu:X_D\to X\times_{\P^1} D$ be the normalization map and $E\subset X_D$ the strict transform of $C\times_{\P^1} D$ under $\nu$. We prove that $E$ is smooth of genus $0$, so that $|E|$ induces a conic bundle on $X_D$. We have the following diagram:

\begin{displaymath}
\begin{tikzcd}
& \arrow[bend right]{ddl} E\arrow{d}{\nu}&\\
   & \arrow[swap]{dl}{\varphi'}C\times_{\P^1} D\arrow{dr}{\psi'} &\\
C\arrow[swap]{dr}{\psi}  &  & D\arrow{dl}{\varphi}\\
  & \P^1 &
\end{tikzcd}
\end{displaymath}

The singularities of $C\times_{\P^1} D$ are the points $(c,d)\in C\times D$ such that $\psi,\varphi$ ramify at $c,d$ respectively. These are also singular points of $X\times_{\P^1} D$, which are eliminated by $\nu$, hence $E$ is smooth. Since $\varphi$ has degree $2$, then $E\to C$ has degree $2$ and, moreover, has $2$ branch points, namely the ones where $\psi$ ramifies. By Hurwitz formula, $g(E)=0$ as desired. 
\end{proof}

\newpage

\section{RNRF and multiple base changes}\label{section:RNRF_and_multiple_base_changes}\
\indent In \cite{Salgado12} and \cite{LoughSalgado}, the authors use $2$ base changes by curves in a conic bundle to show that the rank jumps by at least $2$. The final base for the base changed fibration is an elliptic curve with positive Mordell-Weil rank. One is tempted to consider a third base change to increase the rank jump further. Unfortunately, this cannot be done in such generality. Indeed, the genus of a new base after a third base change would be at least $2$. In particular, the base curve would have a finite set of $k$-points thanks to Faltings' theorem \cite{Faltings}.

Rational elliptic surfaces with a nonreduced fiber on the other hand allow for a sequence of 3 base changes with final base curve of genus 1. Further hypotheses on the surface allow us to show that the genus 1 curve is an elliptic curve with positive Mordell--Weil rank. Indeed, Lemma \ref{lemma:againrational} assures that the first base change by an RNRF-conic bundle produces a surface that is again rational and moreover admits a conic bundle. In other words, admits a bisection of arithmetic genus $0$ as in the hypothesis of \cite[Theorem 2]{LoughSalgado}. One can then take the latter as the starting point and apply \cite[Theorem 2]{LoughSalgado} to conclude. This is explained in detail in the following section.

\section{Rank Jump three times}\label{section:rank_jump_3_times}\
\indent In this section we make use of an RNRF-conic bundle on a rational elliptic surface with a unique nonreduced fiber to show that the collection of fibers for which the Mordell--Weil rank is at least the generic rank plus 3 is not thin as a subset of the base of the fibration.

Throughout this section,  $\pi: X\rightarrow \mathbb{P}^1$ is a geometrically rational elliptic surface with a unique nonreduced fiber $F$ and $D$ is a bisection of $\pi$ such that $|D|$ is a RNRF-conic bundle.

We let $\psi_i:Y_i\rightarrow \P^1$, for $i\in I$, be an arbitrary finite collection of finite morphisms of degree $\geq 2$, as in the discussion at the end of Section~\ref{section:thin_sets_Hilbert_property}. Our goal is to show that there is a curve $X \supset C {\overset{\varphi} \rightarrow} \P^1$ such that:
\begin{enumerate}
\item $C(k)$ is infinite;
\item $\textrm{rank}(X\times_{\P^1} C)(k(C)) \geq r+3$;
\item $\exists P\in\varphi(C(k))\subset \P^1(k)$ such that $P\notin\bigcup_i \psi(Y_i(k))$.
\end{enumerate}

\begin{lemma}\label{lemma:genus1}
Assume $\pi:X\to\P^1$ has a unique non-reduced fiber. Then for all $D_1\in |D|$, there are infinitely many pairs $(D_2,D_3)\in |D|\times |D|$ such that $D_1 \times_{\P^1} D_2\times_{\P^1} D_3$ is a curve of genus $1$.
\end{lemma}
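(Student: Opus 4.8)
\emph{Setup and the branch divisors.} First I would fix the notation $t_F := \pi(F) \in \P^1(k)$ for the point carrying the unique nonreduced fiber. By Lemma~\ref{lemma:bisection} every member $D' \in |D|$ is a bisection of $\pi$ of arithmetic genus $0$; discarding the finitely many reducible members of the conic bundle, a general $D'$ is a smooth rational curve and $\pi|_{D'}\colon D' \to \P^1$ is a finite morphism of degree $2$. By the Riemann--Hurwitz formula such a morphism has exactly two branch points, and by the defining property of an RNRF-conic bundle (Definition~\ref{def:RNRF}) one of them is $t_F$; I write $b_{D'}$ for the other. In an affine coordinate $t$ the cover $\pi|_{D'}$ is then cut out by $\sqrt{g_{D'}}$ with $g_{D'}\equiv (t-t_F)(t-b_{D'})$ modulo squares in $k(t)^\times$. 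This defines a map $\beta\colon |D| \dashrightarrow \P^1$, $D'\mapsto b_{D'}$, which is defined over $k$: for $D'$ defined over $k$ the branch divisor is the $k$-rational degree-$2$ divisor $t_F+b_{D'}$, so $b_{D'}\in\P^1(k)$ as well.

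\emph{The key step: $\beta$ is non-constant.} This is where the hypotheses enter and is the only genuinely delicate point. Suppose $\beta$ were constant, say $\beta\equiv c$. Then \emph{every} conic of the bundle would ramify over the fiber $\pi^{-1}(c)$ in addition to ramifying over $F$. By \cite[Lemma~2.10]{LoughSalgado} (compare Remark~\ref{rmk:stillrational}~ii)) a ramification point common to all conics of a conic bundle can only lie over a nonreduced fiber, so $\pi^{-1}(c)$ would be nonreduced. Since the two branch points $t_F$ and $c$ of a degree-$2$ cover are distinct, $\pi$ would then have two distinct nonreduced fibers, contradicting the uniqueness of $F$. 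Hence $\beta$ is non-constant, therefore finite onto its image, so $\beta\bigl(|D|(k)\bigr)$ is infinite. For the fixed $D_1$, writing $b_1:=b_{D_1}$, I could then choose infinitely many $k$-rational pairs $(D_2,D_3)$ with $b_2:=b_{D_2}$, $b_3:=b_{D_3}$ so that $t_F,b_1,b_2,b_3$ are pairwise distinct.

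\emph{The genus computation.} For such a triple I would compute the genus of $W:=D_1\times_{\P^1}D_2\times_{\P^1}D_3$ by climbing the tower of double covers $\P^1\leftarrow D_1 \leftarrow Z:=D_1\times_{\P^1}D_2 \leftarrow W$. The mechanism used at each stage is that pulling back the cover $\sqrt{(t-t_F)(t-b_i)}$ along a cover $Y\to\P^1$ that is already ramified to \emph{even} order over $t_F$ produces a cover unramified over $t_F$: there $(t-t_F)$ acquires even order while $(t-b_i)$ is a unit, so the radicand is a square times a unit. Concretely $g(D_1)=0$; the cover $Z\to D_1$ is the pullback of $\sqrt{(t-t_F)(t-b_2)}$ and is ramified only over the two points of $D_1$ above $b_2$, so $2g(Z)-2=2\bigl(2g(D_1)-2\bigr)+2=-2$, i.e. $g(Z)=0$; finally $W\to Z$ is the pullback of $\sqrt{(t-t_F)(t-b_3)}$ and is ramified exactly over the four points of $Z$ above $b_3$, whence $2g(W)-2=2\bigl(2g(Z)-2\bigr)+4=0$ and $g(W)=1$. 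Irreducibility is automatic: $g_1,g_2$ and $g_1g_2\equiv(t-b_1)(t-b_2)$ are all non-squares (the shared factor $t-t_F$ cancels), so $Z$ is an irreducible degree-$4$ cover, and a double cover of an irreducible curve with non-empty branch locus is irreducible. As $(D_2,D_3)$ ranges over an infinite set, this yields the lemma.

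\emph{On the main obstacle.} The cancellation of the shared branch point $t_F$ along the tower is exactly what separates the RNRF situation from a general conic bundle: without a common branch point, $Z=D_1\times_{\P^1}D_2$ would already pick up four branch points over $\P^1$ and hence genus $1$, blocking a third base change, as recalled in Section~\ref{section:RNRF_and_multiple_base_changes}. Thus the whole argument rests on the non-constancy of $\beta$ together with the even-order vanishing of $t-t_F$; only the former requires genuine geometric input, namely \cite[Lemma~2.10]{LoughSalgado} and the uniqueness of the nonreduced fiber, so I expect that to be the crux of the proof while the genus count is routine Riemann--Hurwitz bookkeeping.
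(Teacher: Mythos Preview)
Your proof is correct and follows essentially the same route as the paper's: fix the common branch point $t_F$, choose $D_2,D_3$ whose second branch points $b_2,b_3$ are distinct from $b_1$ and from each other, observe that $D_1\times_{\P^1}D_2$ has genus $0$ because the shared ramification over $t_F$ cancels, and then apply Riemann--Hurwitz to the last double cover (ramified at four points) to obtain genus $1$. The one place where you are more careful than the paper is the non-constancy of $\beta$: the paper simply asserts that only finitely many $D_2,D_3$ can share both branch points with $D_1$, while you justify this via \cite[Lemma~2.10]{LoughSalgado} and the uniqueness of the nonreduced fiber --- a welcome clarification, since this is exactly the point where the hypothesis is consumed.
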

\begin{proof}
For a given a $D_1$, there are only finitely many $D_2,D_3\in |D|$ such that $D_2$ or $D_3$ share 2 common ramification points with $D_1$. We exclude these. The 3 quadratic extensions $k(D_i)$ are non-isomorphic as they have precisely one ramification point in common. Hence they are linearly disjoint and the curve  $C=D_1\times_{\P^1} D_2 \times_{\P^1} D_3$ is geometrically integral. Our hypothesis on the common ramification implies moreover that the curve $D_1\times_{\P^1} D_2$ has genus $0$ (see \cite[Lemma 5.2]{LoughSalgado}). Let $t, t_i\in \P^1(k)$ be the two branch points of $D_i \to \P^1$ where $t$ is the common branch point corresponding to the nonreduced fiber, and $t_i\neq t_j$, for $i\neq j$. Consider the degree $2$ morphism $\phi: D_1\times_{\P^1} D_2 \times_{\P^1} D_3 \to D_1\times_{\P^1} D_2$. Then $\phi$ is ramified at the four points above $t_3 \in \P^1(k)$. A direct application of the Hurwitz formula gives that $C$ is a curve of genus $1$.
\end{proof}

\newpage

\begin{prop}\label{prop:proof}
Assume $\pi:X\to\P^1$ has a unique nonreduced fiber and let $\mathcal{P}=\{P_1,\cdots, P_m\}$ be a finite subset of $\P^1_k$. Then there are infinitely many $D_1, D_2, D_3\in |D|$ such that:
\begin{itemize}
\item[a)] $C:=D_1\times_{\P^1} D_2\times_{\P^1} D_3$ is an elliptic curve with positive Mordell-Weil rank over $k$;
\item[b)] $k(D_1)\otimes k(D_2)\otimes k(D_3)$ is linearly disjoint with every $k(Y_i)$;
\item[c)] The rank of the generic fiber of $X_C\rightarrow C$ is at least $r+3$;
\item[d)] $C$ ramifies above exactly one of $P_j$.
\end{itemize}
\end{prop}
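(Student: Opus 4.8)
The plan is to realize $C$ through three successive quadratic base changes, each contributing one unit to the Mordell--Weil rank, and to thread the linear-disjointness and ramification bookkeeping through the infinitely many free choices available at each stage, following the recursive structure already sketched in Section~\ref{section:RNRF_and_multiple_base_changes}. First I would fix a general $D_1\in|D|$. By Lemma~\ref{lemma:againrational} the normalized base change $X_{D_1}$ is again a rational elliptic surface, with base $D_1\cong\P^1$, and the pencil $|D|$ induces a conic bundle on it; in particular each further member of $|D|$ furnishes a genus-$0$ bisection of $X_{D_1}\to D_1$. Applying the pencil rank-jump theorem (Theorem~\ref{thm:rank_jump_pencil}, equivalently Theorem~\ref{thm:pencil_general}) to the base change of $X$ by $D_1$ shows that, for all but finitely many $D_1$, the generic rank of $X_{D_1}\to D_1$ is at least $r+1$.

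For the remaining two base changes I would use that $X_{D_1}$ is a rational elliptic surface carrying a conic bundle, hence by Lemma~\ref{lemma:bisection} a bisection of arithmetic genus $0$ over $k$ --- exactly the hypothesis of \cite[Theorem~2]{LoughSalgado}. Applying that result to $X_{D_1}$ produces, for infinitely many choices $D_2,D_3\in|D|$, a double base change whose base is the curve $C=D_1\times_{\P^1}D_2\times_{\P^1}D_3$, with $C$ of positive Mordell--Weil rank over $k$ and with the generic rank of $X_C\to C$ exceeding that of $X_{D_1}\to D_1$ by at least $2$; together with the first step this yields rank $\ge(r+1)+2=r+3$, which is (c), while the positive-rank conclusion is precisely (a). The genus of $C$ is controlled independently by Lemma~\ref{lemma:genus1}: because all three $D_i$ share the common branch point over $F$, the triple fiber product has genus $1$ rather than $\ge2$, and the unique $k$-rational point of $C$ lying over that common branch point shows $C$ is genuinely an elliptic curve, so $C(k)$ is infinite.

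It remains to secure (b) and (d), where I would exploit that each step leaves infinitely many admissible members of $|D|$ while the finite collection $\{Y_i\}$ and the finite set $\mathcal P$ impose only finitely many exclusions. Writing $t$ for the branch point over $F$ (common to every $D_i$) and $t_i$ for the second branch point of $\pi|_{D_i}$, the cover $C\to\P^1$ is branched exactly at $\{t,t_1,t_2,t_3\}$. For (b) I would choose the $D_i$ so that $t_1,t_2,t_3$ are distinct and avoid the finite branch loci of the $\psi_i:Y_i\to\P^1$; then each $k(D_i)/k(\P^1)$ ramifies at a place where every $k(Y_i)$ is unramified, forcing the three quadratic extensions to be independent and their compositum $k(C)$ to be linearly disjoint from each $k(Y_i)$. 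For (d), the same freedom --- together with the fact that the free branch point of a member of $|D|$ varies nonconstantly as the member moves in the pencil --- lets me prescribe exactly one of the $t_i$ to meet $\mathcal P$ while keeping the others off it, so that precisely one element of $\mathcal P$ appears among the branch points of $C\to\P^1$. All of these are cofinite conditions on the choices, so infinitely many triples survive them simultaneously.

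The step I expect to be the main obstacle is the accumulation of rank in the second paragraph: one must verify that the iterated base changes compose correctly, so that the genus-$1$ base curve output by \cite[Theorem~2]{LoughSalgado} applied to $X_{D_1}$ is \emph{exactly} the triple fiber product $C$, and that the independence of all three new sections together with the positive rank of $C$ persist for the same infinitely many triples. Concretely this means matching the bisection used on $X_{D_1}$ in \cite{LoughSalgado} with members of the original pencil $|D|$ on $X$, and checking that the finitely many exceptional members excluded at each stage --- including those forced by (b) and (d) --- never exhaust the pencil. By comparison the genus count (Lemma~\ref{lemma:genus1}) and the avoidance arguments are routine once this rank accumulation is established.
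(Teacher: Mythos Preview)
Your approach is correct and essentially parallel to the paper's, but with the order of the three base changes grouped differently. You perform them as $1+2$: first a single RNRF base change by $D_1$, invoking Lemma~\ref{lemma:againrational} to land on another rational elliptic surface $X_{D_1}$ with an induced conic bundle, then quote \cite{LoughSalgado} as a black box on $X_{D_1}$ for the remaining two base changes and the $+2$ rank jump. The paper instead does $2+1$: it first picks $D_1,D_2$ so that $D_1\times_{\P^1}D_2$ is a genus-$0$ base with rank $\geq r+2$ and Zariski-dense rational points (as in \cite{LoughSalgado}), then lets $D_3$ vary in $|D|$ and applies Theorem~\ref{thm:rank_jump_pencil} once more to obtain rank $\geq r+3$, with Lemma~\ref{lemma:genus1} controlling $g(C)=1$ and the Zariski density giving positive rank on infinitely many $C$. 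The fiber-product bookkeeping you flagged as the main obstacle is exactly the point where the two routes meet: the bisections of $X_{D_1}\to D_1$ furnished by the induced conic bundle are the curves $D_i\times_{\P^1}D_1$, and their fiber product over $D_1$ is canonically $D_1\times_{\P^1}D_2\times_{\P^1}D_3$, so both orderings produce the same $C$.

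One place your argument is less clean than the paper's is item (d). You propose forcing exactly one of the free branch points $t_i$ into $\mathcal P$; but this pins that particular $D_i$ to a \emph{finite} set of members of $|D|$, which sits uneasily with your earlier ``fix a general $D_1$'' and with the cofinite exclusions from Theorem~\ref{thm:rank_jump_pencil}. The paper sidesteps this entirely: it first enlarges $\mathcal P$ to contain the singular locus of $\pi$ (hence the common branch point $P_1=t$ over the nonreduced fiber) and all branch points of the $Y_i$. Then every $D_i$ automatically has one branch point in $\mathcal P$, and one simply keeps the free branch points $t_1,t_2,t_3$ \emph{off} $\mathcal P$, which is a genuine cofinite condition. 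This simultaneously delivers (b), since each $k(D_i)$ then ramifies at $t_i$ where every $k(Y_i)$ is unramified, and (d), since $C\to\P^1$ ramifies over $\mathcal P$ only at $P_1$.
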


\begin{proof}
This proof is analogous to the proof of \cite[Prop. 4.1]{LoughSalgado}. We may assume that $\mathcal{P}$ contains all branch points of $Y_i$ and the singular locus of $\pi$. In particular, it contains a ramification point of all conics in $|D|$. We call this point $P_1$. After choosing $D_1$ and $D_2$ in an infinite set such that the rank of the generic fiber of the elliptic fibration $\pi_{12}: X_{D_1\times_B D_2}\rightarrow D_1\times_{\P^1} D_2$ is at least $r+2$ and $D_1\times_{\P^1} D_2(k)$ is infinite and $X_{D_1\times_{\P^1} D_2}(k)$ is Zariski dense as in \cite{LoughSalgado}, allowing $D_3$ to vary in $|D|$ gives by Lemma~\ref{lemma:genus1} an infinite family of elliptic curves with positive Mordell-Weil rank that are bisections of $\pi_{12}$. By Theorem~\ref{thm:rank_jump_pencil}, all but finitely many of such curves can be used to base change and obtain an elliptic surface $X_C\rightarrow C$ with $C=D_1\times_{\P^1} D_2\times_{\P^1} D_3$ and generic fiber of rank at least $r+3$.  

The fibration $\pi$ has a unique nonreduced fiber so after excluding finitely many curves when picking $D_1,D_2$ and $D_3$ we may assume that $D_i$'s are not ramified over other singular fibers of $\pi$, nor do they share ramification points with $Y_i$ other than possibly $P_1$. This proves d).
\end{proof}

The following result is parallel to \cite[Lemma 5.5]{LoughSalgado}. Since we construct conic bundles whose members are always ramified at a nonreduced singular fiber, we need to reprove the result. Fortunately, that comes with no cost as elliptic fibrations defined over global fields have at least two singular fibers.

\begin{lemma}
The elliptic surface $X\times_{\P^1} C\rightarrow C$ is nonconstant, i.e. not isomorphic to $E\times C\to C$, where $E$ is an elliptic curve.
\end{lemma}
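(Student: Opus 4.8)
The plan is to exhibit a single singular fiber on $X\times_{\P^1}C\to C$, since a constant fibration $E\times C\to C$ has \emph{every} fiber isomorphic to the smooth curve $E$ and hence has no singular fibers at all. Thus nonconstancy follows immediately once we locate one fiber of $X\times_{\P^1}C\to C$ that is not smooth. This is the reason the remark preceding the statement points out that elliptic fibrations over global fields carry at least two singular fibers: the original fibration $\pi:X\to\P^1$ certainly has singular fibers (for instance its unique nonreduced fiber $F$, and in any case $e(X)=12$ by Theorem~\ref{thm:RES_distinguished_properties} forces $\sum_v e(F_v)=12$, so reducible or otherwise singular fibers must occur), and we only need one of them to survive the base change.

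The key structural input is that $C=D_1\times_{\P^1}D_2\times_{\P^1}D_3$ comes equipped with a finite surjective morphism $\rho:C\to\P^1$ of degree $8$ (each $\pi|_{D_i}:D_i\to\P^1$ has degree $2$), and that the fiber of $X\times_{\P^1}C\to C$ over a point $c\in C$ is, by the definition of the fiber product, exactly $\pi^{-1}(\rho(c))$. First I would fix a point $t_0\in\P^1$ over which $\pi^{-1}(t_0)$ is singular and choose any $c_0\in C$ with $\rho(c_0)=t_0$, which exists by surjectivity of $\rho$. Then the fiber over $c_0$ is $\pi^{-1}(t_0)$, which is singular. Passing to the relatively minimal smooth model (the normalization followed by resolution, as in Lemma~\ref{lemma:againrational}) does not smooth this fiber: if $t_0$ is not a branch point of $\rho$ the fiber is an isomorphic copy of $\pi^{-1}(t_0)$, and if $t_0$ is a branch point the fiber type transforms according to Table~\ref{table:quadratic_base_change}, but in every entry of that table a singular fiber is sent to a singular fiber. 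Hence $X\times_{\P^1}C\to C$ has a singular fiber and cannot be isomorphic to $E\times C\to C$.

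The main obstacle I anticipate is purely bookkeeping rather than conceptual: one must be careful that the object under discussion is the smooth relatively minimal elliptic surface associated with $X\times_{\P^1}C$, so that ``singular fiber'' is meaningful in the Kodaira sense, and that the base-change transformations at branch points are read off correctly from Table~\ref{table:quadratic_base_change}. Since every singular Kodaira type maps to another singular type under quadratic base change, and since $\pi$ has at least two singular fibers while the conics in $|D|$ ramify at only two points of $\P^1$ (one being the nonreduced fiber $F$), there is no danger of exhausting the singular locus. I would therefore conclude by combining the existence of a surviving singular fiber with the triviality of the fiber structure of a product, which contradicts the hypothesis that $X\times_{\P^1}C\to C$ is constant.
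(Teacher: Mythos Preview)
Your strategy---show that some singular fiber of $\pi$ survives the base change---is exactly the paper's approach. However, two details in your execution are not quite right.

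First, the claim that ``in every entry of that table a singular fiber is sent to a singular fiber'' is false: Table~\ref{table:quadratic_base_change} sends $\text{I}_n^*$ to $\text{I}_{2n}$, so in particular $\text{I}_0^*$ becomes the smooth type $\text{I}_0$. Thus you cannot argue purely by appeal to the table. Second, $\rho:C\to\P^1$ has degree $8$, not $2$, and its branch locus consists of four points $\{t,t_1,t_2,t_3\}$ (the common point $t$ under the nonreduced fiber and one extra point for each $D_i$), not two; so Table~\ref{table:quadratic_base_change} does not apply directly, and your remark about ``only two ramification points'' conflates a single conic with the fiber product $C$.

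The paper sidesteps both issues by using the choice of $C$ made in Proposition~\ref{prop:proof}: since $\pi$ has at least two singular fibers and only one nonreduced one, there is a \emph{reduced} singular fiber, and the $D_i$ were selected so that none of them ramifies over it. Hence $\rho$ is unramified there, the pullback consists of isomorphic copies of that reduced singular fiber, and the base-changed surface has a singular fiber. Your final paragraph gestures at this, but you should make the reliance on the unramified reduced singular fiber explicit rather than leaning on Table~\ref{table:quadratic_base_change}.
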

\begin{proof}
The surface $X\to \P^1$ is a relatively minimal geometrically rational elliptic surface defined over a global field. In particular, it has at least $2$ singular fibers. Hence there is at least $1$ reduced singular fiber $F$. Since we chose $C$ such that $C\to \P^1$ is not ramified at $F$, the pull-back of $F$ to $X\times_{\P^1} C\rightarrow C$ is a singular fiber.
\end{proof}

We have at hand all the tools needed to prove our main result.

\begin{proof}[Proof of Theorem~\ref{thm:rank_jump_3_times}]
We choose $C$ as in Proposition~\ref{prop:proof}. By construction, the curves $D_1, D_2, D_3$, and $Y_i$ are smooth and the respective maps to $\P^1$ share at most one branch point. On the other hand, the map $C\times_{\P^1} Y_i \to C$ is branched on the ramification points of $Y_i\to \P^1$. A direct application of the Riemann-Hurwitz formula gives $g(C\times_{\P^1} Y_i)\geq 2$. In particular, by Faltings' theorem \cite{Faltings}, $C\times_{\P^1} Y_i(k)$ is finite. To conclude, we invoke part c) of Proposition~\ref{prop:proof} and apply Theorem~\ref{thm:rank_jump_pencil} to the nonconstant elliptic surface $X\times_{\P^1} C\to C$.  
\end{proof}

\newpage

\section{Examples}\label{section:examples_rank_jumps}\
\begin{example}
Let $X$ be an elliptic surface with Weierstrass equation \[y^2=x^3+a(t)x+b(t),\] with $\deg a(t), b(t)\leq 1$ and $a(t)$ and $b(t)$ not simultaneously constant. Then $X$ admits a nonreduced fiber at infinity. More precisely: if $\deg a(t)=1$ then $X$ admits a fiber of type $\text{III}^*$; and if $\deg a(t)=0$ then $X$ admits a fiber of type $\text{II}^*$. 

The surface $X$ admits a RNRF-conic bundle over the $x$-line. In case $ii)$ this is the unique conic bundle on the surface. A nice geometric description for this case is as follows. Let $C$ be a plane cubic with an inflection point $P$ defined over $k$. Let $L$ be the line tangent to $C$ at $P$. We consider the following pencil of plane cubics 
$$uC+t(3L)=0,\,\, (t:u)\in\P^1,$$

whose base locus is the nonreduced scheme $\{9P\}$. We consider its 9-fold blow up and obtain a rational elliptic surface with a fiber of type II$^*$ at infinity. The unique conic bundle on it is given by the strict transforms of the pencil of lines through $P$. By following the blow ups one sees readily that all conics intersect the unique double component of the fiber of type II$^*$ (Figure~\ref{figure:II*}).

\begin{figure}[!h]\label{figure:II*}
\begin{center}
\includegraphics[scale=0.65]{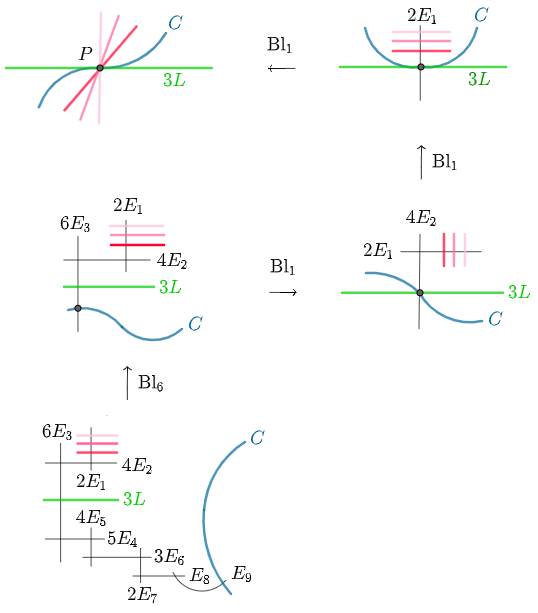}
\end{center}
\caption{RNRF-conic bundle (pink) on $X$ with a $II^*$ fiber.}\label{figure:II*}
\end{figure}
\end{example}

\newpage

\begin{example}\label{example2}
Let $F=y^2z-x^3+x^2z+xz^2-z^3$ and $G=z^2(y-z)$ be two plane cubics.  Let $P_1=(0:1:0)$ and $P_i=(x_i:1:1)$ with $x_i$, for $i=2,3,4$, the 3 roots of the polynomial $x^3-x^2-x$. Hence $F,G$ meet at the nonreduced scheme $F\cap G=\{6P_1,P_2,P_3,P_4\}$. Let $X$ be the blow up of $\mathbb{P}^2$ in $F\cap  G$. Then its (affine) Weierstrass equation is 
$$y^2-ty=x^3-x^2-x+(t-1).$$ 

In particular, $X$ has a fiber of type IV$^*$ at $t=\infty$ and, as expected by Proposition~\ref{prop:conic_bundles_from_Weierstrass_form}, it admits a conic bundle over the $x$-line. Geometrically, the fiber of type IV$^*$ is given by $G'-E_1-E_2-E_3-E_4$, where $G'$ is the proper transform of $G$ and $E_i$ is the exceptional divisor above $P_i$. If $D$ is a fiber of the conic bundle over the $x$-line then $D=\ell_1-E_1$, where $\ell_1$ is the proper transform of a line through $P_1$. In particular, for all lines through $P_1$, except the $3$ lines that pass through $P_2, P_3$ or $P_4$, the curve $D$ intersects the fiber IV$^*$ transversally in the simple component given by $m-E_2-E_3-E_4$ where $m$ is the proper transform of the line $y=z$, and in a simple component above the blow up of $P_1$. Hence the restriction of the elliptic fibration to all but $3$ conics is not ramified at IV$^*$ and, in particular, cannot all share a common ramification. In other words, the conic bundle over the $x$-line is not a RNRF-conic bundle.

Nevertheless, $X$ admits a RNRF-conic bundle, namely the one given by $|\ell_2-E_2|$, where $\ell_2$ is the proper transform of a line through $P_2$. Indeed, $\ell_2$ intersects the double component of IV$^*$ above the strict transform of $z^2=0$. Since $P_2=(0:1:1)$, the conic bundle is defined over $\Q$.
Thus there are infinitely many $t\in \mathbb{Q}$ such that $r_t\geq 1+3=4$ (Figure~\ref{figure:IV*}).

\begin{figure}[h]
\begin{center}
\includegraphics[scale=0.6]{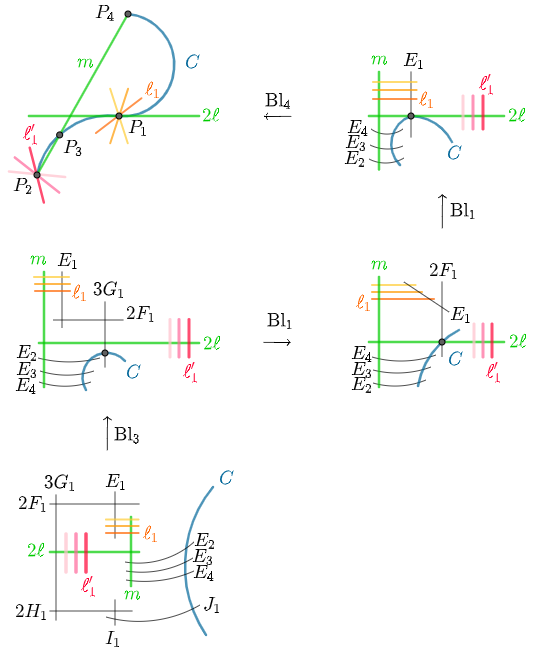}
\end{center}
\caption{RNRF-conic bundle (pink) and a non-RNRF conic bundle (orange) on $X$ with a IV$^*$ fiber.}\label{figure:IV*}
\end{figure}
\end{example}

\begin{example}
Let $X$ be the rational elliptic surface studied by Washington in \cite{Washington} with Weierstrass equation 
$$y^2=x^3+tx^2-(t+3)x+1.$$

The generic Mordell-Weil rank of this surface over $\mathbb{Q}$ is 1. Its singular fibers are of type $(\text{I}_2^*,2\text{II})$. In particular, by Proposition~\ref{prop:RNRF}, it admits a RNRF-conic bundle defined over $\mathbb{Q}$. We can apply Theorem~\ref{thm:rank_jump_3_times} to conclude that the subset of fibers of rank at least 4 is not thin.

For this surface, we can expect an even higher rank jump on a non-thin set. Indeed, Rizzo proved in \cite[Thm. 1]{Rizzo} that the root number of each fiber is $-1$. Hence, under the Parity conjecture \cite{DokchitserParity}, the rank of all fibers is odd. This together with Theorem~\ref{thm:rank_jump_3_times} would imply that the set of fibers with rank at least 5 is not thin. In other words, under the Parity conjecture, there is a rank jump of at least 4 for a non-thin set of fibers.

\end{example}

\newpage
\chapter{Appendix}\label{ch:appendix}\
\indent We reproduce part of the table in \cite[Main Th.]{OguisoShioda} with data on Mordell-Weil lattices of rational elliptic surfaces with Mordell-Weil rank $r\geq 1$. We add columns $c_\text{max},c_\text{min},\Delta$.

\begin{center}
\begin{longtable}{cccccccc}\label{table:MWL_data}
\multirow{1}{*}{No.} & \multirow{1}{*}{$r$} & \multirow{1}{*}{$T$} & \multirow{1}{*}{$E(K)^0$} & \multirow{1}{*}{$E(K)$} & \multirow{1}{*}{$c_\text{max}$} & \multirow{1}{*}{$c_\text{min}$} & \multirow{1}{*}{$\Delta$}\\
\hline
\multirow{1}{*}{1} & \multirow{1}{*}{$8$} & \multirow{1}{*}{$0$} & \multirow{1}{*}{$E_8$} & \multirow{1}{*}{$E_8$} & \multirow{1}{*}{0} & \multirow{1}{*}{0} & \multirow{1}{*}{0}\\
\hline
\multirow{2}{*}{2} & \multirow{2}{*}{$7$} & \multirow{2}{*}{$A_1$} & \multirow{2}{*}{$E_7$} & \multirow{2}{*}{$E_8^*$} & \multirow{2}{*}{$\frac{1}{2}$} & \multirow{2}{*}{$\frac{1}{2}$} & \multirow{2}{*}{$0$}\\
&\\
\hline
\multirow{2}{*}{3} & \multirow{2}{*}{$6$} & \multirow{2}{*}{$A_2$} & \multirow{2}{*}{$E_6$} & \multirow{2}{*}{$E_6^*$} & \multirow{2}{*}{$\frac{2}{3}$} & \multirow{2}{*}{$\frac{2}{3}$} & \multirow{2}{*}{$0$}\\
&\\
\multirow{2}{*}{4} & \multirow{2}{*}{} & \multirow{2}{*}{$A_1^{\oplus 2}$} & \multirow{2}{*}{$D_6$} & \multirow{2}{*}{$D_6^*$} & \multirow{2}{*}{$\frac{3}{2}$} & \multirow{2}{*}{$1$} & \multirow{2}{*}{$\frac{1}{2}$}\\
&\\
\hline
\multirow{2}{*}{5} & \multirow{2}{*}{$5$} & \multirow{2}{*}{$A_3$} & \multirow{2}{*}{$D_5$} & \multirow{2}{*}{$D_5^*$} & \multirow{2}{*}{$1$} & \multirow{2}{*}{$\frac{3}{4}$} & \multirow{2}{*}{$\frac{1}{4}$}\\
&\\
\multirow{2}{*}{6} & \multirow{2}{*}{} & \multirow{2}{*}{$A_2\oplus A_1$} & \multirow{2}{*}{$A_5$} & \multirow{2}{*}{$A_5^*$} & \multirow{2}{*}{$\frac{7}{6}$} & \multirow{2}{*}{$\frac{1}{2}$} & \multirow{2}{*}{$\frac{2}{3}$}\\
&\\
\multirow{2}{*}{7} & \multirow{2}{*}{} & \multirow{2}{*}{$A_1^{\oplus 3}$} & \multirow{2}{*}{$D_4\oplus A_1$} & \multirow{2}{*}{$D_4^*\oplus A_1^*$} & \multirow{2}{*}{$\frac{3}{2}$} & \multirow{2}{*}{$\frac{1}{2}$} & \multirow{2}{*}{$1$}\\
&\\
\hline
\multirow{2}{*}{8} & \multirow{2}{*}{$4$} & \multirow{2}{*}{$A_4$} & \multirow{2}{*}{$A_4$} & \multirow{2}{*}{$A_4^*$} & \multirow{2}{*}{$\frac{6}{5}$} & \multirow{2}{*}{$\frac{4}{5}$} & \multirow{2}{*}{$\frac{2}{5}$}\\
&\\
\multirow{2}{*}{9} & \multirow{2}{*}{} & \multirow{2}{*}{$D_4$} & \multirow{2}{*}{$D_4$} & \multirow{2}{*}{$D_4^*$} & \multirow{2}{*}{$1$} & \multirow{2}{*}{$1$} & \multirow{2}{*}{$0$}\\
&\\
\multirow{2}{*}{10} & \multirow{2}{*}{} & \multirow{2}{*}{$A_3\oplus A_1$} & \multirow{2}{*}{$A_3\oplus A_1$} & \multirow{2}{*}{$A_3^*\oplus A_1^*$} & \multirow{2}{*}{$\frac{3}{2}$} & \multirow{2}{*}{$\frac{1}{2}$} & \multirow{2}{*}{$1$}\\
&\\
\multirow{2}{*}{\hfil 11} & \multirow{2}{*}{\hfil } & \multirow{2}{*}{\hfil $A_2^{\oplus 2}$} & \multirow{2}{*}{$A_2^{\oplus 2}$} & \multirow{2}{*}{${A_2^*}^{\oplus 2}$} & \multirow{2}{*}{\hfil $\frac{4}{3}$} & \multirow{2}{*}{\hfil $\frac{2}{3}$} & \multirow{2}{*}{\hfil $\frac{2}{3}$}\\
&\\
\multirow{4}{*}{\hfil 12} & \multirow{4}{*}{} & \multirow{4}{*}{\hfil $A_2\oplus A_1^{\oplus 2}$} & \multirow{4}{*}{$\left(\begin{matrix}4 & -1 & 0 & 1\\-1 & 2 & -1 & 0\\0 & -1 & 2 & -1\\1 & 0 & -1 & 2\end{matrix}\right)$} & \multirow{4}{*}{$\frac{1}{6}\left(\begin{matrix}2 & 1 & 0 & -1\\1 & 5 & 3 & 1\\0 & 3 & 6 & 3\\-1 & 1 & 3 & 5\end{matrix}\right)$} & \multirow{4}{*}{\hfil $\frac{5}{3}$} & \multirow{4}{*}{\hfil $\frac{1}{2}$} & \multirow{4}{*}{\hfil $\frac{7}{6}$}\\
&\\
&\\
&\\
\multirow{2}{*}{\hfil 13} & \multirow{2}{*}{} & \multirow{2}{*}{\hfil $A_1^{\oplus 4}$} & \multirow{2}{*}{$D_4$} & \multirow{2}{*}{$D_4^*\oplus\Bbb{Z}/2\Bbb{Z}$} & \multirow{2}{*}{\hfil $2$} & \multirow{2}{*}{\hfil $\frac{1}{2}$} & \multirow{2}{*}{\hfil $\frac{3}{2}$}\\
&\\
\multirow{2}{*}{\hfil 14} & \multirow{2}{*}{} & \multirow{2}{*}{\hfil $A_1^{\oplus 4}$} & \multirow{2}{*}{$A_1^{\oplus 4}$} & \multirow{2}{*}{${A_1^*}^{\oplus 4}$} & \multirow{2}{*}{\hfil $2$} & \multirow{2}{*}{\hfil $\frac{1}{2}$} & \multirow{2}{*}{\hfil $\frac{3}{2}$}\\
&\\
\hline
\multirow{2}{*}{\hfil 15} & \multirow{2}{*}{$3$} & \multirow{2}{*}{\hfil $A_5$} & \multirow{2}{*}{$A_2\oplus A_1$} & \multirow{2}{*}{$A_2^*\oplus A_1^*$} & \multirow{2}{*}{\hfil $\frac{3}{2}$} & \multirow{2}{*}{\hfil $\frac{5}{6}$} & \multirow{2}{*}{\hfil $\frac{2}{3}$}\\
&\\
\multirow{2}{*}{\hfil 16} & \multirow{2}{*}{} & \multirow{2}{*}{$D_5$} & \multirow{2}{*}{$A_3$} & \multirow{2}{*}{$A_3^*$} & \multirow{2}{*}{\hfil $\frac{5}{4}$} & \multirow{2}{*}{\hfil $1$} & \multirow{2}{*}{\hfil $\frac{1}{4}$}\\
&\\
\multirow{3}{*}{17} & \multirow{3}{*}{} & \multirow{3}{*}{$A_4\oplus A_1$} & \multirow{3}{*}{$\left(\begin{matrix}4 & -1 & 1\\-1 & 2 & -1\\1 & -1 & 2\end{matrix}\right)$} & \multirow{3}{*}{$\frac{1}{10}\left(\begin{matrix}3 & 1 & -1\\1 & 7 & 3\\-1 & 3 & 7\end{matrix}\right)$} & \multirow{3}{*}{\hfil $\frac{17}{10}$} & \multirow{3}{*}{\hfil $\frac{1}{2}$} & \multirow{3}{*}{\hfil $\frac{6}{5}$}\\
&\\
&\\
\multirow{2}{*}{18} & \multirow{2}{*}{} & \multirow{2}{*}{$D_4\oplus A_1$} & \multirow{2}{*}{$A_1^{\oplus 3}$} & \multirow{2}{*}{${A_1^*}^{\oplus 3}$} & \multirow{2}{*}{\hfil $\frac{3}{2}$} & \multirow{2}{*}{\hfil $\frac{1}{2}$} & \multirow{2}{*}{\hfil $1$}\\
&\\
\multirow{3}{*}{19} & \multirow{3}{*}{} & \multirow{3}{*}{$A_3\oplus A_2$} & \multirow{3}{*}{$\left(\begin{matrix}2 & 0 & -1\\0 & 2 & -1\\-1 & -1 & 4\end{matrix}\right)$} & \multirow{3}{*}{$\frac{1}{12}\left(\begin{matrix}7 & 1 & 2\\1 & 7 & 2\\2 & 2 & 4\end{matrix}\right)$} & \multirow{3}{*}{\hfil $\frac{5}{3}$} & \multirow{3}{*}{\hfil $\frac{2}{3}$} & \multirow{3}{*}{\hfil $1$}\\
&\\
&\\
\multirow{2}{*}{20} & \multirow{2}{*}{} & \multirow{2}{*}{$A_2^{\oplus 2}\oplus A_1$} & \multirow{2}{*}{$A_2\oplus\langle 6\rangle$} & \multirow{2}{*}{$A_2^*\oplus\langle 1/6\rangle$} & \multirow{2}{*}{\hfil $\frac{11}{6}$} & \multirow{2}{*}{\hfil $\frac{1}{2}$} & \multirow{2}{*}{\hfil $\frac{4}{3}$}\\
&\\
\multirow{2}{*}{21} & \multirow{2}{*}{} & \multirow{2}{*}{$A_3\oplus A_1^{\oplus 2}$} & \multirow{2}{*}{$A_3$} & \multirow{2}{*}{$A_3^*\oplus\Z/2\Z$} & \multirow{2}{*}{$2$} & \multirow{2}{*}{$\frac{1}{2}$} & \multirow{2}{*}{$\frac{3}{2}$}\\
&\\
\multirow{2}{*}{22} & \multirow{2}{*}{} & \multirow{2}{*}{$A_3\oplus A_1^{\oplus 2}$} & \multirow{2}{*}{$A_1\oplus\langle 4\rangle$} & \multirow{2}{*}{$A_1^*\oplus\langle 1/4\rangle$} & \multirow{2}{*}{$2$} & \multirow{2}{*}{$\frac{1}{2}$} & \multirow{2}{*}{$\frac{3}{2}$}\\
&\\
\multirow{2}{*}{23} & \multirow{2}{*}{} & \multirow{2}{*}{$A_2\oplus A_1^{\oplus 3}$} & \multirow{2}{*}{$A_1\oplus\left(\begin{matrix}4 & -2\\-2 & 4\end{matrix}\right)$} & \multirow{2}{*}{$A_1^*\oplus\frac{1}{6}\left(\begin{matrix}2 & 1\\1 & 2\end{matrix}\right)$} & \multirow{2}{*}{$\frac{13}{6}$} & \multirow{2}{*}{$\frac{1}{2}$} & \multirow{2}{*}{$\frac{5}{3}$}\\
&\\
\multirow{2}{*}{24} & \multirow{2}{*}{} & \multirow{2}{*}{$A_1^{\oplus 5}$} & \multirow{2}{*}{$A_1^{\oplus 3}$} & \multirow{2}{*}{${A_1^*}^{\oplus 3}\oplus\Z/2\Z$} & \multirow{2}{*}{$\frac{5}{2}$} & \multirow{2}{*}{$\frac{1}{2}$} & \multirow{2}{*}{$2$}\\
&\\
\hline
\multirow{2}{*}{25} & \multirow{2}{*}{$2$} & \multirow{2}{*}{$A_6$} & \multirow{2}{*}{$\left(\begin{matrix}4 & -1\\-1& 2\end{matrix}\right)$} & \multirow{2}{*}{$\frac{1}{7}\left(\begin{matrix}2 & 1\\1 & 4\end{matrix}\right)$} & \multirow{2}{*}{$\frac{12}{7}$} & \multirow{2}{*}{$\frac{6}{7}$} & \multirow{2}{*}{$\frac{6}{7}$}\\
&\\
\multirow{2}{*}{26} & \multirow{2}{*}{} & \multirow{2}{*}{$D_6$} & \multirow{2}{*}{$A_1^{\oplus 2}$} & \multirow{2}{*}{${A_1^*}^{\oplus 2}$} & \multirow{2}{*}{$\frac{3}{2}$} & \multirow{2}{*}{$1$} & \multirow{2}{*}{$\frac{1}{2}$}\\
&\\
\multirow{2}{*}{27} & \multirow{2}{*}{} & \multirow{2}{*}{$E_6$} & \multirow{2}{*}{$A_2$} & \multirow{2}{*}{$A_2^*$} & \multirow{2}{*}{$\frac{4}{3}$} & \multirow{2}{*}{$\frac{4}{3}$} & \multirow{2}{*}{$0$}\\
&\\
\multirow{2}{*}{28} & \multirow{2}{*}{} & \multirow{2}{*}{$A_5\oplus A_1$} & \multirow{2}{*}{$A_2$} & \multirow{2}{*}{$A_2^*\oplus\Z/2\Z$} & \multirow{2}{*}{$2$} & \multirow{2}{*}{$\frac{1}{2}$} & \multirow{2}{*}{$\frac{3}{2}$}\\
&\\
\multirow{2}{*}{29} & \multirow{2}{*}{} & \multirow{2}{*}{$A_5\oplus A_1$} & \multirow{2}{*}{$A_1\oplus\langle 6\rangle$} & \multirow{2}{*}{$A_1^*\oplus\langle 1/6\rangle$} & \multirow{2}{*}{$2$} & \multirow{2}{*}{$\frac{1}{2}$} & \multirow{2}{*}{$\frac{3}{2}$}\\
&\\
\multirow{2}{*}{30} & \multirow{2}{*}{} & \multirow{2}{*}{$D_5\oplus A_1$} & \multirow{2}{*}{$A_1\oplus\langle 4\rangle$} & \multirow{2}{*}{$A_1^*\oplus\langle 1/4\rangle$} & \multirow{2}{*}{$\frac{7}{4}$} & \multirow{2}{*}{$\frac{1}{2}$} & \multirow{2}{*}{$\frac{5}{4}$}\\
&\\
\multirow{3}{*}{31} & \multirow{3}{*}{} & \multirow{3}{*}{$A_4\oplus A_2$} & \multirow{3}{*}{$\left(\begin{matrix}8 & -1\\-1 & 2\end{matrix}\right)$} & \multirow{3}{*}{$\frac{1}{15}\left(\begin{matrix}2 & 1\\1 & 8\end{matrix}\right)$} & \multirow{3}{*}{$\frac{28}{15}$} & \multirow{3}{*}{$\frac{2}{3}$} & \multirow{3}{*}{$\frac{6}{5}$}\\
&\\
&\\
\multirow{3}{*}{32} & \multirow{3}{*}{} & \multirow{3}{*}{$D_4\oplus A_2$} & \multirow{3}{*}{$\left(\begin{matrix}4 & -2\\-2 & 4\end{matrix}\right)$} & \multirow{3}{*}{$\frac{1}{6}\left(\begin{matrix}2 & 1\\1 & 2\end{matrix}\right)$} & \multirow{3}{*}{$\frac{5}{3}$} & \multirow{3}{*}{$\frac{2}{3}$} & \multirow{3}{*}{$1$}\\
&\\
&\\
\multirow{2}{*}{33} & \multirow{2}{*}{} & \multirow{2}{*}{$A_4\oplus A_1^{\oplus 2}$} & \multirow{2}{*}{$\left(\begin{matrix}6 & -2\\-2 & 4\end{matrix}\right)$} & \multirow{2}{*}{$\frac{1}{10}\left(\begin{matrix}2 & 1\\1 & 3\end{matrix}\right)$} & \multirow{2}{*}{$\frac{11}{5}$} & \multirow{2}{*}{$\frac{1}{2}$} & \multirow{2}{*}{$\frac{17}{10}$}\\
&\\
\multirow{2}{*}{34} & \multirow{2}{*}{} & \multirow{2}{*}{$D_4\oplus A_1^{\oplus 2}$} & \multirow{2}{*}{$A_1^{\oplus 2}$} & \multirow{2}{*}{${A_1^*}^{\oplus 2}$} & \multirow{2}{*}{$2$} & \multirow{2}{*}{$\frac{1}{2}$} & \multirow{2}{*}{$\frac{3}{2}$}\\
&\\
\multirow{2}{*}{35} & \multirow{2}{*}{} & \multirow{2}{*}{$A_3^{\oplus 2}$} & \multirow{2}{*}{$A_1^{\oplus 2}$} & \multirow{2}{*}{${A_1^*}^{\oplus 2}\oplus \Z/2\Z$} & \multirow{2}{*}{$2$} & \multirow{2}{*}{$\frac{3}{4}$} & \multirow{2}{*}{$\frac{5}{4}$}\\
&\\
\multirow{2}{*}{36} & \multirow{2}{*}{} & \multirow{2}{*}{$A_3^{\oplus 2}$} & \multirow{2}{*}{$\langle 4\rangle^{\oplus 2}$} & \multirow{2}{*}{$\langle 1/4\rangle^{\oplus 2}$} & \multirow{2}{*}{$2$} & \multirow{2}{*}{$\frac{3}{4}$} & \multirow{2}{*}{$\frac{5}{4}$}\\
&\\
\multirow{2}{*}{37} & \multirow{2}{*}{} & \multirow{2}{*}{$A_3\oplus A_2\oplus A_1$} & \multirow{2}{*}{$A_1\oplus\langle 12\rangle$} & \multirow{2}{*}{$A_1^*\oplus\langle 1/12\rangle$} & \multirow{2}{*}{$\frac{13}{6}$} & \multirow{2}{*}{$\frac{1}{2}$} & \multirow{2}{*}{$\frac{5}{3}$}\\
&\\
\multirow{2}{*}{38} & \multirow{2}{*}{} & \multirow{2}{*}{$A_3\oplus A_1^{\oplus 3}$} & \multirow{2}{*}{$A_1\oplus\langle 4\rangle$} & \multirow{2}{*}{$A_1^*\oplus\langle 1/4\rangle\oplus\Z/2\Z$} & \multirow{2}{*}{$\frac{5}{2}$} & \multirow{2}{*}{$\frac{1}{2}$} & \multirow{2}{*}{$2$}\\
&\\
\multirow{2}{*}{39} & \multirow{2}{*}{} & \multirow{2}{*}{$A_2^{\oplus 3}$} & \multirow{2}{*}{$A_2$} & \multirow{2}{*}{$A_2^*\oplus\Z/3\Z$} & \multirow{2}{*}{$2$} & \multirow{2}{*}{$\frac{2}{3}$} & \multirow{2}{*}{$\frac{4}{3}$}\\
&\\
\multirow{2}{*}{40} & \multirow{2}{*}{} & \multirow{2}{*}{$A_2^{\oplus 2}\oplus A_1^{\oplus 2}$} & \multirow{2}{*}{$\langle 6\rangle^{\oplus 2}$} & \multirow{2}{*}{$\langle 1/6\rangle^{\oplus 2}$} & \multirow{2}{*}{$\frac{7}{3}$} & \multirow{2}{*}{$\frac{1}{2}$} & \multirow{2}{*}{$\frac{11}{6}$}\\
&\\
\multirow{2}{*}{41} & \multirow{2}{*}{} & \multirow{2}{*}{$A_2\oplus A_1^{\oplus 4}$} & \multirow{2}{*}{$\left(\begin{matrix}4 & -2\\-2 & 4\end{matrix}\right)$} & \multirow{2}{*}{$\frac{1}{6}\left(\begin{matrix}2 & 1\\1 & 2\end{matrix}\right)$} & \multirow{2}{*}{$\frac{8}{3}$} & \multirow{2}{*}{$\frac{1}{2}$} & \multirow{2}{*}{$\frac{13}{6}$}\\
&\\
\multirow{2}{*}{42} & \multirow{2}{*}{} & \multirow{2}{*}{$A_1^{\oplus 6}$} & \multirow{2}{*}{$A_1^{\oplus 2}$} & \multirow{2}{*}{${A_1^*}^{\oplus 2}\oplus(\Z/2\Z)^2$} & \multirow{2}{*}{$3$} & \multirow{2}{*}{$\frac{1}{2}$} & \multirow{2}{*}{$\frac{5}{2}$}\\
&\\
\hline
\multirow{2}{*}{43} & \multirow{2}{*}{$1$} & \multirow{2}{*}{$E_7$} & \multirow{2}{*}{$A_1$} & \multirow{2}{*}{$A_1^*$} & \multirow{2}{*}{$\frac{3}{2}$} & \multirow{2}{*}{$\frac{3}{2}$} & \multirow{2}{*}{$0$}\\
&\\
\multirow{2}{*}{44} & \multirow{2}{*}{} & \multirow{2}{*}{$A_7$} & \multirow{2}{*}{$A_1$} & \multirow{2}{*}{$A_1^*\oplus\Z/2\Z$} & \multirow{2}{*}{$2$} & \multirow{2}{*}{$\frac{7}{8}$} & \multirow{2}{*}{$\frac{11}{8}$}\\
&\\
\multirow{2}{*}{45} & \multirow{2}{*}{} & \multirow{2}{*}{$A_7$} & \multirow{2}{*}{$\langle 8\rangle$} & \multirow{2}{*}{$\langle 1/8\rangle$} & \multirow{2}{*}{$2$} & \multirow{2}{*}{$\frac{7}{8}$} & \multirow{2}{*}{$\frac{11}{8}$}\\
&\\
\multirow{2}{*}{46} & \multirow{2}{*}{} & \multirow{2}{*}{$D_7$} & \multirow{2}{*}{$\langle 4\rangle$} & \multirow{2}{*}{$\langle 1/4\rangle$} & \multirow{2}{*}{$\frac{7}{4}$} & \multirow{2}{*}{$1$} & \multirow{2}{*}{$\frac{3}{4}$}\\
&\\
\multirow{2}{*}{47} & \multirow{2}{*}{} & \multirow{2}{*}{$A_6\oplus A_1$} & \multirow{2}{*}{$\langle 14\rangle$} & \multirow{2}{*}{$\langle 1/14\rangle$} & \multirow{2}{*}{$\frac{31}{14}$} & \multirow{2}{*}{$\frac{1}{2}$} & \multirow{2}{*}{$\frac{12}{7}$}\\
&\\
\multirow{2}{*}{48} & \multirow{2}{*}{} & \multirow{2}{*}{$D_6\oplus A_1$} & \multirow{2}{*}{$A_1$} & \multirow{2}{*}{$A_1^*$} & \multirow{2}{*}{$2$} & \multirow{2}{*}{$\frac{3}{2}$} & \multirow{2}{*}{$\frac{1}{2}$}\\
&\\
\multirow{2}{*}{49} & \multirow{2}{*}{} & \multirow{2}{*}{$E_6\oplus A_1$} & \multirow{2}{*}{$\langle 6\rangle$} & \multirow{2}{*}{$\langle 1/6\rangle$} & \multirow{2}{*}{$\frac{11}{6}$} & \multirow{2}{*}{$\frac{1}{2}$} & \multirow{2}{*}{$\frac{4}{3}$}\\
&\\
\multirow{2}{*}{50} & \multirow{2}{*}{} & \multirow{2}{*}{$D_5\oplus A_2$} & \multirow{2}{*}{$\langle 12\rangle$} & \multirow{2}{*}{$\langle 1/12\rangle$} & \multirow{2}{*}{$\frac{23}{12}$} & \multirow{2}{*}{$\frac{2}{3}$} & \multirow{2}{*}{$\frac{5}{4}$}\\
&\\
\multirow{2}{*}{51} & \multirow{2}{*}{} & \multirow{2}{*}{$A_5\oplus A_2$} & \multirow{2}{*}{$A_1$} & \multirow{2}{*}{$A_1^*\oplus\Z/3\Z$} & \multirow{2}{*}{$\frac{13}{6}$} & \multirow{2}{*}{$\frac{2}{3}$} & \multirow{2}{*}{$\frac{3}{2}$}\\
&\\
\multirow{2}{*}{52} & \multirow{2}{*}{} & \multirow{2}{*}{$D_5\oplus A_1^{\oplus 2}$} & \multirow{2}{*}{$\langle 4\rangle$} & \multirow{2}{*}{$\langle 1/4\rangle\oplus\Z/2\Z$} & \multirow{2}{*}{$\frac{9}{4}$} & \multirow{2}{*}{$\frac{1}{2}$} & \multirow{2}{*}{$\frac{7}{4}$}\\
&\\
\multirow{2}{*}{53} & \multirow{2}{*}{} & \multirow{2}{*}{$A_5\oplus A_1^{\oplus 2}$} & \multirow{2}{*}{$\langle 6\rangle$} & \multirow{2}{*}{$\langle 1/6\rangle\oplus\Z/2\Z$} & \multirow{2}{*}{$\frac{5}{2}$} & \multirow{2}{*}{$\frac{1}{2}$} & \multirow{2}{*}{$2$}\\
&\\
\multirow{2}{*}{54} & \multirow{2}{*}{} & \multirow{2}{*}{$D_4\oplus A_3$} & \multirow{2}{*}{$\langle 4\rangle$} & \multirow{2}{*}{$\langle 1/4\rangle\oplus\Z/2\Z$} & \multirow{2}{*}{$2$} & \multirow{2}{*}{$\frac{3}{4}$} & \multirow{2}{*}{$\frac{5}{4}$}\\
&\\
\multirow{2}{*}{55} & \multirow{2}{*}{} & \multirow{2}{*}{$A_4\oplus A_3$} & \multirow{2}{*}{$\langle 20\rangle$} & \multirow{2}{*}{$\langle 1/20\rangle$} & \multirow{2}{*}{$\frac{11}{5}$} & \multirow{2}{*}{$\frac{3}{4}$} & \multirow{2}{*}{$\frac{29}{20}$}\\
&\\
\multirow{2}{*}{56} & \multirow{2}{*}{} & \multirow{2}{*}{$A_4\oplus A_2\oplus A_1$} & \multirow{2}{*}{$\langle 30\rangle$} & \multirow{2}{*}{$\langle 1/30\rangle$} & \multirow{2}{*}{$\frac{71}{30}$} & \multirow{2}{*}{$\frac{1}{2}$} & \multirow{2}{*}{$\frac{28}{15}$}\\
&\\
\multirow{2}{*}{57} & \multirow{2}{*}{} & \multirow{2}{*}{$D_4\oplus A_1^{\oplus 3}$} & \multirow{2}{*}{$A_1$} & \multirow{2}{*}{$A_1^*$} & \multirow{2}{*}{$\frac{5}{2}$} & \multirow{2}{*}{$\frac{1}{2}$} & \multirow{2}{*}{$2$}\\
&\\
\multirow{2}{*}{58} & \multirow{2}{*}{} & \multirow{2}{*}{$A_3^{\oplus 2}\oplus A_1$} & \multirow{2}{*}{$A_1$} & \multirow{2}{*}{$A_1^*\oplus\Z/4\Z$} & \multirow{2}{*}{$\frac{5}{2}$} & \multirow{2}{*}{$\frac{1}{2}$} & \multirow{2}{*}{$2$}\\
&\\
\multirow{2}{*}{59} & \multirow{2}{*}{} & \multirow{2}{*}{$A_3\oplus A_2\oplus A_1^{\oplus 2}$} & \multirow{2}{*}{$\langle 12\rangle$} & \multirow{2}{*}{$\langle 1/12\rangle\oplus\Z/2\Z$} & \multirow{2}{*}{$\frac{8}{3}$} & \multirow{2}{*}{$\frac{1}{2}$} & \multirow{2}{*}{$\frac{13}{6}$}\\
&\\
\multirow{2}{*}{60} & \multirow{2}{*}{} & \multirow{2}{*}{$A_3\oplus A_1^{\oplus 4}$} & \multirow{2}{*}{$\langle 4\rangle$} & \multirow{2}{*}{$\langle 1/4\rangle\oplus\Z/2\Z$} & \multirow{2}{*}{$3$} & \multirow{2}{*}{$\frac{1}{2}$} & \multirow{2}{*}{$\frac{5}{2}$}\\
&\\
\multirow{2}{*}{61} & \multirow{2}{*}{} & \multirow{2}{*}{$A_2^{\oplus 3}\oplus A_1$} & \multirow{2}{*}{$\langle 6\rangle$} & \multirow{2}{*}{$\langle 1/6\rangle\oplus\Z/3\Z$} & \multirow{2}{*}{$\frac{5}{2}$} & \multirow{2}{*}{$\frac{1}{2}$} & \multirow{2}{*}{$2$}\\
&\\
\hline
\caption{Mordell-Weil lattices of rational elliptic surfaces with Mordell-Weil rank $r\geq 1$.}
\end{longtable}
\end{center}

\newpage

\bibliographystyle{alpha}
\bibliography{references_tese}
\end{document}